\DeclareMathAlphabet{\mathcalligra}{T1}{calligra}{m}{n}
\newtheorem{lemma}{Lemma}[section]
\newtheorem{theorem}[lemma]{Theorem}
\newtheorem{proposition}[lemma]{Proposition}
\newtheorem{definition}[lemma]{Definition}
\newtheorem{corollary}[lemma]{Corollary}
\newtheorem{remark}[lemma]{Remark}
\newtheorem{example}[lemma]{Example}
\numberwithin{equation}{section}
\newcommand{\g}{\gamma}
\newcommand{\new}{{\small{\mathtt{ New}}}}
\newcommand{\yy}{{\mathcal Y}}
\newcommand{\s}{{\sigma}}
\newcommand{\id}{{\rm Id}}
\newcommand{\jj}{{\vec \jmath}}
\newcommand{\ii}{{\vec \imath}}
\newcommand{\li}{{\mathfrak h}}
\newcommand{\ad}{{\rm ad}}
\newcommand{\ccC}{{\mathscr{C}}}
\newcommand{\wt}[1]{{\widetilde{#1}}}
\newcommand{\wtOmega}{{\widetilde{\Omega}}}
\newcommand{\wtcD}{{\widetilde{\mathcal D}}}
\newcommand{\wtcH}{{\widetilde{\mathcal H}}}
\newcommand{\wtcP}{{\widetilde{\mathcal P}}}
\newcommand{\wtcM}{\widetilde{\mathcal M}}
\newcommand{\whP}{{\widehat P}}
\newcommand{\whcD}{{\widehat{\mathcal D}}}
\newcommand{\whcH}{{\widehat{\mathcal H}}}
\newcommand{\whcT}{{\widehat{\mathcal T}}}
\newcommand{\whcZ}{{\widehat{\mathcal Z}}}
\newcommand{\C}{{\mathbb C}}
\newcommand{\N}{{\mathbb N}}
\newcommand{\Q}{{\mathbb Q}}
\newcommand{\R}{{\mathbb R}}
\newcommand{\T}{{\mathbb T}}
\newcommand{\Z}{{\mathbb Z}}
\newcommand{\cA}{{\mathcal A}}
\newcommand{\cC}{{\mathcal C}}
\newcommand{\cD}{{\mathcal D}}
\newcommand{\cF}{{\mathcal F}}
\newcommand{\cH}{{\mathcal H}}
\newcommand{\cI}{{\mathcal I}}
\newcommand{\cJ}{{\mathcal J}}
\newcommand{\cK}{{\mathcal K}}
\newcommand{\cL}{{\mathcal L}}
\newcommand{\cM}{{\mathcal M}}
\newcommand{\cN}{{\mathcal N}}
\newcommand{\cO}{{\mathcal O}}
\newcommand{\cP}{{\mathcal P}}
\newcommand{\cQ}{{\mathcal Q}}
\newcommand{\cR}{{\mathcal R}}
\newcommand{\cS}{{\mathcal S}}
\newcommand{\cT}{{\mathcal T}}
\newcommand{\cU}{{\mathcal U}}
\newcommand{\cV}{{\mathcal V}}
\newcommand{\cZ}{{\mathcal Z}}
\newcommand{\fA}{{\mathfrak{A}}}
\newcommand{\fh}{{\mathfrak{h}}}
\newcommand{\fR}{{\text{\gothfamily R}}}
\newcommand{\td}{{\mathtt{d}}}
\newcommand{\ti}{{\mathtt{i}}}
\newcommand{\tk}{{\mathtt{d}}}
\newcommand{\tm}{{\mathtt{m}}}
\newcommand{\tC}{{\mathtt{C}}}
\newcommand{\tF}{{\mathtt{F}}}
\newcommand{\tG}{{\mathtt{G}}}
\newcommand{\tK}{{\mathtt{K}}}
\newcommand{\tL}{{\mathtt{L}}}
\newcommand{\tM}{{\mathtt{M}}}
\newcommand{\tN}{{\mathtt{N}}}
\newcommand{\Tan}{{\cS_0}}
\newcommand{\ba}{{\bf a}}
\newcommand{\be}{{\bf e}}
\newcommand{\bi}{{\bf i}}
\newcommand{\bj}{{\bf j}}
\newcommand{\bk}{{\bf k}}
\newcommand{\bm}{{\bf m}}
\newcommand{\bn}{{\bf n}}
\newcommand{\bz}{{\bf z}}
\newcommand{\bN}{{\bf N}}
\newcommand{\bM}{{\bf M}}
\newcommand{\bs}{{\boldsymbol\sigma}}
\newcommand{\oo}{{\omega}}
\newcommand{\al}{{\alpha}}
\newcommand{\bt}{{\beta}}
\newcommand{\derx}{\partial_x}
\newcommand{\norm}[1]{\| #1 \|}
\newcommand{\abs}[1]{\left| #1 \right|}
\newcommand{\0}{{(0)}}
\newcommand{\2}{{(2)}}
\newcommand{\1}{{(1)}}
\newcommand{\la}{\left\langle}
\newcommand{\ra}{\right\rangle}
\newcommand{\im}{{\rm i}}
\newcommand{\re}[1] {\mbox{\Fontlukas T}\!_{#1}}  
\newcommand{\und}[1]{\underline{#1}}
\newcommand{\di}{{\rm d}}
\newcommand{\e}{{\varepsilon}}
\newcommand{\hh}{ h }
\newcommand{\pix}{{\pi_x}}
\newcommand{\piy}{{\pi_y}}
\newcommand{\uno}{{\mathbb I}}
\newcommand{\meas}{{\rm meas}}
\newcommand{\mix}{{\rm mix}}
\newcommand{\hor}{{\rm hor}}
\newcommand{\out}{{\rm out}}
\newcommand{\diag}{{\rm diag}}
\renewcommand{\line}{{\rm line}}
\newcommand{\fin}{{\rm fin}}
\newcommand{\bd}{{\vec{\bf 2}}}
\newcommand{\kam}{{\eta}}
\newcommand{\bnorm}[1]{{|\mkern-6mu |\mkern-6mu | \,  #1 \,  |\mkern-6mu |\mkern-6mu |}  }
\newcommand{\sQ}{{\mathscr{Q}}}
\newcommand{\sH}{{\mathscr{H}}}
\newcommand{\sS}{{\mathscr{S}}}
\newcommand{\tT}{{\mathtt{T}}}
\newcommand{\fig}{{\rm fg}}
\title{Long time stability of small finite gap solutions \\
of the cubic Nonlinear Schr\"odinger equation on $\T^2$}
\author{
A. Maspero
\footnote{
 International School for Advanced Studies (SISSA), Via Bonomea 265, 34136, Trieste, Italy
  \newline
 \textit{Email: } \texttt{alberto.maspero@sissa.it}} 
, 
M. Procesi
\footnote{
Dipartimento di Matematica e Fisica
Universit\`a degli Studi Roma Tre, 
L.go S. Leonardo Murialdo 1, 00146 Roma, Italy 
 \newline
 \textit{Email: } \texttt{procesi@mat.uniroma3.it}} 
  }
\begin{document}
\maketitle
\begin{abstract}
In this paper we study long time stability of a class of nontrivial,  quasi-periodic solutions depending on one spacial variable of the  cubic defocusing  non-linear  Schr\"odinger equation  on the two dimensional torus.
{We prove that  these quasi-periodic solutions are orbitally stable  for finite but  long times, provided that  their  Fourier support and their frequency vector satisfy some complicated but explicit condition, which we show holds true for most solutions.\\}
 The proof is based on a normal form result.  More precisely we expand the  Hamiltonian in a neighborhood of a quasi-periodic solution, we reduce its quadratic part to diagonal  constant coefficients through a KAM scheme, and finally  we remove its cubic terms with a step of nonlinear Birkhoff normal form. The main difficulty is to impose second and third order Melnikov conditions; this is done by combining the techniques of reduction in order  of pseudo-differential operators with the algebraic analysis of resonant quadratic Hamiltonians.
\end{abstract}
\tableofcontents

\section{Introduction  and main result}
\subsection{The stability  problem}
In this paper we study long time stability of a class of nontrivial,  quasi-periodic solutions depending on one spacial variable of the  cubic defocusing  non-linear  Schr\"odinger equation (NLS) on the two dimensional torus $\T^2$:
\begin{equation}\label{NLS}
\im \partial_t v = -\Delta v + |v|^2 v \ ,  \qquad (x,y)\in\T^2\ .
\end{equation}
In particular we prove that there exists a family of such solutions  which are orbitally stable  in {$H^p(\T^2)$} for finite but  long times, {for any $p >1$}.
This means, essentially, that if we take an initial datum close enough in  $H^p(\T^2)$  to one of these solutions, then we stay in a neighborhood  of the {\em orbit} of the solution for long time, { in the  $H^p(\T^2)$ topology}.\\
Solutions of \eqref{NLS} depending on only one-variable are completely characterized; indeed the
restriction of \eqref{NLS} to 
the subspace  of functions depending only on one-variable, say $x$,  is the 1-dimensional defocusing NLS (dNLS)
\begin{equation}\label{dNLS}
\im \partial_t q = -\partial_{xx} q + |q|^2 q \qquad\qquad x\in\T\ ,
\end{equation}
 which is a well known integrable system \cite{zakharov71, zakharov74}. 
The dynamics of dNLS is  completely understood: the phase space is foliated in invariant tori of finite and infinite dimensions and  the dynamics on each torus is either quasiperiodic (namely it is  a combination of periodic motions with a finite number of different frequencies) or almost periodic (a superimposition of periodic motions with infinitely many different  frequencies).\\ 
 Actually even more is true: Gr\'ebert and Kappeler \cite{grebert_kappeler} showed  that there exists
a globally defined map $\Phi: L^2(\T) \to \ell^2 \times \ell^2, $ $ q \mapsto (z_m, \bar z_m)_{m \in \Z}$, the {\em Birkhoff map}, 
which introduces  {\em Birkhoff coordinates}, namely complex conjugates  canonical coordinates $(z_m, \bar z_m)_{m \in \Z}$,  with the property that the dNLS Hamiltonian, once expressed in such coordinates,  is a real analytic  function depending only on  the actions   $I_m := |z_m|^2$.  
As a consequence, in  Birkhoff coordinates the flow \eqref{dNLS} is conjugated to an infinite chain of nonlinearly coupled oscillators:
\begin{equation}
  \label{dnls.bc}
 \im  \dot z_m = \alpha^{\rm dnls}_m(I) z_m\qquad  \forall m \in \Z \ , 
  \end{equation}  
  where the $\alpha^{\rm dnls}_m(I)$ are frequencies depending only on the actions $(I_m)_{m\in \Z}$. \\
Then, having fixed   an arbitrary number $\tk \in \N$,  an ordered set $\cS_0 := (\tm_1, \ldots, \tm_\tk) \subset \Z$ of modes and a vector $I_\tm = (I_{\tm_i})_{1 \leq i \leq \tk} \subseteq \R^\tk_{>0}$,  the set
\begin{equation}
\label{tori}
\tT^\tk \equiv 
\tT^\tk(\cS_0, I_\tm) := \left\lbrace
(z_m)_{m \in \Z} \colon \quad  |z_{\tm_i}|^2 = I_{\tm_i} \, \mbox{ for } 1 \leq i \leq \tk  \ ,  \quad  
z_m = 0  \, \mbox{ if } m \notin \cS_0 
\right\rbrace
\end{equation}
is an invariant torus of \eqref{dNLS} of dimension $\tk$
which is supported on the set $\cS_0$.
We say that $q(t)$ is a {\em finite gap solution} of \eqref{dNLS} if it is supported (in Birkhoff coordinates) on a finite dimensional torus, i.e.  $\forall t$  $\, \Phi(q(t)) \subseteq \tT^\tk(\cS_0, I_\tm)$, for some  set $\cS_0$ of cardinality $\tk < \infty$ and vector $I_\tm$.
Any finite gap solution is quasiperiodic in time, $q(t) \equiv q(\omega t)$, with a frequency vector $\omega \in \R^\tk$ which  is specified by the values of the actions $I_\tm$ and thus it is associated to the torus itself. 
 In particular, if $\omega$ is nonresonant, than the orbit of the finite gap solution is dense on the torus.

Clearly  any finite gap solution of \eqref{dNLS} is also a solution of  \eqref{NLS}.
 Of course if we take an initial datum of \eqref{NLS} that is close to the initial datum of a finite gap solution but {\em not $y$-independent}   we expect the  dynamics to be very complicated, but to stay close  to the one of the integrable subspace $\tT^\tk$ for long times.\\
{The purpose of this work is to prove, roughly speaking, that ``many'' tori $\tT^\tk(\cS_0, I_\tm)$ are orbitally stable for long times:  if one chooses an initial datum which is $\delta$-close to one of these tori, then  the solution
 of \eqref{NLS} stays close to the torus for times of order $\delta^2$.}
 In order to make this statement precise we need to introduce some notations.
 For any $p > 1$ we denote $H^p(\T^n)$ the usual Sobolev space of functions whose Fourier coefficients have  finite norm
 $$
 \norm{v}_{H^p(\T^n)} := \left( \sum_{\jj \in \Z^n} \la \jj \ra^{2p} \, |v_\jj|^2 \right)^{1/2} \ ,
 $$
 where $\la \jj \ra :=\sqrt{1 + |\jj|^2}$.  From now on we shall systematically identify $H^p(\T)$ with the closed subspace of $H^p(\T^2)$ of functions depending only on the $x$ variable. Consequently $\Phi^{-1}(\tT^\tk)$ is a closed torus of $H^p(\T)\subset H^p(\T^2)$.
 
 We define now the notion of closeness to a torus $\tT^\tk$ which we will use in the following:
 
\begin{definition}
We say that  $v\in H^p(\T^2)$ is $\delta$-close to the torus $\tT^\tk=\tT^\tk(\cS_0, I_\tm)$ if
\begin{equation}
\label{condom}
{\rm dist}{(v , \Phi^{-1}(\tT^\tk))}_{H^p(\T^2)} \le  \delta \quad \mbox{and}\quad  ||z_m|^2 -I_m|\le \delta^2 \quad \forall m\in \cS_0  \ , 
\end{equation}
where
$$
z  \equiv (z_m, \bar z_m) _{m \in \Z}:= \Phi(v(x,0))\ . $$
\end{definition}
{Namely we ask that the distance between  $v$ and the preimage $\Phi^{-1}(\tT^\tk)$ is of order $\delta$, and furthermore that the Birkhoff actions  of $v(\cdot, 0)$ with indexes in $\cS_0$ are $\delta^2$-close to the corresponding actions $I_\tm$ of the torus.}\\ 
{With this definition of $\delta$-closeness, we can now define $\delta$-orbital stability:
\begin{definition}
\label{def:os}
A torus $\tT^\tk(\cS_0, I_\tm)$ is said to be {\em $\delta$-orbitally stable } for $|t| \leq T$ if there exists a constant $\mathtt K$ (independent of $\delta$) s.t.
for any initial datum $v_0 \in H^p(\T^2)$ which is $\delta$-close to $ \tT^\tk(\cS_0, I_\tm)$, then the solution $v(t,x)$ of \eqref{NLS} stays $\mathtt K \delta$- close to $\tT^\tk(\cS_0, I_\tm)$  for any $|t| \leq T$. 
\end{definition}
We state now our main theorem. 
Up to our knowledge, it is the first result of stability for quasi-periodic solutions in higher dimensional setting.
 \begin{theorem}
	\label{thm:main}
Fix $p>1$. 	For any {\em generic}  choice  of support sites $\cS_0$ there exist $\e_\star,  T_\star, \mathtt k_\star >0$ and for any $0<\e<\e_\star$, there exists a  positive measure Cantor-like set $\cI\in (0,\e)^\tk$ such that the following holds true. For any $I_\tm\in \cI$, any  $\delta< \mathtt k_\star \sqrt \e$, the torus $ \tT^\tk(\cS_0, I_\tm)$     is $\delta$-orbitally stable for $|t| \leq T_\star/\delta^2$. 
{Moreover the constant $\tK$ of Definition \ref{def:os} does not depend on $\e$.}
\end{theorem}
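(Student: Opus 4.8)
The plan is to recast Theorem \ref{thm:main} as a normal form problem for the NLS Hamiltonian $H=\int_{\T^2}\big(|\grad v|^2+\tfrac12|v|^4\big)$ in a neighbourhood of the finite gap torus $\tT^\tk(\cS_0,I_\tm)$, followed by a bootstrap energy estimate. \emph{Step 1 (symplectic coordinates).} Writing $v(x,y)=\sum_{j\in\Z}v_j(x)e^{\im jy}$, on the $y$-independent component $v_0$ apply the Gr\'ebert--Kappeler Birkhoff map $\Phi$ to pass to the coordinates $(z_m)_{m\in\Z}$; introduce action--angle variables $z_{\tm_i}=\sqrt{I_{\tm_i}+\mathtt I_i}\,e^{\im\theta_i}$ on the tangential modes $\tm_i\in\cS_0$, and collect in a normal variable $w$ both the non-tangential modes $(z_m)_{m\notin\cS_0}$ and all the modes $(v_j)_{j\ne0}$. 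Expanding $H$ around $v_*=\Phi^{-1}(\tT^\tk)$ yields
\[
H=c_0+\omega\cdot\mathtt I+\tfrac12\langle \cN(\theta)w,w\rangle+\cR_3+\cR_{\ge4},
\]
where $c_0$ is a constant, $\omega=\omega(I_\tm)$ is the finite gap frequency vector, $\cN(\theta)$ is a $\theta$-dependent order-two operator whose variable-coefficient part is multiplication in $x$ by a trigonometric polynomial (inherited from $v_*$), coupling only $w_j$ to $w_{\pm j}$, $\cR_3$ gathers the cubic terms (schematically $\mathtt I\,w\,e^{\im\theta}$ and $w\,w\,\bar w\,e^{\im\theta}$) and $\cR_{\ge4}$ is the higher order tail. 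The delicate point here is that $\Phi$ is not explicit, so one must rely on its real-analyticity and on tame/asymptotic estimates for $\Phi$ and its differentials near $\Phi^{-1}(\tT^\tk)$, tracking carefully the dependence on $\e\sim|I_\tm|$.

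\emph{Step 2 (reduction of the quadratic part).} Conjugate the linear operator $\omega\cdot\partial_\theta+\im\cN(\theta)$ to constant coefficients in three substeps: (i) by successive conjugations with pseudo-differential operators of decreasing order, chosen to solve the relevant symbol equations, remove the $x$-dependence of $\cN(\theta)$ up to a regularising remainder of arbitrarily negative order --- this uses only that $\omega$ is Diophantine; (ii) remove the $\theta$-dependence of the resulting space-independent diagonal part by a conjugation whose homological equation has only first-order (Diophantine) divisors; (iii) via a quadratically convergent KAM iteration, reduce the remaining smoothing, $\theta$-dependent remainder to constant-coefficient diagonal form, the homological equation at each step requiring the second order Melnikov conditions $|\omega\cdot\ell+\mu_a-\mu_b|\ge\gamma\jap{\ell}^{-\tau}$ and $|\omega\cdot\ell+\mu_a+\mu_b|\ge\gamma\jap{\ell}^{-\tau}$, where $\mu_a=\mu_{(k,j)}\approx k^2+j^2$ are the perturbed normal frequencies. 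One is left with $H=c_0+\omega\cdot\mathtt I+\tfrac12\langle \mathtt D w,w\rangle+\cR_3'+\cR_{\ge4}'$, with $\mathtt D$ constant and diagonal. \emph{Step 3 (cubic Birkhoff normal form).} One step of nonlinear normal form removes $\cR_3'$ up to a resonant remainder; the associated homological equation is solvable provided the third order Melnikov conditions $|\omega\cdot\ell+\sigma_1\mu_{a_1}+\sigma_2\mu_{a_2}+\sigma_3\mu_{a_3}|\ge\gamma\jap{\ell}^{-\tau}\prod_i\jap{a_i}^{-1}$ hold, with $\ell$ fixed by momentum conservation and $\sigma_i\in\{\pm1\}$; using the gauge and momentum conservation laws one checks that, for generic $\cS_0$, no resonant cubic monomial survives, so that after this step $H=c_0+\omega\cdot\mathtt I+\tfrac12\langle \mathtt D w,w\rangle+\cR$ with $\cR$ of degree $\ge4$ on the ball of radius $\delta$.

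\emph{Step 4 (Cantor set and conclusion).} For generic $\cS_0$ the map $I_\tm\mapsto\omega(I_\tm)$ is a non-degenerate local diffeomorphism, and the $\mu_a$ depend on $I_\tm$ in a controlled way; choosing $\gamma=\gamma(\e)$ equal to an appropriate power of $\e$, the second and third order Melnikov conditions are then satisfied for all $I_\tm$ outside a set of small relative measure, which produces the Cantor-like set $\cI\subset(0,\e)^\tk$ of positive measure. On $\cI$ all the conjugations above are well defined and bounded on $H^p$, with constants uniform in $\e$ (this is where $p>1$ enters, so that $H^p$ is an algebra and the remainders act tamely). From the reduced Hamiltonian one gets $\big|\tfrac{d}{dt}\big(|\mathtt I(t)|+\norm{w(t)}_{H^p}^2\big)\big|\lesssim\big(|\mathtt I(t)|+\norm{w(t)}_{H^p}^2\big)^2$, so a bootstrap keeps $|\mathtt I(t)|+\norm{w(t)}_{H^p}^2\lesssim\delta^2$, hence $v(t)$ within $\mathtt K\delta$ of $\tT^\tk(\cS_0,I_\tm)$, for $|t|\le T_\star/\delta^2$, with $\mathtt K$ independent of $\e$; undoing the (Lipschitz, close-to-identity) changes of variables yields the claimed $\delta$-orbital stability.

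\emph{Main obstacle.} The hard step is Step 2(iii): the unperturbed normal frequencies $\Omega_{(k,j)}=k^2+j^2$ live on the $\Z^2$-lattice and are highly degenerate, so $\Omega_{(k,j)}-\Omega_{(k',j')}$ vanishes or is $O(1)$ for infinitely many far-apart pairs; genuine diagonalisation is impossible and the second order Melnikov conditions cannot hold in the naive form. The resolution combines the reduction in order --- which confines all dangerous couplings inside a regularising remainder whose off-diagonal entries decay fast both in $|k-k'|$ and in $\min(|k|,|k'|)$, so that near-resonances carry a negligibly small coupling --- with the algebraic classification of the quadratic resonances of NLS on $\T^2$, which for generic $\cS_0$ makes the relevant finite-dimensional normal-form matrices non-degenerate; together these make exactly the restricted family of Melnikov conditions one actually needs both sufficient for the scheme and generically verifiable on a positive measure set of actions.
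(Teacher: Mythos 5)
Your proposal follows essentially the same strategy as the paper: adapted symplectic coordinates via the Gr\'ebert--Kappeler Birkhoff map and action--angle variables on the tangential sites, a descent/pseudo-differential reduction of the quadratic part to a smoothing perturbation, block diagonalisation of the resonant quadratic Hamiltonian plus a KAM iteration, one step of cubic Birkhoff normal form, and a bootstrap; you also correctly identify the core obstruction (non-perturbative near-resonances on $\Z^2$, resolved by combining order reduction with algebraic non-degeneracy of the resonant blocks). The one subtlety not visible in your sketch is that the $2\times2$ resonant blocks supported on the rotated rectangles $\ccC_{i,k}$ can be linearly \emph{hyperbolic} on a positive-measure cone of actions (cf.\ Remark~\ref{rem:unstable}), so one must restrict $\lambda$ to an open elliptic cone $\cO_1$ \emph{before} running the Diophantine/measure estimates; this exclusion is an open condition, not a null set, and therefore is not automatically absorbed by the Cantor construction in your Step~4.
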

}
In order to make the theorem precise, we need to specify what we mean by {\em generic set $\cS_0$}.
The point is that we cannot prove Theorem \ref{thm:main}  for any arbitrary torus, but we need to impose some restrictions both on the  {\em  Birkhoff support} $\cS_0$ of the torus, and on the values of the {\em Birkhoff actions} $I_\tm$.
 Concerning the Birkhoff support  $\cS_0$, we need to select it in a complicated but explicit way,   see  Definition \ref{defset}--\ref{defar}.  
These sets are generic in the following sense (see Lemma \ref{rem:inf.s0}): given any $R\gg 1$ if we choose the elements of $\cS_0$  randomly  in  $[-R,R]$, then the probability of choosing a ``good'' set (i.e. one for which we can prove our Theorem) goes to one as $R$ goes to infinity.  
  It is possible that the conditions we give on the support can be weakened, but this produces serious technical difficulties. In any case we suspect that some selection of the support of the torus is necessary for our stability result.\\
 Once we choose the support, we need also to select the Birkhoff actions $I_\tm$ of the torus.
  This selection is inevitable, since we can produce a positive measure set of  actions $I_\tm$ such that
   the torus  $\tT^\tk(\cS_0, I_\tm)$ is  linearly hyperbolic, hence {\em linearly unstable} in $H^p(\T^2)$ (see Remark  \ref{rem:unstable}). Clearly we want to rule out such a  behavior.
   Thus we also impose  conditions   on  the  actions which are  quite explicit and give rise to a Cantor-like set of positive measure.

  Theorem \ref{thm:main} is essentially a result on the $H^p$-{\em orbital stability} of many small finite gap solutions.
   Indeed, consider a finite gap solution $q(t)$ with $\Phi(q(t))\in \tT^\tk(\cS_0,I_\tm)$,  and $\cS_0,I_\tm$ satisfying the conditions of the theorem.
The fact that the actions belong to $(0, \e)^\tk$ implies that any finite  gap $q$ supported on $\tT^\tk(\cS_0, I_\tm)$ is small in size, and more precisely one has the bound $\|q\|_{L^2(\T)}\le \tk \sqrt{\e}$ (see formula \eqref{mass.bc}).
  Then Theorem \ref{thm:main} says essentially that for any initial datum $v_0 \in H^p(\T^2)$ sufficiently close to $q(0)$, the solution $v(t)$ stays close to the torus supporting the  orbit of $q(t)$ for long times 
  (remark that  we cannot hope to control directly the quantity $\norm{v(t) - q( t)}_{H^p(\T^2)}$ for long times).

Our work should be compared  with the results of
Faou, Gauckler,  Lubich \cite{faou13} and Procesi,  Procesi \cite{procesi15}, which both deal with stability issues of nontrivial solutions of  \eqref{NLS}. 
  In \cite{faou13}, the authors consider  only $1$-gap solutions, but are able to prove orbital stability for  times of order  $\delta^{-N}$ for arbitrary  $N >0$. 
  On the other hand, in  \cite{procesi15} the authors prove
only linear stability, namely that the torus is stable for times of order $\delta^{-1}$,  but for a much larger class of tori which depend on both variables $x$ and $y$.   Note however that the class of solutions considered in \cite{procesi15} does not contain $\tk$-gap solutions for $\tk >1$. 
For a more detailed comparison, see Remark \ref{rem:faou} and Remark \ref{rem:procesi} below.
  \\

The proof of Theorem \ref{thm:main} is based on a Birkhoff normal form result.
To develop such a normal form, first we introduce {\em adapted coordinates}, which are  canonical coordinates 
$(\yy, \theta, a, \bar a)$ which describe a neighborhood of the invariant torus $\tT^\tk(\cS_0, I_\tm)$ and such that  $\yy = 0$, $a =  \bar a =0$
is the torus itself.
 In the classical Hamiltonian  language, the  $\yy$ are the variables tangential to the  torus, while the $a, \bar a$ are the normal ones.
In such variables the NLS Hamiltonian takes  the form
\begin{equation}
\label{ham.intro}
 \omega \cdot \yy  + \sum_{\jj} |\jj|^2 |a_\jj|^2 +  \cH^{(\geq 0)}(\yy, \theta, a, \bar a) 
\end{equation}
where $\omega$ are the frequencies associated to the torus, 
 and $ \cH^{(\geq 0)}$ is a perturbation  term  of size $\e$ (recall that ${\e}$ is the size of the actions) and with  a zero of order 2 in the variables $(\yy, a, \bar a)$. 
Then the stability of the torus $\tT^\tk$ is equivalent to 
the stability of the zero solution  for the Hamiltonian equations of \eqref{ham.intro}.\\
The classical methods used in this problem  consist in developing a normal form theory in which one reduces to diagonal, 
$\theta$-independent form the terms of  degree two in $(a, \bar a)$, and removes iteratively the nonresonant terms of higher and higher degree in $(\yy, a, \bar a)$. 
Here we do this up to  order three; namely after 
reducing to constant coefficients the quadratic part of the Hamiltonian, we eliminate also  the cubic terms, and we are left with an Hamiltonian (see \eqref{ham.bnf.fin}) in which the coupling term has a vector field whose Sobolev norm is   $\sim \delta^{3}$; this leads to  the time of stability of Theorem \ref{thm:main}.
\\

Before closing this introduction, we want to put our work in some historical context. It is well known  since the work of Bourgain \cite{Bourgain1993} that \eqref{NLS}  is globally in time well posed in the Sobolev space $ H^p(\T^d)$ for any $p >0$ and $d= 1, 2$. 
Since then, the problem of describing qualitatively the  dynamics of NLS solutions has been widely studied, and the results obtained so far can be essentially divided into three groups: 
in the first group of papers, the authors study stability of periodic or quasiperiodic solutions of NLS on $\T$ or $\T^2$ for long but finite time \cite{zhidkov1, bambusi99, bourgain00, faou13, gallay07a, gallay07b, gallay15, wilson15}.
In the second group, the authors study instability phenomena, showing that (on a possibly very large time scale) the solution goes arbitrary distant from the initial datum \cite{Iteam10, hani14, guardia14, haus15, guardia15, guardia16}.
Finally,  the third group of papers concerns with the existence of quasi-periodic solutions of \eqref{NLS} for any time \cite{eliasson10, geng, wang, procesi13, procesi15, procesi16}. 

We start to describe the results in the first group. 
In dimension $d=1$, it is known since the work of Zhidkov \cite[Sect. 3.3]{zhidkov1} 
that plane waves solution of \eqref{dNLS} (namely solutions of the form $v(t,x) = A e^{\im (m x -\omega t) }$ with $\omega = m^2 + A^2$) are  $H^1$ orbitally stable for any time.
Remark that plane waves solutions are exactly   $1$-gap solutions  of \eqref{dNLS} (see e.g. \cite[Lemma 3.5]{grebert02}), thus their orbit is an invariant torus of dimension $\tk=1$. 
Orbital stability of  $\tk$-finite gap solutions with $\tk >1$  was studied by Bambusi \cite{bambusi99}, who  proved   $H^1$-orbital stability for exponentially long times, namely times  of order $\sim e^{T _0 \delta^{-1/\tk}}$. Furthermore the author does not impose any condition on the choice of the Birkhoff support; this is due to the fact that  resonances in dimension 1 have a much simpler structure.  \\
Finally Bourgain \cite{bourgain00} proved $H^s$ orbital stability of small finite-gap solutions (with $s$ sufficiently large) for times of order   $\delta^{-N}$, with an arbitrary  $N$.\\
In dimension $d=2$, much less is known.  Faou, Gauckler and Lubich \cite{faou13}  proved that plane waves solutions (i.e. $1$-gap solutions)  of \eqref{NLS} are $H^s$ orbitally stable for $s$ large enough and for times  of order $\delta^{-N}$, for arbitrary $N$.
Note that \cite{faou13} consider  {\em large } $1$-gap solutions and control the distance from the torus for times much longer than ours. 
The reason is that for  such solutions the authors know the exact formula for the tangential and normal frequencies  and therefore they can impose Melnikov conditions at any order and perform  arbitrary many  steps of normal form (see Remark \ref{rem:faou} for more details).

We now describe the results of the second group, concerning  instability phenomena for \eqref{NLS}. While the integrability of \eqref{dNLS} prevents any phenomena of growth of Sobolev norms in dimension $1$, the situation in dimension $d>1$ is much more complicated. Bourgain \cite{bourgain96} gives upper bounds on the solutions of \eqref{NLS} of the form
\begin{equation}
\label{upp.b}
\norm{v(t)}_{H^s(\T^2)} \leq t^{2 (s-1)+} \norm{v(0)}_{H^s(\T^2)} \ , \qquad t >0 \ ,
\end{equation}
for $s$ sufficiently large (see also  \cite{staffilani97, sohinger11,  colliander_oh, visciglia} for more recent results on upper bounds of the form  \eqref{upp.b} 
for nonlinear Schr\"odinger equations and \cite{bourgain99,delort10,maro, BGMRb, Mon17} for linear time-dependent Schr\"odinger equations).
Estimate \ref{upp.b} leaves open the question whether it is possible to construct a solution whose Sobolev norm is  unbounded.
 In the last few years many efforts have been done in this direction. 
Colliander et al. \cite{Iteam10} proved, for any $\epsilon, K >0$ and $s >1$,
 the existence of a initial datum $u(0) \in H^s(\T^2)$ and a time $T >0$ s.t. $\norm{u_0}_{H^s(\T^2)} <\epsilon$ 
and $\norm{u(T)}_{H^s(\T^2)} \geq K$. 
This result was later refined by Guardia and Kaloshin \cite{guardia15}, Carles and Faou \cite{carles12} and extended to different Schr\"odinger equations   \cite{haus15, guardia14, guardia16}. 
All these results deal with the instability of the zero solution.
Hani  \cite{hani14} studied  instability of  1-gap solutions, and    proved that for any $0 < s< 1$, $\delta \ll 1, K \gg 1$, there exists a solution of \eqref{NLS} 
with $\norm{v(0) - Ae^{\im m x}}_{H^s(\T^2)} \leq \delta$ and $\norm{v(T)}_{H^s(\T^2)} \geq K$ for some later $T >0$. 
Note that the instability happens in a topology different from the one of \cite{faou13}.\\
The important question of the existence of solutions of \eqref{NLS} with unbounded Sobolev norms, i.e. s.t. 
$$
\overline{\lim_{t \to \infty}} \norm{v(t)}_{H^s(\T^2)} = +\infty
$$
is still an open problem, and it has been solved only in the  product space $\R \times \T^2$ \cite{hani15}.

Finally, we describe the results of the third group, concerning existence of quasi-periodic solution for NLS. While in dimension $d=1$ the existence of KAM quasi-periodic solution is nowadays well understood in several perturbed dNLS  (we mention just the latest contributions \cite{FP15,berti16}; see reference therein), in dimension $d>1$ the situation is much more complicated and it has been addressed only recently
by Eliasson and Kuksin  \cite{eliasson10}, Geng,   Xu and You \cite{geng}, Wang \cite{wang} 
and Procesi with collaborators \cite{procesi13, procesi15, procesi16}. 
In particular in these last papers the authors proved that, 
{\em for most choices} of tangential sites, there exist families of small quasi-periodic solutions  of \eqref{NLS}
(depending on both $x$ and $y$) supported essentially on the tangential sites. 
Such families of solutions give rise to both linearly stable and unstable KAM tori, but the question of nonlinear stability (or instability!) of such tori  is still open (see Remark \ref{rem:procesi} for more comments).

\subsection{Scheme of the Proof}
It is worth to add some words about our strategy. The proof consists in three main steps.
\\
The first step is to introduce  {\em canonical} adapted coordinates in a neighborhood of a finite dimensional invariant torus. 
\\
The second step consists in a  {\em reducibility} argument.  
We consider the  part of  $\cH^{(\geq 0)}$ which is quadratic in $(a, \bar a)$,  call it $\cH^{(0)}$,  and we reduce it to a  diagonal form with constant coefficients. 
\\
The final step consists in applying one step of non linear Birkhoff Normal Form.
\smallskip

\noindent {\it Step one: adapted coordinates.} The question of introducing canonical coordinates in a neighborhood of a solution has been addressed  in several papers, see e.g. 
\cite{kuksin, berti.bolle, BM16}.
Here however the difficulty is that we need to keep track of how such change of variables affects the constants of motion (namely the mass and momentum). 
Here we take advantage of the fact that mass and momentum are left unchanged by the Birkhoff map of dNLS,
thus first we introduce Birkhoff coordinates on the invariant subspace of $y$-independent functions,   and then pass to action-angle coordinates the sites  where the finite gap is supported.
Finally we extend this transformation as the identity to all the phase space.
Actually for later development, it is necessary to know that the Birkhoff coordinates are majorant analytic; this was proved  in \cite{masp_vey}.
In such coordinates the Hamiltonian of NLS has the form \eqref{ham.intro}.

\smallskip

\noindent {\it Step two: reducibility.} This is the most technical part of our argument.  
It is known that reducibility requires   (i) to impose the so called {\em second order Melnikov} conditions
and (ii) to perform a convergent KAM scheme.
Both these procedures are  particularly delicate in higher spacial dimensions, and  they have  been achieved  for Schr\"odinger equations only in some special cases \cite{EK, procesi13, CHP, procesi16,  GP, BGMR}.
\\
Imposing second order Melnikov conditions means 
requiring lower bounds for expression such  as
\begin{equation}
\label{old.m}
\abs{ \omega \cdot \ell + |\jj_1|^2 \pm |\jj_2|^2 } \,.
\end{equation}
Here there are two problems. First,   since $\omega$ is an integer valued vector up to corrections of order $\e$,
the quantity  \eqref{old.m}  can be of order $\e$ and hence comparable with the size of the perturbation.  Since these expressions appear as {\em denominators} in any diagonalization scheme our problem is {\em not} perturbative. 
Moreover, since we are working in dimension higher than one, it is well known that in order to perform a reduction one needs to know the asymptotics of the eigenvalues associated to the Hamiltonian vector field of $\sum |\jj|^2 |a_\jj|^2 +\cH^\0$.

In order to solve these problems we combine the techniques of reduction in order  of pseudo-differential operators with the algebraic analysis of resonant quadratic Hamiltonians.
More precisely, we construct a change of variables which is {\em not} close to identity and which conjugates
\eqref{ham.intro} to the form 
\begin{equation}
\label{ham.intro2}
\omega \cdot \yy + \sum_{\jj} \wt\Omega_\jj |a_\jj|^2 + \wt \cH^{( 0)}( \theta, a, \bar a) + h.o.t.
\end{equation}
Here $\wt\Omega_\jj-|\jj|^2$ is of size $\e$, while $\wt \cH^{( 0)}( \theta, a, \bar a)$ is a  perturbation of size $\e^2$ and  {\em smoothing} in the following sense: the linear operator associated to its  Hamiltonian vector field is the sum of a $2$-smoothing term plus a term  which gains $2$-derivatives only  in the $x$ direction and is  independent of $y$. This information allows us to extract the asymptotics of the frequencies required for the reduction, just as the T\"oplitz-Lipschtiz matrices in \cite{eliasson10}.

In order to pass from \eqref{ham.intro} to \eqref{ham.intro2} we first exploit the fact that the Hamiltonian vector field of \eqref{ham.intro} is of the form
\[
2|q(\omega t, x)|^2 \, a(x,y) + q^2(\omega t, x) \,  \bar a(x,y)
\]
up to a finite rank remainder. 
 Due to this special form we can extend to the two dimensional setting the techniques of reduction in orders developed in the one dimensional setting \cite{PT01, IPT05, BBM14}. \\
 After this procedure  the perturbation is still of {\em size} $\e$, but it is now smoothing.
At this point we apply the strategy of \cite{procesi12}, namely we explicitly construct a not close to identity change of variables which diagonalizes the resonant non-perturbative terms of the Hamiltonian (these changes of variables are similar to those used in the problems of almost reducibility, see  \cite{Eli01}). 
This construction requires that we select the sites $\cS_0$. 
 
After this procedure, the Hamiltonian \eqref{ham.intro} is transformed into the Hamiltonian \eqref{ham.intro2},
and the relevant quantity to bound from below is now
\begin{equation}
\label{new.m}
\abs{ \omega \cdot \ell + \wt\Omega_{\jj_1} \pm 
\wt\Omega_{\jj_2} } \, , 
\end{equation}
where the normal frequencies have been corrected by some algebraic functions of the actions of size $\e$. 
Here the main difficulty is to prove that the terms of order $\e$ in such expressions are not identically zero; to show this we use the algebraic techniques of irreducible polynomials,  following  \cite{procesiN}; 
{here it is fundamental to exploit the fact that the mass and momentum are preserved}.\\
At this point we perform a KAM  reducibility scheme which puts the quadratic part of   \eqref{ham.intro2} to constant coefficients, conjugating it  to the Hamiltonian
\begin{equation}
\label{ham.intro3}
\omega\cdot\yy + \sum_{\jj} \Omega_{\jj} |a_\jj|^2 + \cH^{(1)}(\theta, a, \bar a) + h.o.t.
\end{equation}
where $\cH^{(1)}$ is cubic in $(a, \bar a)$.

\smallskip 
\noindent{\em Step three: Birkhoff normal form.}
Finally we perform one step of nonlinear Birkhoff normal form to remove $\cH^{(1)}$ from \eqref{ham.intro3}. 
This is nowadays quite standard (see e.g. \cite{bambusi.grebert}), provided that (i) one is able to   impose {\em third order} Melnikov conditions, i.e.  to give lower bounds to expressions of the form 
\begin{equation}
\label{new.m1}
\abs{ \omega \cdot \ell + \Omega_{\jj_1} \pm 
\Omega_{\jj_2} \pm
\Omega_{\jj_3}   } \, ,
\end{equation}
and (ii) one proves that the  Hamiltonian \eqref{ham.intro3} is majorant analytic.\\
Concerning the estimate of \eqref{new.m1},  we again use  algebraic techniques to prove that the  terms of order $\e$ in such expressions are not identically zero, and the asymptotics of the eigenvalues in order to get quantitative bounds.
Concerning the majorant analyticity of the Hamiltonian, we  prove that each change of variables performed so far preserves this property; it is here that one needs the majorant analyticity of the Birkhoff map.\\
In conclusion we  construct a nonlinear change of variables which conjugates \eqref{ham.intro3} to 
\begin{equation}
\label{ham.intro4}
\omega\cdot\yy + \sum_{\jj} \Omega_{\jj} |a_\jj|^2 + \cR^{(\geq 2)}(\yy, \theta, a, \bar a) \ , 
\end{equation}
where $\cR^{(\geq 2)}$ contains  only terms which are at least  of order four in $(a, \bar a)$, or linear in $\yy$ and quadratic in $(a, \bar a)$, or quadratic in $\yy$. As a consequence, its Hamiltonian vector field is of size $\sim \delta^3$, which implies the stability of zero for times of order $\delta^{-2}$, thus proving our main theorem.
\vspace{1em}
\\
\noindent
{\em A final comment:}
It is a natural question whether it is possible to perform more  steps of nonlinear Birkhoff normal form, removing from \eqref{ham.intro4} monomials of higher and higher order, and obtaining a longer time of stability. 
Performing these steps  requires to be able to impose $n^{th}$ order Melnikov conditions of the form 
\begin{equation}
\label{new.mN}
\abs{ \omega \cdot \ell +
\Omega_{\jj_1} 
 \pm  \ldots 
 \pm
\Omega_{\jj_n}   } \, .
\end{equation}
As before, one should verify that these expressions  do not vanish identically except in  resonant cases; 
this is what we are not able to prove so far.
Indeed,  the only information that we have is on the corrections of order $\e$ to  $ \Omega_{\jj} $,  and one can produce examples where  some linear combinations of them  vanish identically already at order 4.

\vspace{2em}
\noindent
{\em Structure of the paper:}   Section 2 contains some preparation in order to state precisely our result on normal form; furthermore we precise the notion of genericity of $\cS_0$.
In Section 3 we define the class of smoothing Hamiltonians and study their properties.
In Section 4 we construct the adapted coordinates $(\yy, \theta, a, \bar a)$ and show that in these coordinates the Hamiltonian has the form \eqref{ham.intro}.
In Section 5 we begin the step of reducibility, and construct the change of coordinates not close to the identity which conjugates \eqref{ham.intro} to \eqref{ham.intro2}.
In Section 6 we prove that the terms of order $\e$ in expressions of the form \eqref{new.m} and \eqref{new.m1} are not identically zero.  Quantitative  lower bounds for these expressions are proved in Appendix \ref{app:mes.m}.
In Section 7 we conclude the reducibility step by performing a KAM scheme, conjugating \eqref{ham.intro2} to \eqref{ham.intro3}.
In Section 8 we perform the step of nonlinear Birkhoff normal form, conjugating \eqref{ham.intro3} to \eqref{ham.intro4}.
Finally in Section 9 we study the dynamics of \eqref{ham.intro4} and prove the stability of zero for long times.

\vspace{1em}
\noindent{\bf Acknowledgements.}
During the preparation of this paper, we benefited of discussions with many people; in particular we wish to thank Dario Bambusi, Benoit Gr\'ebert, Marcel Guardia,  Zaher Hani, Emanuele Haus, Riccardo Montalto, Stefano Pasquali  and Claudio Procesi   for many discussions and comments.

We were supported by  ERC grant  "HamPDEs",  ANR-15-CE40-0001-02 ''BEKAM`` of the Agence Nationale de
la Recherche and Prin   "Variational methods, with applications to
problems in mathematical physics and geometry".

\section{A Birkhoff normal form result}
In this section we state  our results on the Birkhoff normal form (see Theorem \ref{main2} and Theorem \ref{main3}) and describe the genericity conditions that we need to impose on $\cS_0$.
In order to do this, we need some preparation.

\paragraph{Constants of motion.} NLS on $\T^2$ has three constants of motion that we will constantly use. They are the  Hamiltonian 
\begin{equation}
\label{ham0}
H_{{\rm NLS}}(v) :=  \int_{\T^2} \abs{\nabla v(x,y)}^2 \di x \, \di y + \frac{1}{2}\int_{\T^2}\abs{v(x,y)}^4 \, \di x \, \di y \ , 
\end{equation}
 the mass 
 \begin{equation}
\label{mass0} 
 M(v):= \int_{\T^2} \abs{v(x,y)}^2  \, \di x \, \di y \ ,
 \end{equation}
  and the (vector valued) momentum 
\begin{equation}
\label{momentum0}
P(v):= {\im}\int_{\T^2} \bar v(x,y) \cdot \nabla v(x,y)  \, \di x \, \di y \ .
\end{equation} 

\paragraph{Mass shift.} Since the mass is a constant of motion, we make a trivial phase shift and consider the equivalent Hamiltonian 
\begin{equation}
\label{parto}
H(u) :=  \int_{\T^2} \abs{\nabla u(x,y)}^2 \, \di x \, \di y + \frac{1}{2}\int_{\T^2}\abs{u(x,y)}^4 \, \di x \, \di y -  M(u)^2
\end{equation}
corresponding to the Hamilton equations
\begin{equation}\label{piri0}
\im \partial_t u = -\Delta u + |u|^2 u -2 M(u) u \ , \qquad (x,y)\in\T^2\ .
\end{equation}
Clearly the solutions of \eqref{piri0} differ from the solutions of \eqref{NLS}  only by a phase shift\footnote{In order to show the equivalence we consider any solution $u(x,t)$ of \eqref{piri0} and consider the invertible map 
$$
u\mapsto v= u\; e^{-2 \im M(u) t } \quad \mbox{with inverse}\quad  v\mapsto u= v\; e^{2 \im M(v) t } 
$$
then 
$$
\im v_t = \im u_t e^{-2 \im M(u) t } + 2 M(u) u e^{-2 \im M(u) t } = (-\Delta u + |u|^2 u -2 M(u) u)e^{-2 \im M(u) t } + 2 M(u) u e^{-2 \im M(u) t } = -\Delta v +|v|^2 v.
$$}.
If we pass  $u$ to the Fourier coefficients 
$$
u(x,y,t):= \sum_{\jj=(m,n)\in \Z^2} u_{\jj}(t) \, e^{\im(m x + n y)} 
$$
 the Hamiltonian \eqref{parto} takes the form
\begin{equation}
\label{Ha0}
H(u) = \sum_{\jj=(m,n)\in \Z^2}|\jj|^2 |u_{\jj}|^2 -\frac12 \sum_{\jj}|u_{\jj}|^4 + \frac12\sum_{\jj_i\in \Z^2 \atop \jj_1-\jj_2+\jj_3-\jj_4=0}^{\star}u_{\jj_1}\bar u_{\jj_2}u_{\jj_3}\bar u_{\jj_4}
\end{equation}
where the $\sum^\star$ means the sum over the quadruples $\jj_i$ such that $\{\jj_1,\jj_3\}\neq \{\jj_2,\jj_4\}$, and it is a consequence of having removed the mass in \eqref{parto}.\\
The Hamiltonian \eqref{Ha0} is  analytic 
 in the  phase space 
$$
h^p(\Z^2):= \{u=(u_{\jj})_{\jj\in \Z^2}:\quad \sum_{\jj \in \Z^2}|u_\jj|^{2}\la \jj\ra^{2p}:=|u|_{h^p}^2<\infty\} \ .
$$
which we endow with the  standard symplectic form 
\begin{equation}
\label{sym0}
\im \sum_{\jj \in \Z^2} du_\jj\wedge d\bar u_{\jj} \ . 
\end{equation}

\paragraph{Finite gap solutions of NLS.} As we already pointed out in the introduction, the subspace of functions depending only on the variable $x$ is an invariant subspace. In Fourier coordinates, such subspace is identified  by having Fourier coefficients $u_{(m,n)} = 0$ if $n \neq 0$. 
and it  is  clear by the structure of \eqref{Ha0} that such a subspace   is invariant for the dynamics.  
Actually, the  Hamiltonian \eqref{Ha0} restricted on such a subspace is nothing else that the  Hamiltonian 
of the 1-dimensional  defocusing NLS (dNLS) with the mass shift, namely
\begin{equation}
\label{parto.dnls}
H_{\rm dm}(q):= H_{\rm dnls}(q) - M(q)^2 =  \int_{\T} \abs{\nabla q(x)}^2 \, \di x  
+
 \frac{1}{2}\int_{\T}\abs{q(x)}^4 \, \di x 
  -  M(q)^2 \ ,
\end{equation}
 whose equations of motion are
\begin{equation}
\label{dnls.massless}
\im \partial_t q = - \partial_{xx} q + |q|^2 q - 2M(q) q \ , \qquad x \in \T \ .
\end{equation}
Since dNLS is integrable and the mass $M$ is an integral of motion for dNLS,  the Birkhoff map  conjugates \eqref{dnls.massless} to a system of equations of the form
\eqref{dnls.bc},  where the $(\alpha^{\rm dnls}_m(I))_{m \in \Z}$'s are  replaced by new frequencies $(\alpha_m(I))_{m \in \Z}$, see 
subsec. \ref{sub:bm} for more details and references. \\
A consequence of this fact is that any  torus   $\tT^\tk(\cS_0, I_\tm)$ of the form \eqref{tori} is also an invariant torus  for the massless dNLS 
equation \eqref{dnls.massless}, and the dynamics which is induced on it is quasi-periodic with   frequency vector 
$\alpha_\tm(I_\tm) \equiv (\alpha_{\tm_1}(I_\tm), \ldots, \alpha_{\tm_\tk}(I_\tm) )$.
%
Since the map $I_\tm \mapsto \alpha_\tm(I_\tm)$ is highly nonlinear, for technical reasons it is  more convenient to parametrize the vector $\omega:= \alpha_\tm(I_\tm)$  in a linear way; therefore  
 we define
 \begin{equation}
 \label{omega0}
\omega^{(0)}:=(\tm_1^2,\dots,\tm_\tk^2)
 \end{equation}
  and  for $\e$ sufficiently small and  $\lambda \in  [\frac12,1]^\tk$, we   set 
 \begin{equation}
 \label{omega.lambda}
 \alpha_\tm(I_\tm)\equiv  \omega^{(0)}- \e \lambda \ =:  \omega(\lambda) \ .   
 \end{equation}
As discusses in subsec. \ref{sub:bm},  the action-to-frequency map  can be inverted, obtaining a map $\lambda \to (I_{\tm_i}(\lambda,\e))_{1 \leq i \leq \tk}$ s.t.
 \begin{equation}\label{boia}
 I_{\tm_i}(\lambda,\e)= \e \lambda_i +  O(\e^2) \ , \qquad 1 \leq i \leq \tk \ .
 \end{equation} 
In the following we will use $\omega \equiv \omega(\lambda)$ as the vector of frequencies of the finite gap solution. Such vector is chosen to be non-resonant so that  the orbit of the finite gap solution is dense on $\tT^\tk(\cS_0, I_\tm)$.

\paragraph{Adapted coordinates  around a finite gap solution.} 
We now introduce local coordinates in $h^p(\Z^2)$ adapted to  the finite dimensional tori \eqref{tori}. 
Remark that, while in Birkhoff coordinates such tori have a very simple form, in the original coordinates the representation is not so trivial, and they have the form 
 \begin{equation}
 \label{finite.gap.formula}
 q(\lambda;\theta,x)= \sqrt{\e}\left(\sum_{i=1}^\tk \sqrt{\lambda_i}e^{\im \theta_i +\im \tm_i x} +\e p(\lambda,\e ;\theta,x)\right)
 \end{equation}
 for some real analytic function $p(\lambda,\e ;\theta,x)$.

To introduce coordinates around such tori we first apply the Birkhoff map $\Phi$ on $(u_{(m,0)})_{m\in \Z}$ leaving the remaining $(u_\jj)_{\jj \in \Z^2 \setminus \Z}$ unchanged, 
and we set  $ (z_m)_{m \in \Z} :=\Phi((u_{(m,0)})_{m \in \Z})$. 
Next   we  pass the  $z_m$ with $m \in \cS_0$ to  symplectic action-angle variables defined close to the torus, setting 
\begin{equation}
\label{a.a.z}
z_{\tm_i}=\sqrt{ I_{\tm_i}(\lambda)+\yy_i }  \ e^{\im \theta_i} \ , \; 1 \leq i \leq \tk \ , \qquad z_{m}= a_{(m,0)} \,,\; m\in \Z\setminus \cS_0 \ , \qquad u_{(m,n)} = a_{(m,n)} \,,\; n \neq 0 .
\end{equation}
In such a way we introduce coordinates $\yy = (\yy_1, \ldots, \yy_\tk) \in \R^\tk$, $\theta = (\theta_1 , \ldots, \theta_\tk) \in \T^\tk$, $a = (a_\jj)_{\jj \in \Z^2 \setminus \cS_0}$
such that  $\yy=0$, $a=0$ describe 
 {torus}
$\tT^\tk \big(\cS_0,  I_\tm(\lambda)\big)$.
Clearly here $\Z^2 \setminus \cS_0  \equiv \Z^2 \setminus (\cS_0 \times \{0\})$. We will use systematically such a  notation. \\
We denote by  $\Lambda: (\yy,\theta,a)\mapsto (u_{\jj})_{\jj\in \Z^2}$ the map 
\begin{equation}
\label{mappa}
u_{(m,n)} = a_{(m,n)} \,,\quad n\neq 0 \,,\qquad (u_{(m,0)})_{m\in \Z}= \Phi^{-1}( (\sqrt{ I_{\tm_i}(\lambda)+\yy_i }  \ e^{\im \theta_i})_{i=1,\ldots,\tk}, (a_{(m,0)})_{m\in \Z\setminus\cS_0} ).
\end{equation}
Since  the Birkhoff map $\Phi$ is symplectic, the symplectic form \eqref{sym0} in the variables $(\yy, \theta, a)$ is given by  
\begin{equation}
\label{yythetaa}
\sum_{i=1}^\tk d\yy_i\wedge d\theta_i +\im  \, \sum_{\jj \in \Z^2 \setminus \cS_0 } d a_\jj \wedge d \bar a_\jj \ . 
\end{equation}
Next we describe the phase space and its  topology.
We fix once and for all a real  $p> 1 $ and define the  phase space  $ \C^\tk \times \T^\tk \times\li^p $ where  
$$
\li^p\equiv \li^p(\cS_0):=
\{\ba=((a_\jj,\bar a_\jj))_{\jj\in \Z^2\setminus \cS_0} \in h^p(\Z^2\setminus \cS_0) \,,\qquad  \norm{a} \equiv |a|_{\hh^p}<\infty\} \ . 
$$  
Finally we define  the complex domain 
$$D(s,r):=\T^\tk_s\times D(r)$$
 where
\begin{align*}
 D(r) & :=\left\lbrace \yy\in \C^\tk:\; 
|\yy|_1:=\sum_{i=1}^\tk|\yy_i| < r^2 \,,
\quad
\ba\in \li^p : \;
 \|\ba\| < r \right\rbrace  \,, \\
  \T^\tk_s & :=\left\lbrace\theta\in \C^\tk: {\rm Re }(\theta)\in \T^\tk \,,\quad |{\rm Im }(\theta)| < s \right\rbrace . 
  \end{align*}
 We will show in Sec. \ref{preparazione} that for $r<\sqrt\e$ the map  \eqref{a.a.z} is well defined and analytic from $D(s,r)$ to a neighborhood of the torus $\tT^\tk(\cS_0, I_\tm(\lambda))$.

\paragraph{Norm on vector fields.} We describe now how to measure the norm of  Hamiltonian vector fields. 
{ Recall that a Hamiltonian is a \em{real valued} function $\C^\tk\times \T^\tk_s\times \li^p\to \R$}. Given a Hamiltonian function $F(\yy,\theta,\ba)$ we associate to it its Hamiltonian vector field
\begin{equation}
\label{def:XF} 
X_F:=( \ \partial_\theta F, \ -\partial_\yy F, \  -\im \partial_{\bar a} F,  \ \im \partial_{a}F ). 
\end{equation}
More in general consider vector fields which are  functions from $$\C^\tk\times \T^\tk_s\times \li^p \to  \C^{\tk}\times \C^\tk\times \li^p \,:\;(\yy,\theta,\ba)\to (X^{(\yy)},X^{(\theta)},X^{(a)},X^{(\bar a )}) 
$$
 which are analytic in 
 $D(s,r)$.
On the vector field we use as norm
\begin{equation}
\label{listen}
\bnorm{ X}_r:=|X^{(\theta)}|_{\infty}+\frac{|X^{(\yy)}|_1}{r^2}+ \frac{\|X^{(a)}\|}{r}+ \frac{\|X^{(\bar a)}\|}{r} \ .
\end{equation}
Next we introduce the notion of {\em majorant analytic} Hamiltonians and vector fields. 
We write a real  valued Hamiltonian $h$ in  Taylor-Fourier series which  is well defined  and pointwise absolutely convergent:
\begin{equation}
\label{h.funct}
h(\yy,\theta,\ba)= \sum_{\al ,\bt\in\N^{\Z^2\setminus\cS_0},\ell\in \Z^\tk, l\in \N^\tk } h_{\al,\bt,l,\ell} \,  e^{\im \ell\cdot \theta} \, \yy^l \,  a^\alpha \, \bar a^\beta \,,\quad h_{\al,\bt,l,\ell}= \bar h_{\bt,\al,l,-\ell}\,.
\end{equation}
Correspondingly we expand vector fields in Taylor Fourier series (again  well defined  and pointwise absolutely convergent):
$$
X^{(w)}(\yy,\theta,\ba)= \sum_{\al ,\bt\in\N^{\Z^2\setminus\cS_0},\ell\in \Z^\tk, l\in \N^\tk } X_{\al,\bt,l,\ell}^{(w)} \, e^{\im \ell\cdot \theta} \, \yy^l \,  a^\alpha \, \bar a^\beta\,,
$$
where $w$ denotes the components $\theta_i, \yy_i$ or $a_\jj,\bar a_\jj $.
To a vector field we associate its {\em majorant }
\begin{equation}\label{seriamente}
\und{X}_s^{(w)}[\yy,\ba]:= \sum_{\ell\in \Z^\tk,l\in \N^\tk,\al ,\bt\in\N^{\Z^2 \setminus \cS_0} } |X^{(w)}_{\al,\bt, l, \ell}| \, e^{s\, |\ell|} \, \yy^l \,  a^\alpha \, \bar a^\beta  \ .
\end{equation}
Then we have the following
\begin{definition}
A vector field $X: D(s,r) \to  \C^{\tk}\times \C^\tk\times \li^p$ will be said to be {\em majorant analytic} in $ D(s,r)$  if $\und{X}_s$ defines an analytic vector field  $D(r)\to \C^\tk\times\C^\tk\times \li^p$.
\end{definition}
   Since Hamiltonian functions are defined modulo constants, 
we give the following definition of {\em majorant analytic} Hamiltonian and its   norm:
\begin{definition}
A real valued Hamiltonian $H$ will be said to be {\em majorant analytic} in $D(s,r)$ if its Hamiltonian vector field $X_H$ is majorant analytic in $D(s,r)$. We define its {\em norm} by
\begin{equation}\label{musically}
|H|_{s,r}:=\sup_{(\yy,\ba)\in D(r)} \bnorm{ \und{(X_H)}_s(\yy, \ba)}_{r} \ . 
\end{equation}
\end{definition}

Note that the norm $| \cdot |_{s,r}$ controls the norm of the vector field $X_H$ defined in \eqref{def:XF} in the domain $\T^\tk_{s'}\times D(r')$ for all  $s'< s$, $r'<r$. 

By convention we define the {\em scaling degree} of a monomial $ e^{\im \ell\cdot \theta} \, \yy^l \,  a^\alpha \, \bar a^\beta$ as
\begin{equation}\label{scadeg}
\deg(l, \alpha, \beta) := 2| l | + |\alpha| + |\beta| -2\,,
\end{equation}
and define the {\em projection} on the homogeneous components of {\em scaling degree} $d$ as
$$
H^{(d)} := \sum_{\ell\in \Z^\tk, l \in \N^\tk , \al ,\bt\in\N^{\Z^2\setminus\cS_0}  \atop 
	2| l |+|\al|+|\bt|=d+2}  \, H_{\al,\bt, l, \ell} \, e^{\im \ell\cdot \theta} \yy^l a^\alpha \, \bar a^\beta \ , 
$$
similarly for $H^{(\leq d)}$ and $H^{(\geq d)}$. 
\paragraph{Lipschitz families of  Hamiltonians.} In the following we will consider Hamiltonians  $h(\lambda; \yy, \theta, \ba) \equiv h(\lambda)$ depending on an external parameter $\lambda \in \cO$, {where $\cO$ is some compact set}. Thus we define the  {\em weighted Lipschitz} norm:
\begin{equation}
\label{lip.norm0}
|h|_{s,r}^\cO:= \sup_{\lambda \in \cO}|h(\lambda, \cdot)|_{s,r}+  \sup_{\lambda_1\neq \lambda_2\in \cO} \frac{|h(\lambda_1, \cdot)-h(\lambda_2, \cdot) |_{s,r}}{|\lambda_1-\lambda_2|} \ . 
\end{equation}
It will be  convenient to define the Lipschitz norm also for maps $f:\cO\to E$ with values in a Banach  space $E$, whose norm we denote simply by $\vert \cdot \vert_E$. We pose
\begin{equation}
\label{lip.norm}
\abs{f}^\cO_E:= \sup_{\lambda \in \cO} \abs{f(\lambda)}_{E} +  \sup_{\lambda_1 \neq \lambda_2 \in \cO} \frac{|f(\lambda_1)- f(\lambda_2) |_{E}}{|\lambda_1 - \lambda_2|} \ . 
\end{equation}

\paragraph{The Hamiltonian and the constants of motion in the adapted coordinates.} 
When expressed in the coordinates $(\yy, \theta, \ba)$ defined in \eqref{a.a.z}, the Hamiltonian $H$ of \eqref{Ha0} takes the form 
\begin{equation}
\label{ham.1step}
\cH(\yy, \theta, {\bf a}) :=
\omega \cdot \yy 
+ \sum_{\jj=(m,n)\in\Z^2\setminus \cS_0} |\jj|^2 |a_{\jj}|^2 + \cH^{(\geq 0) }(\yy, \theta, {\bf a})
\end{equation}
where $\cH^{(\geq 0) }$ has scaling degree greater than or equal to zero and $\abs{ \cH^{(\geq 0) }}_{s_0,r_0}\leq C \e $ for some $s_0, r_0 >0$, see Section \ref{preparazione} for the details.
In these coordinates the mass $M$ and the momentum $P$ become
\begin{equation}\label{costanti}
	\cM(\yy, \theta, {\bf a})=   \sum_{i=1}^\tk \yy_i +\sum_{\jj\in \Z^2\setminus \cS_0}|a_{\jj}|^2\,,\quad
	\cP(\yy, \theta, {\bf a})=   \begin{bmatrix} \cP_x \\ \cP_y\end{bmatrix}=\sum_i \begin{bmatrix} \mathtt m_i \\ 0 \end{bmatrix} \yy_i+ \sum_{\jj\in \Z^2\setminus \cS_0} \jj \,|a_{\jj}|^2  \, .
	\end{equation}

\paragraph{Genericity condition.}
As we mentioned in the introduction,  we
need to impose some restrictions on the (Birkhoff) support $\cS_0$ of the finite gap solutions. Indeed, we ask $\cS_0$ to fulfill some arithmetic conditions which we now describe.

\begin{definition}
\label{defset} 
Order $\cS_0$ so that $\tm_1<\tm_2<\dots<\tm_\tk$. 
 For every $n \in \Z$ let
$$
\cS_{0n} := \{ (\tm_i, n) :  \ \ 1 \leq i \leq \tk  \} \ .
$$ 
For any  $1 \leq i<j \leq \tk$ let 
\begin{equation}
\label{def:lambda}
\ccC_{i,j}^\pm := \{(m,n)\in \Z^2\,:\; (m-\tm_i)(m-\tm_j)+ n^2 =0\,, \quad \pm n> 0\} \ . 
\end{equation}
Finally denote 
$$
\sS := \bigcup_{n \in \Z\setminus \{0\}} \cS_{0n} \ , \qquad \ccC := \bigcup_{i<j}\ccC_{i,j}\,,\quad  \ccC_{i,j}:=\ccC^+_{i,j}\cup \ccC^-_{i,j} \ . 
$$
\end{definition}
\begin{definition}[Arithmetic genericity]
\label{defar}
We  say that $\cS_0$ is {\em generic} if 
\begin{equation}
\label{gen.cond}
\sS \cap \ccC = \emptyset \ , \qquad \ccC_{i,j} \cap \ccC_{i',j'} = \emptyset \ , \quad \forall \{i, j\} \neq \{i', j'\}.
\end{equation}
Given $\tL\in \N$, we say that $\cS_0$ is $\tL$-generic if it is generic and  moreover
\begin{equation}\label{pop}
\sum \ell_i\tm_i\neq 0 \qquad \forall  0<|\ell|\le \tL.
\end{equation}
\end{definition}
The following lemma explains in which sense the ``good'' sets are generic:
\begin{lemma}
\label{rem:inf.s0}
Fix any $\tL\in \N$. There are infinitely many choices of $\tL$-generic sets. More precisely  for $R\gg 1$ let $B_R$ be the set of all ordered sets $\cS=(\tm_1,\dots,\tm_\tk)$ such that $\max(|\tm_i|)\le R$. Then denoting by $G_R$ the $\tL$-{\em generic} sets in $B_R$ (i.e. those which satisfy Definition \ref{defar},) we have
$$
\lim_{R\to \infty} \frac{|G_R|}{|B_R|} =1.
$$

\end{lemma}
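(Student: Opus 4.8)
The plan is to show that the "bad" sets — those violating Definition \ref{defar} — are rare inside $B_R$, i.e. $|B_R \setminus G_R| = o(|B_R|)$ as $R\to\infty$. Since $|B_R| \asymp R^\tk$, it suffices to prove that the number of ordered $\tk$-tuples in $[-R,R]^\tk$ that fail at least one of the conditions \eqref{gen.cond}--\eqref{pop} is $O(R^{\tk-1}\log R)$ or any bound $o(R^\tk)$. I would treat each forbidden configuration separately and use the union bound over the finitely many pairs $\{i,j\}$, $\{i',j'\}$ and the finitely many $\ell$ with $0 < |\ell| \le \tL$.

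\emph{Step 1: the condition $\sum_i \ell_i \tm_i \neq 0$.} For a fixed nonzero $\ell \in \Z^\tk$ with $|\ell| \le \tL$, the equation $\sum_i \ell_i \tm_i = 0$ defines a hyperplane in $\R^\tk$; the number of integer points of this hyperplane lying in $[-R,R]^\tk$ is $O(R^{\tk-1})$ (pick one coordinate $i_0$ with $\ell_{i_0}\neq 0$ freely among the remaining $\tk-1$ coordinates, each in a range of size $\le 2R+1$, and $\tm_{i_0}$ is then determined up to the divisibility constraint, hence at most one choice). Summing over the finitely many such $\ell$ gives a total of $O(R^{\tk-1})$ bad tuples from \eqref{pop}.

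\emph{Step 2: the conditions $\sS \cap \ccC = \emptyset$ and $\ccC_{i,j}\cap\ccC_{i',j'}=\emptyset$.} Here the key observation is that for fixed $i<j$ the curve $\ccC_{i,j} = \{(m,n): (m-\tm_i)(m-\tm_j) + n^2 = 0\}$ is a circle through $(\tm_i,0)$ and $(\tm_j,0)$ of radius $|\tm_i-\tm_j|/2$, so as soon as $\tm_i,\tm_j$ are fixed it is a bounded set: $\ccC_{i,j} \subset \{|m|,|n| \le |\tm_i-\tm_j| \le 2R\}$ but more usefully, for the intersection conditions what matters is a Diophantine count. For $\sS \cap \ccC \neq \emptyset$: a point of $\sS$ is $(\tm_a, n)$ with $n\neq 0$, and requiring $(\tm_a,n)\in\ccC_{i,j}$ imposes $(\tm_a - \tm_i)(\tm_a-\tm_j) + n^2 = 0$, i.e. $(\tm_a - \tm_i)(\tm_a - \tm_j) = -n^2 < 0$, so $\tm_a$ lies strictly between $\tm_i$ and $\tm_j$ and $(\tm_a-\tm_i)(\tm_j-\tm_a)$ is a perfect square. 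For fixed $a,i,j$ this is one polynomial equation relating three of the $\tm$'s (with $n$ then determined up to sign), so the locus has dimension $\tk-2$ as an algebraic set; counting integer points on it in $[-R,R]^\tk$ gives $O(R^{\tk-2}\cdot R) = O(R^{\tk-1})$ after also choosing the remaining $\tk-3$ free coordinates and bounding $n = O(R)$. A slight care is needed when $a \in \{i,j\}$: then $(\tm_a - \tm_i)(\tm_a-\tm_j) = 0 \neq -n^2$ since $n\neq 0$, so no constraint can be satisfied — these cases are vacuous. The same style of argument handles $\ccC_{i,j}\cap\ccC_{i',j'}\neq\emptyset$ for $\{i,j\}\neq\{i',j'\}$: a common point $(m,n)$ satisfies two distinct quadratics in $m$; if the quadratics are not proportional as polynomials in $m$ (which, since the leading coefficients are both $1$ and the roots differ, happens unless $\{\tm_i,\tm_j\} = \{\tm_{i'},\tm_{j'}\}$, impossible for distinct index pairs given $\tm_1<\dots<\tm_\tk$) then $m$ is determined and the resultant of the two quadratics must vanish, giving again a codimension-$\ge 1$ algebraic constraint on the $\tm$'s; counting yields $O(R^{\tk-1})$. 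Summing over the finitely many index pairs keeps the total $O(R^{\tk-1})$.

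\emph{Step 3: conclusion.} Adding the contributions of Steps 1--2, the number of non-$\tL$-generic ordered sets in $B_R$ is $O(R^{\tk-1}) = o(R^\tk) = o(|B_R|)$, whence $|G_R|/|B_R| \to 1$. Infinitude of $\tL$-generic sets follows since $G_R$ is eventually nonempty (indeed asymptotically all of $B_R$) and $B_R \subset B_{R'}$ for $R < R'$, so new generic sets appear as $R$ grows. The main obstacle I anticipate is making the algebraic-geometry counting in Step 2 fully rigorous and uniform — one must argue carefully that each forbidden locus is a proper subvariety (not all of $\R^\tk$), which relies on exhibiting at least one generic tuple satisfying all conditions, and one must handle the lattice-point count on a variety of possibly high degree (the Diophantine equations like $(\tm_a-\tm_i)(\tm_j-\tm_a) = n^2$ are not linear), for which the cleanest route is to fix all but two of the $\tm$'s, reduce to counting solutions of a one- or two-variable polynomial Diophantine equation, and invoke that a nonzero univariate polynomial of bounded degree has boundedly many roots while $n^2 = c$ has at most two solutions. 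If one prefers a softer route, one can instead parametrize: for each choice of the "non-free" indices the remaining variables range over a proper Zariski-closed subset of $\R^{\tk}$ defined over $\Z$, and a standard lemma (e.g. a Schwartz--Zippel/lattice-point bound) gives $O(R^{\tk-1})$ integer points.
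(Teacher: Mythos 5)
Your Step~1 (the linear condition \eqref{pop}) is correct and matches the paper's argument. The problem is in Step~2, and it is a genuine gap rather than a presentational issue.

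You describe the bad set for $\sS\cap\ccC\neq\emptyset$ as ``one polynomial equation relating three of the $\tm$'s (with $n$ then determined up to sign), so the locus has dimension $\tk-2$'' and then appeal to a Schwartz--Zippel-type lattice-point bound for a proper Zariski-closed subset. This does not work. The condition on $\vec\tm$ is $\exists\, n\neq 0:\;(\tm_a-\tm_i)(\tm_a-\tm_j)+n^2=0$, i.e.\ that $(\tm_a-\tm_i)(\tm_j-\tm_a)$ be a \emph{positive perfect square}. After the existential over $n$, this is not a Zariski-closed condition on the $\tm$'s at all: as a subset of $\R^\tk$ it is the full half-space $(\tm_a-\tm_i)(\tm_j-\tm_a)>0$, which has codimension $0$, and the content of the lemma is entirely in the \emph{arithmetic} sparsity of perfect squares among the values of that quadratic. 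Likewise, the ``cleanest route'' you sketch at the end (fix all but two $\tm$'s, bound $n$ by $O(R)$, use that a fixed univariate polynomial has boundedly many roots) only yields, via a union bound over the $O(R)$ values of $n$, an estimate of order $R^{\tk}$ --- exactly the trivial bound, not $o(R^\tk)$. The union bound over $n$ is where the factor of $R$ is lost.

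The missing ingredient, which the paper's proof uses explicitly, is the classical number-theoretic fact that a circle of radius $O(R)$ contains $O(R^{\delta})$ lattice points for every $\delta>0$ (equivalently, a divisor-function bound such as $r_2(N)=O(N^{\varepsilon})$, or a parametrization $uv=n^2 \Leftrightarrow u=ds^2,\,v=dt^2,\,n=dst$ together with $\sum_{d\le N}N/d=O(N\log N)$). With this input the count becomes: fix $\tm_i,\tm_j$ (giving $O(R^2)$ choices); the admissible $\tm_a$ are the $m$-coordinates of lattice points on the circle with diameter $(\tm_i,0),(\tm_j,0)$, hence $O(R^\delta)$ of them; multiply by $R^{\tk-3}$ for the free coordinates, obtaining $O(R^{\tk-1+\delta})=o(R^\tk)$. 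The case $\ccC_{i,j}\cap\ccC_{i',j'}$ has the same structure: in the paper one fixes all $\tm_q$, $q\neq s$, picks one of the $O(R^\delta)$ lattice points on the first circle, and observes that the second circle equation is then \emph{linear} in the remaining $\tm_s$ --- again $O(R^{\tk-1+\delta})$. Your ``resultant must vanish'' reformulation suffers from the same defect: the resultant is a polynomial in $\vec\tm$ \emph{and} $n^2$, so vanishing for some $n$ is not a codimension-$1$ condition on $\vec\tm$. In short, you need to invoke (or reprove) the circle/divisor bound; an algebraic-geometry codimension count cannot replace it here.
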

The proof of the lemma is postponed in Appendix \ref{generic.infinity}.

%

Let us   motivate the  genericity conditions.
One of the main problems in developing  perturbation theory for NLS on $\T^2$ is the presence of rectangle-resonances, namely quadruple of integers $\jj_1, \jj_2, \jj_3, \jj_4 \in \Z^2$ such that 
\begin{equation}
\label{rec-res}
\jj_1 - \jj_2 + \jj_3 + \jj_4 = 0 \ , \qquad 
|\jj_1|^2 - |\jj_2|^2 + |\jj_3|^2 - |\jj_4|^2 = 0 \ .
\end{equation}
It is easy to check that $\jj_1, \jj_2, \jj_3, \jj_4$ fulfill \eqref{rec-res} if and only if they form the vertex of a rectangle in $\Z^2$, hence the name rectangle-resonance.
In principle we would like to avoid all the resonances of the form \eqref{rec-res} when two $\jj_i$'s are chosen in $\cS_0$ and two are chosen outside $\cS_0$. One realizes immediately that this is not possible: indeed any two $\jj_i$'s chosen in $\cS_{0n}$, $n \neq 0$, will form a {\em horizontal rectangle} with two sites in $\cS_0$. Similarly,  any two $\jj_i$'s chosen in $\ccC_{ij}$, $1 \leq i < j \leq \tk$, will form a {\em rotated rectangle} with two sites in $\cS_0$ (from the very definition of $\cS_{0n}$ and $\ccC$).\\
Then the genericity condition states that there are no intersection at integer points outside $\cS_0$  between an  horizontal rectangle and a rotated rectangle or between two rotated rectangles.

\begin{remark}
{If $\tk>2$, then there are always rotated rectangles. The reason is the following: if $\tm_i - \tm_k$ is an even number, then the point $(\frac{\tm_i + \tm_k}{2}, \frac{\tm_k - \tm_i}{2})$ has integer coordinates and it forms a right angle with $(\tm_i, 0)$, $(\tm_k,0)$, i.e.  it belongs to $\ccC_{ik}$. 
Clearly if the cardinality of $\cS_0$ is at least 3, there are at least two points whose distance is an even number.}
\end{remark}

\begin{figure}[ht]\centering
\vskip-10pt\begin{minipage}[t]{5cm}
{\centering
{\psfrag{m}{$ \tm_2$}
\psfrag{n}{$\tm_1$}
\psfrag{h}{$\tm_3$}
\psfrag{k}{$\tm_4$}
\includegraphics[width=12cm]{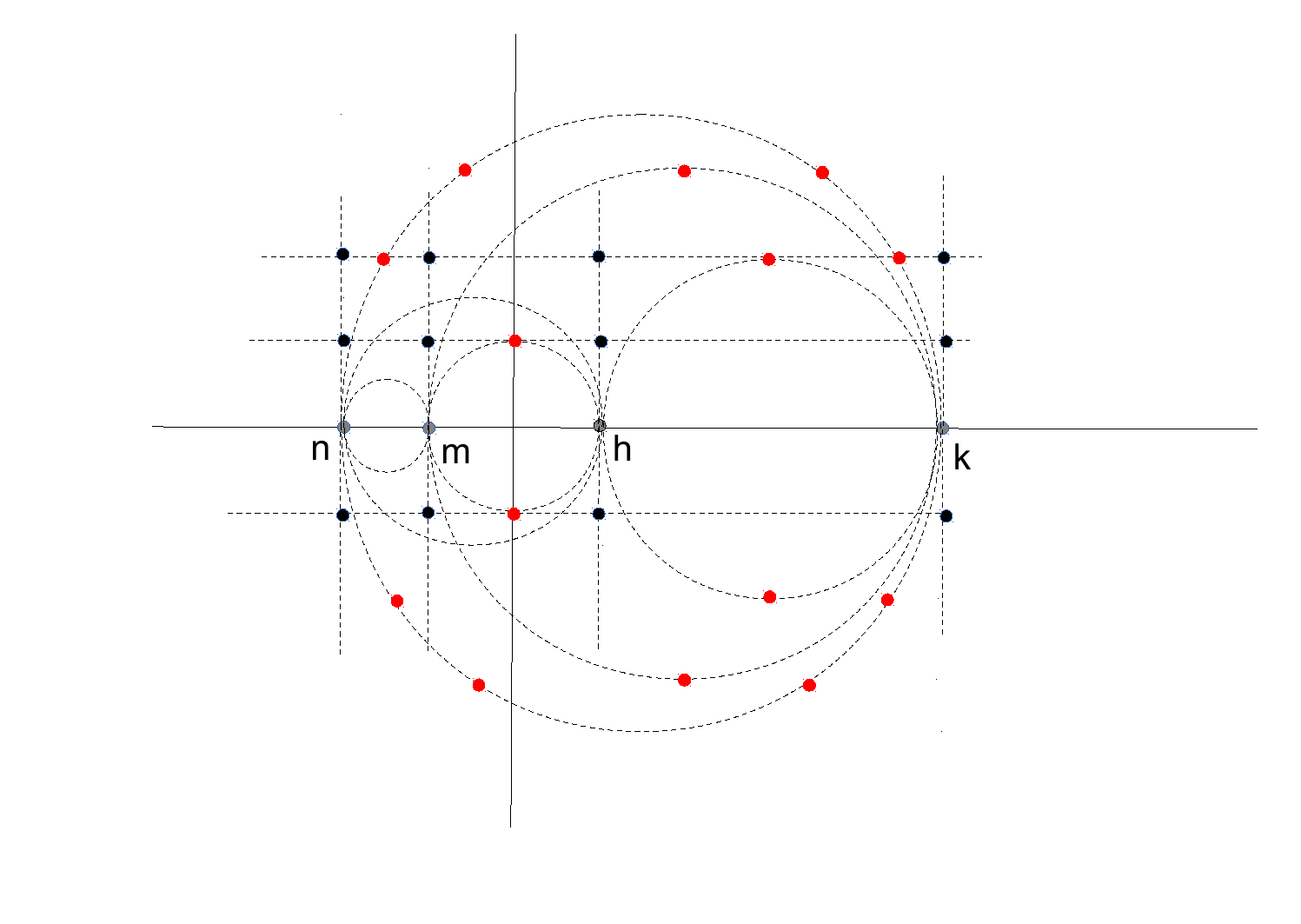}
}}
\end{minipage}
\caption{The red dots on the circles are the $\ccC_{i,j}$  the black dots are $\cS_{0,n}$, the arithmetic genericity condition requires that all the  intersection points of two dotted curves are non-integer. }
\end{figure}

\paragraph{Admissible monomials.}
We  set
	\begin{equation}
		\label{mp.4}
		\begin{aligned}
		&\wtcM:=  \sum_i \yy_i + \sum_{(m , n) \in \Z^2\setminus( \sS \cup \ccC\cup \Tan)  }|a_j|^2 \\
		&\wtcP_x:= \sum_i  \tm_i  \yy_i + \sum_{(m,n) \in \Z^2 \setminus (\sS \cup \Tan\cup \ccC)}\!\!\!\!m \, |a_{(m,n)}|^2 +\!\!
		\sum_{i <k \atop (m,n) \in \ccC^+_{i,k}}\!\!(m - \tm_i)  \left(\ |a_{(m,n)}|^2 - |a_{(\tm_i + \tm_j - m,-n)}|^2 \right)\\
		&\wtcP_y:= \sum_{(m,n) \in \Z^2} n |a_{(m,n)}|^2
		\end{aligned}
		\end{equation} 
		and  give the following 
		\begin{definition}[admissible]
		\label{rem:adm3}
		Given $\bj = (\jj_1, \ldots, \jj_b) \in (\Z^2\setminus\cS_0)^b$, $\ell \in \Z^\tk$ and ${\boldsymbol\sigma}= (\sigma_1, \ldots, \sigma_b) \in \{-1,0, 1\}^b$, we say that $(\bj , \ell, \bs)$
		is {\em admissible}, and denote $(\bj , \ell, \bs) \in {\fA}_b$,  if the monomial $\mathfrak m= e^{\im \theta \cdot \ell} \, a_{\jj_1}^{\sigma_1} \, \ldots a_{\jj_b}^{\sigma_b}$ Poisson commutes with $\wtcM,\wtcP_x, \wtcP_y$.  Here we use the convention that $a^+_\jj=a_\jj, a^-_\jj=\bar a_\jj, a^0_\jj=1$.
	\end{definition}
	\begin{definition}[action preserving]\label{reso}
	Given $\bj = (\jj_1, \ldots, \jj_b) \in (\Z^2\setminus\cS_0)^b$, $\ell \in \Z^\tk$ and $\bs= (\sigma_1, \ldots, \sigma_b) \in \{-1,0, 1\}^b$, we say that $(\bj , \ell, \bs)$ is {\em action preserving}  and denote  $(\bj , \ell, \bs)\in \fR_b$, if  $\ell=0$ and the monomial $\mathfrak m= a_{\jj_1}^{\sigma_1} \, \ldots a_{\jj_b}^{\sigma_b}$ Poisson commutes with  the actions $|a_\jj|^2$ for all $\jj\in \Z^2\setminus\cS_0$.
	\end{definition}

		\paragraph{Birkhoff normal form around a finite gap solution.}
		We are finally ready to state our result on Birkhoff normal form:
\begin{theorem}
	\label{main2}
	There exists $\tL>0$ (depending only on $\td$), such that
	for a $\tL$-generic choice of the set $\Tan$  (in the sense of Definition \ref{defar}) the following holds true.
	Given arbitrary $s_0 >0$, 	there exists $\e_\star >0$ and for any  $0<\e\le \e_\star$,  $r_0\ll \sqrt\e$, 
 there exist a  compact {\em domain} $\cO_1 \subseteq \cO_0 \subseteq [1/2,1]^\tk$,  Lipschitz functions $(\Omega_\jj)_{\jj\in \Z^2\setminus \cS_0}$ defined on $\cO_1$ and  real numbers $\gamma, \tau >0$ s.t. the set 
	\begin{equation}
	\label{3.mc}
	\cC:=\left\{\lambda\in \cO_1:\;	\abs{\omega \cdot \ell + \s_1\Omega_{\jj_1}(\lambda, \e)+ \s_2 \Omega_{\jj_2}(\lambda, \e)+\s_3 \Omega_{\jj_3}(\lambda, \e) } \geq \e \frac{\gamma}{\la \ell \ra^\tau} \ ,\;\forall (\bj , \ell, \bs)\in \fA_3 \setminus \fR_3\right\}
		\end{equation}
has positive measure. 
For each $\lambda \in \cC$ there exists a symplectic change of variables $\cT$,  majorant analytic together with its inverse,  s.t. $\cT$ and $\cT^{-1}$ map $D(s/32, \varrho_0 r) \to D(s,r)$ for all $r \leq r_0$, $s_0/64 \leq s \leq s_0$ (here $\varrho_0>0$ is a parameter depending on $s_0, \max(|\tm_k|^p)$ ). Moreover 
\begin{equation}
\label{ham.bnf.fin}
(\cH\circ \cT)(\yy, \theta, a, \bar a) = \omega \cdot \yy + \sum_{\jj \in \Z^2 \setminus \cS_0} \Omega_\jj\, |a_\jj|^2 + \cR^{(\geq 2)}(\yy, \theta, a, \bar a)
\end{equation}
where $\cR^{(\geq 2)}$ contains just monomials of scaling degree at least 2 and fulfills 
\begin{equation}
\label{R.est0}
\abs{\cR^{(\geq 2)}}_{s/32,\varrho_0 r} \leq C_\cR \,  r^2  \  
\end{equation}
for some positive constant $C_\cR$ independent of $\e$.
\\
The mass $\cM$ and the momentum $\cP$ (defined in \eqref{costanti}) fulfill
$$
\cM\circ \cT= \wtcM\,,\quad \cP\circ \cT= \wtcP \ , 
$$
where $\wtcM$ and $\wtcP$ are defined in \eqref{mp.4}.
\end{theorem}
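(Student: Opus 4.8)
The plan is to obtain $\cT$ as the composition of the three families of changes of variables sketched in the Scheme of the Proof, starting from the Hamiltonian $\cH$ already written in adapted coordinates as in \eqref{ham.1step} (with $|\cH^{(\geq 0)}|_{s_0,r_0}\le C\e$), and to track along the way both the analyticity domains and the Casimirs $\cM,\cP$ of \eqref{costanti}. \textbf{Step 1 (non-perturbative reduction of the quadratic part).} First I would exploit that, modulo a finite rank remainder, the Hamiltonian vector field of the quadratic part of \eqref{ham.1step} acts as $a\mapsto 2|q(\omega t,x)|^2 a+q^2(\omega t,x)\bar a$, with $q$ the finite gap profile \eqref{finite.gap.formula}. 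For such operators one adapts to two dimensions the reduction in orders of pseudodifferential symbols of \cite{PT01,IPT05,BBM14}: a sequence of (generally \emph{not} close to identity) symplectic maps conjugates \eqref{ham.1step} to the form \eqref{ham.intro2}, in which $\wt\Omega_\jj-|\jj|^2=O(\e)$ and the new quadratic defect $\wt\cH^{(0)}$ has size $\e^2$ and is \emph{smoothing} (its vector field is $2$-smoothing plus a piece that gains two $x$-derivatives and is $y$-independent). Into this one inserts the explicit diagonalization of the surviving resonant non-perturbative block of \cite{procesi12} (reminiscent of the almost reducibility constructions of \cite{Eli01}); it is precisely here that the genericity of $\cS_0$ of Definition \ref{defar} enters, guaranteeing that distinct rotated rectangles $\ccC_{i,j}$ and horizontal rectangles $\cS_{0n}$ do not interact. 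Since all these maps mix only sites inside a single $\ccC_{i,j}$, respectively on a single $\cS_{0n}$, they preserve mass and momentum and carry them exactly to $\wtcM,\wtcP$ of \eqref{mp.4}.

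\textbf{Step 2 (KAM reducibility).} Using that $\wt\cH^{(0)}$ is smoothing and that the reduction in orders furnishes the asymptotics of the $\wt\Omega_\jj$, I would run a convergent KAM reducibility scheme on a compact parameter set $\cO_0\subseteq[1/2,1]^\tk$, removing the $\theta$-dependence of the quadratic part and conjugating \eqref{ham.intro2} to \eqref{ham.intro3}, with constant frequencies $\Omega_\jj=|\jj|^2+O(\e)$ (defined and Lipschitz on a compact subset $\cO_1\subseteq\cO_0$) and a purely cubic normal form defect $\cH^{(1)}$. Each KAM step solves a homological equation with divisors of type \eqref{new.m}; the scheme closes on the Cantor set where these second order Melnikov conditions hold, whose positive measure is proved in Appendix \ref{app:mes.m}, once one knows from Section 6 (non-vanishing of the $\e$-order terms, via the irreducible-polynomial argument of \cite{procesiN}, which crucially uses conservation of mass and momentum) that the relevant combinations are not identically zero. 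All generators are admissible Hamiltonians in the sense of Definition \ref{rem:adm3}, so $\wtcM,\wtcP$ are left unchanged; and since the Birkhoff map is majorant analytic \cite{masp_vey} and each step preserves majorant analyticity, so does the composition.

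\textbf{Step 3 (one Birkhoff step) and conclusion.} Finally I would perform a single step of nonlinear Birkhoff normal form cancelling $\cH^{(1)}$: the homological equation has divisors of the form \eqref{new.m1}, and the set on which the corresponding third order Melnikov conditions hold with loss $\e\gamma\la\ell\ra^{-\tau}$ is exactly $\cC$ of \eqref{3.mc}; its positive measure follows, as above, from the non-vanishing of the $\e$-order terms together with the frequency asymptotics, via the quantitative estimates of Appendix \ref{app:mes.m}. This produces \eqref{ham.bnf.fin}, in which $\cR^{(\geq 2)}$ consists only of monomials of scaling degree $\geq 2$; the bound \eqref{R.est0} with $C_\cR$ independent of $\e$ comes from tracking the scaling degrees through the composition together with $r\le r_0\ll\sqrt\e$, which makes every contribution of degree $d\ge 2$ bounded by $C r^2$. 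Setting $\cT$ to be the composition of all the maps above (its inverse being the composition of the inverses) and accounting for the fixed fraction of analyticity width and radius lost at each of the finitely many stages, one checks that $\cT,\cT^{-1}\colon D(s/32,\varrho_0 r)\to D(s,r)$ for the stated ranges of $r,s$, and that $\cM\circ\cT=\wtcM$, $\cP\circ\cT=\wtcP$.

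\textbf{Main obstacle.} The heart of the argument is Step 1 together with the verification of the Melnikov conditions: (i) carrying out the two dimensional reduction in orders and the explicit non-perturbative diagonalization so that the perturbation becomes smoothing and one controls the asymptotics of the corrected frequencies — delicate in dimension $>1$; and (ii) proving that the $O(\e)$ corrections appearing in \eqref{new.m} and \eqref{new.m1} do not make these combinations vanish identically except in genuinely resonant cases, which requires the irreducible-polynomial machinery and crucially uses that mass and momentum are conserved. The convergence of the 2D KAM scheme and the measure estimates, though technical, are comparatively routine once the smoothing structure and the frequency asymptotics are in hand.
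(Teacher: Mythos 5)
Your proposal follows essentially the same route as the paper: Theorem \ref{main2} is proved by composing the three maps $\cT^{(0)}$, $\widehat{\cT}$, $\cT^{(1)}$ constructed respectively in Theorem \ref{thm:q} (non-perturbative reduction of the quadratic part via descent in orders and explicit diagonalization of the resonant blocks), Theorem \ref{thm:kam} (KAM reducibility under second Melnikov conditions, with the measure estimate from Appendix \ref{app:mes.m} and the non-vanishing from Section \ref{sec:IIm}), and Theorem \ref{thm:3b} (one Birkhoff step under third Melnikov conditions), tracking $\wtcM,\wtcP$, majorant analyticity, and the nested domains $D(s,r)\supset D(s/8,\varrho r)\supset D(s/16,\varrho r/2)\supset D(s/32,\varrho r/4)$ so that $\varrho_0=\varrho/4$. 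You have correctly identified the decomposition, the role of the smoothing structure and frequency asymptotics, the use of the irreducible-polynomial arguments and of mass/momentum conservation, and the source of the $r^2$ bound for $\cR^{(\geq 2)}$.
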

We are able to describe quite precisely the asymptotics of the frequencies $\Omega_\jj$ of Theorem \ref{main2}. As happens often in higher dimensional settings, such asymptotics depend on the fact that $\jj \in \Z^2 \setminus (\sS \cup \ccC \cup \cS_0)$ or to the sets $\sS$, $\ccC$.
\begin{theorem}
	\label{main3}
Under the same assumptions as Theorem \ref{main2}, for any $ \e\le \e_\star,\lambda\in \cO_1$ the frequencies  $\Omega_\jj \equiv \Omega_\jj(\lambda,\e)$, $\jj = (m,n)\in \Z^2\setminus \cS_0$, have the following asymptotics:
\begin{align}
\label{as.omega0}
&\Omega_\jj(\lambda, \e) =  m^2  + \frac{\varpi_{m}(\lambda, \e)}{\la m \ra} \ , \qquad n  = 0  \ , \\
\label{as.omega}
&\Omega_\jj(\lambda, \e) =  \wtOmega_\jj(\lambda, \e) + \frac{\Theta_{m}(\lambda, \e)}{\la m \ra^2} + \frac{\Theta_{m,n}(\lambda, \e)}{\la m \ra^2 + \la n \ra^2} \ , \qquad n \neq 0 
\end{align}
	where 
	\begin{equation}
	\label{def:omtilde}
	\wtOmega_\jj (\lambda, \e) := 
	\begin{cases}
	m^2 + n^2   , & \jj=(m,n) \not\in \sS \cup \ccC  \ , \\
	\e \mu_i(\lambda) + n^2   \ , & \jj=(\tm_i, n)\ ,  \\
	m^2 + n^2 - \tm_i^2 + \e  \mu_{i,k}^+(\lambda)  \ , & \jj= (m,n) \in \ccC_{i,k} \ , n >0   \\
	 m^2 + n^2 - \tm_k^2 - \e \mu_{i,k}^-(\lambda)   \ , & \jj= (m,n) \in \ccC_{i,k} \ , n <0 
	\end{cases}
	\end{equation}
	Here the $(\mu_i(\lambda))_{1 \leq i \leq \tk }$ are the roots of the polynomial 
	\begin{equation}
	\label{P.poly0bis}
	P(t,\lambda):= \prod_{i=1}^\tk (t + \lambda_i) - 2 \sum_{i=1}^\tk \lambda_i \, \prod_{k \neq i} (t + \lambda_k) \ , 
	\end{equation}
	while for any $1 \leq i<k \leq \tk$ fixed, the  $\mu^+_{i,k}(\lambda), \mu^-_{i,k}(\lambda)$ are the roots of the polynomial
	\begin{equation}
	\label{char02bis}
	Q(t, \lambda_i, \lambda_k) := t^2 - (\lambda_i - \lambda_k) t + 3 \lambda_i \lambda_k \ .
	\end{equation}
Finally $\mu_i$, $\mu_{i,k}^\pm$, $(\varpi_m(\lambda, \e))_{m \in \Z} $,  $(\Theta_m(\lambda, \e))_{m \in \Z}$ and $ (\Theta_{m,n}(\lambda, \e))_{(m,n) \in \Z^2\setminus \Tan}$ fulfill 
\begin{equation}
\label{theta.est}
\begin{aligned} 
& \sum_{1 \leq i \leq \tk} | \mu_i(\cdot) |^{\cO_1} +  \sum_{1 \leq i < k \leq \tk \atop \pm} |  \mu_{ik}^\pm(\cdot) |^{\cO_1} \\
& \qquad
+ \sup_{\e \leq \e_\star  } \frac{1}{\e^2}\Big( 
{\sup_{m \in \Z} |\varpi_m(\cdot, \e)|^{\cO_1}+ }
 \sup_{m \in \Z} |\Theta_m(\cdot, \e)|^{\cO_1} + \sup_{(m,n) \in \Z^2\setminus \Tan} |\Theta_{m,n}(\cdot, \e)|^{\cO_1} \Big) \leq  \tM_0 
 \end{aligned}
\end{equation}
for some positive constant $\tM_0$. 
\end{theorem}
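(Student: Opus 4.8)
The plan is to track the frequencies $\Omega_\jj$ through the three-step normal form that produces Theorem \ref{main2}, recording at each step how $\wtOmega_\jj$ and the remainder corrections are generated, and to isolate the algebraic identities \eqref{P.poly0bis}--\eqref{char02bis} from the resonant quadratic Hamiltonian. The starting point is the Hamiltonian \eqref{ham.1step}, whose normal part has frequencies $|\jj|^2$. By the reducibility step (Section 5), the change of variables not close to the identity that brings \eqref{ham.intro} to the form \eqref{ham.intro2} acts on the normal coordinates $(a_\jj,\bar a_\jj)$ only through the rectangle-resonant structure: for $\jj=(m,n)$ with $n=0$ the $x$-direction reduction of pseudo-differential type produces a frequency $m^2 + O(\e)/\la m\ra$, the loss/gain of one derivative in $x$ being responsible for the single power of $\la m\ra^{-1}$; this already gives the shape \eqref{as.omega0} with $\varpi_m$ of size $\e$ (indeed of size $\e^2$ after the KAM step, which only corrects by $O(\e^2)$). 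For $n\neq 0$, the only nonperturbative coupling comes from sites in $\sS$ (horizontal rectangles with two tangential sites) and in $\ccC$ (rotated rectangles), and the genericity condition \eqref{gen.cond} guarantees these couplings are disjoint, so the explicit diagonalization of \cite{procesi12} can be performed blockwise.

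The heart of the matter is the identification of the $\e$-order corrections $\wtOmega_\jj - |\jj|^2$ with the roots of the specific polynomials $P$ and $Q$. First I would write down the quadratic resonant Hamiltonian explicitly: the $\e$-order part of $\cH^{(0)}$ that survives the momentum/mass constraints is, after passing through the adapted coordinates and using \eqref{boia} and the finite gap formula \eqref{finite.gap.formula}, a finite-rank operator built from $2|q|^2$ and $q^2$ evaluated on the leading term $\sqrt{\e}\sum_i\sqrt{\lambda_i}e^{\im\theta_i+\im\tm_i x}$. For a fixed $n\neq 0$, the sites $(\tm_1,n),\dots,(\tm_\tk,n)$ in $\cS_{0n}$ are coupled among themselves through the quadratic form $\sum_i I_{\tm_i}|a_{(\tm_i,n)}|^2$ plus off-diagonal terms $\bar a_{(\tm_i,n)}a_{(\tm_k,n)}$ with coefficient $2\sqrt{\lambda_i\lambda_k}\,\e$ coming from $2|q|^2$; diagonalizing the $\tk\times\tk$ matrix $\e(\mathrm{diag}(\lambda)+ 2\sqrt\lambda\sqrt\lambda^{\!\top} - \ldots)$ — more precisely the matrix whose characteristic polynomial is exactly \eqref{P.poly0bis} — yields the eigenvalues $\e\mu_i(\lambda)$, giving the second line of \eqref{def:omtilde}. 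For the rotated rectangles $\ccC_{i,k}$, the site $(m,n)\in\ccC^+_{i,k}$ is coupled only to its reflection $(\tm_i+\tm_k-m,-n)\in\ccC^-_{i,k}$ via the $q^2$ term (coefficient $\e\sqrt{\lambda_i\lambda_k}$) and receives diagonal shifts $\e\lambda_i,\e\lambda_k$ from the two $2|q|^2$ contributions; the resulting $2\times 2$ block has characteristic polynomial \eqref{char02bis}, and $m^2+n^2 = \tm_i^2 + (m-\tm_i)(m-\tm_k) + \ldots$ combined with $(m-\tm_i)(m-\tm_k)+n^2=0$ on $\ccC_{i,k}$ explains the $-\tm_i^2$ (resp. $-\tm_k^2$) reference value in lines three and four. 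The remaining genuinely perturbative coupling, of size $\e$ but smoothing, is then removed by the KAM scheme of Section 7, which only alters $\Omega_\jj$ by quantities of size $\e^2$ bounded in terms of the smoothing norm; tracking the two smoothing scales ($2$-smoothing, and $2$-smoothing only in $x$) produces the two correction terms $\Theta_m/\la m\ra^2$ and $\Theta_{m,n}/(\la m\ra^2+\la n\ra^2)$ in \eqref{as.omega}, with the $\e^2$ bounds collected into \eqref{theta.est}.

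The Lipschitz estimates \eqref{theta.est} follow by propagating the weighted Lipschitz norm \eqref{lip.norm0}--\eqref{lip.norm} through each transformation: the action-to-frequency inversion \eqref{boia} is Lipschitz in $\lambda$ by the analyticity of the Birkhoff map (\cite{masp_vey}) and the implicit function theorem; the reducibility changes of variables of Section 5 depend Lipschitz-continuously on $\lambda$ with $O(\e)$ (resp. $O(\e^2)$) estimates on the generated corrections; and the KAM iteration of Section 7 is set up to yield $\lambda$-Lipschitz limit frequencies with the stated quantitative control. The roots $\mu_i$, $\mu_{ik}^\pm$ are Lipschitz (indeed piecewise analytic) functions of $\lambda$ since they are roots of polynomials with analytic coefficients on the compact set $\cO_1$, and after restricting $\cO_1$ so that the relevant discriminants stay bounded away from zero (this restriction is part of the construction of $\cO_1$) the roots are uniformly Lipschitz. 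I expect the main obstacle to be the bookkeeping in the rotated-rectangle case: one must check that the genericity condition \eqref{gen.cond} really does decouple distinct blocks $\ccC_{i,k}$ and separate them from $\sS$, so that the $2\times2$ reduction is exact and no extra cross-terms of order $\e$ spoil the characteristic polynomial $Q$; and one must correctly handle the sites $(\tm_i,n)$ that could a priori lie in both $\sS$ and some $\ccC_{i',k'}$ — excluded precisely by $\sS\cap\ccC=\emptyset$. The pseudo-differential reduction for the $n=0$ line, giving the single power $\la m\ra^{-1}$ rather than $\la m\ra^{-2}$, is the other delicate point, since it relies on the one-dimensional structure $2|q(\omega t,x)|^2 a + q^2(\omega t,x)\bar a$ and the reduction-in-orders machinery adapted from \cite{PT01,IPT05,BBM14}.
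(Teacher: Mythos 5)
Your plan correctly identifies the two finite-dimensional algebraic blocks: the $\tk\times\tk$ matrix $M(\lambda)$ on each horizontal band $\cS_{0n}$ with characteristic polynomial $P(t,\lambda)$ of \eqref{P.poly0bis}, and the $2\times 2$ (hyperbolic) block $N(\lambda_i,\lambda_k)$ on each pair $\{(m,n),(\tm_i+\tm_k-m,-n)\}\subset\ccC_{i,k}$ with characteristic polynomial $Q(t,\lambda_i,\lambda_k)$ of \eqref{char02bis}. That is exactly how the paper produces \eqref{def:omtilde}: Lemma \ref{medio} isolates the order-$\e$ resonant Hamiltonian $\cZ_1$, Lemma \ref{lem:diag0} removes the angles by a phase shift (this is where the $-\omega_i=-\tm_i^2+\e\lambda_i$ shift, hence the $-\tm_i^2$ in \eqref{def:omtilde}, appears), and Lemmas \ref{lem:irr} and \ref{lem:step4} diagonalize the blocks; small caveat, the diagonal of $N$ is $(\lambda_i,-\lambda_k)$, not $(\lambda_i,\lambda_k)$ — the sign flip encodes the indefinite Lagrangian decomposition and is needed for $Q$ to come out right. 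Your description of the $\Theta_m/\la m\ra^2$ and $\Theta_{m,n}/(\la m\ra^2+\la n\ra^2)$ corrections as the residue of the KAM iteration with two smoothing scales (order $(-2,0)$ horizontal and $(-2,-2)$ mixing) also matches Corollary \ref{stoca} and the definition of $\tM_0$ in \eqref{M0}.

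There is however a genuine error in your explanation of \eqref{as.omega0}. You attribute the single power $\la m\ra^{-1}$ for $n=0$ to a pseudo-differential / reduction-in-orders step applied to the line variables. This is wrong: in the paper's construction the line coordinates $a_{(m,0)}$ never pass through any such reduction. The whole $n=0$ sector is frozen from the very start by the Birkhoff map of the one-dimensional dNLS: the transformation $\cT^{(0)}$ of Theorem \ref{thm:q} satisfies $L_0=\id$ (Theorem \ref{thm:L}), and $\wtcH^{(0)}=\wtcH^{(0,\hor)}+\wtcH^{(0,\mix)}$ has no line component (the line part of $\cH^\0$ is already zero, see the proof of Lemma \ref{scendo}), so the KAM scheme of Section 7 also leaves the line frequencies untouched. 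The asymptotics $\Omega_{(m,0)}=m^2+\varpi_m(\lambda,\e)/\la m\ra$ come directly from \eqref{a.fr}--\eqref{fischio0}, i.e.\ from the known eigenvalue asymptotics of the dNLS action-to-frequency map (Theorem \ref{thm:dnls}(iii), citing \cite{kapp_liang,kapp_scatt}). Relatedly, your remark that $\varpi_m$ has size $\e$ before the KAM step and only becomes $\e^2$ afterwards is incorrect: $\varpi_m(I)$ is at least quadratic in the actions $I$, hence of size $\e^2$ already in \eqref{fischio0}, with no intervention of the KAM. The descent/reduction-in-orders machinery of Section 5.1 (Lemma \ref{scendo}) is applied only to the $n\neq 0$ modes to upgrade the order of $\cH^\0$ from $0$ to $(-2,0)+(-2,-2)$; it has nothing to do with \eqref{as.omega0}.
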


We conclude this section with a list of remarks:
\begin{remark}
\label{rem:mu}
The $(\mu_i(\lambda))_{1 \leq i \leq \tk}$ and the $(\mu_{i,k}^\pm(\lambda))_{1 \leq i<k \leq \tk}$  are distinct  nonzero  algebraic functions which are homogeneous of degree 1.
They depend only on the number $\tk$ of tangential sites, hence  {\em not on the single sites $(\tm_i)_{1 \leq i \leq \td}$}. 
\end{remark}

\begin{remark}
\label{rem:asym}
The asymptotic expansion of the normal frequencies \eqref{as.omega} does not contain any constant term. The reason is that we canceled such term when we subtracted the quantity $M(u)^2$ from the Hamiltonian, see \eqref{parto}. 
Of course if we had not  removed $M(u)^2$, we would have had a constant correction to the frequencies, equal to $\norm{q(\omega t, \cdot)}_{L^2}$.
 Since $q(\omega t, x)$ is a solution of \eqref{NLS}, it enjoys mass conservation, thus $\norm{q(\omega t, \cdot)}_{L^2} = \norm{q(0, \cdot)}_{L^2}$ is independent of time.
\end{remark}

\begin{remark}
\label{rem:faou}
In \cite{faou13} the authors prove that plane waves solutions of \eqref{NLS} are  $H^p$-orbital stable for   times of order $\delta^{-N}$, with $N$ arbitrary chosen, provided $p$ is large enough. 
The reason is the following: plane waves solution have a very specific expression, given by   $A e^{\im (mx - \omega t)}$.
This simple formula  allows to compute exactly the expressions for the  tangential frequency $\omega$ and the normal frequencies $\Omega_\jj$ as a function of $A$, indeed one has  $$
\omega= m^2 + A^2 \ , \qquad \Omega_\jj = \sqrt{|\jj|^4 + 2 |\jj|^2 A^2}  \ . 
$$
Such formulas are much more explicit than the mere asymptotic expansions that we find in \eqref{as.omega}, and allow the authors of \cite{faou13} to impose Melnikov conditions of the form 
$$
\abs{\s_1\Omega_{\jj_1}+\cdots +  \s_n \Omega_{\jj_n}} \geq  \frac{\gamma}{\mu_3(\jj_1, \ldots, \jj_n)^{\alpha}} 
$$
at any order $n$ (here $\mu_3(\jj_1, \ldots, \jj_n)$ is the third largest integer among $|\jj_1|, \ldots, |\jj_n|$). As a consequence they can perform  an arbitrary number of steps of Birkhoff normal form.\\
On the contrary we have a weaker control on the asymptotics of the frequencies, since we know their exact expression only at order $\e$ and not at higher orders in $\e$; this allows us to impose Melnikov conditions up to order three, see \eqref{3.mc}.
\end{remark}

\begin{remark}
\label{rem:procesi}
In \cite{procesi15,procesi16} the authors show that for a generic choice of tangential sites  $\cS_0' \subseteq \Z^2$, it is possible to construct families of 
small quasi-periodic solutions of \eqref{NLS} which are (essentially) supported on $\cS_0'$. Such solutions give rise to finite dimensional KAM tori which depend on both variables $x$ and $y$ and which  are linearly orbitally stable (namely stable for times  $|t| \sim \delta^{-1}$). 
While we borrow ideas and techniques from such papers, the construction that we perform requires extra-care. Indeed the condition of genericity for $\cS_0'$ used in \cite{procesi15,procesi16} does not allow to choose $\cS_0' \subset \Z \times \{ 0\}$, as we do here. 
Furthermore in \cite{procesi15,procesi16} the asymptotics of the normal frequencies $\Omega_\jj$ are less explicit than formula \eqref{as.omega}, therefore it is not clear if it is possible, in the setup of \cite{procesi15,procesi16},  to impose the third order Melnikov conditions \eqref{3.mc} and  obtain nonlinear stability of the KAM tori. 
\end{remark}

\section{Functional setting}
\label{fun}

In this section we introduce the technical apparatus that we will use in the following. \\
Given $\al,\bt\in \N^{\Z^2\setminus \cS_0}$,   to the monomial $e^{\im \ell\cdot \theta}\yy^l a^\al \bar a^\bt$ we associate various numbers.
First we denote by 
\begin{equation}
\label{def.eta}
\eta(\alpha, \beta) := \sum_{\jj \in \Z^2\setminus \cS_0} (\alpha_\jj - \beta_\jj) \ , \quad \qquad \eta(\ell):= \sum_{i=1}^\tk \ell_i \ .
\end{equation} 
The second quantity that we associate to $e^{\im \ell\cdot \theta}\yy^la^\al \bar a^\bt$ are  the momentum  $\pi({\al,\bt})=(\pix,\piy)$ and $\pi(\ell)$ defined by 
\begin{equation}
\label{def.pi}
\pi({\al,\bt})=  \begin{bmatrix} \pi_x(\alpha, \beta) \\ \pi_y(\alpha, \beta) \end{bmatrix} = \sum_{\jj=(m,n)\in \Z^2\setminus \cS_0} 
\begin{bmatrix}  m \\ n \end{bmatrix} (\al_\jj-\bt_\jj) \ , \quad \qquad 
\pi(\ell)= \sum_{i=1}^\tk \tm_i \ell_i  \ . 
\end{equation}
Given a monomial $e^{\im \theta\cdot \ell}\yy^la^\al \bar a^\bt$ we have the commutation rules
\begin{align*}
&\{\cM,e^{\im \theta\cdot \ell}\yy^la^\al \bar a^\bt\}=\im (\eta(\alpha, \beta)+\eta(\ell) )e^{\im \theta\cdot \ell}\yy^la^\al \bar a^\bt\\
\{\cP_x,e^{\im \theta\cdot \ell}\yy^la^\al \bar a^\bt\}&=\im (\pi_x(\alpha, \beta)+\pi(\ell) )e^{\im \theta\cdot \ell}\yy^la^\al \bar a^\bt \,,\quad
\{\cP_y,e^{\im \theta\cdot \ell}\yy^la^\al \bar a^\bt\}=\im \, \pi_y(\alpha, \beta)\,  e^{\im \theta\cdot \ell}\yy^la^\al \bar a^\bt
\end{align*} 
\begin{remark}
	\label{leggi_sel}
	A  function  $\cF$ commutes with the mass $\cM$ and the  momentum $\cP$  defined in \eqref{costanti}  if and only if the following {\em selection rules} on its coefficients hold:
	\begin{align*}
	&\{ \cF, \cM\} = 0 \ \ \  \Leftrightarrow   \ \ \  \cF_{\alpha, \beta, \ell} \, (\eta(\alpha, \beta) + \eta(\ell)) = 0 \\
	& \{ \cF, \cP_x\} = 0  \ \ \ \Leftrightarrow  \ \ \    \cF_{\alpha, \beta, \ell} \, (\pi_x(\alpha, \beta) + \pi(\ell)) = 0 \,,\quad  \{ \cF, \cP_y\} = 0  \ \ \ \Leftrightarrow  \ \ \    \cF_{\alpha, \beta, \ell} \, (\pi_y(\alpha, \beta)) = 0 
	\end{align*}
	where $\eta(\alpha, \beta), \eta(\ell)$ are defined in \eqref{def.eta} and $\pi(\alpha, \beta), \pi(\ell)$ are defined in \eqref{def.pi}.
\end{remark}

{From now on we shall always assume that our  Hamiltonians commute with $\cM$, $\cP$, so the selection rules of Remark \ref{leggi_sel} hold.}

\begin{definition}
We will denote 
 by $\cA_{s,r}$  the {subspace of real  valued functions in the }  closure of the  monomials ($e^{\im \ell \cdot \theta} \yy^l a^\al \bar a^\beta$) which Poisson commute with $\cM$ and $\cP$,  with respect to the  norm $| \cdot |_{s,r}$ (defined in \eqref{musically}). Such Hamiltonians will be  called {\em regular}.
  Given a compact set $\cO\subset \R^\tk$,  we denote by $\cA^\cO_{s,r}$ the Banach space of Lipschitz maps $\cO\to \cA_{s,r}$ with finite  norm $|\cdot|_{s,r}^\cO$ (defined in \eqref{lip.norm0}).
\end{definition}

In the next proposition we describe some basic properties of the norm $|\cdot|_{s,r}$  which will be used repeatedly in the  paper. 

\begin{proposition}
\label{prop:mer}
 For every $ s,r>0$ the following holds true:
\begin{itemize}
		\item[(i)] {\em Degree decomposition:} 
	given a  Hamiltonian $h\in \cA_{s,R}^\cO$ which is homogeneous of degree $d$,  then  $h\in \cA_{s,r}^\cO$ for all $r>0$ and  one has 
	\begin{equation}\label{scala0}
	|h|_{s,r}^\cO \leq \left(\frac{r}{R}\right)^d |h|_{s,R}^\cO \ . 
	\end{equation}
	The same estimate  holds also if $d$ is the minimal degree and $r \leq R$.
	\item[(ii)] {\em Changes of variables}: Let $h,\, f \in \cA_{s,r}^\cO$. For any $0<s' <s$ and  $0<r' < r$, let $\delta := \min \left( 1-\frac{r'}{r}, s-s'\right)$. If $\delta^{-1} |f|_{s,r}^\cO<\varrho$ sufficiently small then the  Hamiltonian vector field $X_f$ defines a close to identity canonical change of variables
	$\cT_f$ such that 
	$$
	 |h\circ\cT_f|_{s', r'}^\cO \leq (1+C\varrho)|h|_{s,r}^\cO \ , \qquad \forall 0<s' <s \ , \ \ 0<r' < r \ . 
	$$
	\item[(iii)]{\em Remainder estimates}: Let  $f,g \in \cA_{s,r}^\cO$  of minimal scaling  degree respectively $\td_f$ and   $\td_g$ and define the function 
	\begin{equation}\label{taylor}
	\re{\ti}(f; h)=\sum_{l=\ti}^\infty  \frac{(-\ad f)^l}{l!} h\,,\quad \ad(f)h:= \{h,f\} \ . 
	\end{equation} 
Then $\re{\ti}(f; g)$ is of minimal scaling degree $\td_f \ti   +\td_g$ 
	and we have the bound 
	$$
	\abs{\re{\ti}(f; h)}_{s',r'}^\cO \leq C(s) \delta^{-\ti} \left(|f|_{s,r}^\cO\right)^\ti \,  |g|_{s,r}^\cO \ ,  \qquad \forall 0<s' <s \ , \ \ 0<r' < r  \ .
	$$
	Note that the same holds if we substitute in \eqref{taylor}  the sequence $\{\frac{1}{l!}\}$  with any sequence $\{b_l\}$ such that   $|b_l|\leq \frac{1}{l!}$ $\ \forall l$ .
\end{itemize}
\end{proposition}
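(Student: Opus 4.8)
\textbf{Proof proposal for Proposition \ref{prop:mer}.}

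The plan is to treat the three items separately, since each reduces to a routine—but careful—manipulation of the Taylor--Fourier expansion \eqref{h.funct} and the majorant norm \eqref{musically}. For item (i), I would argue directly on monomials: if $\mathfrak m = e^{\im \ell\cdot\theta}\yy^l a^\al\bar a^\beta$ has scaling degree $d$, meaning $2|l|+|\al|+|\beta| = d+2$, then its Hamiltonian vector field is homogeneous of degree $d$ in the variables $(\yy,a,\bar a)$ after accounting for the weights in \eqref{listen} (each $\yy$-derivative drops a factor $r^2$, each $a$- or $\bar a$-derivative a factor $r$, matching the scaling $\yy \sim r^2$, $a \sim r$). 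Hence evaluating $\bnorm{\underline{(X_H)}_s}_r$ on $D(r)$ versus $D(R)$ produces exactly the factor $(r/R)^d$, and passing to the supremum over the domain and the Lipschitz seminorm over $\cO$ gives \eqref{scala0}. The statement for minimal degree with $r\le R$ follows because lowering $r$ only shrinks higher-degree contributions further, so the minimal-degree bound still dominates.

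For item (ii) I would use the standard Lie-series machinery. Writing $\cT_f = \Phi^1_{X_f}$ for the time-$1$ flow of $X_f$, one has the formal identity $h\circ\cT_f = \sum_{l\ge 0}\frac{1}{l!}(\ad f)^l h = h + \re{1}(f;h)$ with the notation of \eqref{taylor}. The key analytic input is a Cauchy-type estimate for the Poisson bracket: on majorant-analytic Hamiltonians one has $|\{h,f\}|_{s',r'}^\cO \le C(s)\,\delta^{-1}\,|f|_{s,r}^\cO\,|h|_{s,r}^\cO$, obtained by estimating the loss of one derivative in each of $\theta,\yy,a,\bar a$ against the shrinking of the domain (this is where the $\delta^{-1}$ with $\delta = \min(1-r'/r,\,s-s')$ enters; the $e^{s|\ell|}$ weight in \eqref{seriamente} converts the $\theta$-derivative into a factor $|\ell|$ absorbed by $\delta^{-1}$). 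Iterating on a chain of intermediate domains (splitting the loss $\delta$ into $l$ equal pieces and using $l^l/l! \le e^l$) gives $|(\ad f)^l h|_{s',r'}^\cO \le C(s)^l \delta^{-l}(|f|_{s,r}^\cO)^l |h|_{s,r}^\cO$, so the series converges geometrically once $\delta^{-1}|f|_{s,r}^\cO < \varrho$ is small, yielding $|h\circ\cT_f|_{s',r'}^\cO \le (1+C\varrho)|h|_{s,r}^\cO$. One also checks that $\cT_f$ maps $D(s',r')$ into $D(s,r)$ and is canonical, which is automatic for Hamiltonian flows; and that majorant analyticity is preserved, since $\ad f$ acts positively on Taylor--Fourier coefficients in the sense compatible with \eqref{seriamente}.

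Item (iii) is essentially a refinement of the same computation with bookkeeping on the minimal scaling degree. Each application of $\ad f = \{\cdot,f\}$ to a monomial of scaling degree $d_g$ produces monomials of scaling degree $d_g + \td_f$ (the bracket of degree-$a$ and degree-$b$ homogeneous pieces is degree $a+b-2$, and in scaling-degree bookkeeping—shifted by $-2$ per \eqref{scadeg}—this reads $d_f + d_g$; with $f$ of minimal scaling degree $\td_f \ge 0$ this only raises the degree). Hence $(\ad f)^l g$ has minimal scaling degree $\ge l\,\td_f + \td_g$, and in particular $\re{\ti}(f;g) = \sum_{l\ge \ti}\frac{(-\ad f)^l}{l!}g$ has minimal scaling degree $\ge \td_f\,\ti + \td_g$. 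The norm bound $\abs{\re{\ti}(f;h)}_{s',r'}^\cO \le C(s)\delta^{-\ti}(|f|_{s,r}^\cO)^\ti |h|_{s,r}^\cO$ then comes from summing the geometric tail starting at $l=\ti$, extracting the first $\ti$ factors of $\delta^{-1}|f|_{s,r}^\cO$ explicitly and absorbing the remaining geometric series into $C(s)$ (using smallness of $\delta^{-1}|f|_{s,r}^\cO$ as in (ii)). The final remark about replacing $\{1/l!\}$ by any $\{b_l\}$ with $|b_l|\le 1/l!$ is immediate since every step only used the upper bound $|b_l| \le 1/l!$ in the majorant.

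\medskip
\noindent\emph{Main obstacle.} The genuinely delicate point is the single-step Poisson-bracket estimate with the correct dependence on $\delta$ and the majorant weights: one must verify that differentiating a majorant-analytic vector field in $\theta$ (producing the factor $|\ell|$) is controlled by $e^{s|\ell|}$ against $e^{s'|\ell|}$, i.e.\ by $(s-s')^{-1}$, and simultaneously that the derivatives in $\yy,a,\bar a$ lose the powers of $r$ and $r^2$ encoded in the weights of $\bnorm{\cdot}_r$ in a way compatible with \eqref{musically}. Once this one-step estimate is nailed down with explicit constants, the rest is a standard geometric iteration. Everything else—the degree bookkeeping in (i) and (iii), the canonicity and domain-mapping properties of $\cT_f$—is routine.
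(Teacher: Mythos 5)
Your proposal follows the paper's intended route: the paper does not spell out a self-contained proof of Proposition \ref{prop:mer} but reduces it to the graded Poisson-bracket estimate of Proposition \ref{riassunto}(iv) (stated in Appendix \ref{app:fun} and referred to \cite{BBP}) and then iterates over intermediate domains---precisely the Lie-series scheme you describe. Your scaling argument for item (i), using the weights $\yy\sim r^2$, $a\sim r$ built into the norm \eqref{listen}, and the degree bookkeeping in item (iii) match the paper's conventions.

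One mild imprecision is worth flagging. In the Stirling step of your iteration you bound $\frac{1}{l!}(\ad f)^l$ by splitting the loss $\delta$ into $l$ equal pieces, picking up a factor $l^l/l!\le e^l$; summing the tail from $l=\ti$ then leaves a prefactor of order $e^\ti$, so the constant in item (iii) is really $C(s,\ti)$ rather than a $C(s)$ uniform in $\ti$. This is consistent with the explicit $\ti$-dependence $C_{\ti,\bN}$ appearing elsewhere in the paper (e.g.\ Lemma \ref{lem:pseudo00}(iii)), and is immaterial for the uses here ($\ti\le 3$), but you should not claim uniformity in $\ti$ by this method. With that caveat, the argument is correct.
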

The proof of the proposition, being quite technical, is postponed in Appendix \ref{app:fun}.

\subsection{Quadratic  Hamiltonians}
\label{ham.pseud}
Inside $\cA_{s,r}$ are the polynomial subspaces, i.e. spaces of fixed scaling degree which we shall denote by $\cA^{(d)}_s$.  By \eqref{scala0}, the polynomials in such spaces  are analytic for all $r>0$. 

Here we want to study a special subspace of $\cA_s^{(0)}$ defined as follows:
\begin{definition}
We denote by $\cQ_s^\cO$ the subspace of $\cA_s^{(0)}$ which contains  Hamiltonians  quadratic in $\ba$.
\end{definition}
In particular $\cQ_s^\cO$ is the subspace of real  valued quadratic functions in the closure of the monomials of the form $e^{\im \ell \cdot \theta} a^\al \bar a^\bt$ with $|\al|+|\bt|=2$.
Now we study some properties  of $\cQ_s^\cO$. 
  For such  Hamiltonians, by \eqref{scala0}, the norm $|\cdot|_{s,r}$  is independent from the domain $D(r)$, hence  we always suppose that $r=1$ and we shall drop it:
$$|h|^\cO_{s,r}=|h|^\cO_{s,1}\equiv |h|^\cO_s \ . $$
 We decompose any quadratic  Hamiltonian $H \in \cQ_s^\cO$, $s >0$,  in three components:
\begin{equation}
H(\lambda;\, \theta, a, \bar a) = H^{\rm line} (\lambda;\,\theta, a, \bar a) + H^{\rm diag}(\lambda;\,\theta, a, \bar a)  + H^{\rm out}(\lambda;\,\theta, a, \bar a)  \ .
\end{equation}
Each component contains  monomials supported in different regions of $\Z^2$, which now we describe  in more details.
\begin{itemize}
\item[(i)] $H^{\rm line}(\lambda;\,\theta, a, \bar a) $ contains  only monomials with $|\alpha| + |\beta| =2$ and  ${\rm supp }\, \al \bigcup {\rm supp }\, \bt \subseteq \Z \times \{0\}$:
$$
H^{\rm line}(\lambda;\,\theta, a, \bar a) = \sum_{m_1, m_2 \in \Z} H^{-}_{m_1, m_2}(\lambda;\,\theta) \ a_{(m_1,0)} \bar a_{(m_2,0)} + 2{\rm Re}( H^{+}_{m_1, m_2}(\lambda;\,\theta) \, a_{(m_1,0)}  a_{(m_2,0)}) \ , 
$$
$$
\mbox{with}\quad H^{-}_{m_1, m_2}= \overline{H^{-}_{m_2, m_1}}\,,\quad  H^{+}_{m_1, m_2}=  H^{+}_{m_2, m_1}.
$$

\item[(ii)] 
$H^{\rm diag}(\lambda;\,\theta, a,\bar a) $ contains monomials with $|\alpha| = |\beta| =1$ and ${\rm supp }\, \al \bigcup {\rm supp}\, \bt \subseteq \Z^2 \setminus \Z $ (namely is the diagonal part of the representative matrix):
$$
H^{\rm diag}(\lambda;\,\theta, a, \bar a) = \sum_{m_1, m_2 \in \Z \atop n \in \Z \setminus \{0\}} H^{-}_{m_1, m_2, n}(\lambda;\,\theta) \ a_{(m_1,n)} \bar a_{(m_2,n)} \,,\qquad  H^{-}_{m_1, m_2, n}= \overline{H^{-}_{m_2, m_1, n}}
$$
\item[(iii)] 
 $H^{\rm out}(\lambda;\,\theta, a, \bar a) $ contains monomials with $|\alpha| = 2$ or $ |\beta| =2$ and  ${\rm supp }\, \al \bigcup {\rm supp }\, \bt \subseteq \Z^2 \setminus \Z $ (namely is the out-diagonal part of the representative matrix):
$$
H^{\rm out}(\lambda;\,\theta, a, \bar a) = 2{\rm Re}(\sum_{m_1, m_2 \in \Z \atop n >0} H^{+}_{m_1, m_2,n}(\lambda;\,\theta) \, a_{(m_1,n)} a_{(m_2,-n)})
$$
\end{itemize}
Note that the sum is over only one index $n$, this is due to the conservation of $\cP_y$. Furthermore  the condition $n>0$ is used in order to avoid repetitions.
It is easy  to see that  $H^{\rm line}$, $H^{\rm diag}$, $H^{\rm out}$ are  in $\cQ_s^\cO$ (see Proposition \ref{riassunto}$(i)$).
We will denote $\cQ^{\cO, \rm diag}_{s}$ (respectively $\cQ^{\cO, \rm out}_{s}$, $\cQ^{\cO, \rm line}_{s}$) the relative subspaces.
\begin{remark}The following commutation rules hold:
\begin{itemize}
\item[(i)] $\{ H^{\rm diag}, K^{\rm diag} \} \in \cQ^{\cO,\rm diag}_s \ $, $\{ H^{\rm diag}, K^{\rm out} \} \in \cQ^{\cO,\rm out}_s \ $, 
$\{ H^{\rm out}, K^{\rm out} \} \in \cQ^{\cO,\rm diag}_s$
\item[(ii)] $\{ H^{\rm diag}, K^{\rm line} \} =  \{ H^{\rm out}, K^{\rm line} \}  = 0$.
\end{itemize}
\end{remark}
 
Consider a quadratic  Hamiltonian $H(\theta, a, \bar a)$ as above and expand its coefficients in Fourier series
\begin{equation}
H_{m_1, m_2, n}^{\pm}(\lambda;\theta) = \sum_{\ell \in \Z^\tk } H_{m_1, m_2, n}^{\pm, \ell}(\lambda) \ e^{\im \theta \cdot \ell} \ . 
\end{equation}
In such a way, to any  Hamiltonian $H \in \cQ_s$ we can associate a sequence $\{H^{\s,\ell}_{m_1,m_2,n}\}$ as above.
On the contrary,  a sequence  $\{H^{\s,\ell}_{m_1,m_2,n}\}$ corresponds to a Hamiltonian only if it fulfills the constraints which arise from the fact that the corresponding Hamiltonian should be real. \\
Given a sequence $\{H^{\s,\ell}_{m_1,m_2,n}\}$, we define the norm
\begin{equation}
\label{formula}
 |\{H^{\s,\ell}_{m_1,m_2,n}\}|_{s}:= \sup_{\|\ba\|\le 1} \|\{\sum_{m_1,\ell} e^{|\ell|s }|H^{-,\ell }_{m_1,m_2,n}|\, a_{(m_1,n)} +  e^{|\ell|s }| {\overline{ H}}^{+,\ell}_{m_2,m_1,n}| \, \bar a_{(m_1,-n)} \}  \| \ . 
\end{equation}
  Then  $|H|_s  \equiv |\{H^{\s,\ell}_{m_1,m_2,n}\}|_{s}$.
The same for the corresponding Lipschitz norm.

\begin{remark}
\label{sel.rule.q}
By mass and momentum conservation 
\begin{equation}
\label{coef.0} 
H_{m_1, m_2, n}^{\sigma, \ell}(\lambda)\neq 0 \quad  \mbox{ only if } \quad \pi(\ell) + m_1+\sigma m_2 = 0 \ , \  \eta(\ell) + 1+ \sigma = 0 \ ,
\end{equation}
where  $\sigma$ is either  $+1$ or $-1$. 
\end{remark}
\begin{remark}
\label{rem:mm.use}
In particular, for a monomial  $e^{\im  \theta \cdot \ell} \, a_{(m_1, n_1)} \bar a_{(m_2, n_2)}$ mass and momentum are
$$
\eta(\ell) = 0 \ , \ \ \pi(\ell) + m_1 - m_2  = 0 \ , \ \  n_1 - n_2 = 0  \ , 
$$
while for 
$e^{\im  \theta \cdot \ell} \, \bar a_{(m_1, n_1)} \bar a_{(m_2, n_2)}$ they are
$$
\eta(\ell) - 2 = 0 \ , \ \ \pi(\ell) - m_1 - m_2  = 0 \ , \ \  n_1 + n_2 = 0  \ .
$$
\end{remark}
\begin{remark}
\label{rem.m}
A quadratic   Hamiltonian  in $\cA_{s,r}^\cO$ can be canonically identified with a majorant bounded matrix $M(\theta)$ by setting $H= \frac{1}{2}(\ba , J^{-1} M(\theta) \ba )$, where $J^{-1}= \begin{pmatrix}
	0 & -1 \\
	1 & 0
	\end{pmatrix} $. It is easily seen that $| H|_{s}$ is equivalent to the operator norm on $\cL(\li^p,\li^p)$ of the matrix $\sum_{\ell}\und M(\ell) e^{|\ell|s}$, hence we  have the algebra property
\begin{equation}\label{alge}
| \{H,K\}|_s \le C_0   |H|_s |K|_s.
\end{equation}
\end{remark}

 Next we  introduce the concept of {\em order} of a quadratic  Hamiltonian.
\begin{definition}[Order of a Hamiltonian]
	\label{def:pseudo}
Given ${\bf N}=(N_1, N_2) \in \Z^2$, we say that a quadratic  Hamiltonian with coefficients $\{H_{m_1, m_2, n}^{\pm, \ell}\}$ has {\em order} ${\bf N}$ iff there exists $s >0$ s.t. the sequence 
$\{ (\la m_1\ra^{-N_1} + \la n \ra^{-N_2}) H_{m_1, m_2, n}^{\pm, \ell}\}$ has a finite $|\cdot|_{s}^\cO$ norm. We will denote
$$
\lceil H \rfloor^\cO_{s,{\bf N}} :=  | \{(\la m_1\ra^{-N_1}+ \la n \ra^{-N_2}) H_{m_1, m_2, n}^{\pm, \ell} \}|^\cO_s 
$$
We denote by $\cQ^\cO_{s, \bN}$ the subset of  Hamiltonians in $\cQ^\cO_s$ of order $\bN$. In the same way given a   linear map 
$$ \ba \to L \ba \,,\quad (L \ba)_{(m,n)} =\sum_{m_1}\Big(L^{-}_{m_1,m,n}(\lambda;\theta) a_{({m_1,n})}+ L^{+}_{m_1,m,n}(\lambda;\theta) \bar a_{({m_1,-n})}\Big)  $$
we set 
$$
Y:=\sum_{m_1,m_2,n}(\la m_1\ra^{-N_1}+ \la n \ra^{-N_2})\Big(  L^{-}_{m_1,m_2,n}(\lambda;\theta) a_{({m_1,n})}+ L^{+}_{m_1,m_2,n}(\lambda;\theta) \bar a_{({m_1,-n})}\Big) \partial_{a_{(m_2,n)}} + \mbox{c.c.} 
$$
and define
\begin{equation}\label{eye of the tiger}
\lceil L\rfloor_{s,{\bf N}} :=  \sup_{(\yy,\ba)\in D(r)}\bnorm {\underline{Y}_s}_r \ .
\end{equation}
Same for the Lipschitz norm.
\end{definition}
\begin{remark}
\label{rem:mappare}
Note that in the previous definition we could have used the index $m_2$ instead of $m_1$ or we could have used a symmetric weight  (say  $\la m_1\ra +\la m_2\ra$ ) in the definition of order. In fact such definitions are equivalent due to momentum conservation. Our definition is constructed so that, if ${\bf N}=(N,N)$, then a linear operator  $L$ with $\lceil L\rfloor_{s,{\bf N}} <\infty$ maps $\li^{p+N}\to \li^p$.
\end{remark}
\begin{remark}
Let ${\bf N} \in \N^2 $, then clearly
$$
\lceil H \rfloor_{s,{\bf N}}^\cO \leq | H|_s^\cO \leq \lceil H \rfloor_{s,-{\bf N}}^\cO \ . 
$$
\end{remark}
\begin{remark}
For any $\bN \in \N^2$ with non negative components we have
	\begin{equation}
\label{bagnetto}
\lceil A \rfloor_{s,- (N_1,0)}^\cO, \ \lceil A \rfloor_{s,- (0,N_2)}^\cO \le	\lceil A \rfloor_{s,- \bN}^\cO\le  	\lceil A \rfloor_{s,- (N_1,0)}^\cO + \lceil A \rfloor_{s,- (0,N_2)}^\cO
	\end{equation}
\end{remark}
\begin{lemma}[Bony decomposition]
\label{boni}
Set  $c^{-1}:= 2 \max_{1 \leq i \leq \tk}(|\tm_i|)$ and  write 
\begin{align*}
H  &= H^B + H^R  \ , \\
& H^B:= \sum_{{\s \in \pm, \ell \in \Z^\tk, m_1,m_2, n : \atop
		m_1+\s m_2+\pi(\ell)=0}\atop |\ell|\le  c \la m_1 \ra } H_{m_1, m_2, n}^{\s, \ell}(\lambda) \ e^{\im \theta \cdot \ell}a_{(m_1, n)} a^\s_{(m_2, -\s n)} \ , 
	\\
	&
		H^R:=  \sum_{{\s \in \pm, \ell \in \Z^\tk, m_1,m_2, n : \atop
		m_1+\s m_2+\pi(\ell)=0}\atop |\ell|>  c \la m_1 \ra } H_{m_1, m_2, n}^{\s, \ell}(\lambda) \ e^{\im \theta \cdot \ell}a_{(m_1, n)} a^\s_{(m_2, -\s n)} \ . 
\end{align*}
For any $0 < s' <s$ set  
\begin{equation}
\label{delta}
\delta:= s-s' \ .
\end{equation}
We have that for  all $k>0$ 
\begin{equation}
\label{HResto}
\lceil H^R \rfloor_{s',{-(N_1+ k,N_2)}}^\cO \le  k! \, \delta^{-k} (1+ c^{-k})  \lceil H \rfloor_{s,{-(N_1,N_2)}}^\cO \ ,
\end{equation}
namely the part $H^R$ is infinitely smoothing in the $x$ direction.
\end{lemma}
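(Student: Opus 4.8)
The starting point is that $H^{R}$ is nothing but the restriction of the coefficient sequence $\{H^{\sigma,\ell}_{m_1,m_2,n}\}$ to the region $|\ell|>c\langle m_1\rangle$, so that $|(H^{R})^{\sigma,\ell}_{m_1,m_2,n}|\le |H^{\sigma,\ell}_{m_1,m_2,n}|$ and $(H^{R})^{\sigma,\ell}_{m_1,m_2,n}=0$ unless $|\ell|>c\langle m_1\rangle$ (with $m_1$ the first index of the coefficient). Since, through \eqref{formula}, the order norm depends monotonically on the absolute values of the weighted coefficients, it suffices to produce a pointwise bound comparing, on that region, the weight attached to $\lceil H^{R}\rfloor_{s',-(N_1+k,N_2)}$ with the one attached to $\lceil H\rfloor_{s,-(N_1,N_2)}$.

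So fix $(\sigma,\ell,m_1,m_2,n)$ with $|\ell|>c\langle m_1\rangle$. Because $\ell\in\Z^\tk$ and $c\langle m_1\rangle\ge c>0$, this forces $\ell\neq 0$, hence $|\ell|\ge 1$; it also gives $\langle m_1\rangle^{k}\le c^{-k}|\ell|^{k}$. Writing $\delta=s-s'$ and using $\langle m_1\rangle^{N_1+k}=\langle m_1\rangle^{k}\langle m_1\rangle^{N_1}\le c^{-k}|\ell|^{k}\langle m_1\rangle^{N_1}$ together with $\langle n\rangle^{N_2}\le |\ell|^{k}\langle n\rangle^{N_2}$, one gets
\begin{equation*}
\bigl(\langle m_1\rangle^{N_1+k}+\langle n\rangle^{N_2}\bigr)e^{|\ell|s'}
\le (1+c^{-k})\,|\ell|^{k}e^{-|\ell|\delta}\bigl(\langle m_1\rangle^{N_1}+\langle n\rangle^{N_2}\bigr)e^{|\ell|s}.
\end{equation*}
The elementary maximisation $\sup_{x\ge 0}x^{k}e^{-\delta x}=(k/\delta)^{k}e^{-k}\le k!\,\delta^{-k}$ (by Stirling $k!\ge (k/e)^{k}$) then yields $|\ell|^{k}e^{-|\ell|\delta}\le k!\,\delta^{-k}$, so that on the region $|\ell|>c\langle m_1\rangle$
\begin{equation*}
\bigl(\langle m_1\rangle^{N_1+k}+\langle n\rangle^{N_2}\bigr)e^{|\ell|s'}
\le k!\,\delta^{-k}(1+c^{-k})\bigl(\langle m_1\rangle^{N_1}+\langle n\rangle^{N_2}\bigr)e^{|\ell|s}.
\end{equation*}

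To conclude, I would insert this into the definition of $\lceil H^{R}\rfloor^{\cO}_{s',-(N_1+k,N_2)}$ expanded through \eqref{formula}: that norm is a supremum over $\|a\|\le 1$ of the $\|\cdot\|$-norm of the sequence, indexed by the output pair $(m_2,n)$, built from the blocks $(\langle m_1\rangle^{N_1+k}+\langle n\rangle^{N_2})e^{|\ell|s'}|(H^{R})^{-,\ell}_{m_1,m_2,n}|$ and $(\langle m_2\rangle^{N_1+k}+\langle n\rangle^{N_2})e^{|\ell|s'}|\overline{(H^{R})}^{+,\ell}_{m_2,m_1,n}|$; in the second block the weight carries the first index $m_2$ of the coefficient (this reindexing is legitimate by Remark \ref{rem:mappare} and momentum conservation), and $(H^{R})^{+,\ell}_{m_2,m_1,n}$ is nonzero only when $|\ell|>c\langle m_2\rangle$, so the pointwise estimate above applies verbatim with $m_1$ replaced by $m_2$. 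Combined with $|(H^{R})^{\sigma,\ell}|\le |H^{\sigma,\ell}|$, this dominates each entry of the sequence defining $\lceil H^{R}\rfloor_{s',-(N_1+k,N_2)}$ by $k!\,\delta^{-k}(1+c^{-k})$ times the corresponding entry of the one defining $\lceil H\rfloor_{s,-(N_1,N_2)}$, and taking the supremum gives \eqref{HResto}; the Lipschitz-in-$\lambda$ version is identical since the cut-off region is $\lambda$-independent and the weight bound is coefficient-free. I do not expect a genuine obstacle here — this is a standard Bony-type smoothing estimate — the only points to be careful about being the direction of the gain (it is the high $\theta$-Fourier modes that create smoothing in $x$, by trading a bit of the analyticity width $\delta$ for the factor $|\ell|^{k}e^{-|\ell|\delta}$) and the bookkeeping of the first-index convention in the ``$+$'' part of \eqref{formula}.
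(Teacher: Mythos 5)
Your proof is correct and is essentially the paper's argument: both hinge on the domination $\la m_1\ra^k\le c^{-k}|\ell|^k$ on the cutoff region, on the elementary bound $|\ell|^k e^{-\delta|\ell|}\le k!\,\delta^{-k}$ obtained by trading analyticity width, and on the monotonicity of the $\lceil\cdot\rfloor$ norm in the absolute values of the weighted coefficients, with the same bookkeeping of the first-index convention. You dominate the full weight $(\la m_1\ra^{N_1+k}+\la n\ra^{N_2})e^{|\ell|s'}$ pointwise in one pass, whereas the paper first splits it via \eqref{bagnetto} into $\la m_1\ra^{N_1+k}$ and $\la n\ra^{N_2}$ and treats the two pieces separately; this is cosmetic, though your unified estimate happens to reproduce the claimed constant $(1+c^{-k})k!\delta^{-k}$ exactly rather than the slightly different $1+c^{-k}k!\delta^{-k}$ that the paper's split gives before implicitly absorbing the $1$.
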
 
The proof of the Lemma is postponed in Appendix \ref{app:fun}.
We  now discuss some algebra properties of the norm $\lceil \cdot \rfloor_{s,{\bf N}}^\cO$.
\begin{lemma}
	\label{lem:pseudo00}
	Fix $\bN,\bM \in \N^2$. Let $F\in\cQ^\cO_{s, -\bN}$, $G \in \cQ^\cO_{s,-\bM}$. The following holds true:
	\begin{itemize}
	\item[(i)] For any $0 < s' <s$ one has  $\{ F, G \} \in \cQ^\cO_{s', -\bN-\bM}$  with the quantitative estimate 
	\begin{equation}
	\label{prop.smooth.bra}
	\lceil \{ F, G \} \rfloor_{s', -\bN-\bM}^\cO \leq C_{\bN,\bM} (\,  \, \lceil F \rfloor_{s',-\bN}^\cO \,  \lceil G \rfloor_{s',-\bM}^\cO  + \delta^{-N_1-M_1}| F |_{s}^\cO \,  | G |_{s}^\cO) \ .
	\end{equation}
Here $C=C_{\bN,\bM}>0$ does not depend on $s',s$ and  $\delta$ is defined as in \eqref{delta}. 

\item[(ii)] Let $G \in \cQ_s^\cO$ and  let $F$ fulfill 
\begin{equation}
\label{F.ass}
\lceil F \rfloor_{s, -\bN}^\cO < \eta := \frac{1}{C_{0} } \ ,
\end{equation}
where here (and everywhere below) $C_0$ is the algebra constant of formula \eqref{alge}. 
 Then the Hamiltonian flow $\cT_F$  is a smoothing perturbation of the identity and $\forall \ 0 <s' <s$
\begin{equation}
\label{flow.smooth.est}
\lceil G\circ \cT_F - G \rfloor_{s',-\bN}^\cO \leq C_{\bN}\frac{|G|_{s}^\cO\left(\lceil F \rfloor_{s,-\bN}^\cO+\delta^{-N_1} | F |_{s}^\cO\right)}{1 - \eta^{-1} | F |_{s}^\cO} \ .
\end{equation}

\item[(iii)] Let $G \in \cQ_s^\cO$. Then $\forall \ti \in \N$
the operator $\re{\rm i}(F; G)$ is of order $-\ti \bN$ and $\forall 0 < s' <s$  one has 
$$
\lceil \re{\,\rm i}(F; G) \rfloor_{s',-\ti \bN}^\cO \leq C_{\ti,\bN}\frac{|G|_{s}^\cO\left(\delta^{-N_1} \lceil F \rfloor_{s,-\bN}^\cO\right)^\ti}{1 - \eta^{-1} | F |_{s}^\cO} \ .
$$
Note that the same holds if we substitute in \eqref{taylor}  the sequence $\{\frac{1}{l!}\}$  with any sequence $\{b_l\}$ such that  $\forall l$ one has  $|b_l|\leq \frac{1}{l!}$.

	\end{itemize}
\end{lemma}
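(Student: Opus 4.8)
\emph{Strategy and reductions.} The plan is to work throughout in the matrix picture of Remark \ref{rem.m}: a quadratic $H\in\cQ_s^\cO$ is identified with its $\theta$--dependent majorant--bounded matrix, the Poisson bracket $\{F,G\}$ becomes (up to the fixed symplectic factor) the matrix commutator, and $|H|_s^\cO$ is equivalent to the operator norm on $\cL(\li^p,\li^p)$ of $\sum_\ell\und M(\ell)\,e^{s|\ell|}$. In this language the order weight $\la m_1\ra^{N_1}+\la n\ra^{N_2}$ of Definition \ref{def:pseudo} is morally composition with the Fourier multiplier $\la D_x\ra^{N_1}+\la D_y\ra^{N_2}$, and the whole Lemma is the quantitative version, carrying the analyticity weights, of the symbolic rule $\mathrm{ord}(FG)=\mathrm{ord}F+\mathrm{ord}G$ together with the commutator gain. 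I would use two elementary facts repeatedly: (a) momentum conservation (Remark \ref{sel.rule.q}) forces $m_1+\sigma m_2+\pi(\ell)=0$ and conservation of the $y$--index $n$, so that $\la m_1\ra$, $\la m_2\ra$ and any contracted ``internal'' index $\la k\ra$ differ by at most $\Lambda|\ell|$, with $\Lambda:=\max_{1\le i\le\tk}|\tm_i|$, while $n$ is transported exactly; (b) the exponential weight absorbs polynomials, $|\ell|^{k}e^{s'|\ell|}\le C_k\,\delta^{-k}e^{s|\ell|}$ with $\delta:=s-s'$ as in \eqref{delta}. Items (ii) and (iii) will then be deduced from (i) by summing Lie series.

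\emph{Proof of (i).} First I would write the entries of the commutator, schematically $\{F,G\}^{\pm,\ell}_{m_1,m_2,n}\simeq\sum_{\ell'+\ell''=\ell}\sum_{k}\bigl(F^{\cdot,\ell'}_{m_1,k,n}G^{\cdot,\ell''}_{k,m_2,n}-(F\!\leftrightarrow\! G)\bigr)$, the $y$--index $n$ being common by conservation of $\cP_y$. Then I would multiply by the target weight $\la m_1\ra^{N_1+M_1}+\la n\ra^{N_2+M_2}$ and treat the two summands separately. For the part $\la n\ra^{N_2+M_2}=\la n\ra^{N_2}\la n\ra^{M_2}\le(\la m_1\ra^{N_1}+\la n\ra^{N_2})(\la k\ra^{M_1}+\la n\ra^{M_2})$ the two weighted factors re-assemble into the matrices defining $\lceil F\rfloor^\cO_{s',-\bN}$ and $\lceil G\rfloor^\cO_{s',-\bM}$, so submultiplicativity of the operator norm bounds this contribution by $C\lceil F\rfloor^\cO_{s',-\bN}\lceil G\rfloor^\cO_{s',-\bM}$ with \emph{no} loss in $\delta$. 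For the part $\la m_1\ra^{N_1+M_1}$ I would apply the Bony splitting of Lemma \ref{boni} to $F$, namely $F=F^B+F^R$: on $F^B$ one has $|\ell'|\le c\la m_1\ra$, hence $\la m_1\ra\le 2\la k\ra$, so $\la m_1\ra^{N_1+M_1}\le 2^{N_1+M_1}\la m_1\ra^{N_1}\la k\ra^{M_1}$, the first power staying with $F^B$ and the second going to $G$ (via \eqref{bagnetto}), again giving $\le C\lceil F\rfloor^\cO_{s',-\bN}\lceil G\rfloor^\cO_{s',-\bM}$ without $\delta$ loss; on $F^R$ one has the reverse inequality $\la m_1\ra< 2\Lambda|\ell'|$, so $\la m_1\ra^{N_1+M_1}\le(2\Lambda)^{N_1+M_1}|\ell'|^{N_1+M_1}$, and fact (b) absorbs $|\ell'|^{N_1+M_1}$ into $e^{s'|\ell'|}$ at the price of $C\delta^{-N_1-M_1}$, after which no weights are left and what remains is bounded, using monotonicity of the norms under passing to majorants, by $|F|^\cO_s|G|^\cO_s$. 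Collecting the contributions gives \eqref{prop.smooth.bra} (and in particular $\{F,G\}\in\cQ^\cO_{s',-\bN-\bM}$).

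\emph{Proof of (ii) and (iii).} Both follow from (i) by expanding the time--one flow as a Lie series and charging the order--lowering only to the brackets that genuinely extract smoothing. For (ii), I would write $G\circ\cT_F-G=\sum_{l\ge1}\frac{(-\ad F)^l}{l!}G$ and, for each $l$, $(\ad F)^lG=\{(\ad F)^{l-1}G,F\}$; the inner $l-1$ brackets are estimated ``cheaply'' by the algebra property \eqref{alge}, $|(\ad F)^{l-1}G|^\cO_s\le(C_0|F|^\cO_s)^{l-1}|G|^\cO_s$, and the last bracket against $F$ (order $-\bN$) is estimated by (i) with $\bM=0$, giving $\lceil(\ad F)^lG\rfloor^\cO_{s',-\bN}\le C_\bN(C_0|F|^\cO_s)^{l-1}|G|^\cO_s\bigl(\lceil F\rfloor^\cO_{s,-\bN}+\delta^{-N_1}|F|^\cO_s\bigr)$; summing $\sum_{l\ge1}\frac1{l!}$ and using $\sum_{l\ge1}\frac{(C_0|F|^\cO_s)^{l-1}}{l!}\le e^{C_0|F|^\cO_s}\le(1-\eta^{-1}|F|^\cO_s)^{-1}$ under \eqref{F.ass} (note $|F|^\cO_s\le\lceil F\rfloor^\cO_{s,-\bN}<\eta$) yields \eqref{flow.smooth.est}. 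For (iii) I would iterate (i) $\ti$ times along a chain of widths decreasing from $s$ to $s'$ in $\ti$ equal steps, using $|F|^\cO_s\le\lceil F\rfloor^\cO_{s,-\bN}$ to collapse the mixed terms, to get $\lceil(\ad F)^\ti H\rfloor^\cO_{s',-\ti\bN}\le C_{\ti,\bN}\bigl(\delta^{-N_1}\lceil F\rfloor^\cO_{s,-\bN}\bigr)^{\ti}|H|^\cO_s$ for every $H$ of order $0$; applying this with $H=(\ad F)^{l-\ti}G$ (whose $|\cdot|^\cO_s$ norm is $\le(C_0|F|^\cO_s)^{l-\ti}|G|^\cO_s$ by \eqref{alge}) and summing $\re{\ti}(F;G)=\sum_{l\ge\ti}b_l(\ad F)^lG$ over $l\ge\ti$ with $|b_l|\le1/l!$ produces the convergent series $\sum_{l\ge\ti}\frac{(C_0|F|^\cO_s)^{l-\ti}}{l!}\le(1-\eta^{-1}|F|^\cO_s)^{-1}$ and hence the asserted bound; since only $|b_l|\le1/l!$ is used, the final sentence of (iii) is automatic.

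\emph{Main obstacle.} The genuinely delicate point is (i): arranging the index bookkeeping so that the $\delta^{-N_1-M_1}$ loss is attached \emph{solely} to the non--smoothing term $|F|^\cO_s|G|^\cO_s$ and never contaminates the main term $\lceil F\rfloor^\cO_{s',-\bN}\lceil G\rfloor^\cO_{s',-\bM}$. The Bony decomposition of Lemma \ref{boni} is precisely the device that makes this work cleanly: on its low--frequency part every index swap via momentum conservation costs only a constant, while on the infinitely $x$--smoothing remainder all the weights $\la m_1\ra$ are dominated by $|\ell|$ and hence absorbable into the exponential; and exact conservation of the $y$--index guarantees that the $\la n\ra$--direction never produces a loss at all. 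Once (i) is established, (ii) and (iii) are routine Lie--series manipulations.
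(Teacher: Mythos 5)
Your proofs of parts (ii) and (iii) follow essentially the same route as the paper: write the Lie series, estimate the inner brackets cheaply via the algebra property \eqref{alge}, extract the order/smoothing gain only from the last bracket(s) against $F$ using part (i), and sum the tail using \eqref{F.ass}. Those are fine as outlines.

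There is, however, a genuine gap in your treatment of part (i), and it sits precisely at the point you single out as the ``main obstacle''. You propose to Bony-split only $F=F^B+F^R$, and to charge the $\delta^{-N_1-M_1}$ loss to $F^R$ alone, claiming that $\{F^B,G\}$ can be bounded cleanly by $\lceil F\rfloor_{s',-\bN}\lceil G\rfloor_{s',-\bM}$. This fails because the Poisson bracket is a \emph{commutator}: its coefficient is a sum of two contractions,
\[
\sum_{k,\,\ell'+\ell''=\ell} F^{\ell'}_{m_1,k}\,G^{\ell''}_{k,m_2}\;-\;\sum_{k,\,\ell'+\ell''=\ell} G^{\ell'}_{m_1,k}\,F^{\ell''}_{k,m_2}.
\]
The cutoff built into $F^B$ (Lemma \ref{boni}) is $|\ell_F|\le c\la(\text{first index of }F)\ra$. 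In the first contraction this is $|\ell'|\le c\la m_1\ra$, which indeed yields $\la m_1\ra\sim\la k\ra$ and your clean bound. But in the \emph{swapped} contraction $F^B$ sits at the entry $(k,m_2)$, so the cutoff reads $|\ell''|\le c\la k\ra$, which controls $\la k\ra/\la m_2\ra$ and says nothing about $\la m_1\ra/\la k\ra$. That ratio is governed by $\ell'$, which is $G$'s angle index, and if $G$ has no truncation then $|\ell'|$ can be huge, so $\la m_1\ra^{N_1+M_1}$ cannot be re-associated with the two factors without a $\delta$-loss. In other words, $\{F^B,G^R\}$ does \emph{not} belong to the clean bucket. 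The paper's proof handles this correctly by splitting both functions, writing $\{F,G\}=\{F^B,G^B\}+\{F,G^R\}+\{F^R,G^B\}$: in $\{F^B,G^B\}$ the constraint $|\ell_1|\le c\la m_1\ra$ is available in \emph{both} contractions (supplied by $F^B$ in one and by $G^B$ in the other, see \eqref{for1A}), giving the loss-free term, while the two remaining pieces carry the $\delta^{-N_1-M_1}$ loss via the $|\ell|$-absorption argument you describe. To repair your proof, split $G$ as well and allocate $\{F^B,G^R\}$ to the $\delta$-loss term together with $\{F^R,G^B\}$ and $\{F^R,G^R\}$; then the three-piece decomposition yields exactly \eqref{prop.smooth.bra}.
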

The proof of the lemma is postponed in Appendix \ref{app:fun}.

\begin{remark}
	\label{paralin}
	Consider the Poisson bracket $\{F, G\}$ of two quadratic Hamiltonians $F \in \cQ_{s, -\bN}$, $G \in \cQ_{s, -\bM}$. Then
	$$
	\{F,G\}= \{F^B,G^B\}+\{F,G^R\}+\{F^R,G^B\}  
	$$ 
	and the last two terms $\{F,G^R\}$, $\{F^R,G^B\} $ are infinitely smoothing.  
\end{remark}

	\subsection{Quasi-T\"oplitz structure}
Consider the term $\cH^{(\geq 0)}$   in  \eqref{ham.1step}
and denote by $\cH^{(0)}$ its component of scaling degree $0$, which is given explicitly in \eqref{def:H0}. 
It turns out that $\cH^\0$ is quadratic in $\ba$ and its  coefficients  are independent of $n$, for $n\neq 0$. 
We would like to preserve such property, but during the process of normal form that we perform, we cannot avoid generating coefficients which are depending on $n$. 
In turns out that at each step of the normal form procedure (and later on during the KAM iteration), we will have  Hamiltonians whose coefficients have this  specific form:
	$$
	H_{m_1, m_2, n}= H^\hor_{m_1, m_2} + H^\mix_{m_1, m_2,n}
	$$ 
	where $\{H^\hor_{m_1, m_2} \}$ is independent of $n$, while $\{ H^\mix_{m_1, m_2,n}\}$ depends on $n$ but  has order $(-2,-2)$.
	In the rest of the section we will show that the transformations which we will perform preserve such structure. To do so, we need some more notations:
\begin{definition}	
	 We say that a quadratic Hamiltonian $H \in \cQ_s^\cO$ is {\em horizontal} if its coefficients  $\{ H^{\pm, \ell}_{m_1, m_2,n} \}_{ n\neq 0}$ are independent of $n$, i.e. $H^{\pm, \ell}_{m_1, m_2,n} \equiv H^{\pm, \ell}_{m_1, m_2}$  for all $n\neq 0$. We will denote by $\cQ_{s, M}^{\rm hor}  $ the subspace of $\cQ_s^\cO$ of horizontal  Hamiltonians of order $ (M,0)$. 
	 We will denote the corresponding norm as $\lceil \cdot \rfloor_{s, M}$. 
	If $M=0$   we will write just $\cQ_{s}^{\rm hor}  $.
	\end{definition}
	Note that $\cQ_{s, M}^{\rm hor} $ depends on $\cO$. If we need to evidence such dependence we will write $\cQ_{s, M}^{\cO, \hor}$. 
	However, in most of our algorithms $\cO$ is fixed, hence we avoid to write it explicitly. {Note that the condition $H\in  \cQ_{s}^{\cO, \hor}$ does not impose any restriction on the component $H^{\rm line}$.}
\begin{remark}
\label{rem:hor}
$\cQ_{s, M}^{\rm hor}  $ is closed under  Poisson bracket and composition with flows: if $F, G \in \cQ_s^{\rm hor} $, then for any $0 < s' < s$, 
$$\{ F, G \} \in \cQ_{s'}^{\rm hor} \ , \quad \re{\ti}(F; G) \in \cQ_{s'}^{\rm hor} \ , \quad  G \circ \cT_F \in \cQ_{s'}^{\rm hor} \ .$$
\end{remark}
In the following we want to study the flow generated by an  Hamiltonian of the form $F = F^{\rm hor} + F^\mix $, where $F^\hor \in  \cQ^\hor_{s,-1}$ or $F^\hor \in  \cQ^\hor_{s,-2}$  while $F^\mix \in \cQ_{s, -{\bd}}$ is of order $-\bd:=(-2,-2)$.
\begin{lemma}
\label{lem:smo}
For $a=1,2$, consider $F = F^\hor + F^\mix$ with $F^\hor \in \cQ^\hor_{s,-a}$, $F^\mix \in \cQ_{s,-\bd}^\cO$, and  $G = G^\hor + G^\mix$, $G^\hor \in \cQ^\hor_{s}$, $G^\mix \in \cQ_{s,-\bd}^\cO$.  Set\footnote{here $C_0$ is the algebra constant in \eqref{alge}} $\eta= \min(C_0^{-1},C_{\bd}^{-1})$,  and assume that $\eta^{-1} \abs{F}_{s}^\cO<\frac12 $. The following holds:
\begin{itemize}
	\item[(i)] 
For every $\ti \in \N$ the  Hamiltonian $\re\ti(F; G) $ equals $\re{\ti}(F; G)^\hor + \re{\ti}(F; G)^\mix$ with $\re{\ti}(F; G)^\hor= \re{\ti}(F^\hor; G^\hor) \in \cQ^\hor_{s',-\ti a}$ and $\re{\ti}(F; G)^\mix \in  \cQ_{s',-\bd}^\cO$ for all $0<s'< s$. Furthermore one has the quantitative estimate
\begin{align*}
\lceil \re{\ti}(F; G)^\hor \rfloor_{s', -\ti a }^\cO &\leq C_{a,\ti}\frac{|G^\hor|_{s}^\cO\left(\delta^{-a}\lceil F^\hor \rfloor_{s,-a}^\cO\right)^\ti}{1 - \eta^{-1} \abs{F}_{s}^\cO}\ ,
\end{align*}
for $\ti\ge 0$, here $\delta$ is defined in \eqref{delta}. Similarly for $\ti\ge 1$  we have
\begin{align*}
\lceil \re{\ti}(F; G)^\mix \rfloor_{s',-\bd}^\cO 
&\leq 
 \left(
 \lceil G^\mix\rfloor_{s, -\bd}^\cO \, |F|_s^\cO 
 +  |G|_{s}^\cO \lceil F^\mix \rfloor_{s,-\bd}^\cO
+ |G|_{s}^\cO \, |F|_s^\cO 
  \right)
  \frac{ 2 \delta^{-2} C_{\bd}\left(\eta^{-1} \abs{F }_{s}^\cO\right)^{\ti-1}}{1 - \eta^{-1} \abs{F}_{s}^\cO} 
\end{align*}
 Note that the same holds if we substitute in \eqref{taylor}  the sequence $\{\frac{1}{l!}\}$  with any sequence $\{b_l\}$ such that  $\forall l$ one has  $|b_l|\leq \frac{1}{l!}$.
\item[(ii)]  The Hamiltonian vector field $X_F$ generates a well defined flow $\Phi^\tau_F$ for $\tau\le 1$. Moreover
$$
\sup_{(\yy, \theta, \ba)\in D(s,r)} \bnorm{(\Phi^\tau_F -\id)(\ba,\yy,\theta) }_{r}\le  \tau \eta^{-1} \abs{F}_{s}^\cO \ .
$$
\end{itemize}
\end{lemma}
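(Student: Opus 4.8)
The plan is to dispatch part (ii) first, as it is an ODE statement, and then prove part (i) by a multilinear expansion of $\re{\ti}(F;G)$. For (ii): since $F\in\cQ^\cO_s$ is quadratic in $\ba$ and independent of $\yy$, the vector field $X_F$ has vanishing $\theta$-component, $a$- and $\bar a$-components linear in $\ba$, and a $\yy$-component quadratic in $\ba$. The characteristic system is therefore triangular: $\theta(\tau)\equiv\theta(0)$ is frozen, $\ba(\tau)$ solves a linear equation whose (frozen-$\theta$) generator is bounded on $\li^p$ by $|F|_s^\cO$ via the algebra property \eqref{alge}, and $\yy(\tau)$ is recovered by quadrature along the $\ba$-trajectory; hence $\Phi^\tau_F$ is globally defined. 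For the displacement bound one uses $|F|_s^\cO=\sup_{D(r)}\bnorm{\und{(X_F)}_s}_r$ together with a bootstrap keeping the orbit inside $D(s,r)$ for $\tau\le1$ (permissible since $\eta^{-1}\ge1$ and $\eta^{-1}|F|_s^\cO<\tfrac12$), and then integrates $\tfrac{d}{d\tau}\Phi^\tau_F=X_F\circ\Phi^\tau_F$; the factor $\eta^{-1}$ in the estimate absorbs the Gronwall constant coming from the quadratic $\yy$-equation.

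For part (i) I would start from the absolutely convergent series $\re{\ti}(F;G)=\sum_{l\ge\ti}\tfrac{(-1)^l}{l!}\ad(F)^lG$ (convergence in $|\cdot|_s^\cO$ is immediate from \eqref{alge} and $\eta^{-1}|F|_s^\cO<\tfrac12$), insert $F=F^\hor+F^\mix$ and $G=G^\hor+G^\mix$ inside each $\ad(F)^lG$, and expand multilinearly into words. The single word in which every factor is horizontal reassembles, after summing over $l$, into $\re{\ti}(F^\hor;G^\hor)$: by Remark \ref{rem:hor} this is horizontal, and by Lemma \ref{lem:pseudo00}(iii), applied with $\bN=(a,0)$ and the inclusion $\cQ^\hor_{s,-a}\subset\cQ_{s,-(a,0)}$, it lies in $\cQ^\hor_{s',-\ti a}$; we set $\re{\ti}(F;G)^\hor$ to be this term. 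Every other word contains at least one factor of order $-\bd$ (an $F^\mix$ or a $G^\mix$) while all remaining factors have order $\le0$, so by the order‑additivity of the Poisson bracket (Lemma \ref{lem:pseudo00}(i)) each such word has order $\le-\bd$; consequently the complementary sum $\re{\ti}(F;G)^\mix:=\re{\ti}(F;G)-\re{\ti}(F;G)^\hor$ lies in $\cQ^\cO_{s',-\bd}$. The version with a general sequence $\{b_l\}$, $|b_l|\le1/l!$, follows the same way.

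For the quantitative bounds, the horizontal estimate is precisely Lemma \ref{lem:pseudo00}(iii) together with $|F^\hor|_s^\cO\le|F|_s^\cO$ (the latter because $F^\hor$ is the part of $F$ independent of $n$). The mixed estimate is the technical heart. Using linearity of $\re{\ti}(F;\cdot)$ I would split $\re{\ti}(F;G)^\mix=\re{\ti}(F;G^\mix)+\bigl(\re{\ti}(F;G^\hor)-\re{\ti}(F^\hor;G^\hor)\bigr)$. For the first summand, a shift of the summation index gives $\re{\ti}(F;G^\mix)=-\re{\ti-1}(F;\{G^\mix,F\})$ understood with the weights $b_m=\tfrac1{(m+1)!}\le\tfrac1{m!}$ in \eqref{taylor}; one bounds $\{G^\mix,F\}$ in $\lceil\cdot\rfloor^\cO_{s_1,-\bd}$ via Lemma \ref{lem:pseudo00}(i), treating $F$ merely as an order‑$0$ operator so that only $|F|_s^\cO$ enters (this produces the factor $\lceil G^\mix\rfloor_{s,-\bd}^\cO\,|F|_s^\cO$), and then iterates $\ad(F)$ on this order‑$(-\bd)$ input. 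For the second summand, the identity $\ad(F)^l-\ad(F^\hor)^l=\sum_{j=0}^{l-1}\ad(F^\hor)^j\,\ad(F^\mix)\,\ad(F)^{l-1-j}$ isolates one $\ad(F^\mix)$, which turns an operator‑norm–bounded input into one of order $-\bd$ at the cost of $\lesssim\delta^{-2}\lceil F^\mix\rfloor_{s,-\bd}^\cO$ (this gives the factor $|G|_s^\cO\lceil F^\mix\rfloor_{s,-\bd}^\cO$), while the $\ad(F)^{l-1-j}$ are controlled in operator norm by \eqref{alge}. The remaining iterations of $\ad(F)$, and in particular of $\ad(F^\hor)$, must be carried out without a cumulative loss of the analyticity width; this is exactly what the Bony decomposition of Lemma \ref{boni} provides, since the $B$‑part of a horizontal Hamiltonian propagates the order $-\bd$ with only the algebra constant (no $\delta$), while its $R$‑part is so strongly $x$‑smoothing that its brackets with an order‑$(-\bd)$ object stay of order $-\bd$ with room to spare (cf. Remark \ref{paralin}). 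Summing the geometric series $\sum_m b_m(\eta^{-1}|F|_s^\cO)^m\le\tfrac{(\eta^{-1}|F|_s^\cO)^{\ti-1}}{1-\eta^{-1}|F|_s^\cO}$ and collecting terms — the $|G|_s^\cO|F|_s^\cO$ contribution coming from nested brackets of purely horizontal factors that are later hit by an $\ad(F^\mix)$ — yields the stated bound.

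The main obstacle is precisely this last step: controlling arbitrarily many iterated applications of $\ad(F^\hor)$ on a smoothing object of order $-\bd$ while preserving the order and incurring only a single, bounded loss of the width $s$. The naive bound from Lemma \ref{lem:pseudo00}(i) loses a factor $\delta^{-2}$ at each iteration and cannot be summed over $l$; it is the Bony splitting of Lemma \ref{boni} that renders the horizontal part harmless at the level of the analyticity width, and correctly disentangling the ``$x$‑smoothing'' (handled by Bony, with width loss used sparingly) from the ``$y$‑smoothing'' (which is algebraic, being a consequence of $\cP_y$‑conservation, hence free) is the bookkeeping that must be executed with care.
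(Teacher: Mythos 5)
Your overall plan for part (i) coincides with the paper's: use linearity in the second slot to write $\re\ti(F;G)=\re\ti(F^\hor;G^\hor)+\re\ti(F;G^\mix)+\bigl(\re\ti(F;G^\hor)-\re\ti(F^\hor;G^\hor)\bigr)$, identify the first term as the horizontal part via Remark~\ref{rem:hor} and Lemma~\ref{lem:pseudo00}(iii), and for the cross term expand $\ad(F)^l-\ad(F^\hor)^l$ as a sum of words containing exactly one $\ad(F^\mix)$. Your argument for part~(ii) (triangular characteristic system, linear $\ba$-equation, quadrature for $\yy$) is fine and is in fact more explicit than the paper, which does not spell out~(ii) in the appendix.

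The one genuine problem is your reading of the ``technical heart.'' You claim that ``the naive bound from Lemma~\ref{lem:pseudo00}(i) loses a factor $\delta^{-2}$ at each iteration and cannot be summed over $l$,'' and on this basis you propose to re-do the $\ad$-iteration via the Bony decomposition of Lemma~\ref{boni}. This misreads the structure of~\eqref{prop.smooth.bra}. Applied with $\bN=0$, $\bM=\bd$, it gives
\[
\lceil\{F,H\}\rfloor_{s',-\bd}^\cO\ \lesssim\ |F|_{s'}^\cO\,\lceil H\rfloor_{s',-\bd}^\cO\ +\ \delta^{-2}\,|F|_s^\cO\,|H|_s^\cO\,,
\]
and the crucial point is that the \emph{first} summand carries no factor of $\delta$ at all: it stays at width $s'$ on both sides. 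Only the second summand pays $\delta^{-2}$, and in exchange it drops from the weighted $\lceil\cdot\rfloor$-norm to the plain $|\cdot|_s$-norm, after which further brackets are controlled purely by the algebra property~\eqref{alge} with no further loss. Iterating therefore costs \emph{at most one} $\delta^{-2}$ along each chain, which is exactly what the paper's inductive bound~\eqref{parti} expresses: $\lceil\ad(F)^k(G^\mix)\rfloor_{s',-\bd}^\cO\le(\hat C_\bd|F|_s^\cO)^k\bigl(\lceil G^\mix\rfloor_{s,-\bd}^\cO+k\,\delta^{-2}|G|_s^\cO\bigr)$ -- the $\delta^{-2}$ enters linearly in $k$, not geometrically. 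So there is no obstruction for Bony to remove at this level. The Bony decomposition is indeed the key tool, but it has already been spent in proving Lemma~\ref{lem:pseudo00}(i); re-invoking it inside the $\ad$-iteration (as you sketch, without carrying it through) is neither needed nor what the paper does. Once this is corrected, your plan essentially is the paper's proof: the telescoping induction~\eqref{parti} handles $\re\ti(F;G^\mix)$, the identity $\ad(F)^k-\ad(F^\hor)^k=\sum_{h}\ad(F)^h\,\ad(F^\mix)\,\ad(F^\hor)^{k-1-h}$ handles the cross term, and the stated constants fall out. One small point worth flagging: in passing from $\bigl(1-\eta^{-1}|F^\hor|_s^\cO\bigr)^{-1}$ to $\bigl(1-\eta^{-1}|F|_s^\cO\bigr)^{-1}$ in the horizontal estimate you tacitly use $|F^\hor|_s^\cO\le|F|_s^\cO$; this is true but deserves a word of justification, e.g., recover $F^\hor$ by letting $|n|\to\infty$ in the coefficients and use Proposition~\ref{riassunto}(iii).
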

The lemma is proved in Appendix \ref{app:fun}.
\begin{lemma}
	\label{lem:smo2}
	Consider $F = F^\hor + F^\mix$and  $G = G^\hor + G^\mix$ such that $F^\hor,G^\hor \in \cQ^\hor_{s,-2}$, $F^\mix,G^\mix \in \cQ_{s,-\bd}^\cO$.  Set\footnote{here $C_0$ is the algebra constant in \eqref{alge}} $\eta= \min(C_0^{-1},C_{\bd}^{-1})$,  and assume that $\eta^{-1} \abs{F}_{s}^\cO<\frac12 $. The following holds:
	\begin{itemize}
		\item[(i)] 
		For every $\ti \in \N$ the  Hamiltonian $\re\ti(F; G) $ equals $\re{\ti}(F; G)^\hor + \re{\ti}(F; G)^\mix$ with $\re{\ti}(F; G)^\hor= \re{\ti}(F^\hor; G^\hor) \in \cQ^\hor_{s,-2}$ and $\re{\ti}(F; G)^\mix \in  \cQ_{s,-\bd}^\cO$. 
		Furthermore one has the quantitative estimates
		\begin{align*}
		\lceil \re{\ti}(F; G)^\hor \rfloor_{s, -2 }^\cO &\leq \frac{|G^\hor|_{s,-2}^\cO\left(\eta^{-1} \lceil F^\hor \rfloor_{s,-2}^\cO\right)^\ti}{1 - \eta^{-1} \abs{F^\hor}_{s,-2}^\cO}\ ,
		\end{align*}
		for $\ti\ge 0$. Similarly for $\ti\ge 1$  we have
			\begin{align*}
		\lceil \re{\ti}(F; G)^\mix \rfloor_{s, -\bd }^\cO &\leq \frac{\lceil G^\mix\rfloor_{s,-\bd}^\cO\left(\eta^{-1} \lceil F \rfloor_{s,-2}^\cO\right)^\ti}{1 - \eta^{-1} (\abs{F}_{s,-2}^\cO)}+
	\frac{\lceil G^\hor\rfloor_{s,-2}^\cO \lceil F^\mix \rfloor_{s,-\bd}\left(\eta^{-1} \lceil F \rfloor_{s,-2}^\cO\right)^{\ti-1}}{1 - \eta^{-1} (\abs{F}_{s,-2}^\cO)}
		\end{align*}
		Note that the same holds if we substitute in \eqref{taylor}  the sequence $\{\frac{1}{l!}\}$  with any sequence $\{b_l\}$ such that  $\forall l$ one has  $|b_l|\leq \frac{1}{l!}$.
	\end{itemize}
\end{lemma}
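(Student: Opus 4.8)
The plan is to rerun the proof of Lemma~\ref{lem:smo} in the case where both horizontal components $F^\hor$ and $G^\hor$ are $2$-smoothing in the $x$ direction, i.e.\ of order $(-2,0)$. The gain from the extra smoothing on $G^\hor$ is that we never need to recover $x$-derivatives from a high-frequency (Bony) remainder, which is exactly why the estimates hold at the \emph{same} analyticity width $s$ with no loss $\delta^{-1}$: everywhere in the proof of Lemma~\ref{lem:smo} I would replace the estimate \eqref{prop.smooth.bra} (together with the Bony splitting of Lemma~\ref{boni}) by the loss-free Banach-algebra bound for the order norm. First, write $\re{\ti}(F;G)=\sum_{l\ge\ti}b_l(-\ad F)^lG$ with $|b_l|\le 1/l!$, insert $F=F^\hor+F^\mix$ and $G=G^\hor+G^\mix$, and expand each iterated bracket multilinearly. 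The string built only from $\ad F^\hor$ applied to $G^\hor$ reassembles to $\re{\ti}(F^\hor;G^\hor)$; it is horizontal by Remark~\ref{rem:hor} and of order $(-2,0)$, since the Poisson bracket of two horizontal order-$(-2,0)$ Hamiltonians is again (at least) of order $(-2,0)$. Every other string contains a factor $\ad F^\mix$ or has innermost term $G^\mix$; since $F^\mix,G^\mix\in\cQ^\cO_{s,-\bd}$ and, by Lemma~\ref{lem:pseudo00}(i) (with $\bd=(2,2)$), $\cQ^\cO_{s,-\bd}$ is an ideal for the Poisson bracket against $\cQ^\cO_s$, each such string lies in $\cQ^\cO_{s,-\bd}$. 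Summing over $l$ gives the splitting $\re{\ti}(F;G)=\re{\ti}(F^\hor;G^\hor)+\re{\ti}(F;G)^\mix$ with $\re{\ti}(F;G)^\hor\in\cQ^\hor_{s,-2}$ and $\re{\ti}(F;G)^\mix\in\cQ^\cO_{s,-\bd}$.

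For the horizontal estimate the key input is the algebra inequality
\[
\lceil\{A^\hor,B^\hor\}\rfloor^\cO_{s,-2}\le \eta^{-1}\lceil A^\hor\rfloor^\cO_{s,-2}\,\lceil B^\hor\rfloor^\cO_{s,-2},
\]
proved \emph{without shrinking $s$}: in the regime $|\ell_1|\le c\la m_1\ra$ (with $c$ the cutoff of Lemma~\ref{boni}) momentum conservation forces the contracted index to stay within distance $\tfrac12\la m_1\ra$ of $m_1$, so the weight $\la m_1\ra^2$ passes from one leg of the bracket to the other with a universal constant; in the regime $|\ell_1|>c\la m_1\ra$ the factor $e^{|\ell_1|s}$ dominates the polynomial weight; and in both regimes $e^{|\ell_1+\ell_2|s}\le e^{|\ell_1|s}e^{|\ell_2|s}$, so no $\delta^{-1}$ is produced. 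Iterating this $l$ times on $G^\hor$ and summing $\sum_{l\ge\ti}|b_l|(\eta^{-1}\lceil F^\hor\rfloor^\cO_{s,-2})^l\lceil G^\hor\rfloor^\cO_{s,-2}$ as the tail of a convergent series (legitimate by the smallness hypothesis $\eta^{-1}\abs{F}^\cO_s<\tfrac12$) yields the claimed bound on $\lceil\re{\ti}(F;G)^\hor\rfloor^\cO_{s,-2}$.

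For the mixed estimate, among the strings contributing to $\re{\ti}(F;G)^\mix$ the dominant ones carry exactly one $\mix$ factor, namely $(\ad F^\hor)^{b}(\ad F^\mix)(\ad F^\hor)^{l-1-b}G^\hor$ or $(\ad F^\hor)^{l}G^\mix$. Moving each $\ad F^\hor$ past by the algebra bound above (cost $\eta^{-1}\lceil F^\hor\rfloor^\cO_{s,-2}$, at fixed $s$), and using the analogous loss-free bound $\lceil\{A^\hor,C^\mix\}\rfloor^\cO_{s,-\bd}\le \eta^{-1}\lceil A^\hor\rfloor^\cO_{s,-2}\lceil C^\mix\rfloor^\cO_{s,-\bd}$ for $A^\hor\in\cQ^\hor_{s,-2}$, $C^\mix\in\cQ^\cO_{s,-\bd}$, one bounds such a string by $(\eta^{-1}\lceil F\rfloor^\cO_{s,-2})^{l-1}$ times either $\lceil G^\mix\rfloor^\cO_{s,-\bd}\,\eta^{-1}\lceil F\rfloor^\cO_{s,-2}$ or $\lceil G^\hor\rfloor^\cO_{s,-2}\lceil F^\mix\rfloor^\cO_{s,-\bd}$. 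The position of the single $\mix$ slot among the $l$ slots costs a combinatorial factor $\le l$, which together with $|b_l|\le1/l!$ still sums to a convergent series; strings with two or more $\mix$ factors are smaller by a further power of $\eta^{-1}\abs{F}^\cO_s$ and are absorbed. Collecting the two types of contribution produces the two displayed terms, with geometric prefactor $(\eta^{-1}\lceil F\rfloor^\cO_{s,-2})^{\ti-1}$ and denominator $1-\eta^{-1}\abs{F}^\cO_{s,-2}$.

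The one genuinely new point with respect to Lemma~\ref{lem:smo}, and where I expect the real work to sit, is the loss-free algebra property for horizontal order-$(-2,0)$ Hamiltonians (and its companion for a bracket against an order-$(-\bd)$ Hamiltonian). The difficulty is that the internal summation of the bracket runs over the lattice $\ell_1\in\Z^\tk$, which is \emph{not} controlled by the $x$-weight alone because the map $\ell_1\mapsto\pi(\ell_1)$ is far from injective; the resolution is the two-regime split above — momentum conservation transfers the $\la m_1\ra^2$ weight in the low-$|\ell_1|$ regime, and $e^{|\ell_1|s}$ swallows it in the high-$|\ell_1|$ regime. One must verify that the constants in this split are uniform in $s$ on the range of $s$ used (they blow up only as $s\to0$, which does not occur since $s$ is held fixed throughout), after which the remaining bookkeeping is identical to that in the proof of Lemma~\ref{lem:smo}.
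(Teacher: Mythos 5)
Your plan --- rerun Lemma~\ref{lem:smo} replacing the $\delta$-losing bracket estimate of Lemma~\ref{lem:pseudo00}(i) by one that holds at the \emph{same} analyticity width $s$ --- is precisely what the paper does, and the two algebra inequalities you write down are the right ones (indeed your observation that horizontality of $A^\hor$ is what lets $\{A^\hor,C^\mix\}$ land in $\cQ_{s,-\bd}$ rather than only in $\cQ_{s,-(2,0)}$ is finer than what the paper's terse proof records). The gap is in your proof of the key inequality $\lceil\{A^\hor,B^\hor\}\rfloor_{s,-2}\lesssim\lceil A^\hor\rfloor_{s,-2}\,\lceil B^\hor\rfloor_{s,-2}$ via a Bony split: in the regime $|\ell_1|>c\la m_1\ra$ your claim that ``$e^{|\ell_1|s}$ dominates the polynomial weight, so no $\delta^{-1}$ is produced'' is false at fixed $s$. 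Converting $\la m_1\ra^2$ into $c^{-2}|\ell_1|^2$ and absorbing into the exponential costs exactly $\delta^{-2}$ --- this is the content of Lemma~\ref{boni}, whose conclusion carries $k!\,\delta^{-k}$ --- so the Bony-split route is still $\delta$-losing, and your final-paragraph worry about whether the split constants depend on $s$ is well founded: they do, precisely through $\delta$.

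The resolution the paper uses is simpler and needs no Bony split at all. Because \emph{both} legs of the bracket already carry the order-$(-2)$ weight, there is nothing to trade between legs. Identify $A^\hor$, $B^\hor$ with matrices as in Remark~\ref{rem.m}, set $D:=\diag(\la m\ra^2)$, and use the identity $D(AB-BA)=(DA)B-(DB)A$ (blockwise in $n$) together with the loss-free operator-algebra bound $|\{H,K\}|_s\le C_0|H|_s|K|_s$ of that remark. This immediately gives
\[
\lceil\{A^\hor,B^\hor\}\rfloor_{s,-2}\ \le\ C_0\left(\lceil A^\hor\rfloor_{s,-2}\,|B^\hor|_s+|A^\hor|_s\,\lceil B^\hor\rfloor_{s,-2}\right)\ \le\ 2C_0\,\lceil A^\hor\rfloor_{s,-2}\,\lceil B^\hor\rfloor_{s,-2}
\]
with no shrinking of $s$; for the companion bound involving $C^\mix$, the $\la n\ra^2$-weight passes entirely to $C^\mix$ because $A^\hor$ is $n$-independent, and the $\la m_1\ra^2$-weight goes on whichever leg sits at the output index. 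This is what the paper means by saying the replacement estimate ``follows easily by exploiting the algebra property of the norm, see also Remark~\ref{rem.m} and Remark~\ref{rem:mappare}.'' Once you have the right algebra inequality, the rest of your combinatorial bookkeeping over the multilinear expansion of $\re{\ti}(F;G)$ is correct.
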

The lemma is proved in Appendix \ref{app:fun}.\\
 Note that the difference between Lemma \ref{lem:smo} and Lemma \ref{lem:smo2} is that in this last one the quantitative estimates {\em do not} lose regularity in $s$ and do not gain in order.

\section{Preparation of the Hamiltonian}
\label{preparazione}
The aim of this section is to write the   Hamiltonian \eqref{parto}, the mass $M$  \eqref{mass0} and the momentum $P$ \eqref{momentum0} in the adapted variables \eqref{a.a.z} around the tori  \eqref{tori}.

\subsection{The Birkhoff map for the one dimensional cubic NLS}
\label{sub:bm}
First we gather some properties of the  Birkhoff map for the dNLS \eqref{dNLS}. The main references
for this subsection are the book \cite{grebert_kappeler} and the paper \cite{masp_vey}.\\
We introduce the Fourier-Lebesgue spaces for $p, s \geq 0$:
\begin{equation}
\label{ell.s.a}
h^{p,s} \equiv h^{p,s}(\Z) := \left\lbrace
(q_m, \bar q_m)_{m \in \Z} \in \ell^2(\Z)\times\ell^2(\Z) \colon \ \norm{q}_{p,s}^2 := \sum_{m \in \Z} e^{2 s |m|} \langle m \rangle^{2p} \, |q_m|^2 <\infty \right\rbrace
\end{equation}
We shall denote by $B^{p, s}(\rho)$ the ball of radius $\rho$ and center $0$ in the topology of $h^{p,s}$.
\begin{theorem}
\label{thm:dnls}
There exists $\rho_* >0$  and a real analytic,  symplectic,  majorant analytic map $\Phi: B^{0,0}(\rho_*) \to \ell^2\times \ell^2$  with  $\di\Phi(0,0) = \uno $ s.t. $\forall p, s \geq 0$ the following is true.
\begin{itemize}
\item[(i)] The restriction of  $\und{\Phi}$ to $B^{p,s}(\rho_*)$ gives rise to a real analytic map $\und{\Phi} :  B^{p,s}(\rho_*) \to h^{p,s}$. 
Moreover there exists  $C>0$ s.t. for all $0 < \rho <\rho_*$
\begin{equation}
\label{bm.est}
\sup_{\norm{q}_{p,s}  \leq \rho} \norm{(\und{\Phi }- \uno)(q,\bar q)}_{p,s} \leq  C \,  \rho^3  \ .
\end{equation}
The same estimate holds for $\und{\Phi^{-1}}-\uno$, with a different constant.
\item[(ii)]   For $p \geq 1$,   $\Phi$ 
introduces local Birkhoff coordinates for dNLS. 
More precisely the  integrals of motion of dNLS are real analytic functions of the actions
$I_j = |z_j|^2$. In particular, the Hamiltonian $H_{{\rm dnls}}$, 
the mass $M(q):= \int_\T \abs{q(x)}^2 dx$ and the momentum $P(q):= \int_\T \bar q(x) \im \derx q(x) dx$ have the form
\begin{align}
&\left(H_{{\rm dnls}} \circ \Phi^{-1}\right)(z, \bar z) \equiv h_{\rm dnls}\left( (|z_m|^2 )_{m \in \Z}\right) \\
\label{mass.bc}
&\left(M\circ \Phi^{-1}\right)(z, \bar z) = \sum_{m \in \Z} |z_m|^2 \ , \\
&\left(P\circ \Phi^{-1}\right)(z, \bar z) = \sum_{m \in \Z}  m |z_m|^2 \ . 
\end{align}
\item[(iii)] For $p \geq 2$, define the dNLS  action-to-frequency map  $I \mapsto \alpha^{{\rm dnls}}(I)$, where
$
\alpha^{{\rm dnls}}_m(I) := \frac{\partial h_{\rm dnls}}{\partial I_m}$, $\forall m \in \Z.$
Then, in a neighborhood of $I=0$,  one has the asymptotic expansion 
\begin{equation}
\label{a.fr}
\alpha^{{\rm dnls}}_m(I)  =  m^2 + 2\sum_i I_i - I_m + {\frac{\varpi_m(I)}{\la m\ra}}  \ , \qquad m \in \Z
\end{equation}
where $\varpi_m(I)$ is at least quadratic in $I$ and $\sup_m |\varpi_m(I)| < \infty$.
\end{itemize}
\end{theorem}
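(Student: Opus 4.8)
The plan is to assemble the three items from the two works cited in this subsection, \cite{grebert_kappeler} and \cite{masp_vey}, adding only the short arguments that connect them.

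For (i) and (ii): recall that $\Phi$ is the global Birkhoff map of defocusing NLS constructed in \cite{grebert_kappeler} from the spectral data of the associated Zakharov--Shabat (AKNS) operator. It is a real analytic symplectomorphism of $L^2(\T)$ with $\di\Phi(0,0)=\uno$ which conjugates every Hamiltonian of the dNLS hierarchy to a function of the actions $I_m=|z_m|^2$ alone; specialising to the relevant members of the hierarchy gives the stated form of $H_{\rm dnls}$, of the mass $M$ and of the momentum $P$, the linearity in the actions of the latter two being the explicit normal-form computation of \cite{grebert_kappeler}. The majorant analyticity of $\Phi$, the fact that it restricts to a real analytic map on each Fourier--Lebesgue space $h^{p,s}$, and the cubic estimate \eqref{bm.est} on the nonlinear part $\und{\Phi}-\uno$ are the content of \cite{masp_vey}; the bound for $\und{\Phi^{-1}}-\uno$ then follows by inverting an analytic map tangent to the identity.

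For (iii) I would argue in two steps. \emph{Leading terms.} Perform one step of formal Birkhoff normal form on $H_{\rm dnls}$: writing the quartic term in Fourier coordinates and isolating the resonant quadruples, which in dimension one are forced to be the trivial ones $\{m_1,m_3\}=\{m_2,m_4\}$, one finds that the resonant quartic part equals $\big(\sum_m I_m\big)^2-\tfrac12\sum_m I_m^2$; since $\Phi$ already conjugates $H_{\rm dnls}$ to the integrable $h_{\rm dnls}(I)$, this formal computation is the genuine Taylor expansion at $I=0$, so $h_{\rm dnls}(I)=\sum_m m^2 I_m+\big(\sum_m I_m\big)^2-\tfrac12\sum_m I_m^2+O(I^3)$ and hence $\alpha^{\rm dnls}_m(I)=\partial_{I_m}h_{\rm dnls}=m^2+2\sum_i I_i-I_m+O(I^2)$. \emph{Remainder.} It then remains to show that the $O(I^2)$ term, written as $\varpi_m(I)/\langle m\rangle$, satisfies $\sup_m|\varpi_m(I)|<\infty$ near $I=0$; the gain of one power of $\langle m\rangle$ comes from the smoothing in \eqref{bm.est}, since $H_{\rm dnls}(q)-\sum_m m^2|q_m|^2$ pulled back by $\Phi$ is controlled in the Fourier--Lebesgue topology. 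Quantitatively this is the dNLS frequency asymptotics of \cite{grebert_kappeler}, refined on the analytic scale in \cite{masp_vey}.

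I expect the only genuinely non-routine point to be this last uniform-in-$m$ remainder bound with the $\langle m\rangle^{-1}$ gain: it is not a soft consequence of integrability but rests on the refined majorant-analytic estimates for $\Phi$ on the spaces $h^{p,s}$, which is precisely why the statement is phrased on that scale. Everything else is bookkeeping on top of the cited constructions.
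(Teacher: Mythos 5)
Your treatment of items (i) and (ii) matches the paper exactly: the paper's proof is a one-line citation of \cite{masp_vey} for the majorant-analytic estimate \eqref{bm.est} and of \cite{grebert_kappeler} (together with \cite{kuksin_poschel}) for the normal form of $H_{\rm dnls}$, $M$ and $P$, and you invoke the same sources.

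For item (iii) you deviate from the paper, which simply cites \cite{kapp_liang,kapp_scatt}, and there is a genuine gap. Your formal Birkhoff computation of the leading terms $m^2+2\sum_i I_i - I_m$ is correct (the resonance condition in $1$D does force $\{m_1,m_3\}=\{m_2,m_4\}$, and uniqueness of the resonant normal form lets you identify the formal quartic with the true Taylor coefficient of $h_{\rm dnls}$). The problem is the remainder. You assert that the uniform-in-$m$ bound $\sup_m|\varpi_m(I)|<\infty$ with the $\langle m\rangle^{-1}$ gain ``comes from the smoothing in \eqref{bm.est}'', but that estimate does not give it. \eqref{bm.est} says that $\underline{\Phi}-\uno$ is cubically small from $h^{p,s}$ to $h^{p,s}$, for all $p,s\geq 0$; it is a boundedness statement at fixed regularity, not a gain of one power of $\langle m\rangle$ in the $m$-th component of the remainder. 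To pass from this to a $\langle m\rangle^{-1}$ decay of $\alpha^{\rm dnls}_m-m^2-2\sum I_i+I_m$ one has to control $\partial_{I_m}$ of $h_{\rm dnls}(I)=H_{\rm dnls}\big(\Phi^{-1}(z)\big)$ componentwise, and the cross term $\sum m^2\,2\,\mathrm{Re}\big(\bar z_m R_m(z)\big)$ (with $R=\Phi^{-1}-\uno$) is not $O(\langle m\rangle^{-1})$ for free from \eqref{bm.est} alone. The decay of the frequency corrections is in fact a statement about the asymptotics of the spectral gaps/actions of the Zakharov--Shabat operator as $m\to\infty$, and it is exactly what is proved in Kappeler--Liang \cite{kapp_liang} and Kappeler--Schaad--Topalov \cite{kapp_scatt}, which the paper cites. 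You are right to flag this as the non-routine point, but the argument you sketch for it does not close; you would need to import the inverse-spectral frequency asymptotics, not re-derive them from \eqref{bm.est}.
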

\begin{proof}
Item $(i)$ is the main content of \cite{masp_vey}, where it is proved that the Birkhoff map is majorant analytic between some Fourier Lebesgue-spaces. Item $(ii)$ is proved in \cite{grebert_kappeler, kuksin_poschel}. Item $(iii)$ in \cite{kapp_liang, kapp_scatt}.
\end{proof}

Consider now the mass shift Hamiltonian $H_{\rm dm}$ defined in \eqref{parto.dnls}.
By Theorem \ref{thm:dnls}, $H_{\rm dm}$ is integrable and one has
$$
\left(H_{\rm dm}\circ \Phi^{-1}\right)(z, \bar z) =  h_{\rm dnls}( (|z_m|^2 )_{m \in \Z}) -  \left(\sum_m |z_m|^2 \right)^2 =:h_{\rm dm}\big( (|z_m|^2 )_{m \in \Z}\big)  \ . 
$$
We define the frequencies $\alpha_m(I) := \frac{\partial h_{\rm dm}}{\partial I_m}$ which by \eqref{mass.bc} 
are given by 
\begin{equation}
\label{freq1:dnls0}
\alpha_m(I) := \alpha^{\rm dnls}_m(I) - 2 \sum_i I_i \ ,
\end{equation}
and by   \eqref{a.fr} one has  the following  expansion in a neighborhood of $I=0$:
\begin{equation}
\label{freq:1dnls}
\alpha_m(I)=    m^2 - I_m + { \frac{ \varpi_m(I)}{\la m\ra}}  \ , \qquad m \in \Z  \ . 
\end{equation}
Now with  $\cS_0 = (\tm_1, \ldots, \tm_\tk)$ we consider the map 
\begin{equation}
\label{atof.map2}
I_\tm \equiv (I_{\tm_1},\dots,I_{\tm_\tk}) \to (\alpha_{\tm_1}(I_\tm),\dots,\alpha_{\tm_\tk}(I_\tm)) \equiv \alpha_{\tm}(I_\tm) \ ;
\end{equation}
by Theorem \ref{thm:dnls} $(iii)$ this map  is generically a diffeomorphism\footnote{generically in the sense outside a set of measure 0}. 
As we have already  mentioned, we prefer to parametrize the frequencies with a vector of parameters $\lambda = (\lambda_1, \ldots, \lambda_\tk) \in \cO_0 \equiv [1/2, 1]^\tk$, 
 using the fact that, by a standard application of the implicit function theorem, for $\epsilon$ sufficiently small there exists
  an analytic map $ \cO_0 \ni \lambda \to I_\tm(\lambda, \epsilon) \in \R^\tk_{>0}$ of the form  \eqref{boia}
such that 
$$
\alpha_{\tm}( I_\tm(\lambda, \epsilon) ) = \omega^\0 - \epsilon \lambda  =: \omega(\lambda)\ ,
$$
where $\omega^\0$ is defined in \eqref{omega0}. 
 We will call  $\omega(\lambda) \equiv \alpha_\tm(I_\tm(\lambda, \e))$  the {\em tangential frequencies}, while 
 $(\Omega_m(\lambda))_{m \in \Z \setminus \cS_0}$, where 
 \begin{equation}
 \label{norm.freq}
 \Omega_m(\lambda) := \alpha_m(I_\tm(\lambda, \e)) \ , \qquad m \notin \cS_0 
 \end{equation}
  the {\em normal frequencies};  by \eqref{freq:1dnls} the normal frequencies have the following asymptotic expansion as $|m| \to \infty$: 
\begin{equation}
\label{fischio0}
\Omega_m(\lambda) = m^2 +	
  \frac{\varpi_m(I_\tm(\lambda, \e))}{{\la m\ra}}\,,\quad \mbox{with} \ \ \  
  \sup_{\lambda\in \cO_0}\sup_{m \in \Z }|\varpi_m(I_\tm(\lambda, \e))| +
  |\partial_\lambda \varpi_m(I_\tm(\lambda, \e))|\le  C \e^2 \ . 
\end{equation}

 \subsection{Adapted variables}
In this section we introduce adapted local variables in a neighborhood of the torus $\tT^\tk(\cS_0, \lambda) \equiv \tT^\tk(\cS_0, I_\tm(\lambda, \e))$ defined in \eqref{tori}. 
We parametrize the torus $ \tT^\tk(\cS_0, \lambda)$ as $z^\fig(\lambda; \theta) = (z^\fig_m(\lambda; \theta) )_{m \in \Z}$ with 
\begin{equation}
\label{fin.gap2}
z_{\tm_i}^{\rm fg}(\lambda; \theta)= \sqrt{I_{\tm_i}(\lambda)}\  e^{\im \theta_i}\ , \ \ \ \mbox{for} \ 1 \leq i \leq \tk   \ ,\qquad z_m^{\rm fg}=0\,,\quad  \mbox{for} \ m\notin \cS_0 \ ;
\end{equation}
we denote  by  $q^\fig(\lambda; \theta)$ its preimage  through the Birkhoff map\footnote{Abusing notation, from now on we will often write
$q = \Phi^{-1}(z)$ in place of $(q, \bar q) = \Phi^{-1}(z, \bar z)$.}
: 
\begin{equation}
\label{figa}
q^\fig(\lambda; \theta) \equiv (q_m^\fig(\lambda; \theta))_{m \in \Z} \equiv \Phi^{-1}( z^{\rm fg}\big(\lambda; \theta)\big) \ .
\end{equation}
 The explicit expression of  $q^\fig(\lambda; \theta)$ in the $x$ variable was given in  \eqref{finite.gap.formula}. 
We study now the analytic properties of the map  $\Lambda: (\yy, \theta, \ba) \mapsto (u_\jj)_{\jj \in \Z^2}$ defined in \eqref{mappa}. 
First remark that by \eqref{figa} the set $\Phi^{-1}\left( \tT^\tk(\cS_0, \lambda) \right)$ is described in the $(\yy, \theta, \ba)$ coordinates by $\yy=0$, $\ba =0$, i.e. 
$$
 \Phi^{-1}\left( \tT^\tk(\cS_0, \lambda) \right) \equiv \{ \Lambda(0, \theta, 0) \colon \theta \in \T^\tk \} \ . 
$$
We  show also that $\Lambda$ maps $D(s, r)$ into the set $\cV_\delta$ of functions which are $\delta$-close to the torus $\tT^\tk$, which we recall is 
\begin{equation}
\label{nei}
\begin{aligned}
\cV_\delta := 
\Big\{ v \in H^p(\T^2) \colon 
& {\rm dist}_{H^p(\T^2)} \Big( v,  \Phi^{-1}\left( \tT^\tk(\cS_0, \lambda) \right) \Big) < \delta \ , \\
& \abs{\abs{\Phi_{\tm_i}( (v_{(m,0)})_{m \in \Z} )}^2 -  I_{\tm_i}(\lambda) }\leq \delta^2 \mbox{ for } 1 \leq i \leq \tk 
 \Big\} \ . 
 \end{aligned}
\end{equation}
Note that we are constantly identifying a function with its Fourier coefficients.
\begin{proposition}
\label{prop:adap}
Fix an arbitrary $p > 1$ and the set $\cS_0$. For any $s >0$,  there exist $\e_0(s)>0, c_\star(s), c_\ast(s) >0$ such that  for any $0<\e < \e_0(s)$ the following is true. 
\begin{itemize}
\item[(i)] There exists $0< r_* \leq \sqrt{\e}/4$ s.t. for any $0 <r \leq r_* $ the change of coordinates
$\Lambda \colon D(s, r) \to \Lambda (D(s, r))$, $(\yy,\theta,\ba)\mapsto  (u_{\jj})_{\jj\in \Z^2} $
 is majorant analytic, and the symplectic form in the variables $(\yy, \theta, \ba)$ is given by \eqref{yythetaa}.
 \item[(ii)]
For $0 \leq  r \leq r_*$,  $\cV_{c_\star r} \subseteq \Lambda(D(s, r)) \subseteq  \cV_{c_\ast r}$.
  \item[(iii)] The Hamiltonian \eqref{parto} in the variables $(\yy, \theta, \ba)$ takes the form 
 \begin{align}
\label{H.2}
\cH(\lambda;\yy, \theta, \ba)  
 =  & \cN^\0+\cH^\0(\lambda; \theta, {\bf a})+\cH^\1(\lambda; \theta, {\bf a})+\cH^\2(\lambda; \theta, \yy,  {\bf a})
\end{align}
where 
\begin{align}
\label{def:N}
\cN^\0 & := \omega(\lambda) \cdot \yy +  \cD  \ , \\
\label{def:D}
 & \  \cD :=    \sum_{i=1}^\tk \omega_{\tm_i} (\lambda) \yy_i +\sum_{\jj=(m,n) \in \Z^2\setminus\cS_0} \Omega_\jj^\0\,  |a_\jj|^2   
\end{align} 
and  the normal frequencies $\Omega_\jj^\0$ are defined by
\begin{equation}
\label{def:Omega}
 \Omega^{(0)}_\jj := \left\{ \begin{array}{ll} |\jj|^2 & \text{if}\; \jj=(m,n) \;\text{with} \; n\neq 0 \\ \Omega_m(\lambda) & \text{if}\; \jj=(m,0) \end{array}\right. \ ,
\end{equation}
where $\Omega_m(\lambda)$ is defined in \eqref{norm.freq}.
Moreover  $\cH^\0\in \cQ_{s}^\hor$,   $\cH^\1$ and $\cH^{(\geq 2)}$   belong to $\cA_{s,r_*}^\cO$ with the quantitative  bounds 
\begin{equation}
|\cH^\0|_{s}^\cO \le C\e\,,\qquad |\cH^\1|_{s,r}^\cO \le C\sqrt{\e}r\,,\qquad |\cH^{(\ge 2)}|_{s,r}^\cO \le C r^2  \ , \qquad \forall 0 < r \leq r_* \ , 
\end{equation}
where $C$ is independent of $\e,r$.
\item[(iv)] The mass $M$ and the momentum $P$ (defined in \eqref{mass0} and \eqref{momentum0}) in the variables $(\yy, \theta, \ba)$ take the form \eqref{costanti}.
\end{itemize}
\end{proposition}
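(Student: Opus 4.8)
The plan is to prove the four items in order, exploiting the factorization $\Lambda=\Phi^{-1}\circ\Xi$, where $\Xi$ is the map $(\yy,\theta,\ba)\mapsto (z_m)_{m\in\Z}$ which is the identity on the coordinates $(a_{(m,n)})_{n\neq 0}$ and sends $(\yy,\theta,(a_{(m,0)})_{m\notin\cS_0})$ to $z_{\tm_i}=\sqrt{I_{\tm_i}(\lambda)+\yy_i}\,e^{\im\theta_i}$, $z_m=a_{(m,0)}$ for $m\notin\cS_0$ (and $\Phi$ acts only on the $y$-independent modes, as the identity elsewhere).

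\emph{Items (i) and (ii).} The only nonconstant component of $\Xi$ is $\sqrt{I_{\tm_i}(\lambda)+\yy_i}\,e^{\im\theta_i}$, which is majorant analytic as long as $|\yy_i|<I_{\tm_i}(\lambda)$; since on $D(r)$ one has $|\yy_i|\le|\yy|_1<r^2$ while $I_{\tm_i}(\lambda)=\e\lambda_i+O(\e^2)\ge\e/3$ by \eqref{boia}, it suffices to impose $r^2<\e/3$, which fixes $r_*\le\sqrt\e/4$. Composing with the majorant analytic map $\Phi^{-1}$ of Theorem \ref{thm:dnls}(i) (used at $s=0$, i.e. on $h^p$) yields item (i), provided $\Xi(D(s,r))$ lies in the ball $B^{p,0}(\rho_*)$: this holds once $\e<\e_0(s)$, with $\e_0(s)$ depending on $s$ and $\max_i\langle\tm_i\rangle^p$, since then $\sum_m\langle m\rangle^{2p}|z_m|^2\lesssim\e\,e^{2s}\max_i\langle\tm_i\rangle^{2p}+r^2<\rho_*^2$. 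The statement on the symplectic form is standard, $\Phi$ being symplectic and $z_{\tm_i}=\sqrt{I_{\tm_i}(\lambda)+\yy_i}\,e^{\im\theta_i}$ being the symplectic polar substitution sending $\im\,dz_{\tm_i}\wedge d\bar z_{\tm_i}$ to $d\yy_i\wedge d\theta_i$. For item (ii): if $(\yy,\theta,\ba)\in D(s,r)$ is real, comparing $u=\Lambda(\yy,\theta,\ba)$ with $\Lambda(0,\theta,0)\in\Phi^{-1}(\tT^\tk(\cS_0,\lambda))$ via the analyticity estimate gives ${\rm dist}_{H^p(\T^2)}(u,\Phi^{-1}(\tT^\tk))\lesssim r$, while $|\Phi_{\tm_i}(u)|^2=I_{\tm_i}(\lambda)+\yy_i$ exactly, so $\big||\Phi_{\tm_i}(u)|^2-I_{\tm_i}(\lambda)\big|<r^2$. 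Conversely, given $v\in\cV_{c_\star r}$, setting $a_{(m,n)}=v_{(m,n)}$ for $n\neq0$, $z=\Phi((v_{(m,0)})_m)$, $\theta_i=\arg z_{\tm_i}$, $\yy_i=|z_{\tm_i}|^2-I_{\tm_i}(\lambda)$, $a_{(m,0)}=z_m$ for $m\notin\cS_0$ inverts $\Lambda$, and using that $\Phi$ is bi-Lipschitz near the torus (from $d\Phi(0)=\uno$ and \eqref{bm.est}) together with the splitting of ${\rm dist}_{H^p(\T^2)}(v,\Phi^{-1}(\tT^\tk))$ into its $n\neq0$ and $n=0$ parts, one checks $|\yy|_1<r^2$ and $\|\ba\|<r$ for $c_\star$ small. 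A suitable choice of $c_\star,c_\ast$ gives (ii).

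\emph{Item (iii).} Decompose $u$ into its $y$-independent part $q=(u_{(m,0)})_m$ and its $y$-dependent, zero-$y$-average part $w=(u_{(m,n)})_{n\neq0}$. Writing $u=q+w$ in \eqref{parto} and using $\int_{\T^2}\partial_xq\,\overline{\partial_xw}=0$ and $M(q+w)=M(q)+M(w)$ (no degree-one-in-$w$ term), one obtains
\[
H(q+w)=H_{\rm dm}(q)+\cG_1(q)[w]+\cG_2(q)[w,w]+\cG_3(q)[w,w,w]+\cG_4(q)[w,\dots,w],
\]
with $H_{\rm dm}$ the massless dNLS Hamiltonian \eqref{parto.dnls} and $\cG_k$ $k$-linear in $w$; the crucial fact is
\[
\cG_1(q)[w]=\int_{\T^2}|q|^2\big(q\bar w+\bar q w\big)\,\di x\,\di y\equiv 0
\]
for every $y$-independent $q$, since $\int_\T w(x,y)\,\di y\equiv0$ — this rules out scaling degree $-1$ and is precisely the tangency of $X_\cH$ to the torus. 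Substituting $q=\Phi^{-1}(z)$, $w=(a_{(m,n)})_{n\neq0}$ along $\Xi$: by Theorem \ref{thm:dnls}(ii), $H_{\rm dm}(\Phi^{-1}(z))=h_{\rm dm}\big((|z_m|^2)_m\big)$ is $\theta$-independent, with $|z_{\tm_i}|^2=I_{\tm_i}(\lambda)+\yy_i$ and $|z_m|^2=|a_{(m,0)}|^2$; Taylor-expanding in $\yy$ and in the actions $|a_{(m,0)}|^2$ about $0$ (legitimate since $|\yy|,|a_{(m,0)}|^2\ll\e\sim$ actions, well inside the analyticity domain, and $\partial^2h_{\rm dm}=O(1)$ there by \eqref{a.fr}) produces, dropping the constant, the linear term $\sum_i\alpha_{\tm_i}(I_\tm(\lambda))\yy_i+\sum_{m\notin\cS_0}\alpha_m(I_\tm(\lambda))|a_{(m,0)}|^2=\omega(\lambda)\cdot\yy+\sum_{m\notin\cS_0}\Omega_m(\lambda)|a_{(m,0)}|^2$, plus terms of scaling degree $\ge 2$. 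Adding $\int|\nabla w|^2=\sum_{\jj=(m,n),\,n\neq0}|\jj|^2|a_\jj|^2\subset\cG_2$, the pieces of scaling degree $\le 0$ assemble into $\cN^\0$ of \eqref{def:N}--\eqref{def:D}. For $\cG_2$, write $q=q^{\rm fg}(\lambda;\theta)+\delta q$ with $\delta q=\Phi^{-1}(z)-q^{\rm fg}$ vanishing at $\yy=0,\ba=0$; the $q^{\rm fg}$-contribution
\[
\int_{\T^2}\!\Big(2|q^{\rm fg}|^2|w|^2+{\rm Re}\big((q^{\rm fg})^2\bar w^2\big)\Big)\di x\,\di y-2M(q^{\rm fg})\!\int_{\T^2}\!|w|^2\,\di x\,\di y
\]
is quadratic in $\ba$ of scaling degree $0$, and, $q^{\rm fg}$ being $y$-independent, has coefficients independent of $n$ for $n\neq0$, hence lies in $\cQ_s^{\hor}$ and, together with the degree-$0$ piece above, makes up $\cH^\0$, with $|\cH^\0|_s^\cO\lesssim\||q^{\rm fg}|^2\|+|M(q^{\rm fg})|\lesssim\e$ by \eqref{finite.gap.formula}. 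The contributions cubic in $\ba$ of scaling degree $1$ — namely $\cG_3(q^{\rm fg})[w,w,w]$ and the part of $\cG_2$ linear in $\delta q$ with $\delta q$ taken linear in $(a_{(m,0)})$ — make up $\cH^\1$, with one gained factor $q^{\rm fg}=O(\sqrt\e)$, hence $|\cH^\1|_{s,r}^\cO\lesssim\sqrt\e\, r$; the remaining terms (higher $\delta q$-corrections, corrections of $\cG_3$, and $\cG_4$) have scaling degree $\ge 2$ and vector field of size $\lesssim r^2$, giving $|\cH^{(\ge2)}|_{s,r}^\cO\lesssim r^2$. Majorant analyticity of $\cH=H\circ\Lambda$ and of each block follows from that of $\Lambda$, the fact that $H$ is polynomial, and the composition/product estimates of Proposition \ref{prop:mer}.

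\emph{Item (iv) and main obstacle.} Item (iv) is immediate from Theorem \ref{thm:dnls}(ii): $M(u)=\sum_m|u_{(m,0)}|^2+\sum_{n\neq0}|u_\jj|^2=\sum_m|z_m|^2+\sum_{\jj\in\Z^2\setminus\cS_0}|a_\jj|^2={\rm const}+\sum_i\yy_i+\sum_{\jj\in\Z^2\setminus\cS_0}|a_\jj|^2$, and likewise $P_x(u)={\rm const}+\sum_i\tm_i\yy_i+\sum_{\jj\in\Z^2\setminus\cS_0}m|a_\jj|^2$ and $P_y(u)=\sum_{\jj\in\Z^2\setminus\cS_0}n|a_\jj|^2$; discarding the constants (admissible, Hamiltonians being defined modulo constants) yields \eqref{costanti}. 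The main work, and the only genuinely delicate point, is item (iii): organizing $H\circ\Lambda$ into the four blocks with the stated algebraic structure — in particular the identity $\cG_1\equiv0$, the horizontality of $\cH^\0$, and the isolation of $\cN^\0$ — while propagating majorant analyticity through the composition with the non-polynomial Birkhoff map and converting the size $\sqrt\e$ of $q^{\rm fg}$ into the bounds $\e$, $\sqrt\e\,r$, $r^2$; items (i), (ii), (iv) are essentially bookkeeping around Theorem \ref{thm:dnls}.
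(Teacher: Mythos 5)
Your proposal follows essentially the same route as the paper's: factor $\Lambda$ through the Birkhoff map and the action-angle substitution on $\cS_0$, invoke Theorem~\ref{thm:dnls} for majorant analyticity, Taylor-expand around the finite gap torus, and sort by scaling degree; items (i), (ii) and (iv) match the paper's argument nearly verbatim, and the decomposition in (iii) (isolating $\cN^{(0)}$, the horizontal $\cH^{(0)}$ built from $q^{\rm fg}$, the cubic $\cH^{(1)}$, the remainder $\cH^{(\ge 2)}$, with your explicit $\cG_1\equiv 0$ merely making visible what is implicit in formula \eqref{H.1}) is the same. The one point of divergence is the derivation of the quantitative bounds in (iii): you trace the factors of $\sqrt\e$ by hand through $\cG_2,\cG_3,\cG_4$ and the $\delta q$ corrections, whereas the paper (Lemma~\ref{lem:norm.ham}) proves in one stroke that the pulled-back Hamiltonian is majorant analytic on $D(s,\sqrt\e/4)$ with norm $\lesssim\e$ and then reads off the degree-$d$ bounds from the scaling estimate \eqref{scala0} — a cleaner way to control the remainder terms you only gesture at.
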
 
We split the proof of  Proposition 
 \ref{prop:adap} in several steps. First we prove item $(i)$ and $(ii)$.
 
 \begin{proof}[Proof of Proposition \ref{prop:adap} $(i)$ and $(ii)$.]
$(i)$ The map $\Lambda$  is the identity on $\Z^2 \setminus \Z$ (see \eqref{mappa}), thus we need only to study  the analytic  properties of the map
$\Big(\yy,\theta, (a_{(m,0)})_{m \in \Z\setminus \cS_0}\Big)\to (u_{(m,0)})_{m\in \Z}$.
Such a map is the composition of the Birkhoff map $\Phi^{-1}$ and the map $\Upsilon$ which passes the special sites $\cS_0$ to action-angle coordinates:
\begin{align}
\label{map:aa}
 \left((\yy_i, \theta_i)_{1\leq i \leq d}, (a_{(m,0)})_{m \in \Z\setminus \cS_0}\right) &\stackrel{\Upsilon}{\mapsto}  \left( (z_{\tm_1}, \ldots, z_{\tm_d}), (z_{m})_{m \in \Z \setminus \cS_0}\right)
 \stackrel{\Phi^{-1}}{\mapsto} (u_{(m,0)})_{m\in \Z}
\\
\label{aa}
z_{\tm_j}&= \sqrt{I_j(\lambda) +\yy_j}\ e^{\im \theta_j}, \qquad \tm_j \in \cS_0 \\
\label{aa0}
z_{m}&=a_{(m, 0)}, \qquad m\in \Z\setminus \cS_0.
\end{align}
By Theorem  \ref{thm:dnls} the Birkhoff map $\Phi$  and its inverse $\Phi^{-1}$ are majorant analytic canonical diffeomorphism from a small ball  around the origin in $h^p(\Z)$ to $h^p(\Z)$ for any $p \geq 0$.
In particular there exists $0<R\le \rho_*$ (where $\rho_*$ is the domain on majorant analyticity of $\Phi$, see Theorem \ref{thm:dnls})  such that $\und{\Phi^{-1}} \colon B(R)\times B(R) \to B(2R)\times B(2R)$, $B(R)$ being a ball of center 0 and radius $R$ in the topology of $h^p(\Z)$. \\
 For a given $s>0$, fix $\e, r_*$ so that 
\begin{equation}\label{parapendio}
 16 r_*^2 \leq  \e \leq  C_* e^{-2s} R^2 \ , 
 \qquad C_*^{-1}= 48\, \tk \, \max_i |\tm_i|^{2p}.
\end{equation}
 Consider now the map $\Upsilon$.
It is proved in \cite[Lemma 7.6]{BBP} that  the first condition in \eqref{parapendio} implies that   $ \Upsilon$ is majorant analytic and 
$\und{ \Upsilon}_{s} \colon D(s, \sqrt \e /4 ) \to B(C \sqrt \e)\times B(C \sqrt \e)$, where $C >0$ can be chosen uniformly in $\e$ for $\e \leq \e_0$, while the  second  condition in \eqref{parapendio}  ensures that the image of $\Upsilon$ falls in the domain of majorant analyticity of $\Phi^{-1}$. 
As a consequence the map $\Phi^{-1}\circ \Upsilon$ is majorant analytic.
Recall now that  $\Lambda$ is simply obtained by extending  $\Phi^{-1}\circ \Upsilon$  as the identity on $\Z^2 \setminus \Z$, therefore it is    majorant analytic as  a map
$ D(s, \sqrt \e /4 ) \to B(C \sqrt \e) \times B( C \sqrt \e)$, where now we denoted by  $B(R)$ the ball in the topology of $h^p(\Z^2)$. The fact that $\Lambda$  transforms the standard symplectic form into  \eqref{yythetaa} is a direct  computation, using the symplecticity of the Birkhoff map $\Phi$.\\
$(ii)$ First we show that there exists a constant $c_\ast>0$ s.t. $\Lambda(D(s, r)) \subseteq \cV_{c_\ast r}$.
 Thus let $u = \Lambda(\yy, \theta, \ba)$ with $(\yy, \theta, \ba) \in D(s,r)$.  Recall that $q^\fig(\lambda; \theta)  = \Phi^{-1}(\Upsilon(0, \theta,0))$.
   One has that
\begin{align*}
{\rm dist}_{H^p(\T^2)} \Big( u,  \Phi^{-1}\left( \tT^\tk(\cS_0, \lambda) \right) \Big) &\leq  \norm{u - q^\fig(\lambda; \theta)}_{h^p(\Z^2)}   
 = \norm{u}_{h^p(\Z^2\setminus \Z)}  + 
\norm{u - q^\fig(\theta)}_{h^p(\Z)} \\
& =
 \norm{(a_{(m,n)})_{n \neq 0}}_{h^p(\Z^2\setminus \Z)}  + 
\norm{\Phi^{-1}\left(\Upsilon(\yy, \theta, \ba)\right) 
- \Phi^{-1}\left(\Upsilon(0, \theta, 0)\right)}_{h^p(\Z)} \\
& \stackrel{{\rm Thm.}  \ref{thm:dnls}}{\leq}   r + C \norm{\Upsilon(\yy, \theta, \ba)
- \Upsilon(0, \theta, 0)}_{h^p(\Z)} \\
& \leq r + C \norm{(a_{(m,0)})_{m \in \Z}}_{h^p(\Z\setminus \cS_0)} + C \left(\sum_{i=1}^\tk \tm_i^{2p} \abs{\sqrt{I_{\tm_i}(\lambda) + \yy_i} - \sqrt{I_{\tm_i}(\lambda)}}^2 \right)^{1/2} \\
& \leq C' \left(  r + \frac{r^2}{\sqrt \e} \right) 
\stackrel{\eqref{parapendio}}{\leq } c_\ast r  \ ,
\end{align*}
which proves one condition.
Then, since 
\begin{equation}
\label{u.0.b}
(u_{(m,0)})_{m \in \Z} = \Phi^{-1}
\Big(\left(\sqrt{I_{\tm_i}(\lambda) + \yy_i } 
e^{\im \theta_i} 
\right)_{1 \leq i \leq \tk}, 
(a_{(m,0)})_{m \in \Z \setminus \cS_0}  \Big)
\end{equation}
 one has $\Phi_{\tm_i}((u_{(m,0)})_{m \in \Z}) = \sqrt{I_{\tm_i}(\lambda) + \yy_i } \, 
e^{\im \theta_i} $, which clearly implies that
\begin{align*}
\left\vert \abs{\Phi_{\tm_i}((u_{(m,0)})_{m \in \Z})}^2 - I_{\tm_i}(\lambda) \right\vert \leq r^2 \ . 
\end{align*}
Thus  $\Lambda(D(s, r)) \subseteq \cV_{c_\ast r}$.\\
Now we show the converse, namely that $\exists \, c_\star >0$ s.t.
 $ \cV_{c_\star r} \subseteq \Lambda(D(s,r))$.  
 Then take $\delta = c_\star r $,   $u \in \cV_\delta$, and we show that $u = \Lambda(\yy, \theta, \ba)$ for some $(\yy, \theta, \ba) \in D(s,c_\star^{-1}\delta)$.
 First we put $(a_{(m,n)})_{n \neq 0} := (u_{(m,n)})_{n \neq  0}$; the condition  ${\rm dist}\Big(u, \Phi^{-1}\left( \tT^\tk(\cS_0, \lambda) \right)\Big) \leq \delta$ implies immediately that 
$\norm{(a_{(m,n)})_{n \neq 0}}_{h^p(\Z^2\setminus \Z)}\leq \delta$.\\
Consider now   $(u_{(m,0)})_{m \in \Z}$, which are the Fourier coefficients of $u(\cdot, 0)$.
 Denote  $\theta_* := {\rm argmin}_\theta \norm{u - q^\fig(\lambda; \theta_*)}$; such minimum exists since $\T^\tk$ is compact and $\theta \mapsto q^\fig(\lambda; \theta)$ is continuous. 
Then
$$
\norm{(u_{(m,0)})_{m \in \Z}}_{h^p(\Z)} 
\leq \norm{(u_{(m,0)})_{m \in \Z} - q^\fig(\lambda; \theta_*)}_{h^p(\Z)} 
+
\norm{ q^\fig(\lambda; \theta_*)}_{h^p(\Z)} \leq
\delta + C\sqrt \e \leq R \leq \rho_* \ ,
$$ 
where $\rho_*$ is the size of the domain of majorant analyticity 
of the Birkhoff map $\Phi$, see Theorem \ref{thm:dnls}.
Hence the vector  $z\equiv (z_{m})_{m \in \Z} := \Phi((u_{(m,0)})_{m \in \Z})$ is well defined; 
we pose $a_{(m,0)} := z_{m}$ for any $m \notin \cS_0$.
By the  analyticity of $\Phi^{-1}$ and the fact that $\di \Phi^{-1}(0) = \uno$, we  bound 
\begin{align*}
\delta = \norm{u - q^\fig(\lambda; \theta_*)}_{h^p(\Z)} 
& = \norm{\Phi^{-1}(z) - \Phi^{-1}(z^\fig(\lambda; \theta_*))}_{h^p(\Z)} 
\geq c_\star \norm{z - z^\fig(\lambda; \theta)_*}_{h^p(\Z)} \\
& = c_\star \norm{z - z^\fig(\lambda; \theta_*)}_{h^p(\cS_0)} + c_\star \norm{z}_{h^p(\Z\setminus \cS_0)}  \ . 
\end{align*}
Hence we obtain that   $\norm{(a_{(m,0)})_{m \notin \cS_0}}_{h^p(\Z\setminus \cS_0)}\leq c_\star^{-1}\delta$. Finally write
$z_{\tm_i} = \sqrt{I_{\tm_i}(\lambda) + \yy_i} e^{\im \theta_i}$ for some $\yy$ and $\theta$. 
Then  the second condition in \eqref{nei} implies $\delta^2 \geq \abs{|z_{\tm_i}|^2 - I_{\tm_i}(\lambda)} \geq \abs{\yy_i}$. 
Therefore we have shown that $u=\Lambda(\yy, \theta, \ba)$ for some $(\yy, \theta, \ba) \in D(s, c_\star^{-1} \delta)$.
\end{proof}

We  begin now the proof of item $(iii)$. 
Thus we consider  expression \eqref{parto} and apply in two steps the change of coordinates $\Phi^{-1}$ and $\Upsilon$ of \eqref{map:aa}.
 To begin with, we start from the  Hamiltonian in Fourier coordinates \eqref{Ha0}, and set
$$
q_m:= u_{(m,0)}\quad \mbox{ if } \  m\in \Z\,,\qquad a_{\jj}:= u_{\jj} \quad \mbox{ if } \ \jj=(m,n)\in \Z^2\,,\; n\neq 0 \ .
$$ 
We rewrite the  Hamiltonian accordingly in increasing degree in $a$, obtaining 
\begin{align}
\label{H.1}
H(q, a)= 
& \sum_{m \in \Z} m^2 |q_m|^2 -\frac12 \sum_{m \in \Z} |q_m|^4 +  \frac12\sum_{m_i\in \Z \atop m_1-m_2+m_3-m_4=0}^{\star}q_{m_1}\bar q_{m_2}q_{m_3}\bar q_{m_4}\\
\notag
 & + \sum_{\jj\in \Z^2\setminus \Z} |\jj|^2 |a_{\jj}|^2\\
 \notag
 & + 2\sum^\star_{\jj_i=(m_i,n_i)\,,i=3,4\,,\; n_i\neq 0 \atop {
m_1-m_2+m_3-m_4=0\atop  n_3-n_4=0}} q_{m_1}\bar q_{m_2} a_{\jj_3}\bar a_{\jj_4}   +
\frac{1}{2} \sum_{ \jj_i=(m_i,n_i)\,,i=2,4\,,\;n_i\neq 0 \atop {
m_1-m_2+m_3-m_4=0 \atop  n_2+n_4=0} } (q_{m_1} \bar a_{\jj_2}  q_{m_3} \bar a_{\jj_4}+ \bar q_{m_1} a_{\jj_2}  \bar q_{m_3}  a_{\jj_4})\\
\notag
 & +  \sum_{ \jj_i=(m_i,n_i)\,,i=2,3,4\,,\; n_i\neq 0
\atop {m_1-m_2+m_3-m_4=0\atop  -n_2+n_3-n_4=0}} q_{m_1} \bar a_{\jj_2} a_{\jj_3} \bar a_{\jj_4} 
+ 
\sum_{ \jj_i=(m_i,n_i)\,,i=1,2,3\,,\; n_i\neq 0
\atop {m_1-m_2+m_3-m_4=0\atop  n_1-n_2+n_3=0}} a_{\jj_1} \bar a_{\jj_2} a_{\jj_3} \bar q_{m_4}\\
\notag
& + \frac{1}{2}\sum^\star_{ \jj_i=(m_i,n_i)\,,i=1,2,3,4\,,\;  n_i\neq 0\atop{ \jj_1-\jj_2+\jj_3-\jj_4 = 0}} a_{\jj_1} \bar a_{\jj_2} a_{\jj_3} \bar a_{\jj_4}-\frac12\sum_{\jj\in \Z^2\setminus \Z}|a_\jj|^4
\end{align}

\noindent{\bf Step 1:} First we  introduce Birkhoff coordinates on the line $\Z\times \{0\}$: We set
\begin{align}
&\left( (z_m)_{m \in \Z}, \  (a_{\jj})_{\jj\in \Z^2\setminus \Z}\right)\mapsto \left((q_m)_{m \in \Z}, (a_{\jj})_{\jj \in \Z^2\setminus \Z}\right) \ , 
\quad (q_m)_{m \in \Z} =\Phi^{-1}\left((z_m)_{m \in \Z}\right) \ .
\end{align}
Since the first line of \eqref{H.1} is the dNLS with mass shift, when we apply  $\Phi^{-1}$ we obtain 
\begin{align}\label{piri}
H'(z, a)=
 & h_{\rm dm}((|z_m|^2)_{m \in \Z}) +\sum_{\jj\in \Z^2\setminus \Z} |\jj|^2 |a_{\jj}|^2  \\
\notag
 &+ 2\sum^\star_{\jj_i=(m_i,n_i)\,,i=3,4\,,\; n_i\neq 0 \atop {
m_1-m_2+m_3-m_4=0\atop  n_3-n_4=0}} q_{m_1}\bar q_{m_2} a_{\jj_3}\bar a_{\jj_4} +
\frac{1}{2} \sum_{ \jj_i=(m_i,n_i)\,,i=2,4\,,\;n_i\neq 0 \atop {
m_1-m_2+m_3-m_4=0 \atop  n_2+n_4=0} } (q_{m_1} \bar a_{\jj_2}  q_{m_3} \bar a_{\jj_4}+ \bar q_{m_1} a_{\jj_2}  \bar q_{m_3}  a_{\jj_4})\\
\notag
 & +  \sum_{ \jj_i=(m_i,n_i)\,,i=2,3,4\,,\; n_i\neq 0
\atop {m_1-m_2+m_3-m_4=0\atop  -n_2+n_3-n_4=0}} q_{m_1} \bar a_{\jj_2} a_{\jj_3} \bar a_{\jj_4} 
+ 
\sum_{ \jj_i=(m_i,n_i)\,,i=1,2,3\,,\; n_i\neq 0
\atop {m_1-m_2+m_3-m_4=0\atop  n_1-n_2+n_3=0}} a_{\jj_1} \bar a_{\jj_2} a_{\jj_3} \bar q_{m_4}\\
\notag
& + \frac{1}{2}\sum^\star_{ \jj_i=(m_i,n_i)\,,i=1,2,3,4\,,\;  n_i\neq 0\atop{ \jj_1-\jj_2+\jj_3-\jj_4 = 0}} a_{\jj_1} \bar a_{\jj_2} a_{\jj_3} \bar a_{\jj_4}-\frac12\sum_{\jj\in \Z^2\setminus \Z}|a_\jj|^4
\end{align}
where in the last three lines we think $(q_m)_{m \in \Z}=\Phi^{-1}((z_m)_{m \in \Z})$ as a function of the Birkhoff variables $z$.\\

{\bf Step 2:}  We go to action-angle coordinates only on the set $\cS_0= (\tm_1, \ldots, \tm_\tk) \subset \Z$ and rename $z_m$ for $m \notin \cS_0$ as $a_{(m, 0)}$, see \eqref{aa} and \eqref{aa0}. We think each $q_m$ as a function
$$q_m= q_m(\lambda;\yy, \theta, \ba) \equiv \Phi^{-1}_m( (\sqrt{ I_{\tm_i}(\lambda)+\yy_i }  \ e^{\im \theta_i})_{i=1,\ldots,\tk}, (a_{(m,0)})_{m\in \Z\setminus\cS_0} ) \  . $$
Next we   Taylor expand the Hamiltonian \eqref{piri} around the finite-gap torus corresponding to
$(\yy, \theta, \ba) = (0, \theta, 0)$.
First one has (up to an irrelevant constant) 
\begin{align*}
h_{\rm dm}((|z_m|^2)_{m \in \Z}) 
& = h_{\rm dm}\left( I_{\tm_1}(\lambda) + \yy_1, \ldots ,  I_{\tm_\tk}(\lambda) + \yy_{\tk} , \, (|a_{(m,0)}|^2)_{m \in \Z\setminus \cS_0}\right) \\
&  = \omega(\lambda)\cdot \yy + \sum_{m \in \Z\setminus \cS_0}
\Omega_{m}(\lambda) |a_{(m,0)}|^2 + \cO(\yy^2, \yy |\ba|^2, |\ba|^4) \ ,
\end{align*}
where the $\Omega_m(\lambda)$ are defined in \eqref{norm.freq}.
Now we consider the terms which contain $q_m$.
Expanding $q_m$ in Taylor series  we have
  \begin{align*}
   q_m  = & q_m(\lambda; 0, \theta,  0)
    + 
    \sum_{i=1}^d 
    \frac{\partial q_m}{\partial \yy_i}(\lambda; 0, \theta, 0) \yy_i 
    + 
   \sum_{m_1 \in \Z \setminus \cS_0} 
   \frac{\partial q_m}{\partial a_{(m_1,0)}}
   (\lambda; 0, \theta, 0) a_{(m_1, 0)} \\  
&+
   \sum_{m_1 \in \Z \setminus \cS_0} 
   \frac{\partial q_m}{\partial \bar a_{(m_1,0)}}
   (\lambda; 0, \theta, 0) \bar a_{(m_1, 0)}   
     + \cO(\yy^2, \yy \ba , \ba^2)
  \end{align*}
  Remark that
  $$
  q_m(\lambda; 0, \theta,  0) = \Phi^{-1}_m( (\sqrt{ I_{\tm_i}(\lambda)}  \ e^{\im \theta_i})_{i=1,\ldots,\tk}, 0 ) = q_m^\fig(\lambda; \theta) \ .
  $$
  Inserting such formula in the Hamiltonian \eqref{piri} and collecting the terms in increasing scaling degree we obtain the new Hamiltonian
  $$
\cH(\lambda;\yy, \theta, \ba)  
 =   \cN^\0+\cH^\0(\lambda; \theta, {\bf a})+\cH^\1(\lambda; \theta, {\bf a})+\cH^\2(\lambda; \theta, \yy,  {\bf a})
$$
where $\cN^\0$ is defined in \eqref{def:N}.
Correspondingly $\cH^\0$  is the second and third line of \eqref{H.2}, namely all the remaining terms of degree two in $\ba$:
\begin{align}\label{def:H0}
\cH^\0  := & 
  2\sum^\star_{\jj_i=(m_i,n_i)\,,i=3,4\,,\; n_i\neq 0 
\atop {
m_1-m_2+m_3-m_4=0\atop  n_3-n_4=0}} q_{m_1}^\fig(\lambda;\theta)\bar q_{m_2}^\fig(\lambda;\theta) a_{\jj_3}\bar a_{\jj_4} \\
 \notag
 & + 
\frac{1}{2} \sum_{ \jj_i=(m_i,n_i)\,,i=2,4\,,\;n_i\neq 0 \atop {
m_1-m_2+m_3-m_4=0 \atop  n_2+n_4=0} } 
(q_{m_1}^\fig(\lambda;\theta) \, \bar a_{\jj_2}  \, q_{m_3}^\fig(\lambda;\theta) \, \bar a_{\jj_4}
+ \bar q_{m_1}^\fig(\lambda;\theta) \, a_{\jj_2}  \, \bar q_{m_3}^\fig(\lambda;\theta) \, a_{\jj_4})\notag \ .
\end{align}
{  $\cH^\1$ are the terms with scaling degree 1: 
  \begin{align}\label{def:H1}
\cH^\1 & :=
  \sum_{\ell \in \Z^d , \alpha, \beta \in \N^{\Z^2 \setminus \cS_0} \atop |\alpha| + |\beta| = 3} H_{\alpha, \beta, \ell}(\lambda)  \, e^{\im \theta \cdot \ell} a^\alpha \, \bar a^\beta
\end{align}
in particular such Hamiltonian is cubic in ${\bf a}$ and  it does not contain any monomial of the form $e^{\im \theta \cdot \ell} \yy^{l} a^\alpha \bar a^\beta$ with $|l| = |\alpha + \beta | = 1$.\\
  }
  Finally $\cH^{(\geq 2)} $ 
contains  all the rest i.e.  all the terms with scaling degree $\geq 2$.

 In the next lemma we estimate the norms of the Hamiltonians $\cH^\0, \cH^\1$ and $\cH^{(\geq 2)}$:
 \begin{lemma}
 \label{lem:norm.ham}
Fix $s >0$. There exists $\e_0>0$ and for any  $\e \leq \e_0$, there exists  $r_* \leq  \sqrt{\e}/4 $ s.t.   $\cH^\0\in \cQ_{s}^\hor$ while  $\cH^\1$ and $\cH^{(\geq 2)}$   belong to $\cA_{s,r_*}^\cO$. Finally the following bounds hold:
\begin{equation}
|\cH^\0|_{s}^\cO \le C\e\,,\qquad |\cH^\1|_{s,r}^\cO \le C\sqrt{\e}r\,,\qquad |\cH^{(\ge 2)}|_{s,r}^\cO \le C r^2 \ , 
\qquad \forall 0 < r \leq r_* \ ,
\end{equation}
where $C$ is independent of $\e,r$.
\end{lemma}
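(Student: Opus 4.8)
The plan is to start from the Hamiltonian \eqref{H.1} in Fourier variables, perform the two substitutions of \eqref{map:aa}, and then control the Taylor--Fourier expansion of the result around the finite-gap torus $(\yy,\theta,\ba)=(0,\theta,0)$, estimating each homogeneous piece separately with Proposition \ref{prop:mer} and Remark \ref{rem.m}. The three basic ingredients are: the majorant analyticity of $\Phi^{-1}$ and of $\Upsilon$, already established in Proposition \ref{prop:adap}$(i)$; the bound $\norm{q^\fig(\lambda;\theta)}_{p,s}\le C\sqrt\e$ on the finite-gap torus, which follows from \eqref{bm.est} (or directly from \eqref{finite.gap.formula}) once one notes that the input $z^\fig$ of \eqref{fin.gap2} has $h^{p,s}$-norm $O(\sqrt\e)$ by the first inequality in \eqref{parapendio}, uniformly for $\theta$ in a complex strip of width slightly larger than $s$ (this is the role of the second inequality in \eqref{parapendio}, upon shrinking $\e_0=\e_0(s)$ if necessary); and the fact that $\di\Phi^{-1}(0)=\uno$, so that $\partial_\yy q_m$, $\partial_a q_m$, $\partial_{\bar a}q_m$ evaluated at $(0,\theta,0)$ define $O(1)$ operators in the relevant majorant norms, again by Theorem \ref{thm:dnls}$(i)$. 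Since $M$ and $P$ are constants of motion and $\Phi^{-1}$, $\Upsilon$ are symplectic with $M\mapsto\cM$, $P\mapsto\cP$ (Proposition \ref{prop:adap}$(iv)$), the full $\cH$ Poisson-commutes with $\cM,\cP$; as $\cM,\cP$ have scaling degree $0$, each scaling-homogeneous component of $\cH$ commutes with $\cM,\cP$ too, hence lies in $\cA_{s,r_*}^\cO$.

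I would first treat $\cH^\0$, given by \eqref{def:H0}. It is quadratic in $\ba$, and its coefficients are Fourier-in-$\theta$ series of convolutions of $q^\fig(\lambda;\theta)$ with $\bar q^\fig(\lambda;\theta)$; in particular they are independent of $n$ for $n\neq 0$, because the momentum constraints in \eqref{def:H0} couple the $n$'s only pairwise, the weights $q^\fig_{m_i}$ do not see $n$, and for $n\neq 0$ the exclusion coming from the $\star$ in \eqref{H.1} reduces to a restriction on the $m$'s alone; hence $\cH^\0\in\cQ_s^\hor$. For the quantitative bound I would use the matrix identification of Remark \ref{rem.m}: up to the finite-rank bookkeeping attached to $\cS_0$, the matrix associated with $\cH^\0$ is the Toeplitz (in the $x$-variable) multiplication operator by $2\abs{q^\fig(\lambda;\cdot)}^2$ together with the off-diagonal block $q^\fig(\lambda;\cdot)^2$; its operator norm on $\li^p$ is controlled by $\norm{q^\fig(\lambda;\cdot)}_{H^p(\T)}^2\le C\e$, since $H^p(\T)$ is an algebra for $p>1$, and the weight $e^{s\abs\ell}$ in \eqref{formula} is absorbed by the $\theta$-analyticity. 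The Lipschitz dependence on $\lambda$ is handled identically, using that $\lambda\mapsto I_\tm(\lambda,\e)$ is analytic. This gives $\abs{\cH^\0}_s^\cO\le C\e$.

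Next, $\cH^\1$ collects the monomials of scaling degree $1$. Counting $a$-factors shows these are exactly the cubic-in-$\ba$ monomials: in every term of \eqref{piri} carrying an explicit $a$ there are at least two such factors, all with $n\neq 0$, while expanding $q_m$ around the torus only adds $\yy$'s or $a_{(m,0)}$'s (with $n=0$), so no cancellation is possible; a scaling-degree-$1$ monomial with $\abs\alpha+\abs\beta\ge 2$ and $2\abs l+\abs\alpha+\abs\beta=3$ must have $\abs l=0$, $\abs\alpha+\abs\beta=3$, which in particular rules out any $\yy$-linear quadratic-in-$\ba$ monomial. Each such cubic monomial carries exactly one factor $q^\fig_m(\lambda;\theta)$ of size $\sqrt\e$, the remaining factors being $O(1)$, so by the scaling \eqref{scala0} its vector field has norm $\lesssim\sqrt\e\, r$ on $D(s,r)$, and summing the (majorant-convergent) series gives $\abs{\cH^\1}_{s,r}^\cO\le C\sqrt\e\, r$. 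Finally $\cH^{(\ge 2)}$ is everything else: the $O(\yy^2,\yy\abs{\ba}^2,\abs{\ba}^4)$ remainder of $h_{\rm dm}$ (with $O(1)$ coefficients, since the frequencies and their action-derivatives are $O(1)$), the genuine quartic terms $a_{\jj_1}\bar a_{\jj_2}a_{\jj_3}\bar a_{\jj_4}$ and $-\tfrac12\sum\abs{a_\jj}^4$, and all higher Taylor remainders of the $q$-dependent cubic terms; every such monomial has scaling degree $\ge 2$, so by \eqref{scala0} and $r_*\ll\sqrt\e\le 1$ its contribution to $\abs{\cdot}_{s,r}$ is $\le C r^2$, and the series converges in $\cA_{s,r_*}^\cO$ by Proposition \ref{prop:mer}$(iii)$ applied to the analytic maps $\Phi^{-1}$ and $h_{\rm dm}$.

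The step that requires the most care is not any individual estimate but making the Taylor--Fourier expansion of the composed map $\Phi^{-1}\circ\Upsilon$ legitimate in the \emph{majorant} norm: one must verify that the infinitely many monomials produced by expanding each $q_m=q_m(\lambda;\yy,\theta,\ba)$ and multiplying them out reorganize into an absolutely and pointwise convergent series in $\cA_{s,r_*}^\cO$, uniformly for $\lambda\in\cO$. This is precisely where the majorant analyticity of the Birkhoff map from \cite{masp_vey} (Theorem \ref{thm:dnls}$(i)$) is indispensable, together with the smallness $r_*\le\sqrt\e/4$ which keeps the image of $\Upsilon$ inside the domain of majorant analyticity of $\Phi^{-1}$, exactly as in the proof of Proposition \ref{prop:adap}$(i)$. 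Once this is in place, the degree-by-degree bounds above are routine.
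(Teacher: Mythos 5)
Your argument is correct in the essentials and rests on the same two pillars as the paper's own proof — the majorant analyticity of the adapted map $\Lambda=\Phi^{-1}\circ\Upsilon$ from Proposition \ref{prop:adap}$(i)$, and the scaling estimate \eqref{scala0} — but it deploys them in a noticeably more laborious order. You Taylor--Fourier expand the composed map around the torus, sort monomials by scaling degree, and estimate each graded piece by hand: the Toeplitz/matrix argument of Remark \ref{rem.m} for $\cH^{(0)}$, a monomial count for $\cH^{(1)}$, and a collection argument for $\cH^{(\geq 2)}$. The paper inverts this order and gets all three bounds in one stroke. It first observes that $H_4$ is a convolution so $\und{X_{H_4}}$ is bounded by $\rho^2$ on a ball of radius $\rho$ in $h^p(\Z^2)$, and that $\Lambda$ maps $D(s,\sqrt\e/4)$ into a ball of radius $C_1\sqrt\e$; combining these gives the single global bound $\abs{\cH^{(\geq 0)}}^\cO_{s,\sqrt\e/4}\le C_1\e$. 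Then the continuity of the degree projection (Proposition \ref{riassunto}$(i)$) and \eqref{scala0} applied to each homogeneous component $\cH^{(d)}$ with $R=\sqrt\e/4$ immediately yield $\abs{\cH^{(d)}}^\cO_{s,r}\le(4r/\sqrt\e)^d C_1\e$, which at $d=0,1,2$ is exactly the stated triple of estimates. Your route buys more explicit information (e.g.\ the Toeplitz structure of $\cH^{(0)}$, the cubic structure of $\cH^{(1)}$) but that information is established later in the paper in \eqref{def:H0}--\eqref{def:H1} where it is actually needed; for the norm bounds alone the scaling argument is sharper.

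One small misattribution: you invoke Proposition \ref{prop:mer}$(iii)$ to justify the convergence of the rearranged Taylor--Fourier series of $\cH^{(\geq 2)}$, but that item concerns Lie-series remainders $\re{\ti}(f;h)$ and is not the right tool here. What actually guarantees the convergence (and the fact that each $\cH^{(d)}$ belongs to $\cA^\cO_{s,r_*}$) is precisely the global statement you flag at the end — that the pullback through $\Lambda$ is majorant analytic on $D(s,\sqrt\e/4)$ — together with continuity of projections (Proposition \ref{riassunto}$(i)$). Your identification of $n$-independence for $\cH^{(0)}$, and the degree count forcing $|l|=0$, $|\alpha|+|\beta|=3$ for $\cH^{(1)}$ (hence excluding $\yy$-linear quadratic monomials), both match what the paper asserts.
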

\begin{proof}
	We start from the Hamiltonian \eqref{Ha0}. The terms of order four in  \eqref{Ha0}  belong to $\cA_{s,r}$ for all\footnote{note that there is in fact NO dependence on $s$ since there are no angle variables at this point. Furthermore, there is NO dependence on $\lambda \in \cO$} $s,r \geq 0$.  Indeed we have that
$$
H_4:= \sum_{\jj_1+\jj_2=\jj_3+\jj_4}u_{\jj_1}u_{\jj_2}\bar u_{\jj_3}\bar u_{\jj_4} \quad \Rightarrow (X_{H_4})_\jj= (u\star u \star \bar u)_\jj \quad \Rightarrow (\und{X_{H_4}})_\jj =    (u\star u \star \bar u)_\jj\Rightarrow |(\und{X_{H_4}})_\jj|\le  ( v\star v \star \bar v)_\jj
$$
with $v_\jj=|u_\jj |$.
Then we repeat the same argument for $M(u) = \int_{\T^2} |u|^2$ (see the definition of the Hamiltonian \eqref{parto}) and the result follows by the algebra property of our space. This shows that the starting  Hamiltonian \eqref{Ha0} is majorant analytic.
Moreover one has
\begin{equation}
\label{XH4}
\sup_{\abs{u}_{h^p(\Z^2)} \leq \rho} \bnorm{\und{X_{H_4}}(u, \bar u)}_{\rho} \leq \rho^2 \ . 
\end{equation}
Next consider the map $\Lambda$ of \eqref{mappa}. 
By Proposition \ref{prop:adap}(i),  $\Lambda$ is  majorant analytic 
and  maps $D(s, \sqrt{\e}/4) \to B(C_1\sqrt\e)\times B(C_1 \sqrt\e)$. 
Thus the pullback of $X_{H_4}$ through $\Lambda$,  which we denote $Y$, is  a majorant analytic vector field $D(s, r) \to \C^\tk \times \C^\tk \times \fh^p$, $\forall 0< r\leq r_*$. Hence $\cH^{(\geq 0)}:= H_4\circ\Lambda $ is majorant analytic and 
\begin{equation}
\label{YH}
\abs{\cH^{(\geq 0)}}_{s, \sqrt\e/4} \leq \sup_{(\yy, \ba) \in D(\sqrt\e/4)}
\bnorm{\und{Y}_{s}(\yy, \ba)}_{\sqrt\e/4} 
\leq
\sup_{\abs{u}_{h^p(\Z^2)} \leq C_1 \sqrt \e} \bnorm{\und{X_{H_4}}(u)}_{C_1 \sqrt \e} \stackrel{\eqref{XH4}}{\leq} C_1 \e \ . 
\end{equation}
As a consequence each homogeneous component of $\cH^{(\geq 0)}$ is a majorant analytic Hamiltonian; in particular let $\cH^{(d)}$ the homogeneous component of  $\cH^{(\geq 0)}$ of scaling degree $d$; then 
\begin{align*}
\abs{\cH^{(d)}}_{s, r} 
\leq 
\left( \frac{4r}{\sqrt \e}\right)^d  \abs{\cH^{(d)}}_{s,  \sqrt\e/4} 
\stackrel{\eqref{scala0}}{\leq }
\left( \frac{4r}{\sqrt \e}\right)^d  \abs{\cH^{(\geq 0)}}_{s,  \sqrt\e/4} 
\stackrel{\eqref{YH}}{\leq}
\left( \frac{4r}{\sqrt \e}\right)^d C_1 \e \ .
\end{align*}
The estimate for the Lipschtitz norm $\abs{\cH^{(d)}}_{s, r}^\cO$ is analogous, and the lemma follows.
\end{proof}

\begin{proof}[Proof of Proposition \ref{prop:adap} $(iii)$ and $(iv)$]
Item $(iii)$ follows from \eqref{H.2} and Lemma \ref{lem:norm.ham}. 
Item $(iv)$ is proved similarly by performing {\bf Step 1} and {\bf Step 2} 
on the mass  $M$ and momentum $P$, we skip the details.
\end{proof}


\section{Normal form  of the quadratic terms}
In this section we consider only the quadratic part of the  Hamiltonian \eqref{H.2}, namely
\begin{equation}
\label{quad.ham}
\cN^\0+\cH^\0 \equiv \omega \cdot \yy + \cD + \cH^\0 \ ,
\end{equation}
where $\cD$ is defined in \eqref{def:D} and $\cH^\0$ in \eqref{def:H0}. 
Our aim is to  reduce such  Hamiltonian  to a diagonal form  by a convergent KAM procedure by requiring some (potentially rather complicated but relatively explicit) Diophantine condition on $\lambda$. 
 The problem of the KAM algorithm is that it requires to impose the so called {\em second Melnikov conditions} on the frequencies, which do not hold for $\cN^\0$.\\
Thus, first we must  put \eqref{quad.ham}  in a normal form which is suitable in order to start a KAM algorithm to obtain the reducibility result. Following ideas of \cite{procesi12}, we perform a finite number of  normal form transformations whose effect is to put the Hamiltonian \eqref{quad.ham} in a  form suitable to the application of a KAM scheme.\\
In this section we will constantly use the notation $a\lessdot b$ with the meaning $a \leq C b$ for some positive constant $C$ independent of $\e$ and $r$.\\
The aim of the section is to prove the following result:
\begin{theorem}
	\label{thm:q}
Fix $p>1$ and $s_0 >0$. For a generic choice of the set $\Tan$  (in the sense of Definition \ref{defar}), 
	there exists $\e_0 >0$ such that  for all $0<\e\le \e_0$,  $\exists r_0\ll \sqrt\e$ s.t. the following holds true.
There  exists a  compact {\em domain} $\cO_1 \subseteq \cO_0 \subseteq [1/2,1]^\tk$  such that
	for any $\lambda \in \cO_1$ there exists an invertible  symplectic change of variables $\cT^{(0)}$: 
	$$
	\ba \mapsto \cL(\lambda,\e, \theta)\ba\,, \qquad \yy \mapsto  \yy + \im ( \ba, \cQ(\lambda,\e, \theta)\ba)  \ , \qquad \theta \mapsto \theta
	$$
	well defined and majorant analytic together with its inverse, such that $\cT^{(0)},(\cT^\0)^{-1}$:  $D(s/8, \varrho r ) \to D(s,r)$ for all $0<r \leq r_0$, $s_0/64 \le s \leq s_0$ (here $\varrho>0$ depends on $s_0,\max(|\tm_k|^p)$)  and 
	\begin{equation}
	\label{ham.bnf}
	(\omega \cdot \yy + \cD + \cH^\0)\circ \cT^{(0)}(\yy, \theta, a, \bar a) = \omega \cdot \yy + \sum_{\jj \in \Z^2 \setminus \Z} \widetilde\Omega_\jj\, |a_\jj|^2 
+ \sum_{m \in \Z \setminus \cS_0 } \Omega_{m} |a_{(m,0)}|^2 
	+ \wtcH^{(0)}+ \wtcH^\1 + \wtcH^{(\geq 2)} \ .
	\end{equation}
	Here the frequencies $\widetilde\Omega_\jj$ are defined in \eqref{def:omtilde}, and $\Omega_m$ in \eqref{norm.freq}.
	 Furthermore  $\wtcH^\0$ has scaling degree $0$ and has the form
	 $$
	 \wtcH^\0= \wtcH^{(0,\rm hor)}+ \wtcH^{(0,\rm mix)}\,,\quad  	\lceil \wtcH^{(\rm 0,hor)}\rfloor_{s/8,-2}^{\cO_1} + \lceil \wtcH^{(\rm 0,mix)}\rfloor_{s/8,-\bd}^{\cO_1}\lessdot \e^2\,.
	 $$
Similarly 	 $\wtcH^\1$ has scaling degree 1 and does not depend on $\yy$, $\wtcH^{(\geq 2)}$ has scaling degree 2, and 
	$$
	\abs{\wtcH^\1}_{s/8, \varrho r}^{\cO_1}\lessdot \sqrt{\e} \, r \ , \qquad 
	\abs{\wtcH^{(\geq 2)}}_{s/8, \varrho r}^{\cO_1} \lessdot r^2 \ . 
	$$
	The mass $\cM$ and the momentum $\cP$ of \eqref{costanti} in the new coordinates are given by 
	$$
	\cM\circ \cT^\0= \wtcM\,,\quad \cP\circ \cT^\0= \wtcP 
	$$
	defined in \eqref{mp.4}.
\end{theorem}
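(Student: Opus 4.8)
\emph{Step 1 (structure of $\cH^\0$ and the resonant set).} The plan is to produce $\cT^{(0)}$ as a finite composition of majorant analytic symplectic maps of two kinds: first a string of ``reduction in orders'' transformations that smooths out the non‑resonant part of $\cH^\0$, then an explicit, \emph{not close to the identity}, $\theta$-dependent linear symplectic map that diagonalises the resonant blocks. I would start by recording that $\cH^\0\in\cQ_s^\hor$, is quadratic in $\ba$ and supported on modes with $n\neq 0$, carries no $|a_\jj|^2$ term (that contribution is killed by the $\star$-restriction in \eqref{Ha0}, equivalently cancelled by the subtraction of $M(u)^2$, cf.\ Remark \ref{rem:asym}), and that up to $O(\e^{3/2})$-smaller pieces its coefficients are the Fourier coefficients of the multiplication operators $2|q^\fig(\lambda;\theta,x)|^2$ and $(q^\fig(\lambda;\theta,x))^2$; hence the vector field of $\cH^\0$ is, up to finite rank, the operator $a\mapsto 2|q^\fig|^2a+(q^\fig)^2\bar a$, which is pseudo-differential of order $0$ in $x$, quasi-periodic in $\theta$, and \emph{independent of $y$}. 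Using $\omega^\0=(\tm_i^2)$, the selection rules of Remark \ref{sel.rule.q} and the $\tL$-genericity of $\Tan$, one checks that the monomials of $\cH^\0$ whose divisor $\omega\cdot\ell+\sigma_1\Omega^\0_{\jj_1}+\sigma_2\Omega^\0_{\jj_2}$ vanishes identically in $\lambda$ (all others being bounded below by a constant independent of $\e$) are exactly $(i)$ the couplings $a_{(\tm_a,n)}\bar a_{(\tm_b,n)}e^{\im(\theta_b-\theta_a)}$ on the horizontal blocks $\cS_{0n}$, $n\neq0$, and $(ii)$ the couplings $\bar a_{(m,n)}\bar a_{(m',-n)}e^{\im(\theta_i+\theta_k)}$ on the rotated blocks $\ccC_{i,k}$, with $m+m'=\tm_i+\tm_k$; the conditions \eqref{gen.cond} then ensure these blocks are pairwise disjoint and that each $\ccC_{i,k}$ is a \emph{finite} disjoint union of such $2$-point pairs, so that the resonant part $\cH^\0_{\rm res}$ is a direct sum of one $\tk\times\tk$ Hermitian block per $n\neq0$ and finitely many $2\times2$ Bogoliubov blocks.

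\emph{Step 2 (reduction in orders).} Since the vector field of $\cH^\0-\cH^\0_{\rm res}$ is, up to finite rank, an order-$0$, $\theta$-quasi-periodic, $y$-independent multiplication operator, I would conjugate it by finitely many symplectic pseudo-differential / Fourier integral transformations --- adapting the one-dimensional reduction-in-orders schemes of \cite{PT01,IPT05,BBM14} to the present $2$D, quasi-periodic, symplectic, majorant-analytic setting, and splitting at each step a tail smoothing in $x$ by the Bony decomposition of Lemma \ref{boni} --- so as to lower the order of the non-resonant perturbation. The key point is that the $x$-average of $2|q^\fig(\lambda;\theta,\cdot)|^2$ is $\theta$-independent (mass conservation of the finite gap solution), so after this procedure the residual diagonal correction to $\Omega^\0_\jj$ for $\jj\notin\sS\cup\ccC$ is only $O(\e^2)$ --- this is how the $O(\e)$ constant shift one would otherwise get is cancelled, cf.\ Remark \ref{rem:asym}. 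One is left with these $O(\e^2)$ frequency corrections plus a smoothing remainder splitting as a horizontal term of order $(-2,0)$ and a genuinely mixed term of order $-\bd=(-2,-2)$; a further Birkhoff-type step (harmless because the divisors attached to the smoothing remainder are again $O(1)$) together with Lemmas \ref{lem:smo}--\ref{lem:smo2} pushes it to size $O(\e^2)$ without spoiling the orders, yielding $\wtcH^\0=\wtcH^{(0,\hor)}+\wtcH^{(0,\mix)}$ with the stated bounds. Removing the residual $\theta$-dependence needs a Diophantine condition $|\omega\cdot\ell|\ge\e\gamma\langle\ell\rangle^{-\tau}$, which cuts out a compact full-measure subset $\cO_0'\subseteq[1/2,1]^\tk$.

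\emph{Step 3 (explicit diagonalisation and bookkeeping).} On each horizontal block $\cS_{0n}$ I would apply the $\theta$-dependent symplectic change $a_{(\tm_a,n)}=e^{\im\theta_a}\widetilde a_{(\tm_a,n)}$, compensated by $\yy_i\mapsto\yy_i-\sum_n|\widetilde a_{(\tm_i,n)}|^2$ to preserve the symplectic form: using $\omega_a=\tm_a^2-\e\lambda_a$ and $I_{\tm_a}=\e\lambda_a+O(\e^2)$, this turns the $O(1)$ diagonal entries $\tm_a^2+n^2$ into $n^2+\e\lambda_a+O(\e^2)$, removes the $\theta$-dependence of the couplings, and reduces the block to $n^2\,\id+\e N(\lambda)+O(\e^2)$ with $N(\lambda)_{ab}=2\sqrt{\lambda_a\lambda_b}-\lambda_a\delta_{ab}$; a ($\theta$-independent) orthogonal diagonalisation of $N(\lambda)$ gives the frequencies $n^2+\e\mu_i(\lambda)+O(\e^2)$, and a Cauchy-formula computation of $\det(t\,\id-N(\lambda))$ identifies the $\mu_i$ with the roots of $P(t,\lambda)$ of \eqref{P.poly0bis}. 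On each $2\times2$ Bogoliubov block of $\ccC_{i,k}$ the analogous change $a_{(m,n)}=e^{\im\theta_i}\widetilde a_{(m,n)}$, $a_{(m',-n)}=e^{\im\theta_k}\widetilde a_{(m',-n)}$ produces a constant-coefficient Bogoliubov Hamiltonian with diagonal entries $m^2+n^2-\tm_i^2+\e\lambda_i$ and $m'^2+n^2-\tm_k^2+\e\lambda_k$ --- whose $O(1)$ parts are opposite, since $m^2+n^2-\tm_i^2=(m-\tm_i)(\tm_i+\tm_k)=-(m'^2+n^2-\tm_k^2)$ on $\ccC_{i,k}$ --- and off-diagonal entry $\e\sqrt{\lambda_i\lambda_k}(1+O(\e))$; a hyperbolic rotation diagonalises it into $m^2+n^2-\tm_i^2+\e\mu^+_{i,k}$ and $m'^2+n^2-\tm_k^2-\e\mu^-_{i,k}$, with $\mu^\pm_{i,k}$ the roots of $Q(t,\lambda_i,\lambda_k)$ of \eqref{char02bis} (this is the strategy of \cite{procesi12} for the resonant part). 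Composing all block transformations with the reduction-in-orders maps gives $\cL(\lambda,\e,\theta)$ and the accumulated $\yy$-shift $\cQ(\lambda,\e,\theta)$; since only finitely many blocks are touched and each block map is bounded on $\li^p$ with norm controlled by $\max_i\langle\tm_i\rangle^p$, $\cL,\cL^{-1},\cQ$ are majorant analytic, which explains the loss $D(s/8,\varrho r)\to D(s,r)$. Taking $\cO_1:=\cO_0'\cap\{\lambda:\text{the discriminants of }P,Q\text{ are bounded below},\ \mu_i,\mu^\pm_{i,k}\text{ distinct and non-zero}\}$ --- compact --- makes everything well defined and real analytic (Remark \ref{rem:mu}) and yields \eqref{theta.est}. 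Finally Proposition \ref{prop:mer}$(ii)$--$(iii)$ applied to $\cH^\1$ and $\cH^{(\ge2)}$ gives $|\wtcH^\1|^{\cO_1}_{s/8,\varrho r}\lessdot\sqrt\e\,r$ and $|\wtcH^{(\ge2)}|^{\cO_1}_{s/8,\varrho r}\lessdot r^2$ (with $\wtcH^\1$ still $\yy$-independent and free of the $|l|=|\alpha+\beta|=1$ monomials, since all transformations are linear in $\ba$ and the $\yy$-shift is quadratic in $\ba$), and the identities $\cM\circ\cT^{(0)}=\wtcM$, $\cP\circ\cT^{(0)}=\wtcP$ hold because the reduction-in-orders maps commute with $\cM,\cP_y$ and move $\cP_x$ only within the bookkeeping of \eqref{mp.4}, while the phase changes above are tailored exactly so that the surviving $|a_\jj|^2$ contributions to $\cP_x$ and $\cM$ reorganise into the combinations appearing in $\wtcP_x,\wtcM$.

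\emph{Main obstacle.} The crux is Step 2: carrying out the reduction in orders for a $\theta$-quasi-periodic operator on $\T^2$ while staying Hamiltonian and majorant analytic (controlling simultaneously the $\lceil\,\cdot\,\rfloor_{s,\bN}$ norms and the unavoidable loss of analyticity width), cleanly isolating the resonant finite-rank part --- which \emph{cannot} be removed, since its divisors $\omega\cdot\ell+\tm_a^2-\tm_b^2$ vanish identically, and must be passed to Step 3 --- and controlling not just the $x$-smoothing of the remainder (automatic from the $y$-independence of $q^\fig$) but also a genuinely mixed, $y$-dependent remainder of order $-\bd$. It is here that the arithmetic of $\Tan$ (Definition \ref{defar}) and the commutator/flow estimates of Lemmas \ref{boni}--\ref{lem:smo2} are essential.
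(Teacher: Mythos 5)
Your overall architecture closely tracks the paper's: a ``descent in orders'' step, a Birkhoff step eliminating the Birkhoff non-resonant monomials at order $\e$, and then an explicit, not-close-to-identity, block-diagonal transformation (a $\theta$-dependent phase shift followed by a $\theta$-independent orthogonal/Bogoliubov rotation) handling the finite-rank resonant part. This is exactly the factorization $\cT^{(D)}\circ\cT^{(B)}\circ\cR\circ\cU$ carried out in Lemmata \ref{scendo}, \ref{medio}, \ref{lem:diag0}, and \ref{lem:step4}, and your identification of the resonant blocks, of the opposite $O(1)$ diagonal entries on each $\ccC_{i,k}$, and of the characteristic polynomials $P$ and $Q$ as in Lemma \ref{lem:irr} are all correct.

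There is, however, a genuine gap. The Diophantine condition $|\omega\cdot\ell|\ge\e\gamma\la\ell\ra^{-\tau}$ that you invoke to ``remove the residual $\theta$-dependence'' is both unnecessary and incompatible with what the theorem asserts. The theorem does \emph{not} ask for $\wtcH^\0$ to be $\theta$-independent --- it only asks for the decomposition $\wtcH^{(0,\hor)}+\wtcH^{(0,\mix)}$ of size $O(\e^2)$ with the stated smoothing orders. The $\theta$-dependence of $\wtcH^\0$ is dealt with later, in the KAM iteration of Theorem \ref{thm:kam}, which is the step that genuinely cuts the parameter set down to a Cantor-like family. In the present theorem, by contrast, every homological equation actually solved has a divisor bounded below \emph{uniformly in $\lambda$}: in the descent step the divisor is $m_1^2-m_2^2$ or $m_1^2+m_2^2+2n^2$, a nonzero integer, and the $\omega\cdot\partial_\theta\chi_1$ contribution is absorbed into the remainder (look at the explicit formulas for $\chi^-_{m_1,m_2,n}$, $\cR^\pm$ in the proof of Lemma \ref{scendo}); in the Birkhoff step the divisor $\omega\cdot\ell+m_1^2-m_2^2$ appears only at $|\ell|=2$, and for Birkhoff non-resonant monomials $|\omega^\0\cdot\ell+m_1^2-m_2^2|\ge 1$ by genericity, hence $\ge 1/2$ after the $O(\e)$ shift (estimate \eqref{step2.he1}). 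This is precisely why $\cO_1$ can be, as the statement requires, an honest \emph{compact domain with nonempty interior}, carved out solely by the eigenvalue-separation conditions \eqref{tutti}--\eqref{tutti2}. Inserting your $\cO_0'$ would instead shrink $\cO_1$ to a nowhere-dense Cantor-like set, contradicting the conclusion and also disrupting the downstream Lipschitz/measure arguments that rely on $\cO_1$ being a domain. Drop that step and simply retain the $O(\e^2)$, still $\theta$-dependent, smoothing remainder $\wtcH^\0$ produced by the descent-plus-Birkhoff steps; then your Step 3 completes the proof.
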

\begin{remark}
	\label{leggi_sel1}
	In the new variables, the {\em selection rules} of Remark \ref{leggi_sel} become
	\begin{align*}
	&\{ \cH, \wtcM\} = 0 \ \ \  \Leftrightarrow   \ \ \  \cH_{\alpha, \beta, \ell} \, (\widetilde \eta(\alpha, \beta) + \eta(\ell)) = 0 \\
	& \{ \cH, \wtcP_x\} = 0  \ \ \ \Leftrightarrow  \ \ \    \cH_{\alpha, \beta, \ell} \, (\widetilde \pi_x(\alpha, \beta) + \pi(\ell)) = 0 \\
	& \{ \cH, \wtcP_y\} = 0  \ \ \ \Leftrightarrow  \ \ \    \cH_{\alpha, \beta, \ell} \, (\pi_y(\alpha, \beta)) = 0 
	\end{align*}
	where $\eta(\ell)$ is defined in \eqref{def.eta},  $\pi(\ell)$ in \eqref{def.pi}, while
	$$
	\widetilde \eta(\alpha, \beta):= \sum_{\jj\in \Z^2\setminus( \sS \cup \ccC\cup \Tan) }(\al_\jj-\bt_\jj) \ , 
	$$
	$$
	 \widetilde{\pi}_x(\alpha, \beta):= \sum_{\jj=(m,n) \atop \jj\in \Z^2 \setminus (\sS \cup \Tan\cup \ccC)} m(\al_\jj-\bt_\jj) + \sum_{i< k \atop \jj=(m,n) \in \ccC^+_{i,k} } (m- \tm_i) (\al_\jj - \bt_\jj)\,+\!\!\!  \sum_{i< k \atop \jj=(m,n) \in \ccC^-_{i,k} } (m- \tm_k) (\al_\jj - \bt_\jj) \ . 
	$$
\end{remark}

As far as it concerns the structure of the transformation $\cT^\0$, we have the following result:
\begin{theorem}
	\label{thm:L}
We have that $\cL,\cQ$ are block diagonal in the $y$-Fourier modes. More precisely the matrix  $\cL={\rm diag}_{n\in \N}(L_{n})$, with each $ L_{n}$ acting on the sequence $(a_{(m,n)},a_{(m,-n)})_{m\in \Z}$.
The $ L_n$ satisfy the following properties:
\noindent		 $ L_0=\id$ , while $L_n$ with $n \neq 0$ is the composition of four maps: $ L_n =  L_{n}^{(\rm D)}\circ  L_{n}^{(\rm B)}  \circ R_n\circ U_n$ where 
			\begin{itemize}
				\item Setting $ L^{(\rm D)}= {\rm diag}_{n\in \N}( L^{(\rm D)}_{n})$, we have 	$ L^{(\rm D)}-\id=  L^{(\rm D, hor)}+ L^{(\rm D, mix)}$,  with $ L_{n}^{(\rm D,\rm hor)}$ independent of $n$, for all $n\neq 0$,  and
			$$  
				\lceil  L^{(\rm D,hor)} \rfloor_{s/4,-2}^{\cO_0}, \quad \lceil  L^{(\rm D,mix)} \rfloor^{\cO_0}_{s/4,-\bd}\lessdot \e\,,
				$$
				see formula \eqref{eye of the tiger}.
				\item Setting $ L^{(\rm B)}= {\rm diag}_{n\in \N}( L^{(\rm B)}_{n})$, we have 	$ L^{(\rm B)}-\id=  L^{(\rm B, hor)}+ L^{(\rm B, mix)}$,  with $ L_{n}^{(\rm B,\rm hor)}$ independent of $n$, for all $n\neq 0$, and
			$$  
			\lceil  L^{(\rm B,hor)} \rfloor_{s/8,-2}^{\cO_0},\quad \lceil  L^{(\rm B,mix)} \rfloor_{s/8,-\bd}^{\cO_0}\lessdot \e\, . 
			$$
				\item $R_n$ is a  finite dimensional phase shift  given explicitly by formula \eqref{rotation}, moreover if \begin{equation}\label{proto}
				\ccC \cap \{(m,n), (m, -n)\}_{m \in \Z} =\emptyset
				\end{equation}
				then  $R_n$ is independent of $n$.
				\item $U_n$ acts non trivially only on  $(\sS \cup \ccC) \cap \{(m,n), (m, -n)\}_{m \in \Z}$ it is invertible and depends analytically on $\lambda\in\cO_1$ together with its inverse. Moreover if \eqref{proto} holds  then  $U_n$ is independent of $n$ and orthogonal.
			\end{itemize}
			The matrix $\cQ$ has the same structure of $\cL$.
\end{theorem}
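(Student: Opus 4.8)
The plan is to prove Theorem \ref{thm:L} together with Theorem \ref{thm:q}, by reading off the structure of $\cT^{(0)}$ from its construction. The key general fact is that the whole normal form of the quadratic Hamiltonian \eqref{quad.ham} preserves the momentum component $\cP_y$ (which is left unchanged, $\cP_y\circ\cT^{(0)}=\wtcP_y$; see \eqref{mp.4}). By the selection rule for $\cP_y$ in Remark \ref{leggi_sel} --- equivalently, Remark \ref{rem:mm.use} --- every quadratic Hamiltonian produced along the way couples a mode $(m,n)$ only to modes $(m',\pm n)$, so its Hamiltonian vector field and the linear symplectic map it generates leave each subspace $\{\ba:\ a_\jj=0 \text{ for } |n_\jj|\neq n_0\}$ invariant; the same holds for the finite-dimensional changes $R_n,U_n$, which act only on a fixed $|n|$-block by construction. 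Hence $\cL=\diag_n(L_n)$. Moreover $\cH^{(0)}$ (formula \eqref{def:H0}) contains no monomial on the line $n=0$, so all the cohomological equations solved in the procedure involve only coefficients with $n\neq 0$; the line is never moved, i.e. $L_0=\id$. Finally $\cQ$, being determined by $\cL$ and $\partial_\theta\cL$ through the symplecticity of $\cT^{(0)}$, inherits the same block structure, which gives the last assertion of the theorem.

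For $n\neq 0$ I would first perform the two reduction-in-orders steps. Using that the Hamiltonian vector field of $\cH^{(0)}$ is, up to a finite-rank operator, of the form $2|q^\fig(\omega t,x)|^2 a + (q^\fig)^2(\omega t,x)\,\bar a$, one adapts to the two-dimensional setting the one-dimensional descent techniques of \cite{PT01,IPT05,BBM14}: a linear symplectic conjugation $L^{(\rm D)}$ acts on the diagonal part of the quadratic Hamiltonian, reducing it to constant coefficients in $x$ modulo terms of order $(-2,0)$, and a second, Bogoliubov-type conjugation $L^{(\rm B)}$ removes the off-diagonal part modulo a smoothing remainder. Since the coefficients of $\cH^{(0)}$ are $n$-independent and only the finite-rank correction produces $n$-dependent terms, each of these maps splits as $L^{(\rm D)}-\id=L^{(\rm D,\hor)}+L^{(\rm D,\mix)}$ (and likewise for $L^{(\rm B)}$), with horizontal part independent of $n$, and the order bounds $\lceil L^{(\rm D,\hor)}\rfloor_{s/4,-2}^{\cO_0}$, $\lceil L^{(\rm B,\hor)}\rfloor_{s/8,-2}^{\cO_0}$, $\lceil L^{(\rm D,\mix)}\rfloor_{s/4,-\bd}^{\cO_0}$, $\lceil L^{(\rm B,\mix)}\rfloor_{s/8,-\bd}^{\cO_0}\lessdot\e$ follow from the algebra and smoothing estimates of Lemma \ref{lem:pseudo00}, Lemma \ref{lem:smo} and Lemma \ref{lem:smo2}, together with the Bony decomposition (Lemma \ref{boni}), which is exactly what makes the reduction infinitely smoothing in the $x$-direction. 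All these estimates are uniform in $n$, because the $n$-dependence lives only in the mixed components, measured in $\lceil\cdot\rfloor_{s,-\bd}$.

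After these steps, the quadratic Hamiltonian equals $\omega\cdot\yy$ plus a diagonal part plus a non-perturbative but finite-rank (in each $|n|$-block) resonant quadratic form supported on $\sS\cup\ccC$. I would then follow the algebraic strategy of \cite{procesi12}: the surviving resonant monomials are precisely those of the horizontal rectangles (two sites in some $\cS_{0n}$, $n\neq 0$) and of the rotated rectangles (two sites in some $\ccC_{i,k}$), and the genericity condition \eqref{gen.cond} forces the supports $\sS$ and the various $\ccC_{i,k}$ to be pairwise disjoint outside $\cS_0$; hence the resonant form decomposes into independent finite-dimensional pieces --- a $\tk\times\tk$ piece for each $n\neq 0$ (horizontal rectangles on $\cS_{0n}$) and a $2\times 2$ piece for each circle $\ccC_{i,k}$. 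A finite-rank $\theta$-dependent phase shift $R_n$ (the explicit map \eqref{rotation}) brings each such piece to a $\theta$-independent quadratic form, which is then diagonalized by an explicit constant linear change $U_n$, whose eigenvalues produce the corrections $\e\mu_i(\lambda)$, $\e\mu_{i,k}^\pm(\lambda)$ of \eqref{def:omtilde}, i.e. the roots of the polynomials $P$ and $Q$ of \eqref{P.poly0bis}--\eqref{char02bis}. By construction $U_n$ acts nontrivially only on $(\sS\cup\ccC)\cap\{(m,n),(m,-n)\}_{m}$, and it is invertible with $\lambda$-analytic inverse. If \eqref{proto} holds there is no rotated-rectangle piece in that $|n|$-sector, so only the horizontal one survives; since $\cH^{(0)}$ is horizontal this piece is $n$-independent, and being a real symmetric form it is diagonalized by an orthogonal, $n$-independent $U_n$, while $R_n$ is then $n$-independent too. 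Collecting the four factors gives $L_n=L_n^{(\rm D)}\circ L_n^{(\rm B)}\circ R_n\circ U_n$, which together with the block-diagonality established above completes the structural description of $\cL$ and $\cQ$.

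The work really concentrates in two points. In the reduction in orders, one must keep the iteration convergent while preserving the horizontal and mixed order structure with bounds uniform in $n$ and in $\lambda\in\cO_0$: this is exactly what Lemmas \ref{lem:smo}--\ref{lem:smo2} are tailored to do (no loss of order, controlled loss of analyticity width). In the algebraic step, the crucial and most delicate point is to prove that every finite-dimensional resonant block is non-degenerate, so that an explicit diagonalizing $U_n$ with $\lambda$-analytic inverse exists and the corrected frequencies of \eqref{def:omtilde} are well defined; this is where the conservation of mass and momentum and the genericity of $\cS_0$ enter, and it is the main obstacle. Once non-degeneracy and the disjointness of the resonant supports are available, the remaining claims --- block-diagonality, finite rank and support of $R_n$ and $U_n$, $n$-independence under \eqref{proto}, and the structure of $\cQ$ --- reduce to bookkeeping of supports and orders.
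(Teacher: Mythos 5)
Your proposal is essentially the paper's own proof in expanded form: as the paper records, Theorems \ref{thm:q} and \ref{thm:L} are obtained by composing the four maps built in Lemmata \ref{scendo}, \ref{medio}, \ref{lem:diag0} and \ref{lem:step4}, and the structural claims of the theorem are read off from those constructions. Your observations that the conservation of $\cP_y$ forces block-diagonality in $|n|$, that $L_0=\id$ because $\cH^\0$, the generating functions $\chi_1,\chi_2$, and the finite-dimensional maps are all supported off the line $n=0$, and that $\cQ$ inherits the same block structure from symplecticity, are exactly right, as is the identification of $U_n$ with the diagonalization from Lemma \ref{lem:irr} and of $R_n$ with the phase shift \eqref{rotation}.

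One point is misstated and would cause trouble if you carried the plan out literally. You describe $L^{(\rm D)}$ as acting on the \emph{diagonal} ($a\bar a$) part of $\cH^\0$ and $L^{(\rm B)}$ as a Bogoliubov-type map acting on the \emph{off-diagonal} ($aa,\bar a\bar a$) part. That is not what these maps are. In Lemma \ref{scendo} the generating function $\chi_1=\chi_1^\diag+\chi_1^\out$ has both a diagonal and an out-diagonal component, and the descent step $L^{(\rm D)}$ simultaneously reduces the order of $\cH^\diag$ and $\cH^\out$: its purpose is to make the whole perturbation smoothing while isolating the explicit order-$\e$ term $\cH_1=\cH_1^\hor+\cH_1^\mix$ of \eqref{kernel0}--\eqref{kernel00}. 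Likewise, $L^{(\rm B)}$ built in Lemma \ref{medio} removes the Birkhoff \emph{non-resonant} part of $\cH_1$ (both diagonal and off-diagonal non-resonant monomials), leaving the resonant normal form $\cZ_1=\cZ_1^\hor+\cZ_1^\mix$. The split is descent vs.\ Birkhoff, not diagonal vs.\ off-diagonal; under your reading $L^{(\rm D)}$ would be purely horizontal and $L^{(\rm B)}$ purely mixing, and the decompositions $L^{(\rm D)}-\id=L^{(\rm D,\hor)}+L^{(\rm D,\mix)}$, $L^{(\rm B)}-\id=L^{(\rm B,\hor)}+L^{(\rm B,\mix)}$ with the stated bounds would not match the actual construction. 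With that correction the rest of your argument lines up with the paper.
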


The rest of the section is devoted to the proof of Theorem \ref{thm:q} and Theorem \ref{thm:L}.

\subsection{Proof of Theorems \ref{thm:q} and \ref{thm:L}}
We prove our statement in three steps, where we construct one by one the changes of variables  in Theorem \ref{thm:L} justifying their role in the diagonalization process.
 \begin{itemize}
 	\item
 	[Step 1.] The  change of coordinates $\cL^{(D)}$ puts \eqref{quad.ham} into the form $$\omega\cdot \yy +\cD+\cH^\hor + \cH^\mix$$ so that $\cH^\hor \in \cQ^{\cO_0, \hor}_{s',-2}$ and  $\cH^\mix \in \cQ^{\cO_0}_{s',-\bd}$. Furthermore we isolate the terms of order $\e$.
 	\item
 	[Step 2.] The  change of coordinates $\cL^{(B)}$ removes from the terms at order $\e$ in $\cH^\hor + \cH^\mix$ all the monomials which are  Birkhoff non-resonant, in the sense of Definition \ref{def:bnr} below.
 	\item
 	[Step 3.] The  changes of coordinates $\cR,\cU$ are  not close to the identity  and are used to put the Birkhoff resonant terms at order $\e$ in diagonal form.
 \end{itemize}
 
We start by decomposing  $\cD$ and $\cH^\0$ in their line, diagonal and out-diagonal   components as defined in Section \ref{ham.pseud}. Note that $\cD = \cD^\line + \cD^\diag$ (it does not have an out-diagonal component), while 
$$
\cH^\0(\lambda; \, \theta, a, \bar a) = \cH^{\rm diag}(\lambda; \,\theta, a, \bar a) + \cH^{\rm out}(\lambda; \,\theta, a, \bar a) \ ,
$$
where 
\begin{align}
\label{def:Hdiag}
\cH^{\rm diag}(\lambda; \,\theta, a, \bar a)  &\equiv     \sum_{m_1, m_2 , n \in \Z} H^-_{m_1, m_2}(\lambda; \,\theta)   a_{(m_1,n)} \bar a_{(m_2, n)} \\
\notag
& =   2 \sum_{ 
m_1-m_2+m_3-m_4=0 \atop m_1 \neq m_2 , \, n\neq 0} q_{m_3}^\fig(\lambda; \theta) \bar q_{m_4}^\fig(\lambda; \theta) a_{(m_1,n)} \bar a_{(m_2, n)}
\\
\label{def:Hout}
\cH^{\rm out}(\lambda; \,\theta, a, \bar a) & \equiv   2\sum_{m_1, m_2 , n >0} {\rm Re } \left( H^{+}_{m_1, m_2}(\lambda; \,\theta)   a_{(m_1,n)}  a_{(m_2, -n)} \right) \\
\notag
& =   \sum_{
m_1+m_2=m_3+m_4 \atop  n>0 } (q_{m_3}^\fig(\lambda; \theta)  q_{m_4}^\fig(\lambda; \theta) \bar a_{(m_1,n)}  \bar a_{(m_2, -n)}+ \bar q_{m_1}^\fig(\lambda; \theta)  \bar q_{m_2}^\fig(\lambda; \theta)  a_{(m_1,n)}  a_{(m_2, -n)})  \ ;
\end{align}
here $( q_m^\fig(\lambda; \theta) )_{m \in \Z} = \Phi^{-1}(z^{\rm fg}(\lambda; \theta))$. Note that in \eqref{def:Hout} we are summing only on $n >0$, and not on all $n$ as in \eqref{def:H0}, hence the coefficient is changed accordingly.
Furthermore note that    $\cH^\0$ does not have a line component.

\subsubsection{Step 1:  descent method.}
{From now on we will constantly write $a \lessdot b$ with the meaning $a \leq C b$, with a constant $C$ independent of $\e, r$.}
\\
We prove the following:
\begin{lemma}\label{scendo}
There exists an invertible symplectic transformation $\cT^{(D)}: D(s/4, r/2) \to D(s, r)$ $\, \forall s_0/64 \le s\le s_0, 0<r\le  r_0$,  which transforms the Hamiltonian \eqref{quad.ham} in the following form:
\begin{align}
\label{ham.30}
(\omega\cdot \yy + \cD + \cH^\0)\circ \cT^{(D)}  = \omega \cdot \yy + \cD +\cH_{1}  +  \cH_2 \ , 
\end{align}
where 
\begin{itemize}
   \item[(i)] the map $\cT^{(D)}$ is the time-1 flow of a quadratic Hamiltonian 
   $\chi_1= \chi_1^\hor + \chi_1^\mix$ such that 
   $\lceil \chi_1^\hor \rfloor_{3s/4, -1}^{\cO_0 }+ \lceil \chi_1^\mix \rfloor_{3s/4, -\bd}^{\cO_0}  \lessdot \e$.
	\item[(ii)] $\cH_1 = \cH_1^\hor + \cH_1^\mix \in \cQ^{\cO_0, \hor}_{s/4, -2} + \cQ^{\cO_0}_{s/4, -\bd}$  and  $\lceil \cH_1^\hor \rfloor_{s/4, -2}^{\cO_0} + \lceil \cH_1^\mix \rfloor_{s/4, -\bd}^{\cO_0}   \lessdot \e$.    Explicitly 
	\begin{align}
	\label{kernel0}
	\cH_1^\hor (\lambda; \theta, a, \bar a ) & =  2\e \sum_{m_1, m_2 , n \in \Z\setminus \{0\} \atop |m_1| \neq |m_2|}  \frac{ - (\omega^\0 \cdot \partial_\theta)^2 Q^{-}_{m_1, m_2}(\lambda;\theta)}{ (m_1^2 -m_2^2)^2} \   a_{(m_1,n)} \bar a_{(m_2, n)} \\
& \qquad  +  2\e \sum_{m_1, n\neq 0} Q_{m_1}^-(\lambda; \theta) \   a_{(m_1,n)} \bar a_{(-m_1, n)} 
\\
\label{kernel00}
\cH_1^\mix (\lambda; \theta, a, \bar a )&= 2\e \sum_{m_1, m_2 , n >0}{\rm Re } \left( \frac{\im \omega^\0 \cdot \partial_\theta Q_{m_1, m_2}^{+}(\lambda;\theta) }{m_1^2+m_2^2+2n^2} \   a_{(m_1,n)}  a_{(m_2, -n)}\right)	\ ,
	\end{align}
	where
	\begin{align}
	\label{kernel10}
&	Q^{-}_{m_1, m_2} (\lambda; \theta):=  \sum_{i\neq j: \; \tm_i - \tm_j = m_2 - m_1 }
	 \sqrt{\lambda_{i} \lambda_{j}}\ e^{\im (\theta_{i} - \theta_{j})} \,,\\
	 \label{kernel11}
	& Q_{m_1}^-(\lambda; \theta):= \sum_{i\neq j: \; \tm_i - \tm_j = -2m_1 } \sqrt{\lambda_{i} \lambda_{j}}\ e^{\im (\theta_{i} - \theta_{j})}  \ , \\
	\label{kernel12}
	& Q^{+}_{m_1, m_2}(\lambda; \theta) :=  \sum_{i,j: \; \tm_i + \tm_j = m_2 +m_1 }
	 	 \sqrt{\lambda_{i} \lambda_{j}}\ e^{-\im (\theta_{i} + \theta_{j})}  \ . 
\end{align}
	 Note that $Q^\s_{m_1,m_2}(\lambda; \theta)=0$ if there exists no couple $\tm_i,\tm_j\in \Tan$ such that $\tm_i +\s \tm_j = m_1+\s m_2$.
	 
	\item[(iii)] $\cH_2 = \cH_2^\hor + \cH_2^\mix \in \cQ^{\cO_0, \hor}_{s/4, -2} + \cQ^{\cO_0}_{s/4, -\bd}$  and  $\lceil \cH_2^\hor \rfloor_{s/4, -2}^{\cO_0} + \lceil \cH_2^\mix \rfloor_{s/4, -\bd}^{\cO_0}   \lessdot \e^2$. 
	\item[(iv)] One has $\cM \circ \cT^{(D)} = \cM$ and  $\cP \circ \cT^{(D)} = \cP$.
	\end{itemize}
\end{lemma}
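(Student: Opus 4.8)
The strategy is to obtain $\cT^{(D)}$ as the time-one flow $\Phi^1_{\chi_1}$ of a quadratic generator $\chi_1=\chi_1^\hor+\chi_1^\mix$ produced by a finite \emph{reduction of order} (``descent'') scheme, taking advantage of the fact that $q^\fig(\lambda;\theta,x)$ depends only on $x$. Indeed, by \eqref{def:Hdiag}--\eqref{def:Hout} the Hamiltonian $\cH^\0$ is, up to finite rank, the linearization of the cubic nonlinearity at $q^\fig$: the block $\cH^{\rm diag}$ is multiplication by $2|q^\fig(\lambda;\theta,x)|^2$ and $\cH^{\rm out}$ is multiplication by $(q^\fig)^2$ followed by conjugation, both acting as the identity in the $y$ variable. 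First I would insert the expansion \eqref{finite.gap.formula}, $q^\fig=\sqrt\e\bigl(\sum_i\sqrt{\lambda_i}e^{\im\theta_i+\im\tm_i x}+\e\,p(\lambda,\e;\theta,x)\bigr)$, into \eqref{def:Hdiag}--\eqref{def:Hout}: the $O(\e)$ part of $\cH^\0$ then has coefficients exactly the trigonometric polynomials $Q^\pm_{m_1,m_2}$ of \eqref{kernel10}--\eqref{kernel12} (angular support $|\ell|\le2$), while the $O(\e^2)$ remainder, built from $p$, lies in $\cQ^{\cO_0}_s$ with norm $\lessdot\e^2$ (but is still of pseudodifferential order $(0,0)$, with only off-diagonal decay, coming from the analyticity of $p$).

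The homological step is the heart of the matter. Since $\chi_1$ will involve only modes with $n\neq0$, it commutes with the line block $\cD^{\rm line}$, and the relevant operator is $\{\cD^{\rm diag},\cdot\}$, $\cD^{\rm diag}=\sum_{n\ne0}|\jj|^2|a_\jj|^2$, which on $e^{\im\ell\cdot\theta}a_{(m_1,n)}\bar a_{(m_2,n)}$ acts by $\im(m_1^2-m_2^2)$ and on $e^{\im\ell\cdot\theta}a_{(m_1,n)}a_{(m_2,-n)}$ by $\im(m_1^2+m_2^2+2n^2)$. For the diagonal block the monomials occurring have $m_1-m_2$ frozen equal to some $\tm_i-\tm_j$, so $|m_1^2-m_2^2|=|m_1-m_2||m_1+m_2|\ge1$ as soon as $|m_1|\ne|m_2|$, and dividing by $m_1^2-m_2^2\sim2m_1(m_1-m_2)$ \emph{gains one $x$-derivative}. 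I would therefore define a first generator by $\{\cD^{\rm diag},\chi^{(a)}\}=-\cH^{\rm diag}$ restricted to $|m_1|\ne|m_2|$ (dividing coefficients by $m_1^2-m_2^2$; note one divides by the $\cD$-part only, never small, and \emph{not} by the full denominator $m_1^2-m_2^2-\omega\cdot\ell$, which develops genuine small divisors on the $O(\e^2)$, large-$|\ell|$ part of $\cH^\0$). After conjugation this block is cancelled and replaced by the leftover $\{\omega\cdot\yy,\chi^{(a)}\}$, of order $-1$ and size $\e$; a second generator obtained by dividing this leftover once more by $m_1^2-m_2^2$ pushes it to order $-2$, leaving the explicit residue $\propto(\omega^\0\cdot\partial_\theta)^2Q^-_{m_1,m_2}/(m_1^2-m_2^2)^2$ (the replacement $\omega\mapsto\omega^\0$ costing only $O(\e^2)$ terms, since $\omega-\omega^\0=-\e\lambda$). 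This is precisely the first sum in \eqref{kernel0}. The only monomials that resist are the genuinely resonant ones, $m_2=-m_1$: they are kept, and since $m_1=(\tm_j-\tm_i)/2$ takes finitely many values they form a finite-rank (in $x$) operator, hence of order $-2$ a fortiori — the second sum in \eqref{kernel0}, with $Q^-_{m_1}$ as in \eqref{kernel11}. For $\cH^{\rm out}$ the divisor $m_1^2+m_2^2+2n^2\ge2$ never vanishes and already grows quadratically, so one division by it suffices to reach order $-\bd=(-2,-2)$, and the leftover $\{\omega\cdot\yy,\cdot\}$ equals $\propto\omega^\0\cdot\partial_\theta Q^+_{m_1,m_2}/(m_1^2+m_2^2+2n^2)$, which is \eqref{kernel00}; its $n$-dependent denominator is exactly why this residue is $\mix$ and not $\hor$. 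Setting $\chi_1$ to be the sum of all these generators (and of the analogous ones for the $O(\e^2)$ part of $\cH^\0$) one gets $\lceil\chi_1^\hor\rfloor^{\cO_0}_{3s/4,-1}+\lceil\chi_1^\mix\rfloor^{\cO_0}_{3s/4,-\bd}\lessdot\e$, the loss of angular width $s\to 3s/4$ coming from the $O(\e^2)$ part (which has $|\ell|$-tails) and from the $\delta^{-a}$-factors in Lemma~\ref{lem:smo}.

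With $\chi_1$ in hand, the remaining work is to expand $(\omega\cdot\yy+\cD+\cH^\0)\circ\Phi^1_{\chi_1}$ in Lie series and sort the output: the $\omega^\0\cdot\yy$-terms give $\cH_1$ as above, while $-\e\{\lambda\cdot\yy,\chi_1\}$, the double brackets $\frac12\{\{\cD+\omega\cdot\yy,\chi_1\},\chi_1\}$, $\{\cH^\0,\chi_1\}$, the tails of the Lie series, and the contributions from the $O(\e^2)$ part of $\cH^\0$ are all $O(\e^2)$ and — after iterating the same descent a further finite number of times — of order $(-2,0)\oplus(-\bd)$; they are what make up $\cH_2$, with $\lceil\cH_2^\hor\rfloor^{\cO_0}_{s/4,-2}+\lceil\cH_2^\mix\rfloor^{\cO_0}_{s/4,-\bd}\lessdot\e^2$. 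All quantitative bounds (on $\chi_1$, on $\Phi^1_{\chi_1}$, on every remainder, and the mapping property $D(s/4,r/2)\to D(s,r)$, which follows from Lemma~\ref{lem:smo}(ii) once $\eta^{-1}|\chi_1|^{\cO_0}_s\lessdot\e<1/2$) are supplied by Lemmas~\ref{lem:pseudo00}, \ref{lem:smo} and \ref{lem:smo2}; the Bony splitting of Lemma~\ref{boni} and Remark~\ref{paralin} are invoked to discard the high-angular-frequency parts $H^R$, which are already infinitely $x$-smoothing. Finally (iv): $\cH^\0$ Poisson-commutes with $\cM$ and $\cP$, and the divisors $m_1^2-m_2^2$, $m_1^2+m_2^2+2n^2$ respect the selection rules of Remark~\ref{leggi_sel}, so $\chi_1$ still commutes with $\cM,\cP$ and $\cT^{(D)}=\Phi^1_{\chi_1}$ leaves them invariant.

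The step I expect to be the main obstacle is \emph{not} the division (elementary: the divisors are $\ge1$ off a finite resonant set) but proving that the $O(\e^2)$ residue really lands in $\cQ^{\cO_0,\hor}_{s/4,-2}\oplus\cQ^{\cO_0}_{s/4,-\bd}$. The naive commutator bound \eqref{prop.smooth.bra} loses order through its $\delta^{-N_1}|F|_s|G|_s$ term, so one commutator pass is not enough and the reduction of order must be iterated on these errors; the delicacy is to balance, at each pass, the gain of order against the gain of a power of $\e$ and the unavoidable shrinking of the analyticity width $s$, so that the process closes in finitely many steps with a fixed fraction of $s$ and $r$ retained and with constants that stay bounded — which is exactly where Lemma~\ref{lem:smo2} (no $s$-loss, no order gain) and the para-differential decomposition do the real bookkeeping. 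One must also check along the way that the non-reducible part stays finite-rank in $x$ — it does, the only resonant set being $m_2=-m_1$ for the diagonal block, with no resonances for the out-diagonal block — so that keeping it does not spoil the claimed order $-2$.
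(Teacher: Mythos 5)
Your proposal follows the paper's approach faithfully in its essentials: a single symplectic time-one flow whose generator $\chi_1$ is built by reduction of order (descent) exploiting that $\cH^\0$ is $y$-independent multiplication by $|q^\fig|^2$ and $(q^\fig)^2$ up to finite rank; division by the operator divisors $m_1^2-m_2^2$ and $m_1^2+m_2^2+2n^2$ (\emph{not} by the full $\omega\cdot\ell$-shifted ones, exactly as the paper does); the residue at order $\e$ giving the explicit $\cH_1$ of \eqref{kernel0}--\eqref{kernel12}; the resonant $m_2=-m_1$ set identified correctly and recognised as finite-rank in $x$; and the selection rules preserving $\cM,\cP$. The paper packages your two-step descent ($\chi^{(a)}$ then $\chi^{(b)}$) as a single modified homological equation in which the $\{\omega\cdot\yy,\chi_1\}$ and $\tfrac12\{\cH^\out,\chi_1^\diag\}$ terms are included so that the residue is produced in one shot, but these are the same generator and the same residue.

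The one place where you genuinely depart from the paper is in proving that the $O(\e^2)$ term $\{\cH^{\rm diag},\chi_{1,-1}^\hor\}$ lands in order $-2$. You propose to treat it as an order-$(-1)$ error (which is what Lemma~\ref{lem:pseudo00} yields generically for the bracket of an order-$0$ and an order-$(-1)$ operator) and run a further pass of the descent. The paper instead carries out the explicit Taylor--Fourier computation of the coefficients of $\{\cH^{\rm diag},\chi_{1,-1}^\hor\}$, uses momentum conservation and the Bony cutoff $|\ell_i|\lessdot\la m_1\ra$ to rewrite the double sum with symmetrized indices, and exhibits the telescoping cancellation
$$\frac{1}{m_3^2-m_2^2}-\frac{1}{m_1^2-(m_1-m_3+m_2)^2}=\frac{2(m_1-m_3)}{(m_3-m_2)(m_3+m_2)(2m_1-m_3+m_2)}\,,$$
whose numerator $m_1-m_3=-\pi(\ell_1)$ is bounded while the denominator is $\sim m_1^2$. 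This shows the bracket is \emph{already} of order $-2$ with no further change of variables. Your iterated-descent route would indeed terminate after finitely many passes (after one extra step all generators are of order $-2$, and subsequent brackets with $\cH^{\rm diag}$ close at order $-2$ by Lemma~\ref{lem:pseudo00}), but it requires tracking the correct Lie-series coefficients (the problematic bracket actually appears with prefactor $\tfrac12$ after the substitution of the homological equation, not $1$) and an extra budget of analyticity width. The paper's cancellation is the sharper and more economical argument, and it is a genuinely exploited structural fact about this specific commutator rather than a generic bound — this is the one piece of the proof your sketch does not anticipate.
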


\begin{proof}
Decompose $\cH^\0 = \cH^\diag + \cH^\out $ where   $\cH^\diag$ and $\cH^\out$ defined in \eqref{def:Hdiag} respectively \eqref{def:Hout} are both horizontal. We recall that $\cH^\0$ does not have a line component.
We use the method of the Lie series,  constructing   the symplectic map $\cT^{(D)}$ as the time-1 flow of a    Hamiltonian $\chi_{1}$ of the form 
\begin{equation}
\begin{aligned}
& \chi_1=  \chi_1^\diag + \chi_1^\out \ , \\
& \chi_1^\diag := \sum_{m_1, m_2 , n \neq 0 } \chi_{m_1, m_2,n}^-(\lambda; \theta)  a_{(m_1,n)} \bar a_{(m_2, n)}\,,\quad \chi_{m_1, m_2,n}^-= \overline{\chi_{m_2, m_1,n}^-} \\
&  \chi_1^\out :=  2\sum_{m_1, m_2 , n>0 } {\rm Re}(\chi_{m_1, m_2,n}^+(\lambda;\theta)  a_{(m_1,n)}  a_{(m_2,- n)})\ . 
\end{aligned}
\end{equation}
By construction  $\chi_1^{\rm line}=0$, so that $\chi_1$ Poisson commutes with any Hamiltonian $\cF^{\rm line}$. This implies that $\cT^{(D)}$ preserves the line component $\cD^{\rm line}$ .
Using formula \eqref{quad.ham}  we have that
\begin{align}
\notag
(\omega\cdot \yy + \cD+ \cH^\0)\circ \cT^{(D)}  = &   \ \omega\cdot \yy + \cD \\
\label{step1.he}
&  + \{\cD^\diag,  \chi_1 \} + \cH^{\rm diag}+  \cH^{\rm out }  + \left\{ \omega \cdot \yy, \chi_1 \right\}+\frac12\{\cH^\out,\chi^\diag_1\} \\ 
& +  \{\cH^\diag,\chi_1\} +\frac12\{\{\cD^\diag+ \omega \cdot \yy,\chi_1 \},\chi_1\}+\frac12\{\cH^\out,\chi^\diag_1\} \label{step1.hb}\\
& + \{\cH^\out,\chi^\out_1\}+\re2\left( \chi_1;\cH^\diag+\cH^{\rm out} \right)+ \re3\left( \chi_1; \ \omega \cdot \yy + \cD^\diag \right) \label{step1.hc}
\end{align}
Our aim is  to reduce the {\em order} of $\cH^\0$, to this purpose we set
\begin{equation}
\label{hom.eq.0}
\left\{\cD^\diag, \chi_1 \right\} + \cH^{\rm diag}+\cH^{\rm out} +  \left\{ \omega \cdot \yy, \chi_1 \right\} +\frac12\{\cH^\out,\chi_1^\diag\} = \cZ+\cR
\end{equation}
where    $\cZ$ is in the kernel of ad$(\cD^\diag)$ while $\cR \in \cQ^{\cO_0, \hor}_{s/2, -2} + \cQ^{\cO_0}_{s/2, -\bd}$.  Note that  $\chi_1,\cZ,\cR$  are all  to be determined.
Writing \eqref{hom.eq.0} in Fourier-Taylor components,  explicitly we obtain for the diagonal part
$$
 \im ( m_1^2-m_2^2) {\chi_{m_1, m_2,n}^-}+ H^{-}_{m_1, m_2} +\omega\cdot\partial_\theta {\chi_{m_1, m_2,n}^-}  = \delta(|m_1|,|m_2|) \cZ^-_{m_1,m_2} + \cR^-_{m
_1, m_2} \ 
$$
and for the out-diagonal part
$$
\im ( m_1^2+m_2^2+ 2n^2) {\chi_{m_1, m_2,n}^+}+ H^{+}_{m_1, m_2} +\omega\cdot\partial_\theta {\chi_{m_1, m_2,n}^+} + \frac12(\{\cH^\out,\chi_1^\diag\})^+_{m_1,m_2} =  \cR^+_{m_1, m_2,n} \ ,
$$
note that the last summand in the left hand side depends only on $\chi^-$.
 A simple computation using the fact that the diagonal coefficients of the matrix $H^{\rm diag}$ are zero, i.e. $H^{-}_{m,m} =0$, shows that a solution is given by
 \begin{align*}
 & \chi_{m_1, m_2,n}^- :=
  \begin{cases}
   \displaystyle{\im \frac{H^{-}_{m_1, m_2}}{ m^2_1 - m^2_2 } - \frac{\omega\cdot\partial_\theta H^{-}_{m_1, m_2}}{ (m^2_1 - m^2_2 )^2}} \ , & |m_1| \neq |m_2| \\
   0 \ ,  &  |m_1| = |m_2| 
  \end{cases} \ , \\
 & \chi_{m_1, m_2,n}^+ := 
  \im \frac{H^{+}_{m_1, m_2}+\frac12(\{\cH^\out,\chi_1^\diag\})^+_{m_1,m_2}}{ (m^2_1 + m^2_2 +2n^2)}  \ , \\
  & \cZ=   0 \ , 
 \end{align*}
\begin{equation}
\label{R.def}
\cR^-_{m_1,m_2}=
\begin{cases}
 H^{-}_{m_1,-m_1} , & m_1 = -m_2  \\
- \displaystyle{\frac{(\omega\cdot\partial_\theta)^2 H^{-}_{m_1, m_2}}{ (m^2_1 - m^2_2 )^2}} \ , & |m_1| \neq |m_2| \\
0 & {\rm otherwise} 
\end{cases} \,, 
 \qquad  \qquad 
 \cR^+_{m_1,m_2,n}=\omega\cdot\partial_\theta \chi_{m_1, m_2,n}^+ \, . 
\end{equation}
By construction $\chi_1^\diag$ and $\cR^\diag$ are horizontal since their coefficients {\em do not} depend on $n$.
We show now that 
$$
\{\cM, \chi_1 \}=0\,,\quad \{\cP, \chi_1\} = 0 \ .
$$
This follows immediately by remarking that $\cH^\diag$ and $\cH^\out$ commute with $\cM$ and $\cP$ and hence they satisfy the selection rules of Remark \ref{leggi_sel}. 
By the explicit formula for $\chi_1$ it follows that the same selection rules hold for $\chi_1$.
This shows that item $(iv)$ holds.\\
We pass now to the quantitative estimates.
We define 
$$\chi_1^\hor:= (\chi_1)^\diag  \ , \qquad \chi_1^\mix:=(\chi_1)^\out \ . $$
In order to estimate $\chi_1^\hor$, we split it in two components by writing
\begin{align*}
& \chi_1^\hor:= \chi_{1,-1}^\hor + \chi_{1,-2}^\hor \ , \\
& (\chi_{1,-1}^\hor)_{m_1,m_2}:= 
\begin{cases}
\im \frac{H^{-}_{m_1, m_2}}{ m^2_1 - m^2_2 } \ , &  |m_1| \neq |m_2| \\
0 \ , & |m_1| = |m_2|
\end{cases} \ , \ \ \ \
(\chi_{1,-2}^\hor)_{m_1,m_2}:=
\begin{cases}
- \frac{\omega\cdot\partial_\theta H^{-}_{m_1, m_2}}{ (m^2_1 - m^2_2 )^2} \ , & |m_1| \neq |m_2| \\
0  \ , & |m_1| \neq |m_2| \ . 
\end{cases}
\end{align*}
We  prove that $\chi_{1,-1}^\hor \in \cQ^{\cO_0, \hor}_{s, -1}$, $\chi_{1,-2}^\hor \in \cQ^{\cO_0, \hor}_{3s/4, -2}$, $\chi_{1}^\mix \in \cQ^{\cO_0}_{3s/4, -\bd}$ with quantitative estimates 
\begin{equation}
\label{chi1.est}
\lceil\chi_{1,-1}^\hor\rfloor_{s,-1}^{\cO_0}
+
 \lceil\chi_{1,-2}^\hor\rfloor_{3s/4,-2}^{\cO_0} 
 +
 \lceil\chi_1^\mix \rfloor_{3s/4,-\bd}^{\cO_0} 
 \lessdot (|\cH^{\rm diag}|_s^{\cO_0} + |\cH^{\rm out}|_s^{\cO_0}) \lessdot \e  \ . 
\end{equation}
We begin with $\chi_{1,-1}^\hor $. 
By definition $\lceil\chi_{1,-1}^\hor\rfloor_{s,-1}^{\cO_0}$  is the $\abs{\cdot}_s^{\cO_0}$ norm of the Hamiltonian with Fourier-Taylor  coefficients 
$\left\{\la m_1 \ra \, \abs{\dfrac{H^{-,\ell}_{m_1, m_2}}{ m^2_1 - m^2_2 } }\right\}$.  Such coefficients are estimated by
$$
\abs{\la m_1 \ra \, (\chi_{1,-1}^\hor)_{m_1, m_2}^{-,\ell} }^{\cO_0}_\C \leq   \frac{\la m_1 \ra}{ | m^2_1 - m^2_2 | }  \abs{H^{-,\ell}_{m_1, m_2}}^{\cO_0}_\C \leq \abs{H^{-,\ell}_{m_1, m_2}}^{\cO_0}_\C
$$
since $\frac{\la m_1\ra}{|m_1^2- m_2^2|}\le 1$ for $|m_1| \neq |m_2|$. Thus, by Proposition \ref{riassunto} (iii), $\lceil\chi_{1,-1}^\hor\rfloor_{s,-1}^{\cO_0} \lessdot |\cH^{\rm diag}|_s^{\cO_0}$.\\
Consider now $\chi_{1,-2}^\hor $. By construction 
$$
\abs{\la m_1 \ra^2 \, (\chi_{1,-2}^\hor)_{m_1, m_2}^{-,\ell} }^{\cO_0}_\C \lessdot \,  \frac{\la m_1 \ra^2 |\ell|}{ | m^2_1 - m^2_2 |^2 }  \abs{H^{-,\ell}_{m_1, m_2}}^{\cO_0}_\C \lessdot \, |\ell| \abs{H^{-,\ell}_{m_1, m_2}}^{\cO_0}_\C \ ,  
$$
which implies that
$\lceil\chi_{1,-2}^\hor\rfloor_{3s/4,-2}^{\cO_0} \lessdot |\cH^\diag|^{\cO_0}_s $.

Finally consider $\chi_1^\mix$. By definition, 
$\lceil\chi_1^\mix\rfloor_{s,-\bd}^{\cO_0}$ is  the  $\abs{\cdot}_s^{\cO_0}$ norm of the  Hamiltonian with Fourier-Taylor coefficients 
$\{(\la m_1 \ra^2 + \la n \ra^2 ) \, \abs{\chi_{m_1, m_2,n}^{+,\ell}} \}$. We denote  $G^{+,\ell}_{m_1,m_2} :=\frac{1}{2}(\{\cH^\out,\chi^\diag\})^{+,\ell}_{m_1,m_2}$ and get
\begin{align*}
\abs{(\la m_1 \ra^2 + \la n \ra^2 )  \, \chi_{m_1, m_2,n}^{+,\ell}}^{\cO_0}_\C 
& \leq   \frac{(\la m_1 \ra^2 + \la n \ra^2 )  }{ | m^2_1 + m^2_2 + 2n^2 | }\, \big( \abs{H^{+,\ell}_{m_1, m_2}}^{\cO_0}_\C+  \abs{G^{+,\ell}_{m_1, m_2}}^{\cO_0}_\C\big ) \\
& \lessdot \big( \abs{H^{+,\ell}_{m_1, m_2}}^{\cO_0}_\C   +  \abs{G^{+,\ell}_{m_1, m_2}}^{\cO_0}_\C\big )
\end{align*}
since $\frac{(\la m_1 \ra^2 + \la n \ra^2 )  }{m_1^2+ m_2^2+ 2n^2}\le 1$.
Once again we get by Proposition \ref{riassunto} (iii) that 
$$
\lceil\chi_1^\mix\rfloor_{3s/4,-\bd}^{\cO_0} \lessdot |\cH^\out|_s^{\cO_0}(1+2|\chi_1^\diag|_{3s/4}^{\cO_0}) \lessdot 
 |\cH^\out|_s^{\cO_0}(1+ |\cH^\diag|_{s}^{\cO_0}) \ .
$$
We have  thus proven \eqref{chi1.est}.

We show now that  line \eqref{step1.he} belongs to $\cQ^{\cO_0, \hor}_{s/2, -2} + \cQ^{\cO_0}_{s/2, -\bd}$, i.e. it  is either horizontal and of order $-2$ or of order $-\bd$.
By the homological equation \eqref{hom.eq.0}, line \eqref{step1.he} equals 
\begin{align*}
\cR & = \cR^\diag + \cR^\out = \cR^\diag_1 + \cR^\diag_2 + \cR^\out \ , \\
& \cR^\diag_1 = \left\{\delta(m_1,-m_2) H^{-}_{m_1,-m_1}\right\} \ , \qquad \cR^\diag_2=\omega \cdot \partial_\theta  \chi^\hor_{1,-2}  \ ,\\
& \cR^\out = \omega \cdot \partial_\theta \chi_1^\mix \equiv 
\{ \omega\cdot\partial_\theta \chi_{m_1, m_2,n}^+\}
\end{align*}
First we estimate $\cR^\diag_1$. By momentum conservation, see Remark \ref{leggi_sel}, its coefficients are not-zero 
 only if $2m_1= -\pi(\ell)$ so 
 $\lceil \cR^\diag_1 \rfloor_{3s/4,-2}^{\cO_0} \lessdot  |\cH^\diag|_{s}^{\cO_0}$.  \\
Consider now $\cR^\diag_2$. One has 
$\lceil \cR^\diag_2 \rfloor_{s/2,-2}^{\cO_0} \lessdot \lceil \chi^\hor_{1,-2} \rfloor_{3s/4,-2}^{\cO_0} \lessdot |\cH^\diag|_{s}^{\cO_0}$. \\
Finally consider $\cR^\out$. One has 
 $$
 \lceil \cR^\out \rfloor_{s/2,-\bd}^{\cO_0} \lessdot 
\lceil \chi_1^\mix \rfloor_{3s/4,-\bd}^{\cO_0} \lessdot 
  |\cH^\out|_{s}^{\cO_0} \ .
 $$
 Collecting all the estimates we have that 
\begin{equation}
\label{R.est}
\lceil \cR^\diag \rfloor_{s/2,-2}^{\cO_0} + 
\lceil \cR^\out \rfloor_{s/2,-\bd}^{\cO_0} 
\lessdot |\cH^\0|_{s}^{\cO_0} \lessdot \e  \ .
\end{equation}
Thus we proved that line \eqref{step1.he} belongs to  $\cQ_{s/2, -2}^{ \cO_0, \hor} + \cQ_{s/2, -\bd}^{\cO_0}$.\\
 Let us now prove the same for \eqref{step1.hb}. 
 The first term to consider is 
\begin{equation}\label{partocesareo}
\{\cH^{\rm diag},\chi_1 \}=\{\cH^{\rm diag},\chi_{1,-1}^\hor \}+\{\cH^{\rm diag},\chi_{1,-2}^\hor \} + 
\{\cH^{\rm diag},\chi_1^\mix \} \ , 
\end{equation}
where we used that $\chi_1 =\chi_{1,-1}^\hor +\chi_{1,-2}^\hor + \chi_1^\mix$. 
We will prove that $\{\cH^{\rm diag},\chi_{1,-1}^\hor \}+\{\cH^{\rm diag},\chi_{1,-2}^\hor \} \in \cQ^{\cO_0, \hor}_{s/2, -2}$ while $\{\cH^{\rm diag},\chi_1^\mix \}  \in \cQ^{\cO_0}_{s/2, - \bd}$ with quantitative estimates
\begin{equation}
\label{linea4.15}
\lceil \{\cH^{\rm diag},\chi_{1,-1}^\hor \} \rfloor_{s/2,-2}^{\cO_0}  
+ \lceil \{\cH^{\rm diag},\chi_{1,-2}^\hor \} \rfloor_{s/2,-2}^{\cO_0} + 
\lceil \{\cH^{\rm diag},\chi_{1}^\mix \} \rfloor_{s/2,-\bd}^{\cO_0}  \lessdot (|\cH^\diag|_s^{\cO_0})^2 \lessdot \e^2 
\end{equation}
First note that since both $\cH^\diag$ and $\chi_1^\diag$ are horizontal, $\{\cH^{\rm diag},\chi_{1,-1}^\hor \}+\{\cH^{\rm diag},\chi_{1,-2}^\hor \} $ is horizontal as well. We pass to quantitative estimates.
By Lemma \ref{lem:smo} we have 
\begin{equation}
\label{partocesareo.est}
\lceil \{\cH^{\rm diag},\chi_{1,-2}^\hor \} \rfloor_{s/2,-2}^{\cO_0} \lessdot  |\chi_{1, -2}^\hor|_{3s/4, -2}^{\cO_0} \, |\cH^\diag|_s^{\cO_0} \lessdot \e^2 \ .
\end{equation}
Now  $\{\cH^{\rm diag},\chi_{1,-1}^\hor \}$ is   apparently only of order $-1$; however a direct computation shows that it is in fact of order $-2$:
	\begin{align*}
	(\{\cH^{\rm diag},\chi_{1,-1}^\hor \})_{m_1,m_2}^\ell
&	=   -   
\sum_{{m_3,\ell_1,\ell_2 \atop \ell_1+\ell_2=\ell\,,\; m_1-m_3= \pi(\ell_1)}\atop {m_3-m_2=\pi(\ell_2) \atop |m_3|\neq |m_2|} }
 \frac{H^{-, \ell_1}_{m_1, m_3 } H^{-, \ell_2}_{m_3, m_2 }}{(m_3^2-m_2^2)}
 +
 \sum_{{m_4,\ell_1,\ell_2 \atop \ell_1+\ell_2=\ell\,,\; m_1-m_4= \pi(\ell_2)}\atop {m_4-m_2=\pi(\ell_1) \atop |m_1| \neq |m_4| }} 
  \frac{H^{-, \ell_2}_{m_1, m_4 } H^{-, \ell_1}_{m_4, m_2 }}{(m_1^2-m_4^2)}\\
& =  -   \sum_{{m_3,\ell_1,\ell_2 \atop \ell_1+\ell_2=\ell\,,\; m_1-m_3= \pi(\ell_1)}\atop{m_3-m_2=\pi(\ell_2) \atop |m_3|\neq |m_2|, \ |m_1| \neq |m_1-m_3+m_2|} } 
 \frac{H^{-, \ell_1}_{m_1, m_3 } H^{-, \ell_2}_{m_3, m_2 }}{(m_3^2-m_2^2)}- \frac{H^{-, \ell_2}_{m_1, m_1-m_3+m_2 } H^{-, \ell_1}_{m_1-m_3+m_2, m_2 }}{(m_1^2-(m_1-m_3+m_2)^2)}   
	   \\
& =  -  \sum_{{m_3,\ell_1,\ell_2 \atop \ell_1+\ell_2=\ell\,,\; m_1-m_3= \pi(\ell_1)}\atop{m_3-m_2=\pi(\ell_2) \atop |m_3|\neq |m_2|, \ |m_1| \neq |m_1-m_3+m_2|}  }
 H^{-, \ell_1}_{m_1, m_3 } H^{-, \ell_2}_{m_3, m_2 } \left(\frac{1}{(m_3^2-m_2^2)}- \frac{1}{(m_1^2-(m_1-m_3+m_2)^2)}\right)   
	 \\
& =  -  \sum_{{m_3,\ell_1,\ell_2 \atop \ell_1+\ell_2=\ell\,,\; m_1-m_3= \pi(\ell_1)}\atop{m_3-m_2=\pi(\ell_2) \atop |m_3|\neq |m_2|, \ |m_1| \neq |m_1-m_3+m_2|}  } 
H^{-, \ell_1}_{m_1, m_3 } H^{-, \ell_2}_{m_3, m_2 }
 (\frac{ {2(m_1 - m_3)}}{(m_3-m_2)(m_3+m_2)(2m_1-m_3+m_2))}) 
	\end{align*}
Note that the conditions $|m_3|\neq |m_2|$ and $|m_1|\neq |m_4|$ come from the definition of $\chi_1^\diag$.
	By Remark \ref{paralin} we may assume $|\ell|,|\ell_i|\le  c\langle m_1\rangle$ (where $\displaystyle{c^{-1}= 2 \max_{1 \leq i \leq \tk }(|\tm_i|)}$ )  since otherwise we have  an infinitely  smoothing term. 
We have
$$
m_3+m_2 =2m_1+ m_3-m_1+m_2-m_1= 2m_1-\pi(\ell_1)-\pi(\ell)
$$
thus using also that $|\pi(\ell_1) + \pi(\ell)| \leq |m_1| $ and $|\pi(\ell_2)| \leq |m_1|$ one deduce the estimate
\begin{align*}
\langle m_1\rangle^2 \frac{2|m_1-m_3|}{|m_3-m_2||m_3+m_2||2m_1-m_3+m_2|}
\leq 
\frac{2|\pi(\ell_1)| \langle m_1\rangle^2 }{|\pi(\ell_2)||2m_1-\pi(\ell_1)-\pi(\ell)| \, |2m_1-\pi(\ell_2)| }
\le 2 |\pi(\ell_1)| \,.
\end{align*}
This implies that  $\{\cH^{\rm diag},\chi_{1,-1}^\diag \}\in \cQ_{3s/4,-2}^{\cO_0, \hor}$ with quantitative estimate
\begin{equation}
\label{partocesareo.est2}
\lceil \{\cH^{\rm diag},\chi_{1,-1}^\hor \} \rfloor_{3s/4,-2}^{\cO_0} \lessdot  (|\cH^\diag|_s^{\cO_0})^2 \lessdot \e^2 \ .
\end{equation}
Finally consider $ \{\cH^{\rm diag},\chi_{1}^\mix \} $. By Lemma \ref{lem:pseudo0.00}$(i)$ one has 
\begin{equation}
\label{partocesareo.est3}
\lceil \{\cH^{\rm diag},\chi_{1}^\mix \} \rfloor_{s/2,-\bd}^{\cO_0} 
\lessdot |\chi_1^\mix|_{3s/4, -\bd}^{\cO_0} \  |\cH^\diag|_s^{\cO_0} \lessdot \e^2 \ .
\end{equation}
Thus  estimate \eqref{linea4.15} follows from \eqref{partocesareo.est}, \eqref{partocesareo.est2}, \eqref{partocesareo.est3}.\\
Consider now the last two terms of line \eqref{step1.hb}. Substituting \eqref{hom.eq.0} into 
\begin{align}
\notag
\frac12\{\{\cD^\diag+ \omega \cdot \yy,\chi_1 \},\chi_1\}+\frac12\{\cH^\out,\chi^\diag_1\} 
 = & \frac12\{-\cH^{\rm diag}-\frac12\{\cH^\out,\chi_1^\diag\} +\cR,\chi_1\} {-\frac{1}{2} \{\cH^\out, \chi_1^\mix\}}\\
\label{seclina}
 = & \frac12 \{\chi_1,\cH^\diag\} 
+ \frac14 \{\{\chi_1^\diag, \cH^\out\}, \chi_1\}
 \\ 
 &+ \frac12 \{\cR, \chi_1\}{+ \frac12 \{\chi^\mix_1, \cH^\out\} } \ . 
\end{align}
The first term of \eqref{seclina} is the same as \eqref{partocesareo}, thus it is estimated by \eqref{linea4.15}. \\
Consider now the second term in \eqref{seclina}. Using that $\cH^\out$ is horizontal one decompose  
$$
\{\{\chi_1^\diag, \cH^\out\}, \chi_1\} = 
\{\{\chi_1^\diag, \cH^\out\}, \chi_1^\diag\} + 
\{\{\chi_1^\diag, \cH^\out\}, \chi_1^\out\}  
 \in  \cQ^{\cO_0, \hor}_{s/4, -2} +  \cQ^{\cO_0}_{s/4, -\bd} 
$$
 with quantitative estimates
\begin{equation}
 \label{seclina2}
 \lceil \{\{\chi_1^\diag, \cH^\out\}, \chi_1^\diag\}  \rfloor_{s/4, -2}^{\cO_0} 
 +
 \lceil \{\{\chi_1^\diag, \cH^\out\}, \chi_1^\out\}   \rfloor_{s/4, -\bd}^{\cO_0} \lessdot (|\cH|_{s}^{\cO_0})^3 \lessdot \e^3 \ .
 \end{equation} 
Next consider the  term  $\{\cR, \chi_1\}$. By construction 
$\cR = \cR^\diag + \cR^\out$, 
$\chi_1 = \chi_1^\diag + \chi_1^\out$ 
with $\cR^\diag$, $\chi_1^\diag$ horizontal. 
So
$$
\{\cR, \chi_1\} 
= 
\{\cR^\diag, \chi_1^\diag \} 
+ \{\cR^\diag, \chi_1^\out\} 
+ \{\cR^\out, \chi_1^\diag\} 
+ \{\cR^\out, \chi_1^\out\} $$
and the first term belongs to $ \cQ^{\cO_0, \hor}_{s/4, -2} $, 
while the last three belong to $ \cQ^{\cO_0}_{s/4, -\bd}$, and \eqref{chi1.est}, \eqref{R.est} give the quantitative estimates
\begin{equation}
\label{seclina3}
\lceil \{\cR^\diag, \chi_1^\diag \} \rfloor_{s/4, -2}^{\cO_0} + 
\lceil \{\cR^\diag, \chi_1^\out \} \rfloor_{s/4, -\bd}^{\cO_0} + 
\lceil \{\cR^\out, \chi_1^\diag \} \rfloor_{s/4, -\bd}^{\cO_0} + 
\lceil \{\cR^\out, \chi_1^\out \} \rfloor_{s/4, -\bd}^{\cO_0} 
\lessdot \e^2 \ . 
\end{equation}
Finally we study $\{\cH^\out, \chi_1^\mix \}$, which is easily seen to belong to $\cQ_{s/4, - \bd}^{\cO_0}$ with quantitative estimate
\begin{equation}
\label{seclina5}
\lceil \{\cH^\out, \chi_1^\mix \} \rfloor_{s/4, -\bd}^{\cO_0} 
\lessdot \e^2 \ . 
\end{equation}
Altogether we have proved that $\eqref{step1.hb} = \eqref{step1.hb}^\hor + \eqref{step1.hb}^\mix \in \cQ_{s/4, - 2}^{\cO_0, \hor} + \cQ_{s/4, - \bd}^{\cO_0}$ with estimates
\begin{equation}
\label{seclina6}
\lceil \eqref{step1.hb}^\hor  \rfloor_{s/4, -2}^{\cO_0} + 
\lceil \eqref{step1.hb}^\mix \rfloor_{s/4, -\bd}^{\cO_0} 
\lessdot \e^2 \ . 
\end{equation}
\\
In order to study line \eqref{step1.hc}  we apply Lemma \ref{lem:smo} to  $\re2\left( \chi_1; \cH^{\rm diag}+\cH^{\rm out}\right)$. We have
$$
\lceil \re2\left( \chi_{1}; \cH^{\rm diag}+\cH^{\rm out}\right)^\hor \rfloor_{s/2, -2}^{\cO_0} \lessdot \e^2 
\,,\qquad
\lceil \re2\left( \chi_1; \cH^{\rm diag}+\cH^{\rm out}\right)^\mix \rfloor_{s/2, -\bd}^{\cO_0} \lessdot \e^2 \ . 
$$
Now consider  $\re3\left( \chi_1; \omega \cdot \yy +\cD^\diag\right)$. Write it as
\begin{align*}
\re3\left( \chi_1; \omega \cdot \yy +\cD^\diag\right) =& \sum_{k \geq 3} \frac{{\rm ad}(\chi_1)^k[\omega \cdot \yy +\cD^\diag]}{k!} = \sum_{k \geq 3} \frac{{\rm ad}(\chi_1)^{k-1}[\{ \omega \cdot \yy+\cD^\diag , \chi_1\}]}{k!} \\
=& \sum_{k \geq 3} \frac{{\rm ad}(\chi_1)^{k-1}[ -\cH^{\rm diag}-\cH^{\rm out}+\frac12\{\chi_1^\diag,\cH^\out\} + \cZ+\cR]}{k!} \ . 
\end{align*}
Again we apply Lemma \ref{lem:smo} to obtain
$$
\lceil \left(\re3(\chi_1; \omega \cdot \yy+\cD^\diag)\right)^\hor\rfloor_{s/4, -2}^{\cO_0}+\lceil \left(\re3(\chi_1; \omega \cdot \yy+\cD^\diag)\right)^\mix\rfloor_{s/4, -\bd}^{\cO_0}  \lessdot \e^2 \ . 
$$
So far we have proved that
$$(\omega\cdot \yy + \cD^\diag + \cH^\0)\circ \cT^{(D)}= \omega\cdot \yy + \cD^\diag + \cR + \wt\cH $$ 
 with 
 $\wt\cH = \wt\cH^\hor + \wt\cH^\mix \in  \cQ_{s/4,-2}^{\cO_0,\hor} +  \cQ_{s/4,-\bd}^{\cO_0}$ fulfilling
 $$
 \lceil \wt\cH^\hor  \rfloor_{s/4, -2}^{\cO_0} +  \lceil \wt\cH^\mix \rfloor_{s/4, -\bd}^{\cO_0} \lessdot \e^2 \ .
 $$
The next step is to extract from the so obtained  Hamiltonian the  terms which are exactly of order $\e$. Obviously such terms can be contained only in $\cR$, so 
 we extract from $\cR$  the monomials of order exactly $\e$, which we denote by $ \cH_1$:  
$$
 \cH_1 := \partial_\e \cR \vert_{\e = 0} \ .
$$
	By formula \eqref{finite.gap.formula}, it follows that
\begin{equation}
q_m^\fig(\theta) = 
\begin{cases}
\sqrt{\e} \, \sqrt{ \lambda_i} \, e^{\im \theta_i} + \e^{3/2} p_m(\theta) \ , \quad m=\tm_i \in \Tan\\
\e^{3/2} p_m(\theta) \ , \quad m \notin \Tan
\end{cases} \ , 
\end{equation}	
	thus
	\begin{align}
	H_{m_1, m_2}^{-}(\lambda;\theta) & = 2\sum^\star_{m_3 - m_4 = m_2 - m_1 } q_{m_3}^\fig (\theta) \bar q_{m_4}^\fig (\theta) \\
	\label{kernel1}
	& = 2\e \sum_{i\neq j: \; \tm_i - \tm_j = m_2 - m_1 }
	 \sqrt{\lambda_{i} \lambda_{j}}\ e^{\im (\theta_{i} - \theta_{j})} + {\cO}(\e^2) 
	\end{align}
	and
		\begin{align}
		\notag
		H_{m_1, m_2}^{+}(\lambda; \theta) & = \sum_{m_3 + m_4 = m_2 + m_1 } \bar q_{m_3}^\fig (\theta)  \bar q_{m_4}^\fig(\theta) \\
		\label{kernel2}
		& = \e \sum_{i,j: \; \tm_i + \tm_j = m_2 + m_1 }
		 \sqrt{\lambda_{i} \lambda_{j}}\ e^{-\im (\theta_{i} + \theta_{j})} + \cO(\e^2).
		\end{align}
	Now we substitute
	 such kernels into the expression of $\cR $ \eqref{R.def} and  separate the terms of order $\e$ which define $\cH_1 $. All the other terms define $\cH_2$.
\end{proof}

\subsubsection{Step 2: removal of Birkhoff non resonant monomials at order $\e$}
We begin with the following definition:

\begin{definition}
\label{def:bnr}
 A monomial of the form  $ e^{\im \theta \cdot \ell } a^\alpha \bar a^\beta$, $|\alpha|+|\beta|=2$ will be said to be  {\em Birkhoff resonant} iff
$$
\omega^\0 \cdot \ell + \Omega^\0 \cdot(\alpha - \beta) = 0 \ .
$$
\end{definition}
In the next lemma we describe the monomials in the  Hamiltonian $\cH_1$ (defined in \eqref{ham.30}) which are Birkhoff resonant. 

\begin{lemma}
\label{lem:br}
The following holds true:
\begin{itemize}
\item[(i)] 
Consider a  monomial $ e^{\im \theta \cdot \ell }a_{(m_1,n)} \bar a_{(m_2,n)}$, $|m_1| \neq |m_2|$, $|\ell| =2$.    Then the monomial is Birkhoff resonant iff there exist $ 1 \leq i,j \leq \tk$ such that
\begin{equation}
\label{res1}
m_1=\tm_j \ , \ m_2 = \tm_i  \ , \ \ \  \ell = {\bf e}_i - {\bf e}_j \ .
\end{equation}
Such monomials  have support in $\bigcup_{n\in \Z} \cS_{0n}$. Thus, the  support of such monomials forms a horizontal rectangle with two points in $\Tan$. 
\item[(ii)] Consider a  monomial $ e^{\im \theta \cdot \ell }a_{(m_1,n)} \bar a_{(-m_1,n)}$, $|\ell| =2$.    Then the monomial is Birkhoff  resonant iff there exist $1 \leq i,j \leq \tk$, $i \neq j$, such that 
 $\tm_i =-\tm_j$.
\item[(iii)] Consider a  monomial $ e^{\im \theta \cdot \ell }\bar a_{(m_1,n)} \bar a_{(m_2,-n)}$,  $|\ell| = 2$.   Then the monomial is Birkhoff resonant iff there exist $1 \leq i,j \leq \tk$ such that
\begin{equation}
\label{res2} 
m_2=\tm_i + \tm_j - m_1 \ ,  \ \ \ (m_1 - \tm_i)(m_1-\tm_j) + n^2 =0  \ , \ \ \ \ell = {\bf e}_i + {\bf e}_j \ .
\end{equation}
Such monomials  have support in $\bigcup_{i<j} \ccC_{ij}$. Thus, their support form a rotated rectangle with two point in $\Tan$.
\end{itemize}
\end{lemma}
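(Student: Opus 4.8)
\emph{Proof strategy.} The whole argument is a bookkeeping exercise built on the explicit formula for $\cH_1 = \cH_1^\hor + \cH_1^\mix$ provided by Lemma~\ref{scendo}, see \eqref{kernel0}--\eqref{kernel12}. The plan is to treat the three monomial shapes separately, matching each shape to the only block of \eqref{kernel0}--\eqref{kernel00} that can produce it: an $a\bar a$-monomial with $|m_1|\neq|m_2|$ can only arise from the first sum in \eqref{kernel0}; an $a_{(m_1,n)}\bar a_{(-m_1,n)}$-monomial can only arise from the second sum in \eqref{kernel0}, since the first sum explicitly excludes $|m_1|=|m_2|$; and a $\bar a_{(m_1,n)}\bar a_{(m_2,-n)}$-monomial can only arise from $\cH_1^\mix$. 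In each case I would first read off from \eqref{kernel10}--\eqref{kernel12} the admissible frequency vectors $\ell$ together with the linear constraint they impose on $(m_1,m_2)$, then impose the Birkhoff resonance condition of Definition~\ref{def:bnr}, using that on the monomials of $\cH_1$ one has $\Omega^\0_{(m,n)} = m^2 + n^2$ (because $n\neq 0$ there), and finally combine the two relations and identify the support set.

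For case (i): by \eqref{kernel10} the monomial forces $\ell = {\bf e}_i - {\bf e}_j$ with $i\neq j$ and $\tm_i - \tm_j = m_2 - m_1$, and the resonance condition $\tm_i^2 - \tm_j^2 + m_1^2 - m_2^2 = 0$ becomes, after using $\tm_i^2-\tm_j^2 = (\tm_i-\tm_j)(\tm_i+\tm_j)$ and $m_1^2-m_2^2 = -(m_2-m_1)(m_1+m_2)$, the factored identity $(m_2 - m_1)(\tm_i + \tm_j - m_1 - m_2) = 0$. Since $|m_1|\neq|m_2|$ gives $m_1\neq m_2$, this yields $\tm_i + \tm_j = m_1 + m_2$, hence, together with $\tm_i - \tm_j = m_2 - m_1$, the relations $m_1 = \tm_j$, $m_2 = \tm_i$, which is \eqref{res1}; the support $\{(\tm_j,n),(\tm_i,n)\}$ lies in $\bigcup_n\cS_{0n}$ and, with $(\tm_i,0),(\tm_j,0)\in\Tan$, forms an axis-parallel rectangle. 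For case (ii): here $\Omega^\0\cdot(\alpha-\beta)=0$, so the resonance condition reduces to $\omega^\0\cdot\ell = 0$; by \eqref{kernel11} the monomial forces $\ell = {\bf e}_i - {\bf e}_j$ with $i\neq j$ and $\tm_i-\tm_j = -2m_1$, so the condition is $\tm_i^2 = \tm_j^2$, i.e. $\tm_i = -\tm_j$, and then $m_1 = \tm_j$. Thus such resonant monomials occur exactly when $\Tan$ contains an antipodal pair.

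For case (iii): by \eqref{kernel12} the monomial forces $\ell = {\bf e}_i + {\bf e}_j$ with $\tm_i + \tm_j = m_1 + m_2$ for some indices $i,j$, possibly with $i=j$ at this stage. The resonance condition $\tm_i^2 + \tm_j^2 = m_1^2 + m_2^2 + 2n^2$, after subtracting $(\tm_i+\tm_j)^2 = (m_1+m_2)^2$, becomes $\tm_i\tm_j = m_1m_2 - n^2$, which is precisely $(m_1-\tm_i)(m_1-\tm_j) + n^2 = 0$ together with $m_2 = \tm_i+\tm_j-m_1$, i.e. \eqref{res2}. The corresponding quadratic $m_1^2 - (\tm_i+\tm_j)m_1 + (\tm_i\tm_j + n^2) = 0$ has discriminant $(\tm_i-\tm_j)^2 - 4n^2$, hence an integer root only if $i\neq j$, and by the symmetry $\ccC^\pm_{i,j} = \ccC^\pm_{j,i}$ we may take $i<j$. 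Since $m_2 - \tm_i = -(m_1-\tm_j)$ and $m_2 - \tm_j = -(m_1-\tm_i)$, one also has $(m_2-\tm_i)(m_2-\tm_j) + n^2 = 0$, so (using $n>0$) $(m_1,n)\in\ccC^+_{i,j}$ and $(m_2,-n)\in\ccC^-_{i,j}$; the four points $(\tm_i,0),(\tm_j,0),(m_1,n),(m_2,-n)$ form a rectangle, since the segments $[(\tm_i,0),(\tm_j,0)]$ and $[(m_1,n),(m_2,-n)]$ share the midpoint $\big(\tfrac{\tm_i+\tm_j}{2},0\big)$ and, by \eqref{res2}, have equal length.

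The only genuinely delicate point is making the matching of monomial shapes to the blocks of \eqref{kernel0}--\eqref{kernel00} exhaustive, and keeping the $|m_1|=|m_2|$ versus $|m_1|\neq|m_2|$ dichotomy straight; all the arithmetic above is elementary.
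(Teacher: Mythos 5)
Your proof is correct and follows essentially the same route as the paper's: pin down $\ell$ and the linear constraint on $(m_1,m_2)$, then factor the Birkhoff resonance identity. The only cosmetic differences are that you read off the constraints from the explicit coefficient formulas \eqref{kernel10}--\eqref{kernel12} rather than from the abstract mass/momentum selection rules of Remark \ref{rem:mm.use} (which encode the same information since $\cH_1$ commutes with $\cM$ and $\cP$), and in case (i) you factor the resonance as $(m_2-m_1)(\tm_i+\tm_j-m_1-m_2)$ instead of the paper's $2(\tm_i-\tm_j)(\tm_j-m_1)$, which leads to the same conclusion since $m_1\neq m_2$ and $\tm_i\neq\tm_j$ are each available; your discriminant observation in (iii), ruling out $i=j$ because $-4n^2<0$ when $n\neq 0$, is a useful clarification that the paper leaves implicit.
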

\begin{proof}
 $(i)$ Obviously if \eqref{res1} holds, then the monomial is Birkhoff resonant. Assume now that the monomial is Birkhoff resonant. We will use the conservation of mass and momentum as described in Remark \ref{rem:mm.use}. 
  By conservation of mass  $\sum_i \ell_i = 0$, which together  with $|\ell| =2$ implies $\ell = {\bf e}_i - {\bf e}_j$, $i \neq j$.
   Now by  conservation of momentum  $\pi(\ell) +m_1 - m_2 = 0$, which shows that $\tm_i - \tm_j + m_1 - m_2 = 0$. Thus
\begin{align*}
0 = \oo^\0 \cdot \ell+\Omega^\0\cdot (\al-\bt) & = \oo^\0 \cdot({\bf e}_{i} - {\bf e}_{j}) + m_1^2 - m_2^2 = \tm_i^2 - \tm_j^2 + m_1^2 - m_2^2 \\
& = 2(\tm_i - \tm_j)(\tm_j- m_1) \ .
\end{align*}
If   $\tm_j = \tm_i$, then by momentum conservation $m_1 = m_2$, which contradicts  $|m_1| \neq |m_2|$. Thus $m_1 = \tm_j$, $m_2 = \tm_i$.

$(ii)$ By conservation of mass and momentum $\ell = {\bf e}_i - {\bf e}_j$,  $i \neq j$, $\tm_i - \tm_j + 2m_1= 0$. Then
\begin{align*}
0 = \oo^\0 \cdot \ell+\Omega^\0\cdot (\al-\bt) & =  \tm_i^2 - \tm_j^2 \ = (\tm_i - \tm_j)(\tm_j + \tm_i) \ .
\end{align*}
Since $i \neq j$, it follows that $\tm_j + \tm_i = 0$.

$(iii)$ Once again if \eqref{res2} holds, than the monomial is Birkhoff resonant. Assume now to have a  Birkhoff resonant monomial.
By conservation of mass $\sum_i \ell_i = 2$, which together with $|\ell|=2$ implies $\ell = {\bf e}_i + {\bf e}_j$. Now the conservation of momentum reads $\pi(\ell) - m_1 - m_2 = 0$, hence one has $\tm_i + \tm_j - m_1 - m_2 =0$. Thus
\begin{align*}
0 = \oo^\0 \cdot \ell+\Omega^\0\cdot (\al-\bt) & = \oo^\0 \cdot({\bf e}_{i} + {\bf e}_{j}) - m_1^2 - m_2^2 - 2 n^2 = \tm_i^2 + \tm_j^2 - m_1^2 - m_2^2 - 2n^2\\
& =-2[(m_1 - \tm_i)(m_1-\tm_j) + n^2 ]  \ ,
\end{align*}
which shows the claimed condition.
\end{proof}

\begin{remark}
By the condition of arithmetic genericity of $\Tan$ (see Definition \ref{defar}), one has $\tm_i \neq - \tm_j$ $\forall i,j$. Thus the monomials described in Lemma \ref{lem:br} (ii) are always Birkhoff non resonant.
\end{remark}

In the next lemma we perform a canonical transformation which removes all the Birkhoff non  resonant monomials.
	\begin{lemma}\label{medio}
There exists an invertible symplectic transformation $\cT^{(B)}: D(s/8, r/4) \to D(s/4, r/2)$  $\forall s_0/64 \le s \leq s_0, 0 < r \leq r_0$ which transforms the Hamiltonian \eqref{ham.30} in the following form:
\begin{align}
\label{ham.40}
\left(\omega \cdot \yy + \cD +\cH_{1}  +  \cH_2\right) \circ \cT^{(B)} =  \omega \cdot \yy + \cD + \cZ_{1} + \whcH_{2}  \ , 
\end{align}
where 
\begin{itemize}
 \item[(i)] the map $\cT^{(B)}$ is the time-1 flow of a quadratic Hamiltonian $\chi_2= \chi_2^\hor + \chi_2^\mix$ such that $\lceil \chi_2^\hor \rfloor_{s/4, -2}^{\cO_0} + \lceil \chi_2^\mix \rfloor_{s/4, -\bd}^{\cO_0}  \lessdot \e$.
	\item[(ii)] $\cZ_1$ is the {\em Birkhof resonant} part of $\cH_1$ and has the following form:  $\cZ_1  = \cZ^\hor_1 + \cZ^\mix_1$, where 
	\begin{align*}
\cZ_1^\hor=	\cZ^{\rm diag}_1 := 2\e \sum_{n\neq 0}\sum_{i\neq j} \sqrt{\lambda_i \lambda_j}\,  e^{\im(\theta_i-\theta_j)} \,  a_{(\tm_j,n)}\bar a_{(\tm_i,n)} \\
\cZ_1^\mix=	\cZ^{\rm out}_1 := 4\e \, \sum_{i<j}\sum_{(m,n)\in \ccC_{i,j}^+ }	 \sqrt{\lambda_i \lambda_j} \ {\rm Re}(e^{\im(\theta_i+\theta_j)} \ \bar a_{(m,n)}\bar a_{(\tm_i+\tm_j-m,-n)} )\ ,
	\end{align*}
	where $\ccC_{i,j}^+$ is defined in \eqref{def:lambda}. Note that $\cZ_1^\hor$ is both horizontal and diagonal while $\cZ_1^\mix$ is out-diagonal and supported only on the finite set $\ccC_{i,j}$.
	\item[(iii)] $\whcH_2 = \whcH_2^\hor + \whcH_2^\mix \in \cQ_{s/8, -2}^{\cO_0, \hor} +\cQ_{s/8, -\bd}^{\cO_0}  $  and  $\lceil \whcH_2^\hor \rfloor_{s/8, -2}^{\cO_0} + \lceil \whcH_2^\mix \rfloor_{s/8, -\bd}^{\cO_0}   \lessdot \e^2$.  
	\item[(iv)]  One has $\cM \circ \cT^{(B)} = \cM$ and  $\cP \circ \cT^{(B)} = \cP$.
	\end{itemize}
	\end{lemma}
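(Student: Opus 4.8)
The plan is to construct $\cT^{(B)}$ as the time-$1$ flow of a quadratic Hamiltonian $\chi_2$ which solves a homological equation that kills the Birkhoff non-resonant part of $\cH_1$. Writing $\cH_1 = \cZ_1 + (\cH_1 - \cZ_1)$ where $\cZ_1$ is the Birkhoff resonant part identified in Lemma \ref{lem:br}, I would set, in Fourier-Taylor components,
\begin{equation*}
\chi_2^{\s,\ell}_{m_1,m_2,n} := \frac{(\cH_1 - \cZ_1)^{\s,\ell}_{m_1,m_2,n}}{\im\bigl(\omega^\0\cdot\ell + \Omega^\0\cdot(\alpha-\beta)\bigr)} \qquad \text{when } \omega^\0\cdot\ell + \Omega^\0\cdot(\alpha-\beta)\neq 0,
\end{equation*}
and zero otherwise. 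Since $\cH_1$ is of size $\e$ and supported on monomials with $|\ell|=2$ and $|\alpha|+|\beta|=2$, the small divisors $\omega^\0\cdot\ell+\Omega^\0\cdot(\alpha-\beta)$ are nonzero \emph{integers} on the non-resonant set (the frequencies $\omega^\0$ and $\Omega^\0$ are integer-valued up to corrections of order $\e$ that we have not yet included; here they are literally integers $\tm_i^2$ and $|\jj|^2$), hence bounded below by $1$ in absolute value. This makes Step 2 \emph{non-perturbative-free}: no Diophantine condition on $\lambda$ is needed, which is why this lemma does not yet shrink $\cO_0$. I would split $\chi_2 = \chi_2^\hor + \chi_2^\mix$ according to the diagonal/out-diagonal decomposition exactly as in Lemma \ref{scendo}, check that $\chi_2^\diag$ is horizontal (its coefficients do not depend on $n$ because those of $\cH_1^\hor$ do not), and verify that $\chi_2^{\rm line}=0$ so that $\cT^{(B)}$ preserves $\cD^{\rm line}$.

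Next I would carry out the Lie-series expansion
\begin{align*}
(\omega\cdot\yy + \cD + \cH_1 + \cH_2)\circ\cT^{(B)} &= \omega\cdot\yy + \cD + \{\cD^\diag + \omega\cdot\yy,\chi_2\} + \cH_1 + \cH_2 \\
&\quad + \re2(\chi_2;\cD^\diag) + \re1(\chi_2;\cH_1+\cH_2),
\end{align*}
impose the homological equation $\{\cD^\diag + \omega\cdot\yy, \chi_2\} + \cH_1 - \cZ_1 = 0$ (which holds on the resonant set by definition of $\cZ_1$, and on the non-resonant set by the choice of $\chi_2$), and collect everything except $\omega\cdot\yy + \cD + \cZ_1$ into $\whcH_2$. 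The terms to bound are: $\cH_2$ itself ($\lessdot \e^2$ by Lemma \ref{scendo}(iii)); the remainders $\re2(\chi_2;\cD^\diag)$, $\re1(\chi_2;\cH_1)$, $\re1(\chi_2;\cH_2)$. For each of these I would invoke the algebra and smoothing estimates of Lemma \ref{lem:pseudo00}, Lemma \ref{lem:smo} and Proposition \ref{prop:mer}(iii), using $\lceil\chi_2^\hor\rfloor_{s/4,-2}^{\cO_0} + \lceil\chi_2^\mix\rfloor_{s/4,-\bd}^{\cO_0}\lessdot\e$ (which follows from dividing $\cH_1$ by an integer divisor and noting that the explicit kernels \eqref{kernel0}--\eqref{kernel00} have the required order, just as for $\chi_1$ in Lemma \ref{scendo}). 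The loss of analyticity width from $s/4$ to $s/8$ and of domain radius from $r/2$ to $r/4$ is absorbed into the constants $\delta^{-1}$ appearing in those estimates.

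The explicit form of $\cZ_1$ in item (ii) is then just a matter of reading off from Lemma \ref{lem:br} which monomials of $\cH_1^\hor$ (formula \eqref{kernel0}) and $\cH_1^\mix$ (formula \eqref{kernel00}) survive: the resonant diagonal ones are $a_{(\tm_j,n)}\bar a_{(\tm_i,n)}$ with $\ell={\bf e}_i - {\bf e}_j$ (the case $m_1=-m_2$ is ruled out by arithmetic genericity $\tm_i\neq-\tm_j$), giving $\cZ_1^\hor$; the resonant out-diagonal ones are $\bar a_{(m,n)}\bar a_{(\tm_i+\tm_j-m,-n)}$ with $(m,n)\in\ccC_{i,j}$, $\ell={\bf e}_i+{\bf e}_j$, giving $\cZ_1^\mix$. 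I would restrict the out-diagonal sum to $\ccC_{i,j}^+$ (using $n>0$ to avoid double counting and the symmetry under $(m,n)\mapsto(\tm_i+\tm_j-m,-n)$) and read off the coefficients $2\e\sqrt{\lambda_i\lambda_j}$ and $4\e\sqrt{\lambda_i\lambda_j}$ from \eqref{kernel10}, \eqref{kernel12}. Item (iv), conservation of mass and momentum, follows because $\cH_1-\cZ_1$ commutes with $\cM$, $\cP$ (it is a subsum of $\cH_1$, which does, by Lemma \ref{scendo}), hence so does $\chi_2$ by the explicit formula, and therefore the flow $\cT^{(B)}$ preserves $\cM$ and $\cP$.

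The main obstacle I anticipate is purely bookkeeping rather than conceptual: showing that $\chi_2^\mix$, obtained by dividing the order-$(-\bd)$ Hamiltonian $\cH_1^\mix$ by the divisor $m_1^2+m_2^2+2n^2$, genuinely retains order $-\bd=(-2,-2)$ uniformly, and that the many Poisson brackets in $\re1(\chi_2;\cH_1)$ — in particular the cross term $\{\chi_2^\diag,\cH_1^\out\}$ which is only \emph{apparently} of order $-1$ — are in fact of order $-2$ in the $x$-direction. This requires the same momentum-conservation telescoping trick used in Lemma \ref{scendo} for $\{\cH^\diag,\chi_{1,-1}^\hor\}$ (the identity $m_3+m_2 = 2m_1 - \pi(\ell_1) - \pi(\ell)$ and the bounds $|\pi(\ell_i)|\le|m_1|$), together with the Bony decomposition of Lemma \ref{boni} to discard the infinitely smoothing high-$|\ell|$ tails. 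Everything else is a routine application of the already-established functional machinery.
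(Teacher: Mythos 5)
Your overall strategy matches the paper's — Lie series with a quadratic generator $\chi_2$, homological equation with respect to $\omega\cdot\yy+\cD^\diag$, identification of the resonant part $\cZ_1$ via Lemma \ref{lem:br}, and absorption of remainders into $\whcH_2$ via Lemma \ref{lem:smo} — but as written there is an inconsistency in the construction of $\chi_2$. You define $\chi_2$ by dividing by the integer $\omega^\0\cdot\ell+\Omega^\0\cdot(\alpha-\beta)$, yet then assert that the homological equation $\{\cD^\diag+\omega\cdot\yy,\chi_2\}+\cH_1-\cZ_1=0$ holds. But $\{\omega\cdot\yy,\chi_2\}$ produces the divisor $\omega\cdot\ell$, not $\omega^\0\cdot\ell$, so your $\chi_2$ does not solve this equation: there is a leftover term $\{(\omega-\omega^\0)\cdot\yy,\chi_2\}=\e\,\lambda\cdot\partial_\theta\chi_2$ of size $\e^2$ that your Lie-series bookkeeping silently drops. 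The paper avoids this by dividing directly by $\omega\cdot\ell+m_1^2-m_2^2$ (resp.\ $\omega\cdot\ell+m_1^2+m_2^2+2n^2$), which on the non-resonant set is bounded below by $|\omega^\0\cdot\ell+m_1^2-m_2^2|-O(\e)\ge 1/2$, see \eqref{step2.he1}. Either adopt that divisor, or, if you insist on the integer one, explicitly push the residual $\e\,\lambda\cdot\partial_\theta\chi_2$ (which has the same order as $\chi_2$ and scales as $\e^2$) into $\whcH_2$ — this is the same kind of residual that, in Lemma \ref{scendo}, the split $\chi_1=\chi_{1,-1}^\hor+\chi_{1,-2}^\hor$ is specifically designed to track, and it cannot be skipped.

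A secondary remark: the ``main obstacle'' you anticipate — needing the telescoping identity $m_3+m_2=2m_1-\pi(\ell_1)-\pi(\ell)$ and the Bony decomposition to handle $\re1(\chi_2;\cH_1)$ — is not actually an obstacle here. That machinery was needed in Lemma \ref{scendo} because $\chi_{1,-1}^\hor$ was only of order $-1$ and the bracket $\{\cH^\diag,\chi_{1,-1}^\hor\}$ had to be promoted to order $-2$. In the present step both $\chi_2^\hor\in\cQ^\hor_{s/4,-2}$ and $\chi_2^\mix\in\cQ_{s/4,-\bd}$, as well as both pieces of $\cH_1$, are already of the correct orders, so Lemma \ref{lem:smo} applies directly to $\re1(\chi_2;\cH_1)$ with no extra argument; this is what the paper's proof does.
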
 
\begin{proof}
Once again we use the method of the Lie series. Thus we look for $\cT^{(B)}$ as the time-1 flow map of an  Hamiltonian  $\chi_2=\chi_2^\diag+\chi_2^\out $ to be determined. As in the previous step $\chi_2$ Poisson commutes with $\cD^{\rm line}$. Then one has
\begin{align}
\notag
\left(\omega \cdot \yy + \cD +\cH_{1}  +  \cH_2\right) \circ \cT^{(B)}     = & \  \omega \cdot \yy + \cD   \\
\label{red.10}
& + \{ \omega \cdot \yy + \cD^{\rm diag} , \chi_2 \} + \cH_{1}  \\
\label{red.20}
& + \re2(\chi_2; \, \omega \cdot \yy + \cD^{\rm diag} ) + \re1(\chi_2; \,  \cH_1 )\\
\label{red.30}
& + \cH_{2}\circ \cT^{(B)} 
\end{align}
This time we fix $\chi_2$ in order to  remove the Birkhoff non resonant terms of $\cH_1$, so   we solve the homological equation with $\omega \cdot \yy + \cD^{\rm diag}$ (on the contrary of \eqref{hom.eq.0}).  The homological equation is 
$$
\{ \omega \cdot \yy + \cD^{\rm diag}, \chi_2\} + \cH_{1} =\cZ_1
$$
for some $\chi_2, \cZ_1$ to be determined.
We claim that we may divide   $\chi_2 = \chi_{2}^\hor + \chi_{2}^\mix$ so that we can solve the horizontal part of the equation and the not-horizontal separately, i.e. 
\begin{equation}\label{fifi}
\{ \omega \cdot \yy + \cD^{\rm diag}, \chi_{2}^\hor\} + \cH^\hor_{1} =\cZ^\hor_1\,,\quad \{ \omega \cdot \yy + \cD^{\rm diag}, \chi_{2}^\mix\} + \cH^\mix_{1} =\cZ^\mix_1 \ , 
\end{equation} 
with
 $$\lceil \chi_2^\hor\rfloor^{\cO_0}_{s/4,-2} +  \lceil \chi_2^\mix\rfloor^{\cO_0}_{s/4,-\bd} \lessdot \e. $$  
In the first equation of \eqref{fifi},  remarking that $\cH_1^\hor$ is  also a  diagonal Hamiltonian, we make the ansatz that $\cZ_1^\hor$ and  $\chi_2^\hor$ are diagonal as well. Passing to Taylor-Fourier expansion we get for the coefficients $\{\chi_{m_1,m_2,n}^{-,\ell}\}$ of $\chi_2^\hor$ the equation 
\begin{equation}
\im (\oo\cdot \ell+ m_1^2  - m_2^2)\chi_{m_1,m_2,n}^{-,\ell}+(\cH^\hor_1)_{m_1,m_2}^{-, \ell}= (\cZ^\hor_1)_{m_1,m_2}^{-, \ell}\ .
\end{equation}
Now by the explicit expression of $\cH^\hor_1$ given in  \eqref{kernel0} we  have that the Taylor-Fourier support of $\cH^\hor_{1}$ is the set
\begin{align*}
{\rm supp}(\cH^\hor_1):= \Big\{\big((m_1, n), (m_2, n), \ell \big) : \ \exists  \, 1 \leq i, j \leq \tk \ ,  \, i \neq j  \mbox{ s.t. }\  m_2 - m_1 = \tm_i - \tm_j \ , \   \ell = {\bf e}_{i} - {\bf e}_{j}  \Big\} \ . 
\end{align*}
By Lemma \ref{lem:br}$(i)-(ii)$, all the monomials with $m_1 \neq \tm_j$ are Birkhoff non resonant, thus using also that $|\ell|=2$ 
\begin{equation}
\label{step2.he1}
|\oo\cdot \ell+ m_1^2  - m_2^2| \geq |\oo^\0 \cdot \ell+ m_1^2  - m_2^2| - \cO(\e) >1/2 \ ,
\end{equation}
 and moreover such divisor  does not depend on $n$.  We obtain
\begin{align}
\label{Zhor1}
 (\cZ^\hor_1)_{m_1,m_2}^{-, \ell}=
\begin{cases}
 2\e \sqrt{\lambda_i \lambda_j} \ , & m_1=\tm_j\ ,\ m_2=\tm_i \ , \  \ell = {\bf e}_{i} - {\bf e}_{j} \\
 0 & \mbox{ otherwise}
 \end{cases}
\end{align}
and 
$$
\chi_{m_1,m_2,n}^{-,\ell} = \frac{(\cZ^\hor_1)_{m_1,m_2}^{-, \ell} - (\cH^\hor_1)_{m_1,m_2}^{-, \ell}}{\im (\oo\cdot \ell+m_1^2 - m_2^2)} \ .
$$
It is clear that $\chi_2^\hor  \in \cQ^{\cO_0, \hor}_{s/4,-2}$ is horizontal and using also the estimate \eqref{step2.he1} one gets
$$
\lceil \chi_2^\hor \rfloor^{\cO_0}_{s/4,-2} \lessdot \e \ .
$$
Consider now the second equation  in \eqref{fifi}.  By Lemma \ref{lem:br} $(iii)$  the resonant monomials are those  fulfilling \eqref{res2}. 
Thus, using also the rectangle condition $|(\tm_j, 0)|^2 - |(m_1, n)|^2 + |(\tm_i, 0)|^2 - |(m_2, -n)|^2 = 0$,  we define for $n>0$
\begin{align*} (\cZ^\mix_1)_{m_1,m_2,n}^{+, \ell}=
\begin{cases}
 2\e \sqrt{\lambda_i \lambda_j} \ , &  (m_1-\tm_i) (m_1-\tm_j)+n^2=0  , \; \ \ell =- {\bf e}_{i} - {\bf e}_{j}\\
0 & \mbox{ otherwise} 
\end{cases}
\end{align*}
 and  
$$
\chi_{m_1,m_2,n}^{+,\ell} = \frac{(\cZ^\mix_1)_{m_1,m_2,n}^{-, \ell} - (\cH^\mix_1)_{m_1,m_2,n}^{-, \ell}}{\im (\oo\cdot \ell+m_1^2 + m_2^2 + 2n^2)} \ .
$$
As in \eqref{step2.he1}, we have the estimate of the small divisors 
$$
\abs{\omega \cdot \ell  + m_1^2 + m_2^2 + 2n^2 } \geq  \frac12
$$
which implies that $\chi_2^\mix\in \cQ_{s/4,-\bd}^{\cO_0}$ and 
$$
\lceil \chi_2^\mix\rfloor_{s/4,-\bd}^{\cO_0} \lessdot  \e \ .
$$
Now remark that  $\cH_1$ commutes with $\cM$ and $\cP$ and hence it satisfies the selection rules of Remark \ref{leggi_sel}. 
By the explicit formula for $\chi_2$ it follows that the same selection rules hold for $\chi_2$, hence $\{\cM, \chi_2 \}=0$ and $\{\cP, \chi_2\} = 0 $ and item $(iv)$ follows.

Now we analyze line \eqref{red.20}. First one has that, since $\chi_2 = \chi_2^\hor + \chi_2^\mix$,  we can apply Lemma \ref{lem:smo} to get 
$$
\lceil \re1\left( \chi_2; \cH_1\right)^\hor \rfloor_{s/8, -2}^{\cO_0} \lessdot \e^2 
\,,\qquad
\lceil \re1\left( \chi_2; \cH_1\right)^\mix \rfloor_{s/8, -\bd}^{\cO_0} \lessdot \e^2 \ .
$$
Using that $\{ \chi_2, \omega \cdot \yy + \cD^{\rm diag} \}= \cH_1 - \cZ_1 $, one obtains 
\begin{align*}
\re2\left( \chi_2; \omega \cdot \yy + \cD^{\rm diag} \right) & 
 = \sum_{k \geq 2} \frac{{\rm ad}(\chi_2)^{k-1}[ \{ \chi, \omega \cdot \yy + \cD^{\rm diag}  \}]}{k!} = \sum_{k \geq 1} \frac{{\rm ad}(\chi_2)^{k}[\cH_1 - \cZ_1]}{(k+1)!}  \ . 
\end{align*}
Since  $\cH_1 - \cZ_1 = (\cH_1 - \cZ_1)^\hor + (\cH_1 - \cZ_1)^\mix$ we apply  Lemma \ref{lem:smo} to get
$$
\lceil\left( \re2(\chi_3; \oo\cdot \yy+ \cD^\diag)\right)^\hor\rfloor^{\cO_0}_{s/8, -2} +\lceil\left( \re2(\chi_3; \oo \cdot \yy + \cD^\diag)\right)^\mix\rfloor^{\cO_0}_{s/8, -\bd} \lessdot \e^2 \ . 
$$
Now consider line \eqref{red.30}. Using again Lemma \ref{lem:smo} (with $\ti = 0$) we get  $\whcH_2:= \cH_2 \circ \cT^{(B)} = \whcH_2^\hor + \whcH_2^\mix $ with the claimed estimates.
\end{proof}
%

\subsubsection{Step 3: diagonalization of the Birkhoff resonant terms}
In the final  step we consider the resonant Hamiltonian in normal form  $ \omega \cdot \yy + \cD   + \cZ_1$ and we diagonalize it through a  transformation which is not close to the identity. 
\begin{remark}
	Due to our genericity condition we have that $(m,n)\in \cup_{i<k}\ccC_{i,k}$ implies that $m\notin \Tan$.   Moreover given $(m,n)$ with $n>0$ there exists at most one couple $(i,k)$, $i<k$, such that $(m,n)\in \ccC_{i,k}^+$. In the same way given $(m,n)$ with $n<0$ there exists at most one  couple $(i,k)$, $i<k$, such that  $(m,n)\in \ccC_{i,k}^-$ and consequently  $(\tm_i+\tm_k-m,-n)\in \ccC_{i,k}^+$.
\end{remark}
We now perform a phase shift which removes the  dependence on the angles in $\cZ_1$.
\begin{lemma}
	\label{lem:diag0}
	Consider the  Hamiltonian \eqref{ham.40}. For all $s_0/2 \le s\le s_0, 0<r\le  r_0$, there exists an invertible symplectic change of variables $\cR: D(s/8, e^{-s_0} r/4) \to D(s/8, r/4)$: $ (\yy^{(+)}, \theta, {\bf b}) \mapsto (\yy, \theta, \ba) $ s.t.  
	\begin{align}
	\label{quad00}
	(\omega\cdot \yy + & \cD + \cZ_1 )\circ \cR = 
	\omega \cdot \yy^{(+)} + \whcD + \whcZ_1 
	\end{align}
	where
	\begin{align}
	\label{quad0}
	& \whcD:= \sum_{(m,n) \in \Z^2\setminus (\Tan\cup\sS \cup \ccC) } \Omega^\0_{(m,n)} |b_{(m,n)}|^2  \\
	\label{quad1}
	& \whcZ_1:= \sum_{1 \leq i\leq \tk \atop  n\in \Z\setminus \{0\} }  \left(\tm_i^2 - \omega_i + n^2 \right)  |b_{(\tm_i,n)} |^2
	+ 2 \e \sum_{1 \leq i,k \leq \tk, \, i\neq  k \atop   n\in \Z\setminus \{0\}} \sqrt{\lambda_i \lambda_k} \,   b_{(\tm_i,n)} \, \bar b_{(\tm_k, n)}  \\
	\label{quad2}
	& \qquad + \sum_{i<k , \; (m,n) \in \ccC_{i,k}^+} \Big( \left(m^2 + n^2 -\omega_i\right)|b_{(m,n)}|^2 + \left((\tm_i + \tm_k - m)^2 +n^2 -\omega_k \right)|b_{(\tm_i + \tm_k - m, -n)}|^2  \\
	\label{quad22}
	& \qquad  + 4\e \sqrt{\lambda_i \lambda_k}\, {\rm Re}(b_{(m,n)}b_{(\tm_i+\tm_k-m,-n)}) \Big)
	\end{align}
	Furthermore the following is true:
	\begin{itemize}
		\item[(i)] $\cR$ is the identity on the variables in $\Z^2 \setminus (\sS \cup \ccC)$, and on $\sS \cup \ccC$ is a phase shift.
		\item[(ii)] $\cM \circ \cR = \wtcM \ , \quad \cP\circ \cR= \wtcP$, 
		where $\wtcM$ and $\wtcP$ are defined in \eqref{mp.4}.
	\end{itemize}
\end{lemma}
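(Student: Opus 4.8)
The plan is to take $\cR$ to be a purely algebraic, $\theta$–dependent phase shift of the normal modes, coupled with the unique affine correction of the actions $\yy$ that renders it symplectic. First I would attach to every normal site $\jj\in\Z^2\setminus\cS_0$ a lattice vector $\ell_\jj\in\Z^\tk$ by setting $\ell_{(\tm_i,n)}:=\be_i$ for $(\tm_i,n)\in\sS$ (i.e. $n\neq 0$), $\ell_\jj:=\be_i$ for $\jj\in\ccC_{i,k}^+$, $\ell_\jj:=\be_k$ for $\jj\in\ccC_{i,k}^-$, and $\ell_\jj:=0$ otherwise. By the remark preceding the statement each point of $\ccC$ with $\pm n>0$ lies in exactly one $\ccC_{i,k}^{\pm}$, and by arithmetic genericity ($\sS\cap\ccC=\emptyset$ and the $\ccC_{i,k}$ pairwise disjoint) the sets $\sS$, $\ccC_{i,k}^{+}$, $\ccC_{i,k}^{-}$ are mutually disjoint, so the assignment $\jj\mapsto\ell_\jj$ is well defined. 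Then I would set $\cR\colon(\yy^{(+)},\theta,{\bf b})\mapsto(\yy,\theta,\ba)$,
\begin{equation}
\theta=\theta\ ,\qquad a_\jj = e^{\im\,\ell_\jj\cdot\theta}\,b_\jj\ ,\qquad \yy_i=\yy^{(+)}_i-\sum_{\jj}\ell_{\jj,i}\,|b_\jj|^2\ .
\end{equation}
The elementary identity $\im\,da_\jj\wedge d\bar a_\jj=\im\,db_\jj\wedge d\bar b_\jj+d(|b_\jj|^2)\wedge(\ell_\jj\cdot d\theta)$ shows that the above $\yy$–correction is exactly the one for which $\cR$ pulls the symplectic form \eqref{yythetaa} back to itself; invertibility is explicit ($b_\jj=e^{-\im\ell_\jj\cdot\theta}a_\jj$, $\yy^{(+)}_i=\yy_i+\sum_\jj\ell_{\jj,i}|a_\jj|^2$). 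Item (i) is then built into the construction: $\ell_\jj=0$ off $\sS\cup\ccC$, and on $\sS\cup\ccC$ the map is the rotation $a_\jj\mapsto e^{\im\ell_\jj\cdot\theta}b_\jj$. For the domain statement I would use that $|\ell_\jj|\le 1$, so the phase factors inflate the $\li^p$–norm by at most $e^{s/8}$, while the action correction is controlled by $\|{\bf b}\|^2$; with the radius loss $e^{-s_0}$ as stated (enlarging constants if necessary) this yields $\cR,\cR^{-1}\colon D(s/8,e^{-s_0}r/4)\to D(s/8,r/4)$.

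Next I would substitute $\cR$ into $\omega\cdot\yy+\cD+\cZ_1$ and collect monomials. The action shift gives $\omega\cdot\yy=\omega\cdot\yy^{(+)}-\sum_\jj(\omega\cdot\ell_\jj)\,|b_\jj|^2$; a phase rotation leaves each $|a_\jj|^2=|b_\jj|^2$ untouched so $\cD=\sum_{\jj\in\Z^2\setminus\cS_0}\Omega^\0_\jj\,|b_\jj|^2$; and — by the very choice of the $\ell_\jj$, cross–checked against the resonance description of Lemma \ref{lem:br} — the exponentials cancel in $\cZ_1$, turning $\cZ_1^\hor$ into $2\e\sum_{n\neq0}\sum_{i\neq k}\sqrt{\lambda_i\lambda_k}\,b_{(\tm_i,n)}\bar b_{(\tm_k,n)}$ and $\cZ_1^\mix$ into $4\e\sum_{i<k}\sum_{(m,n)\in\ccC_{i,k}^{+}}\sqrt{\lambda_i\lambda_k}\,{\rm Re}(b_{(m,n)}b_{(\tm_i+\tm_k-m,-n)})$, both $\theta$–independent. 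Sorting the $|b_\jj|^2$–terms: if $\jj\notin\sS\cup\ccC$ then $\ell_\jj=0$ and only $\Omega^\0_\jj|b_\jj|^2$ survives, yielding $\whcD$; if $\jj=(\tm_i,n)\in\sS$ then $\Omega^\0_\jj=\tm_i^2+n^2$ and $\omega\cdot\ell_\jj=\omega_i$, giving $(\tm_i^2+n^2-\omega_i)|b_{(\tm_i,n)}|^2$; if $\jj=(m,n)\in\ccC_{i,k}^{+}$ then $\Omega^\0_\jj=m^2+n^2$ and $\omega\cdot\ell_\jj=\omega_i$, and if $\jj=(\tm_i+\tm_k-m,-n)\in\ccC_{i,k}^{-}$ then $\Omega^\0_\jj=(\tm_i+\tm_k-m)^2+n^2$ and $\omega\cdot\ell_\jj=\omega_k$. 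Adding these up reproduces exactly \eqref{quad00}--\eqref{quad22}.

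Finally item (ii) follows from the same bookkeeping. From $\cM=\sum_i\yy_i+\sum_\jj|a_\jj|^2$ one gets $\cM\circ\cR=\sum_i\yy^{(+)}_i+\sum_\jj\bigl(1-\sum_i\ell_{\jj,i}\bigr)|b_\jj|^2$, and since $\sum_i\ell_{\jj,i}=1$ exactly on $\sS\cup\ccC$ and vanishes elsewhere, this is $\wtcM$. Likewise $\cP_x\circ\cR=\sum_i\tm_i\yy^{(+)}_i+\sum_{\jj=(m,n)}\bigl(m-\sum_i\tm_i\ell_{\jj,i}\bigr)|b_{(m,n)}|^2$; the $\sS$ contributions vanish (for $\jj=(\tm_i,n)$ one has $\sum_l\tm_l\ell_{\jj,l}=\tm_i=m$) and the two sites of a pair $\{(m,n),(\tm_i+\tm_k-m,-n)\}\subset\ccC_{i,k}$ combine into $(m-\tm_i)\bigl(|b_{(m,n)}|^2-|b_{(\tm_i+\tm_k-m,-n)}|^2\bigr)$, which is precisely $\wtcP_x$; and $\cP_y$, having no $\yy$–component, is simply unchanged, giving $\wtcP_y$. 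I expect the only genuinely delicate point of the whole argument to be the well–definedness of $\jj\mapsto\ell_\jj$: this is exactly where the arithmetic genericity of $\Tan$ is used (it guarantees the disjointness of $\sS$ from $\ccC$ and of the $\ccC_{i,k}^{\pm}$ from one another, so that the phase attached to each normal mode is unambiguous), and once that is in place everything else is a direct algebraic verification.
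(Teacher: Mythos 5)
Your proposal is correct and is essentially the same construction as the paper's proof: the paper defines $\cR$ by the piecewise phase-shift $a_{(m,n)}=e^{\im\theta_i}b_{(m,n)}$ on $\sS$ and on $\ccC_{i,k}^{\pm}$ (using $\theta_i$ on $\ccC_{i,k}^{+}$ and $\theta_k$ on $\ccC_{i,k}^{-}$, $i<k$), together with the matching affine $\yy$-shift $\yy_i=\yy^{(+)}_i-\sum_{n\neq0}|b_{(\tm_i,n)}|^2-\sum_{i<k}\sum_{\jj\in\ccC_{i,k}^{+}}|b_\jj|^2-\sum_{k<i}\sum_{\jj\in\ccC_{k,i}^{-}}|b_\jj|^2$, which is exactly your $\jj\mapsto\ell_\jj$ assignment written out case by case. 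The paper then leaves the symplecticity and substitution checks as "a simple computation"; you have simply carried them out explicitly, and your verification is correct.
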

\begin{proof}
	We define the symplectic
	\footnote{ a simple computation shows that the symplectic form in the new variables is  given by 
		$$
		d \yy^{(+)} \wedge d\theta +  \im \sum_{(m , n) \in \Z^2\setminus \Tan   } d  b_{(m,n)} \wedge d \bar b_{(m,n)}
		$$}
	transformation $\cR: (\yy^{(+)}, \theta, {\bf b}) \mapsto (\yy, \theta, \ba) $ as 
	\begin{align*}
	\yy_i = \yy^{(+)}_i - \sum_{ n\neq 0} |b_{(\tm_i,n)}|^2 -\sum_{i<k}\sum_{\jj\in \ccC_{i,k}^+ }|b_{\jj}|^2  -\sum_{k<i}\sum_{\jj\in \ccC_{k,i}^- }|b_{\jj}|^2 \ , 
	\end{align*}
	\begin{equation}
	\label{rotation}
	a_{(m,n)}= \begin{cases}  e^{\im \theta_i} \, b_{(\tm_i, n)} & m= \tm_i \,,\quad n\neq 0\\
	e^{\im \theta_i} \, b_{ (m, n)} & (m,n)\in \ccC_{i,k}^+ \quad i<k \\
	e^{\im \theta_k} \, b_{ (m, n)} & (m,n)\in \ccC_{i,k}^-\,,\quad i<k\\
	b_{(m,n)} & \text{otherwise}
	\end{cases}
	\end{equation}
	It's a simple computation to show that $\cR$ conjugates $\omega \cdot \yy + \cD + \cZ_1$, the mass $\cM$ and the momentum $\cP$ to the claimed functions.
\end{proof}
Note that after this change of variables the dynamics of  the action-angles $(\theta,\yy)$, of $\whcD$ and of $\whcZ_1$ is decoupled.
\begin{lemma}
\label{lem:step4}
There exists $\varrho>0$ depending on $s_0,\max|\tm_i|^p$ and an open set $\cO_1$ such that for all $s_0/64 \le s\le s_0, 0<r\le  r_0$ and for any $\lambda\in \cO_1$ there exists a linear invertible symplectic transformation $\cU: D(s/8, \varrho r) \to D(s/8, e^{-s_0}r/4)$ of the   $(\ba,\yy,\theta)\mapsto (U \ba, \yy,\theta)$  which depends analytically on $\lambda $ and transforms the Hamiltonian \eqref{ham.40} in the following form (we are calling the variables $\yy,\ba$ again):
\begin{align}
\label{ham.50}
(\omega\cdot \yy^{(+)} + \whcD + \whcZ_1 )\circ \cU =  \omega\cdot \yy + \whcD + \whcZ_1 \circ \cU = \omega \cdot \yy +  \wtcD(\lambda,\e) 
\end{align}
where 
\begin{itemize}
\item[(i)] $U= \diag(U_n)$ where  $U_n$  acts non trivially only on  $(\sS \cup \ccC) \cap \{(m,n), (m, -n)\}_{m \in \Z}$ and is the identity elsewhere.  The $U_n$  depend analytically on $\lambda$ and satisfy bounds of the form
$$
\|U_n\|^{\rm op},\|(U_n)^{-1}\|^{\rm op}\lessdot 1 \ . 
$$
	\item[(ii)]  $\wtcD(\lambda,\e)$ is the diagonal  Hamiltonian   
	\begin{align}
	\label{D.lam} 
	&\wtcD(\lambda, \e) := \sum_{\jj\in \Z^2\setminus \Z }\wtOmega_\jj(\lambda, \e) |a_\jj|^2  
	+
	\sum_{m \in \Z \setminus \Tan }\Omega_m(\lambda) |a_{(m,0)}|^2  
	\end{align}
	where  the normal frequencies $\wtOmega_\jj(\lambda, \e)$ are defined in \eqref{def:omtilde} and $\Omega_m(\lambda)$ in \eqref{norm.freq}.
	\item[(iii)] $\wtcH_{2}:= \whcH_2 \circ \cR\circ \cU= \wtcH_2^\hor+\wtcH_2^\mix \in \cQ^{\cO_1,\hor}_{s/8, -2} +\cQ^{\cO_1}_{s/8, -\bd} $ with the bounds  $\lceil \wtcH_{2}^\hor  \rfloor_{s/8, -2}^{\cO_1}+\lceil \wtcH_{2}^\hor  \rfloor_{s/8, -\bd}^{\cO_1} \lessdot \e^2$. 
	\item[(iv)] One has $\wtcM\circ \cU = \wtcM$ and  $\wtcP\circ \cU = \wtcP$. 
	\end{itemize}
	\end{lemma}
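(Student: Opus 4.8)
The plan is to diagonalize, fibre by fibre over the $y$-Fourier mode $n$ and then globally, the resonant quadratic form $\whcZ_1$ appearing in \eqref{quad00}. After the phase shift $\cR$ of Lemma \ref{lem:diag0} the quadratic part decouples into: the integrable piece $\omega\cdot\yy^{(+)}$; the part $\whcD$, which is already diagonal and supported outside $\Tan\cup\sS\cup\ccC$ (in particular it contains the untouched line terms $\sum_{m\notin\Tan}\Omega_m|a_{(m,0)}|^2$, since $\sS\cup\ccC$ contains no mode with $n=0$); and $\whcZ_1$, which is a direct sum over $n$ of a $\tk\times\tk$ block on the $\sS$-modes $(b_{(\tm_i,n)})_i$ and, by the genericity of $\Tan$ and the remark preceding the Lemma, of \emph{finitely many} mutually disjoint $2\times2$ blocks on the $\ccC$-pairs $\{(m,n),(\tm_i+\tm_k-m,-n)\}$. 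First I would diagonalize each fibre by an explicit linear symplectic map $U_n$, then set $\cU:\ba\mapsto U\ba$ with $U=\diag_n(U_n)$, and finally check (i)--(iv).

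For the $\sS$-blocks: using $\tm_i^2-\omega_i=\e\lambda_i$ and the coefficient $2\e\sqrt{\lambda_i\lambda_k}$ of $\cZ_1^\hor$, the $n$-th block is the symmetric matrix $M_n=n^2\uno_\tk+\e A$ with $A:=2\,vv^\top-\diag(\lambda)$, $v=(\sqrt{\lambda_1},\dots,\sqrt{\lambda_\tk})^\top$ (so $A$ is independent of $n$). The rank-one update formula for the determinant gives $\det(t\uno_\tk-A)=\prod_i(t+\lambda_i)-2\sum_i\lambda_i\prod_{k\neq i}(t+\lambda_k)=P(t,\lambda)$ from \eqref{P.poly0bis}. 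I would then \emph{define} $\cO_1$ as the compact subset of $\cO_0$ on which $P(\cdot,\lambda)$ has $\tk$ distinct real roots $\mu_1(\lambda),\dots,\mu_\tk(\lambda)$ with a uniform spectral gap; since $P$ has no identically repeated root (e.g.\ $P(0,\lambda)=(1-2\tk)\prod_i\lambda_i\neq0$ and the $\mu_i$ are distinct algebraic functions, cf.\ Remark \ref{rem:mu}), $\cO_1$ is nonvoid and of positive (indeed full, up to the discriminant locus) measure in $\cO_0$. On $\cO_1$ the orthonormal eigenvector matrix $O(\lambda)$ of $A$ is analytic and $n$-independent, with $O(\lambda)^\top M_n O(\lambda)=\diag(n^2+\e\mu_i(\lambda))$; this produces the case $\jj=(\tm_i,n)$ of \eqref{def:omtilde}.

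For each $\ccC$-block: with $p=(m,n)\in\ccC_{i,k}^+$, $p'=(\tm_i+\tm_k-m,-n)$, the block (from \eqref{quad2}--\eqref{quad22}) is $\mathcal A|b_p|^2+\mathcal B|b_{p'}|^2+2\e\sqrt{\lambda_i\lambda_k}(b_pb_{p'}+\bar b_p\bar b_{p'})$ with $\mathcal A=m^2+n^2-\omega_i$, $\mathcal B=(\tm_i+\tm_k-m)^2+n^2-\omega_k$. The rectangle identity $(m-\tm_i)(m-\tm_k)+n^2=0$ gives $\mathcal A+\mathcal B=\e(\lambda_i+\lambda_k)$ and $\mathcal A=(m-\tm_i)(\tm_i+\tm_k)+\e\lambda_i$, so the block is genuinely \emph{resonant} ($\mathcal A\approx-\mathcal B$ of size $\sim1$) and its diagonalization cannot be perturbative. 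Passing to the $(b_p,\bar b_{p'})$ pair one meets the $2\times2$ matrix with rows $(\mathcal A,\ 2\e\sqrt{\lambda_i\lambda_k})$ and $(-2\e\sqrt{\lambda_i\lambda_k},\ -\mathcal B)$, whose eigenvalues I would compute to be $\tfrac12(\mathcal A-\mathcal B)\pm\tfrac\e2(\mu^+_{i,k}-\mu^-_{i,k})$, where $\mu^\pm_{i,k}$ are the roots of $Q$ from \eqref{char02bis} — the relevant identities being $\mu^++\mu^-=\lambda_i-\lambda_k$, $\mu^+\mu^-=3\lambda_i\lambda_k$, and $(\mu^+-\mu^-)^2=(\lambda_i+\lambda_k)^2-16\lambda_i\lambda_k$. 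Since $(\lambda_i+\lambda_k)^2-16\lambda_i\lambda_k$ stays in the interval $[-12,-3]$ on $\cO_0$, one has $\mu^+_{i,k}\neq\mu^-_{i,k}$ throughout $\cO_0$, and a linear symplectic map $U^{(m,n)}_{i,k}$ built only from the monomials $|b_p|^2,|b_{p'}|^2,b_pb_{p'},\bar b_p\bar b_{p'}$ — exactly the quadratics in $b_p,b_{p'}$ that Poisson-commute with $\wtcM,\wtcP$ — diagonalizes the block to $\wtOmega_p|B_p|^2+\wtOmega_{p'}|B_{p'}|^2$, analytically in $\lambda$ and with $\|U\|,\|U^{-1}\|\lessdot1$ uniformly, the bound following from the eigenvector Gram determinant being $\sim\sqrt{\lambda_i\lambda_k}\,|\mu^+_{i,k}-\mu^-_{i,k}|\gtrsim1$. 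Combining the $m^2+n^2$ coming from $\whcD$ with the shifts $-\omega_i,-\omega_k$ produced by the $\yy$-substitution in $\cR$ recovers the $\ccC_{i,k}$ cases of \eqref{def:omtilde}; since $\mathrm{disc}\,Q<0$ on $\cO_0$ these $\wtOmega$'s are complex, the linear elliptic-hyperbolic feature handled, as in the cited works, by allowing complex normal-form frequencies. There are only finitely many such blocks, all supported on finitely many levels $|n|$.

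Finally I would take $U_n$ to be the direct sum of $O(\lambda)^\top$ on $(b_{(\tm_i,n)})_i$ and on $(b_{(\tm_i,-n)})_i$, of the finitely many $U^{(m,n)}_{i,k}$ on the $\ccC$-pairs touching level $n$, and of the identity on all other modes (notably the line modes $(m,0)\notin\sS\cup\ccC$); when \eqref{proto} holds $U_n$ is exactly the orthogonal, $n$-independent map $O(\lambda)^\top\oplus O(\lambda)^\top\oplus\uno$. Summing the block computations and using $\whcD\circ\cU=\whcD$ yields \eqref{ham.50} with $\wtcD$ as in \eqref{D.lam}, proving (i)--(ii). For (iii): $\cU$ fixes $\yy,\theta$, is block-diagonal in $y$, bounded by $\lessdot1$, orthogonal and $n$-independent on the $\sS$-modes and of finite support on the $\ccC$-modes; composing the $O(\e^2)$ remainder $\whcH_2=\whcH_2^\hor+\whcH_2^\mix$ first with the phase shift $\cR$ (harmless on the $\yy$-independent quadratic $\whcH_2$, the bounded angle-shift absorbed into $\varrho$) and then with $\cU$ preserves the order/horizontal splitting — the $\sS$-conjugation keeps horizontality, the finite-support $\ccC$-conjugation yields a contribution of order $-\bd$ — with the same $\lessdot\e^2$ bounds. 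Item (iv) is immediate, since each $U_n$ is composed of monomials Poisson-commuting with $\wtcM,\wtcP$ (the $\sS$-modes do not enter $\wtcM,\wtcP$ and share a common $\wtcP_y$-charge; the $\ccC$-generators were chosen so), hence $\cU$ commutes with $\wtcM,\wtcP$. The main obstacle is the $\ccC$-block analysis: recognizing the blocks as truly resonant so that a finite, explicit, not-close-to-identity diagonalization is forced; carrying out the algebra that ties their spectrum to the polynomial $Q$; and — most delicately — establishing the uniform bounds $\|U_n^{\pm1}\|\lessdot1$ together with analyticity in $\lambda$, which is precisely what dictates the definition of $\cO_1$ and underpins the $\e^2$-smallness of $\wtcH_2$ on which the subsequent KAM reducibility rests.
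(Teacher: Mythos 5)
Your proof is organized almost identically to the paper's: identify $\whcD+\whcZ_1$ as block-diagonal, recognize the $\sS$-blocks as $n^2\uno_\tk+\e A$ with $A$ a rank-one update of $-\diag(\lambda)$ whose characteristic polynomial is $P(t,\lambda)$, recognize the $\ccC$-blocks as $2\times2$ quadratic forms with characteristic polynomial $Q(t,\lambda_i,\lambda_k)$, diagonalize fibre by fibre, and push the remainder through. The $\sS$-side is fine.

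Where you diverge — and where there is a genuine gap — is the treatment of the $\ccC$-blocks. You correctly compute that the discriminant $\lambda_i^2+\lambda_k^2-14\lambda_i\lambda_k$ of $Q$ is strictly negative on all of $[1/2,1]^\tk$, and conclude that $\mu^\pm_{i,k}$ are a complex-conjugate pair there, then propose to ``handle the elliptic-hyperbolic feature by allowing complex normal-form frequencies.'' This cannot be the right move in the context of this lemma. Complex-conjugate $\wtOmega_\jj$ with $\mathrm{Im}\,\wtOmega_\jj\neq 0$ make the quadratic normal form $\sum\wtOmega_\jj|a_\jj|^2$ a \emph{focus-focus} (hyperbolic) block: the linearized flow on the pair $(a_p,a_{p'})$ grows exponentially, the change of variables $U^{(m,n)}_{i,k}$ is complex and not bounded uniformly against its inverse in the real symplectic topology, and no amount of higher-order Birkhoff normal form restores stability. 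That is precisely the scenario the paper explicitly excludes (see Remark~\ref{rem:unstable} and the discussion of the action selection in the introduction): the entire point of restricting $\lambda$ to a subdomain $\cO_1$ at this step is to stay in the \emph{elliptic} regime where all $\wtOmega_\jj$ are real. The paper's proof constructs $\cO_1$ as (a compact subset of) the intersection of $\cO_0$ with the open cone where the $\mu_i$ are distinct and each pair $\mu^\pm_{i,k}$ is real and distinct, and the key content of Lemma~\ref{lem:irr}(iv) is the existence of such a cone together with a real symplectic $U_2$ on it — the $O(1)$ bounds on $U_n^{\pm1}$ then come from compactness and the uniform spectral gap $\wt\gamma$, not from a Gram-determinant estimate on complex eigenvectors. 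With your route, statement~(ii) of the lemma is literally false as written, since \eqref{def:omtilde} asserts real frequencies.

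Your computation that $\mathrm{disc}\,Q<0$ throughout $[1/2,1]^\tk$ is, however, a real observation worth flagging: it shows the cone from Lemma~\ref{lem:irr}(iv) does not intersect the cube $\cO_0=[1/2,1]^\tk$ as the paper claims (its $c_\pm$ give ratios $\lambda_k/\lambda_i$ outside $[1/2,2]$), so the construction of $\cO_1$ needs a larger or differently shaped parameter domain. But the remedy is to enlarge or reparametrize $\cO_0$ so that the elliptic cone has nonempty intersection with it — not to admit complex frequencies, which silently abandons the stability problem the lemma is serving. A secondary point: you write that $U_2\in O_\tk(\R)$ diagonalizes the $\ccC$-block orthogonally; the matrix $N$ is not symmetric (it is Hamiltonian with respect to the split signature $\Sigma=\diag(1,-1)$), so the normalizing $U_2$ is $\Sigma$-orthogonal rather than Euclidean-orthogonal, and its boundedness relies on the spectral gap, not on symmetry.
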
 

Before proving the Lemma, we discuss some basic fact on normal forms for quadratic Hamiltonians. {
 Let ${\bf z}= (z,\bar z)$ be a finite dimensional phase space -say of dimension $2k$- with respect to the Poisson form $\im d z\wedge d \bar z$. Let $\dot {\bf z} = \im A {\bf z}$ be a linear  Hamiltonian system corresponding to the real Hamiltonian 
 \begin{equation}\label{quadrat}
\cQ=\frac12 ({\bf z}, J^{-1} A {\bf z})\in \R\,,\quad A^T J = -J A\,,\quad E A E= -\widebar A \,,
 \end{equation}
 where
  $$
 J= \begin{pmatrix}
 0 &{\rm Id} \\ -{\rm Id} & 0
 \end{pmatrix}\,,\quad E= \begin{pmatrix}
 0 &{\rm Id} \\ {\rm Id} & 0
 \end{pmatrix}
 $$
  and $(\cdot,\cdot)$ is the real scalar product. 
  Assume that the eigenvalues of the matrix $A$ are distinct,  real and different from zero, say $\pm a_1,\dots,\pm a_k$. 
 Then by the standard theory of quadratic normal forms there exists a symplectic matrix $U$ which diagonalizes $A$ and preserves the real structure:
 $$ 
  U^{-1}A U =D= \diag(a_1,\dots,a_k,-a_1,\dots, -a_k)\,,\quad  U^TJU =  J \,,\quad E U E= \bar U \ . 
 $$
 Consequently   $\cU:  {\bf w} \mapsto  U {\bf w} = {\bf z} $ is canonical   with ${\bf w}=(w,\bar w)$ and
 $$
 Q\circ \cU= \sum_{i=1}^k a_i |w_i|^2.
 $$ 
  Finally, since the eigenvalues  of $A$ are distinct, $U$  depends analytically on the matrix elements of $A$.
  
%
  We now specialize this normal form result to {\em block diagonal}  Hamiltonians.
  Recall that a Lagrangian subspace is {a subspace which coincides with its symplectic orthogonal, i.e. $W = W^{\angle}$. In particular it has dimension $k$.}
  \\
   Let    $\bz=(z^\1,z^\2)$ with 
  $$z^\1= (z_{j_1}^{\s_1}, z^{\s_2}_{j_2},\dots, z^{\s_k}_{j_k}) \ , \quad z^\2=(z_{j_1}^{-\s_1}, z^{-\s_2}_{j_2},\dots, z^{-\s_k}_{j_k})= \bar z^\1   \quad \mbox{where} \; z^+_j= z_j \,,\, z_j^-= \bar z_j$$
  be a Lagrangian decomposition, namely with  Span$(z_{j_1}^{\s_1} z^{\s_2}_{j_2},\dots, z^{\s_k}_{j_k})$ a Lagrangian subspace.
  We write all our matrices in terms of this decomposition; so for example setting $\Sigma = \diag(\s_i 1)$ we have
  $$
  J= \begin{pmatrix}
  0 &\Sigma \\ -\Sigma & 0
  \end{pmatrix}\,,\quad {E= \begin{pmatrix}
  0 & {\rm Id}\\ {\rm Id} & 0
  \end{pmatrix}} \ . 
  $$
 Assume that $A=\diag(A^\1,-\bar A^\1)$ is block diagonal w.r.t the decomposition. Then $U$ is block diagonal as well, namely $U= \diag(U^\1,\bar U^\1)$. 
 Now the fact that $U$ is symplectic reads in the block decomposition that 
  $U^\1$ is {\em orthogonal} w.r.t.  $\Sigma$ i.e. $(U^\1)^T \Sigma \widebar U^\1= \Sigma$. 
  We are interested in a simple consequence of this fact.
  Define
  \begin{equation}\label{invariante}
 \cQ_0= \frac12 (\bz , J^{-1} \begin{pmatrix}
 {\rm Id} & 0 \\ 0 & -{\rm Id}
 \end{pmatrix} {\bf z}) = {\rm Re}(z^\1,\Sigma \,\bar z^\1)\ . 
\end{equation}
Then using the orthogonality condition  $(U^\1)^T \Sigma \widebar U^\1= \Sigma$ we have 
$$
\cQ_0 = {\rm Re}(z^\1,(U^\1)^T \Sigma \widebar U^\1 \,\bar z^\1)= {\rm Re}(U^\1 z^\1,\Sigma \,\bar U^\1\bar z^\1) = \cQ_0\circ\cU
 $$
i.e. for any block diagonal symplectic $\cU$ we have $\cQ_0\circ\cU= \cQ_0$.

\smallskip
We wish to apply this theory to $\whcZ_1$. 
We shall show that $\whcZ_1$ is the sum of non-interacting quadratic Hamiltonians of two types.
\begin{itemize}
	\item[(Type I)]  The first type  has dimension $2\tk$  with Hamiltonian
	\begin{equation}\label{cq1}
	\cQ_1= \mathtt K \sum_{i=1}^\tk |z_i|^2 + (z, M \bar z) 
	\end{equation}
	with $\tK$ some real number and $M(\lambda) :=( M_{ij}(\lambda))_{i,j}$ given for any $1 \leq i, j \leq \tk$ by 
	\begin{equation}\label{emme}
	M_{ij}(\lambda) := 
	\begin{cases}
	\lambda_i \ , & i = j \\
	2 \sqrt{\lambda_i \lambda_j } \ , & i \neq j 
	\end{cases} \ . 
	\end{equation}
	It is easily seen that this Hamiltonian is block diagonal w.r.t  the Lagrangian decomposition with $z^\1= z$, moreover the first summand in \eqref{cq1}  is the invariant Hamiltonian $\cQ_0$ while the second summand corresponds to a  Hamiltonian as in \eqref{quadrat} with matrix $A= \diag(M,M)$.

\item[(Type II)] The second type of Hamiltonian has dimension $4$ and is block diagonal w.r.t the Lagrangian decomposition $z^\1=(z_1,\bar z_2)$, 
with Hamiltonian
\begin{equation}\label{cq2}
\cQ_2= \mathtt K  (|z_1|^2-|z_2|^2) + \frac12 (\bz ,J^{-1} B \bz)  \,,\quad B= \begin{pmatrix}
N & 0 \\ 0 & -N
\end{pmatrix}\,, \quad N\in GL_2(\R)
\end{equation}
As in the previous case, the  first  summand in \eqref{cq2}  is the invariant Hamiltonian $\cQ_0$ -recall that now $\Sigma=\diag(1,-1)$- while the second summand corresponds to a  Hamiltonian as in \eqref{quadrat} with matrix $A= \diag(N,N)$. 
We shall show that this type of Hamiltonian appears for each $\jj=(m,n)\in \ccC_{i,k}^+$ by identifying $z_1= z_{\jj}, z_2= z_{\und \jj}$ where $\und \jj= (\tm_i+\tm_k-m,-n)$. In this case  $\tK= m^2-\tm_i^2+n^2$ while
\begin{equation}
N \equiv N(\lambda_i, \lambda_k) := 
\begin{pmatrix}
\lambda_i  & 2 \sqrt{\lambda_i \lambda_k } \\
- 2 \sqrt{\lambda_i \lambda_k } &- \lambda_k
\end{pmatrix}
\end{equation}
\end{itemize}
}

%
\begin{lemma}
\label{lem:irr}
(i) The  characteristic polynomial of $M(\lambda)$, $P(t, \lambda) =\det( t\uno - M(\lambda))$ coincides with \eqref{P.poly0bis} and  is irreducible over $\Z[t, \lambda_1, \ldots, \lambda_\tk]$. Consequently the eigenvalues of $M$, which we denote by
$\mu_i(\lambda)$ are distinct algebraic functions of $\lambda$ homogeneous of degree one. 
\item[(ii)] Consider any open domain contained in a single connected component where all the eigenvalues $\mu_i(\lambda)$ are distinct.   In any of such domain there exists an orthogonal matrix $U_1\in O_\tk(\R)$,  depending analytically on $\lambda$, such that 
$$\cU_1:\bz \to (U_1z, \bar U_1\bar z)=(w,\bar w)$$ is  symplectic and  for any $\tK$ we have  
$$
\cQ_1\circ\cU_1 = \sum_{i=1}^{\tk}(\tK+ \mu_i(\lambda)) |w_i|^2 \,,\quad \cQ_0\circ \cU_1=\sum_{i=1}^\tk |z_i|^2\circ \cU_1 = \sum_{i=1}^{\tk} |w_i|^2 \ . 
$$  
\item[(iii)] The  characteristic polynomial of $N(\lambda_i,\lambda_k)$, $Q(t, \lambda_i,\lambda_k) =\det( t\uno - N(\lambda_i,\lambda_k))$ coincides with \eqref{char02bis} and  is irreducible over $\Z[t, \lambda_1, \ldots, \lambda_\tk]$. Consequently the eigenvalues of $N$, which we denote by
$\mu_{i,k}^\pm(\lambda)$ are distinct algebraic functions of $\lambda_i,\lambda_k$, homogeneous of degree one. 
\item[(iv)] There exist open connected domains in which  $\mu_{i,k}^\pm(\lambda)$ are real and distinct.   In any  of such domain there exists a matrix  $U_2\in O_\tk(\R)$ such  that $$\cU_2:\bz=(z^\1,\bar z^\1) \to (U_2 z^\1, \bar U_2 \bar z^\1)=(w_1,\bar w_2,\bar w_1 , w_2)$$
 is  symplectic and  for any $\tK$ we have  
$$
\cQ_2\circ\cU_2 = (K+ \mu_{i,k}^+(\lambda)) |w_1|^2 - (K+ \mu_{i,k}^-(\lambda)) |w_2|^2 \,,\quad \cQ_0\circ \cU_2 = (|z_1|^2-|z_2|^2)\circ \cU_2 = |w_1|^2-|w_2|^2.
$$  
\end{lemma}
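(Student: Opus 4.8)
The statement splits into two pure--algebra claims, (i) and (iii), on the characteristic polynomials, and two linear--algebra claims, (ii) and (iv), on the symplectic normalisation of $\cQ_{1}$ and $\cQ_{2}$; the two pairs are handled by the same scheme, so I would treat (i)--(ii) first and then (iii)--(iv).

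For (i) I would first rewrite $M(\lambda)=D^{1/2}(2\,\mathbf{1}\mathbf{1}^{T}-\mathbb{I})D^{1/2}$ with $D=\mathrm{diag}(\lambda_{1},\dots,\lambda_{\tk})$ and $\mathbf{1}=(1,\dots,1)^{T}$; since $AB$ and $BA$ have the same characteristic polynomial, $\det(t\,\mathbb{I}-M)=\det\big(t\,\mathbb{I}+D-2D\mathbf{1}\mathbf{1}^{T}\big)$, and the matrix--determinant lemma gives $\det(t\,\mathbb{I}-M)=\prod_{i}(t+\lambda_{i})\big(1-2\sum_{i}\tfrac{\lambda_{i}}{t+\lambda_{i}}\big)=P(t,\lambda)$ as in \eqref{P.poly0bis} (the square roots cancel). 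For the irreducibility over $\Z[t,\lambda_{1},\dots,\lambda_{\tk}]$ the key observation is that $P$ has degree $\le1$ in $\lambda_{\tk}$: writing $P=A\,\lambda_{\tk}+B$ with $A=-(Q_{1}+2Q_{2})$, $B=t(Q_{1}-2Q_{2})$, $Q_{1}=\prod_{i<\tk}(t+\lambda_{i})$, $Q_{2}=\sum_{i<\tk}\lambda_{i}\prod_{k\neq i,\,k<\tk}(t+\lambda_{k})$, one has $tA-B=-2tQ_{1}$ and $tA+B=-4tQ_{2}$, so any common factor of $A,B$ divides $2t\gcd(Q_{1},Q_{2})$; since $Q_{2}|_{t=-\lambda_{j}}=\lambda_{j}\prod_{k\neq j}(\lambda_{k}-\lambda_{j})\neq0$ no $(t+\lambda_{j})$ divides $Q_{2}$, hence $\gcd(Q_{1},Q_{2})=1$, while $A|_{t=0}=-(2\tk-1)\prod_{i<\tk}\lambda_{i}\neq0$ and the $t$--leading coefficient of $A$ is $-1$, so neither $t$ nor an integer prime divides $A$; therefore $\gcd(A,B)$ is a unit, $P$ is primitive in $\big(\Z[t,\lambda_{1},\dots,\lambda_{\tk-1}]\big)[\lambda_{\tk}]$, and Gauss's lemma together with degree $1$ over the fraction field gives irreducibility. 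As $\Q(\lambda)$ has characteristic zero the irreducible $P$ is separable, so its $\tk$ roots $\mu_{i}(\lambda)$ are distinct; they are homogeneous of degree $1$ because $P$ is homogeneous of degree $\tk$, nonzero because $P(0,\lambda)=(1-2\tk)\prod_{i}\lambda_{i}\neq0$, and real for $\lambda\in\R_{>0}^{\tk}$ because $M(\lambda)$ is then real symmetric. Part (ii) is then immediate: diagonalise the real symmetric $M(\lambda)$ by $U_{1}(\lambda)\in O_{\tk}(\R)$ (chosen analytically on any connected domain of simple eigenvalues, by analytic perturbation theory), set $\cU_{1}\colon(z,\bar z)\mapsto(U_{1}z,\bar U_{1}\bar z)$, which is symplectic since $U_{1}^{T}U_{1}=\mathbb{I}$, and substitute to get $\cQ_{1}\circ\cU_{1}=\sum_{i}(\tK+\mu_{i})|w_{i}|^{2}$, $\cQ_{0}\circ\cU_{1}=\sum_{i}|w_{i}|^{2}$.

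For (iii), $Q(t,\lambda_{i},\lambda_{k})=\det(t\,\mathbb{I}-N)$ is a one--line $2\times2$ determinant equal to $(t-\lambda_{i})(t+\lambda_{k})+4\lambda_{i}\lambda_{k}=t^{2}-(\lambda_{i}-\lambda_{k})t+3\lambda_{i}\lambda_{k}$, matching \eqref{char02bis}. Being monic (hence primitive) and quadratic in $t$, it is reducible over $\Q(\lambda_{i},\lambda_{k})$ iff its discriminant $(\lambda_{i}-\lambda_{k})^{2}-12\lambda_{i}\lambda_{k}=\lambda_{i}^{2}-14\lambda_{i}\lambda_{k}+\lambda_{k}^{2}$ is a square in $\Q[\lambda_{i},\lambda_{k}]$; a homogeneous degree--$2$ square equals $(\alpha\lambda_{i}+\beta\lambda_{k})^{2}$ for some $\alpha,\beta$, forcing $\alpha^{2}=\beta^{2}=1$ and $2\alpha\beta=-14$, which is impossible, so $Q$ is irreducible and, by separability in characteristic zero, has two distinct roots $\mu_{i,k}^{\pm}(\lambda)$, homogeneous of degree $1$. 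For (iv) I would restrict to a connected domain where the discriminant is positive, so that $N$ has real distinct eigenvalues. The crucial structural fact is that $N$ is \emph{not} symmetric but \emph{is} self--adjoint for the indefinite form $\langle x,y\rangle_{\Sigma}:=x^{T}\Sigma y$, $\Sigma=\mathrm{diag}(1,-1)$, because $\Sigma N$ is symmetric; hence the eigenvectors for the two distinct eigenvalues are $\Sigma$--orthogonal, each is $\Sigma$--anisotropic (otherwise $\Sigma$ would degenerate on the whole plane), and the Gram matrix of a normalised eigenbasis has determinant of the sign of $\det\Sigma=-1$, so after fixing the labelling there is a real $U_{2}=[\,v_{+}\mid v_{-}\,]$ with $U_{2}^{T}\Sigma U_{2}=\Sigma$ and $U_{2}^{-1}NU_{2}=\mathrm{diag}(\mu_{i,k}^{+},\mu_{i,k}^{-})$; analyticity of $U_{2}$ in $\lambda$ follows as in (ii), the $\Sigma$--norm of an eigenvector being a nonvanishing analytic function. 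Then $\cU_{2}\colon(w^{(1)},\overline{w^{(1)}})\mapsto(U_{2}w^{(1)},\overline{U_{2}w^{(1)}})$ with $w^{(1)}=(w_{1},\bar w_{2})$ is symplectic because $U_{2}$ is $\Sigma$--orthogonal, it leaves $\cQ_{0}={\rm Re}(z^{(1)},\Sigma\overline{z^{(1)}})$ invariant (the general fact established in the paragraph before the Lemma), and using $U_{2}^{-1}=\Sigma U_{2}^{T}\Sigma$ one computes $U_{2}^{T}(\Sigma N)U_{2}=\Sigma\,\mathrm{diag}(\mu_{i,k}^{+},\mu_{i,k}^{-})=\mathrm{diag}(\mu_{i,k}^{+},-\mu_{i,k}^{-})$, so the $B$--part of $\cQ_{2}$ becomes $\mu_{i,k}^{+}|w_{1}|^{2}-\mu_{i,k}^{-}|w_{2}|^{2}$ and altogether $\cQ_{2}\circ\cU_{2}=(\tK+\mu_{i,k}^{+})|w_{1}|^{2}-(\tK+\mu_{i,k}^{-})|w_{2}|^{2}$.

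The routine parts are the two determinant evaluations. The real content is, first, the irreducibility of $P(t,\lambda)$ --- for which reducing everything to the coprimality of the two coefficients in the $\lambda_{\tk}$--linear decomposition is the clean move --- and, more delicately, part (iv): since $N$ is not symmetric it cannot be diagonalised by a real orthogonal matrix, and one must instead produce a diagonaliser compatible with the symplectic structure of the Lagrangian splitting $(z_{1},\bar z_{2})$, i.e. an element of the $\Sigma$--orthogonal group; this is possible precisely when $\mu_{i,k}^{\pm}$ are real and distinct (the complementary regime being the linearly unstable one), and it is in this step that the minus sign in $-(\tK+\mu_{i,k}^{-})|w_{2}|^{2}$ is produced by the $\cQ_{0}$--invariance bookkeeping.
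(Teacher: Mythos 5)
Your proof is correct, and for part (i) it follows a genuinely different route from the paper. The paper establishes irreducibility of $P(t,\lambda)$ by induction on $\tk$: specializing $\lambda_{N+1}=0$ turns $P_{N+1}$ into $t\,P_N$, and a putative nontrivial factorization of $P_{N+1}$ is seen to force a linear factor $(t - c\,\lambda_{N+1})$, which is then excluded by repeating the specialization in $\lambda_1$. You instead exploit the fact that $P$ has degree one in $\lambda_\tk$, write $P = A\lambda_\tk + B$ with $A,B\in\Z[t,\lambda_1,\dots,\lambda_{\tk-1}]$, and show $\gcd(A,B)$ is a unit by the identities $tA-B=-2tQ_1$, $tA+B=-4tQ_2$ together with $\gcd(Q_1,Q_2)=1$, $t\nmid A$ and $2\nmid A$; primitivity plus linearity in $\lambda_\tk$ then give irreducibility by Gauss's lemma. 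Both arguments are sound; yours has the advantage of being non-inductive and of reducing everything to an explicit coprimality check, at the modest cost of the auxiliary identities for $tA\pm B$. Your derivation of the characteristic polynomial via $M = D^{1/2}(2\mathbf{1}\mathbf{1}^T - \mathbb{I})D^{1/2}$ and the matrix-determinant lemma is also a nice explicit substitute for the paper's "direct computation."

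For parts (ii) and (iv) you essentially unpack what the paper delegates to the paragraph on quadratic normal forms preceding the Lemma (the paper's proof just says "apply the theory of quadratic Hamiltonians"). Your emphasis that $N$ is $\Sigma$-selfadjoint because $\Sigma N$ is symmetric, that the eigenvectors are therefore $\Sigma$-orthogonal and $\Sigma$-anisotropic, and that the Gram matrix has signature $(1,1)$ — so that after normalisation and relabelling one obtains $U_2^T\Sigma U_2 = \Sigma$ — is exactly the content the paper is invoking, and the observation that $U_2$ cannot be truly orthogonal but must lie in the $\Sigma$-orthogonal group correctly identifies the subtlety. For (iii) your discriminant-is-not-a-square argument is equivalent to the paper's observation that the explicit roots \eqref{mutilde} involve a square root that is not a polynomial. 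No gaps.
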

\begin{proof}
(i) The fact that $\det( t\uno - M(\lambda))$ coincides with \eqref{P.poly0bis} is a direct computation. In order to prove that  $P(t, \lambda)$ is  irreducible over $\Z[t, \lambda_1, \ldots, \lambda_\tk]\subset \Q( \lambda_1, \ldots, \lambda_\tk)[t]$ we proceed by induction on $\tk$.  If $\tk=1$  the statement is trivial.  Now let us suppose that it is true up to $\tk= N$ and prove it for $N+1$. We consider the polynomial
$$
P_{N+1}(t,\lambda)=\prod_{i=1}^{N+1} (t + \lambda_i) - 2 \sum_{i=1}^{N+1} \lambda_i \, \prod_{k \neq i} (t + \lambda_k)
$$
and specify to $\lambda_{N+1}=0$. We obtain 
$$
P_{N+1}(t,\lambda_1,\dots,\lambda_N,0)= t \, \Big(\prod_{i=1}^{N} (t + \lambda_i) - 2  \sum_{i=1}^{N} \lambda_i \, \prod_{k \neq i} (t + \lambda_k)\Big)= t \, P_N(t,\lambda_1,\dots,\lambda_N)
$$
and by the inductive hypothesis the second factor is irreducible. Assume by contradiction that  $P_{N+1}$ is not irreducible: then it must factorize a linear term of the form $(t- c \lambda_{N+1})$.  Note that $c$ is a number, this is due to the fact that $P_{N+1}$ is homogeneous of degree $N+1$ in $(t,\lambda)$ and $P_N$ is homogeneous of degree $N$.
We repeat the same argument specifying to $\lambda_1=0$ and obtain a contradiction.
Indeed we would have
$$
P_{N+1}(t,\lambda)=(t- c_1 \lambda_{N+1})\whP_1(t,\lambda) = (t-c_2\lambda_1)\whP_2(t,\lambda)
$$
with $\whP_1,\whP_2$ irreducible. Now this equality can hold only if $(t- c_1 \lambda_{N+1})$ divides $\whP_2$ which is impossible by the irreducibility.

Now remark that for a polynomial with coefficients in a field with characteristic $0$, a sufficient condition to have distinct roots is the polynomial to be irreducible. Indeed if there were a double root then $f(t)$ and $f'(t)$ would have a common divisor, thus contradicting the irreducibility. Hence the eigenvalues of the symmetric matrix $M$ are distinct (and obviously real for positive $\lambda$) outside a finite number of algebraic surfaces. Since $M$ is homogeneous of degree one then so are the $\mu_i(\lambda)$.

\noindent
(ii) In order to conclude our proof we  restrict to a connected component which does not cross any  surface where two eigenvalues coincide. Then we apply the theory of quadratic Hamiltonians as described above.  \\
(iii) The irreducibility can be verified immediately by computing the roots of $Q(t, \lambda_i, \lambda_k)$, which are given by
\begin{align}
\label{mutilde}
\mu_{i,k}^+(\lambda) = \frac{\lambda_i - \lambda_k - \sqrt{\lambda_i^2 + \lambda_k^2- 14 \lambda_i \lambda_k}}{2} \ , \qquad 
\mu_{i,k}^-(\lambda) = \frac{\lambda_i - \lambda_k + \sqrt{\lambda_i^2 + \lambda_k^2- 14 \lambda_i \lambda_k}}{2} 
\end{align}
(iv) We restrict $\lambda_i,\lambda_k$ to a region where we have 2 distinct, real eigenvalues. To this purpose  we  impose the condition ${\rm Tr}^2  N(\lambda_i, \lambda_k) > 4 \det N(\lambda_i, \lambda_k)$, which is equivalent to 
$$
(\lambda_k - c_+\lambda_i)(\lambda_k - c_- \lambda_i) >0  \ , \quad c_\pm  = (5\pm\sqrt{21})/2 \ .
$$
This  selects two conic regions of parameters $\lambda_i, \lambda_k$. In each such region we may apply the theory of quadratic Hamiltonians in order to obtain our result.
\end{proof}

\begin{proof}[Proof of  Lemma \ref{lem:step4}.]We first notice that $\whcD$, $\whcZ_1$ depend on different variables and hence do not interact; in the same way, due to our genericity condition, the first line in $\whcZ_1$, see \eqref{quad1}, does not interact with the second and third one, see \eqref{quad2}-\eqref{quad22}. Finally \eqref{quad1}  is the infinite sum over $n\neq 0$ of the finite dimensional Hamiltonian:
$$
\sum_{1 \leq i\leq \tk  }  \left(\tm_i^2 - \omega_i + n^2 \right)  |b_{(\tm_i,n)} |^2
+ 2 \e \sum_{1 \leq i,k \leq \tk, \, i\neq  k \atop   n\in \Z\setminus \{0\}} \sqrt{\lambda_i \lambda_k} \,   b_{(\tm_i,n)} \, \bar b_{(\tm_k, n)}
$$
supported on the Fourier indices 
{$$
\sS_n := \bigcup_{\pm n}\cS_{0n}  \ .
$$. }
\\
Since $\tm_i^2 - \omega_i + n^2= \e\lambda_i +n^2$ the Hamiltonian above is of type I, see \eqref{cq1}, we just identify $z=(b_{(\tm_1,n)},b_{(\tm_2,n)},\dots ,b_{(\tm_tk,n)}) $ for all $n\neq 0$ and fix $\tK= n^2$.
Regarding \eqref{quad2}-\eqref{quad22} we have Hamiltonians of  type II, see \eqref{cq2}. Indeed for each  $\jj=(m,n)\in \ccC_{i,k}^+$ we identify $z_1= z_{\jj}, z_2= z_{\und \jj}$ where $\und \jj= (\tm_i+\tm_k-m,-n)$. In this case the coefficient $\tK= m^2-\tm_i^2+n^2$.  
In conclusion $\whcD+\whcZ_1$ is block diagonal with respect to the blocks
$$
\{\Z^2\setminus(\sS \cup \ccC\cup \Tan)\} \cup_{n >  0}\sS_n\cup_{i<k}\ccC_{i,k}.
$$
We consider an open connected region in $\cO_0$ where the eigenvalues $\mu_i(\lambda)$ are distinct and for each $i\neq k$,
the eigenvalues $\mu^\pm_{i,k}(\lambda)$ are  real and distinct.
 Such a region  exists and is the intersection between $\cO_0$ and a {\em cone} due to Lemma \ref{lem:irr}.
  Now we may choose a compact domain $\cO_1$ strictly contained in a connected component of the open cone and define 
  \begin{equation}\label{tutti}
\wt\gamma\le  \,\mbox{ distance between $\cO_1$ and the border of the cone}\,,
  \end{equation} \
  so that
  \begin{equation}
  \label{tutti2}
   \min_{\lambda\in\cO_1}  \Big(|\mu^\s_{i,k}(\lambda)-  \mu^{\s'}_{i',k'}(\lambda)|, |\mu_{i}(\lambda)-  \mu^{\s'}_{i',k'}(\lambda)|, |\mu_{i}(\lambda) -  \mu_{i'}(\lambda)| \Big) >\tilde{\g}
  \end{equation}
  for any choice of the distinct eigenvalues.
  \\
  In $\cO_1$ the changes of variables $\cU_1(\lambda)$ and $(\cU_2(\lambda_i,\lambda_k))_{i<k}$ of Lemma \ref{lem:irr} are well defined, analytic  and we may estimate $\cU_i,\partial_{\lambda}\cU_i$  by Cauchy estimates (recall that $\cU_i$ and $\wt\gamma$ are $\e$ independent). 
   We are ready to define $U$, which is  a block diagonal matrix with respect to the blocks
$$
(\Z^2\setminus(\sS \cup \ccC\cup \Tan)) \cup_{n\neq 0}\sS_n\cup_{i<k}\ccC_{i,k}.
$$
On the first block $U$ is the identity, on each block $\sS_n$ it is the matrix $U_1$ of Lemma \ref{lem:irr}, on each 4x4 block  $\jj=(m,n)\in \ccC_{i,k}^+$, $\und \jj= (\tm_i+\tm_k-m,-n)$, the matrix  $U$ coincides with the matrix $U_2(\lambda_i,\lambda_k)$. We have proved items (i) and (ii).
Regarding item (iii) we only need to bound the norm of $\cR$ and $\cU$ together with their $\lambda$ derivatives. The bound on $\cR$ is trivial, as for the one on $\cU$ it follows by our definition of $\cO_1$.
Finally we prove item $(iv)$. Clearly  $\wtcM\circ \cU = \wtcM$  since this  functions  {\em does not} depend on the variables $(b_{
\jj})_{\jj\in \sS\cap\ccC} $, as one sees inspecting formulas \eqref{mp.4}.
Regarding $\wtcP_x $  and $\wtcP_{y}$ we notice that they are decomposed in the same blocks as $\whcZ_1$:
\begin{align*}
	&\wtcP_x= \sum_i  \tm_i  \yy_i + \sum_{(m,n) \in \Z^2 \setminus (\sS \cup \Tan\cup \ccC)}\!\!\!\!m \, |b_{(m,n)}|^2 +\!\!
	\sum_{i <k,  (m,n) \in \ccC^+_{i,k}}\!\!(m - \tm_i)  \left(\ |b_{(m,n)}|^2 - |b_{(\tm_i + \tm_j - m,-n)}|^2 \right) \ , \\
	&\wtcP_y= \sum_{(m,n) \in \Z^2 \setminus (\sS \cup \Tan\cup \ccC)}\!\!\!\! n |b_{(m,n)}|^2 + \sum_{n\neq 0} n \sum_{(m,n)\in\sS_n} |b_{(m,n)}|^2 + \sum_{i <k,  (m,n) \in \ccC^+_{i,k}}\!\!   n \left(\ |b_{(m,n)}|^2 - |b_{(\tm_i + \tm_j - m,-n)}|^2 \right) \ . 
\end{align*}
On the first block $\cU$ acts as the identity. 
On each block $\sS_n$,  $\cU$ acts as $\cU_1$  and moreover $\wtcP_x$ is null on this block while $\wtcP_y$ is proportional to the invariant Hamiltonian $\cQ_0= \sum_{i=1}^\tk |b_{(\tm_i,n)}|^2$. 
Finally on the last blocks $\cU$ acts as $\cU_2$ and $\wtcP_x$, $\wtcP_y$ are {\em both} proportional to the invariant Hamiltonian $\cQ_0$ (recall that these blocks are of type II and hence $\cQ_0= |b_{(m,n)}|^2 - |b_{(\tm_i + \tm_j - m,-n)}|^2$, see formula \eqref{cq2}). 
Finally we rename the canonical variables $(\yy,\theta,\ba)$.
\end{proof}

\begin{remark}
\label{rem:unstable}
The fact that the eigenvalues of $ N(\lambda_i,\lambda_k)$ might  in principle be imaginary shows that the tori {containing the family of the finite gap solutions } might be  linearly hyperbolic, hence linearly unstable. Here we want to rule out such behavior.
\end{remark}

\begin{proof}[Proof of Theorems \ref{thm:q} and \ref{thm:L}]
We just apply Lemmata \ref{scendo}, \ref{medio}, \ref{lem:diag0} and \ref{lem:step4}.
\end{proof}

\section{Non-degeneracy of the Hamiltonian $\wt\cD(\lambda, \e)$ }
\label{sec:IIm}
By Theorem \ref{thm:q}  the quadratic  Hamiltonian $\omega\cdot \yy + \cD + \cH^\0 $ is now reduced to a $\cO(\e^2)$ perturbation of the  diagonal  Hamiltonian $\omega \cdot \yy + \wt\cD(\lambda, \e)$, where $\wt\cD(\lambda, \e)$ is defined in \eqref{D.lam}.\\
In this section we prove that we can impose  second and third order Melnikov conditions
on the frequencies $\wtOmega_\jj(\lambda, \e)$ of the operator $\wt\cD(\lambda, \e)$. 
Now we have the following lemma:
\begin{lemma}
\label{quefe}
	Let $\wtOmega_\jj(\lambda, \e)$ be defined as in \eqref{def:omtilde}. For a generic  choice of $\cS_0$  the following holds: for each admissible $(\bj, \ell, \bs)\neq ((\jj,\jj),0, (\s_1,-\s_1)) $ in the sense of Definition \ref{rem:adm3} one has 
\begin{equation}
\label{mel.2}
\omega \cdot \ell + \sigma_1\wtOmega_{\jj_1}(\lambda, \e) + \sigma_2 \wtOmega_{\jj_2}(\lambda, \e) \not\equiv 0 \ , 
\end{equation}	
		in the set $\cO_1$ of Theorem \ref{thm:q}.
\end{lemma}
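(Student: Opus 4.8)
The plan is to separate off the $\e$--dependence. Fix an admissible $(\bj,\ell,\bs)\in\fA_2$ with $\bj=(\jj_1,\jj_2)$, $\bs=(\sigma_1,\sigma_2)$, and consider
\[
F(\lambda,\e):=\omega(\lambda)\cdot\ell+\sigma_1\wtOmega_{\jj_1}(\lambda,\e)+\sigma_2\wtOmega_{\jj_2}(\lambda,\e).
\]
Since $\omega(\lambda)=\omega^\0-\e\lambda$ and, by \eqref{def:omtilde}, each $\wtOmega_\jj(\lambda,\e)$ is an integer plus $\e$ times a function of $\lambda$ alone, one has $F=F_0+\e\,F_1(\lambda)$ with $F_0\in\Z$ and
\[
F_1(\lambda)=-\ell\cdot\lambda+\sigma_1\,\nu_{\jj_1}(\lambda)+\sigma_2\,\nu_{\jj_2}(\lambda),
\]
where $\nu_\jj$ is $0$, $\mu_i(\lambda)$, $\mu^+_{i,k}(\lambda)$ or $-\mu^-_{i,k}(\lambda)$ according to the four alternatives in \eqref{def:omtilde}. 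By Remark \ref{rem:mu} and Lemma \ref{lem:irr} the $\mu_i,\mu^\pm_{i,k}$ are real-analytic and homogeneous of degree one on $\cO_1$, hence so is $F_1$; in particular, for $\e\neq0$ small, $F(\cdot,\e)\equiv0$ on $\cO_1$ if and only if $F_0=0$ \emph{and} $F_1\equiv0$ identically. So it suffices to show that, for a generic $\cS_0$, these two identities force $(\bj,\ell,\bs)=((\jj,\jj),0,(\sigma_1,-\sigma_1))$.

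Next I would run a finite case analysis according to which of the three pairwise disjoint regions $\Z^2\setminus\cS_0=(\Z^2\setminus(\sS\cup\ccC\cup\cS_0))\sqcup\sS\sqcup\ccC$ contains $\jj_1$ and $\jj_2$; the disjointness is exactly the genericity $\sS\cap\ccC=\emptyset$, $\ccC_{i,k}\cap\ccC_{i',k'}=\emptyset$ of Definition \ref{defar}. In each configuration the first step is admissibility: since $e^{\im\theta\cdot\ell}a^{\sigma_1}_{\jj_1}a^{\sigma_2}_{\jj_2}$ Poisson--commutes with $\wtcM,\wtcP_x,\wtcP_y$, the selection rules of Remark \ref{leggi_sel1} — applied just as in the proof of Lemma \ref{lem:br} — determine $\ell$, together with the indices $i,k$ such that $\jj_1,\jj_2\in\sS$ or $\ccC_{i,k}$, up to finitely many possibilities. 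With $\ell$ so fixed, $F_1$ is an explicit $\Z$--combination of the $\lambda_j$'s and of at most two of the $\mu_i$'s and two of the $\mu^\pm_{i,k}$'s, and $F_0$ an explicit integer $\omega^\0\cdot\ell+\sigma_1(\cdot)+\sigma_2(\cdot)$.

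The conclusion then rests on the arithmetic of $P(t,\lambda)$ and $Q(t,\lambda_i,\lambda_k)$ of \eqref{P.poly0bis}--\eqref{char02bis}, which are irreducible over $\Z[t,\lambda]$ by Lemma \ref{lem:irr}: hence (for $\tk\ge2$) a single $\mu_i$ is never in $\Q(\lambda)$, nor is a single $\mu^\pm_{i,k}$; the quadratic extensions $\Q(\lambda)(\mu^\pm_{i,k})$ with different $\{i,k\}$ are pairwise distinct and distinct from each $\Q(\lambda)(\mu_i)$ (their defining discriminants $\lambda_i^2-14\lambda_i\lambda_k+\lambda_k^2$ being distinct irreducible quadratic forms); and $\mu^+_{i,k}-\mu^-_{i,k}=-\sqrt{\lambda_i^2-14\lambda_i\lambda_k+\lambda_k^2}\notin\Q(\lambda)$ by \eqref{mutilde}. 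Feeding these facts into the list of configurations shows $F_1\not\equiv0$ in every case except the one with $\jj_1,\jj_2\notin\sS\cup\ccC$, where admissibility already forces $\ell=0$, $\sigma_2=-\sigma_1$ and, as in Lemma \ref{lem:br}(i), $\jj_1=\jj_2$, i.e.\ the excluded triple. The remaining exceptional configurations are precisely those in which a degree-one relation among the $\mu$'s does hold — the case $\tk=1$ (where $\mu_1=\lambda_1$) and the case $\tk=2$ (where $\mu_1+\mu_2=\lambda_1+\lambda_2$) — so that $F_1$ might vanish; there one instead checks directly that $F_0=\omega^\0\cdot\ell+(\text{a sum of squares})\neq0$, using $\tm_i\neq0$ and the $\tL$--genericity \eqref{pop}.

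The main obstacle is the bookkeeping of the subcase $\jj_1,\jj_2\in\ccC$. One must use the genericity $\ccC_{i,k}\cap\ccC_{i',k'}=\emptyset$ to restrict which pairs of indices can occur together in an admissible monomial; treat separately the "conjugate" positions $\jj_2=(\tm_i+\tm_k-m_1,-n_1)$ inside a single $\ccC_{i,k}$, where $\wtcP_y$ forces $\sigma_1=\sigma_2$ and the $\e$--correction becomes $\sigma_1(\mu^+_{i,k}-\mu^-_{i,k})$; and discard the combinations $\pm\mu^\pm_{i,k}\pm\mu^\pm_{i',k'}$ and $\pm\mu_i\pm\mu^\pm_{i',k'}$ via the field-theoretic disjointness above. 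This is exactly the point where the hypotheses on $\cS_0$, and the conservation of mass and momentum encoded in admissibility, are genuinely used.
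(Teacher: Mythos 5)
Your decomposition $F=F_0+\e F_1(\lambda)$ and the plan of a case analysis over the partition of $\Z^2\setminus\cS_0$ into $\Z^2\setminus(\sS\cup\ccC\cup\cS_0)$, $\sS$, $\ccC$, combined with the irreducibility of $P$ and $Q$, is indeed the paper's strategy. However, your discussion of the ``exceptional configurations'' is a real gap. This lemma is proved using only the genericity of Definition \ref{defar} (the conditions $\sS\cap\ccC=\emptyset$ and $\ccC_{i,k}\cap\ccC_{i',k'}=\emptyset$); the $\tL$-genericity \eqref{pop} is reserved for the third-order conditions in Lemma \ref{lem.F}, and the paper does not assume $\tm_i\neq0$. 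Moreover $\tK_{\bj,\ell}^\bs$ is not a ``sum of squares''. What actually disposes of the configurations where a $\Z$-linear relation among the $\mu$'s holds (e.g.\ $\sum_i\mu_i=\sum_i\lambda_i$, or $\mu^+_{i,k}+\mu^-_{i,k}=\lambda_i-\lambda_k$) is the admissibility constraints themselves: the corresponding $\ell$ either violates the $\wtcM$ selection rule (e.g.\ $\eta(\ell)=0$ when both sites lie in $\sS\cup\ccC$), or, when $\ell$ is admissible, the $\wtcP_y,\wtcP_x$ rules force $\jj_1=\jj_2$ and one falls back on the excluded triple. So the mechanism you invoke for the hard cases is the wrong one, and if you tried to run it you would find nothing to use.

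Two smaller points. In case 1 you write that ``admissibility already forces $\ell=0$''; the logic runs the other way: $F_1\equiv0$ forces $\ell=0$, and \emph{then} admissibility gives $\sigma_2=-\sigma_1$ and $\jj_1=\jj_2$. And $\Q(\lambda)(\mu_i)$ has degree $\tk$ over $\Q(\lambda)$, not $2$: for $\tk\ge3$ its distinctness from $\Q(\lambda)(\mu^\pm_{i,k})$ follows from the degree count, whereas for $\tk=2$ one must compare the two discriminants $\lambda_1^2+14\lambda_1\lambda_2+\lambda_2^2$ (for $P$) and $\lambda_1^2-14\lambda_1\lambda_2+\lambda_2^2$ (for $Q$); the parenthetical you offer only addresses the $Q$-discriminants among themselves.
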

\begin{proof}
Without loss of generality we may assume that $\s_1=1$ and  set $\s_2=\s$ and $\jj_i=(m_i,n_i)$ for $i=1,2$.
In the expression \eqref{mel.2} we separate the terms of order $\e^0$ and $\e$:
\begin{equation}
\label{res.2}
\omega \cdot \ell + \wtOmega_{\jj_1}(\lambda, \e) + \sigma \wtOmega_{\jj_2}(\lambda, \e)  \equiv \tK_{\bj, \ell}^\bs + \e \tF_{\bj, \ell}^\bs(\lambda) \ ,
\end{equation}
where we denote by $\tK_{\bj, \ell}^\bs$  the part of \eqref{res.2} which is of order $\e^0$ and by $\tF_{\bj, \ell}^\bs(\lambda)$ the part of order $\e$. Explicitly
\begin{align}
&\qquad \tK_{\bj, \ell}^\bs := \omega^\0 \cdot \ell +   \wtOmega_{\jj_1}(\lambda, 0) + \sigma \,  \wtOmega_{\jj_2}(\lambda,0)   \\
& \qquad  \tF_{\bj, \ell}^\bs(\lambda) :=  \partial_{\e} \left. \Big(\omega(\lambda) \cdot \ell +   \wtOmega_{\jj_1}(\lambda, \e) +\sigma \wtOmega_{\jj_2}(\lambda, \e)  \Big) \right|_{\e = 0} =- \lambda\cdot \ell + \vartheta_{\jj_1}(\lambda) + \sigma \vartheta_{\jj_2}(\lambda)
\end{align}
 where  $\tK_{\bj, \ell}^\bs$ is an integer while the functions $\vartheta_{\jj}(\lambda)$ belong to the  finite list of functions
$$
\vartheta_{\jj}(\lambda)\in \Big\{ (\mu_i(\lambda))_{1 \leq i \leq \tk},\ \ \ (\mu_{i,k}^{\pm}(\lambda))_{1 \leq i < k \leq \tk} \Big \}.
$$
Clearly in order for the resonance \eqref{mel.2} to hold identically, we must have $\tK_{\bj, \ell}^\bs =\tF_{\bj, \ell}^\bs(\lambda)\equiv 0$.

Remark that $\jj_1$ and $\jj_2$ can belong to either $\Z^2 \setminus  (\Tan\cup\sS \cup \ccC)$, or $\sS$, or $\ccC$ and all the possible combinations are possible. Thus we perform  a case analysis and show that,  however one chooses admissible $((\jj_1,\jj_2),\ell,(\s_1,\s_2))\neq ((\jj,\jj),0, (\s_1,-\s_1)) $, the function \eqref{mel.2} cannot be identically 0.
	\begin{enumerate}
		\item $\jj_1,\jj_2\in\Z^2\setminus (\Tan\cup\sS\cup\ccC)$. 
		In this case  $\tF_{\bj, \ell}^\bs(\lambda) = - \lambda\cdot \ell$, which is identically zero iff $ \ell = 0$. 
		The conservation of mass and momentum reads
		\begin{equation}
		\label{mc1}
		\eta(\ell) + 1 +\sigma  = 0 \ , \quad \pi(\ell) + m_1 + \sigma m_2 = 0\ ,\quad n_1+\s n_2=0 \ .
		\end{equation}
		Since $\ell=0$, the first equation above  fixes $\s=-1$ and the other two imply  $\jj_1=\jj_2$, which is the trivial case we excluded. 
		
		\item  $\jj_1 \in \sS \ ,\jj_2 \in \Z^2 \setminus  (\Tan\cup\sS \cup \ccC)$. 
		We have  
		$$
		\tF_{\bj, \ell}^\bs(\lambda)= - \lambda \cdot \ell + \mu_i (\lambda) \quad \mbox{for some} \; 1 \leq i \leq \tk \ . 
		$$ 
		 This expression may be identically zero only if for some $1 \leq i \leq \tk$  and some $\ell\in \Z^{\tk} $ one has $\mu_i(\lambda)\equiv \lambda\cdot \ell$. Since  by definition  $\mu_i(\lambda)$ is a root of $P(t,\lambda)$ (defined  in \eqref{P.poly0bis}), we would have that  $(t- \lambda \cdot \ell) \in \Z[t, \lambda_1, \ldots, \lambda_\tk]$ would be a divisor of $P(t,\lambda)$. However by Lemma \ref{lem:irr} $P(t,\lambda)$ is    irreducible in $\Z[t, \lambda_1, \ldots, \lambda_\tk]$, and we have obtained a contradiction.

		\item $\jj_1 \in \ccC_{i, k}^\pm$ for some $i<k$, $\ \jj_2 \in \Z^2 \setminus  (\Tan\cup\sS \cup \ccC)$. 		We have  $$
\tF_{\bj, \ell}^\bs(\lambda)=-\lambda \cdot \ell + \mu_{i,k}^\pm (\lambda) \quad \mbox{for some} \; 1 \leq i< k\leq \tk \ . 
$$  This expression may be zero only if for some $1 \leq i< k\leq \tk$, $s=\pm $  and some $\ell\in \Z^{\tk} $ one has $\mu^s_{i,k}(\lambda)\equiv\lambda\cdot \ell$. But in this case the polynomial $(t- \lambda \cdot \ell) \in \Z[t, \lambda_1, \ldots, \lambda_\tk]$ would be a divisor of $Q(t,\lambda_i,\lambda_k)$ defined in \eqref{char02bis}, which is irreducible by  Lemma \ref{lem:irr}.  We have obtained a contradiction.

		\item $\jj_1 , \jj_2 \in \sS $. 		W.l.o.g.  we may assume that $m_1=\tm_i$ and $m_2=\tm_k$. By conservation of mass and momentum of Remark \ref{leggi_sel1}
		\begin{equation}
		\label{mc4}\eta(\ell) = 0\,,\quad \pi(\ell) = 0\,,\quad  n_1 +\s n_2 = 0.
		\end{equation}
		Then we have $\tK_{\bj, \ell}^\bs=\omega^\0\cdot \ell  + (1+ \sigma) n_1^2$ and finally  
		$$
		\tF_{\bj, \ell}^\bs(\lambda)=-\lambda \cdot \ell + \mu_i(\lambda) + \sigma \mu_k(\lambda) \ .
		$$ 
As usual assume that \eqref{res.2} is identically $0$. 
		If $\ell=0$ then  $\tK_{\bj, \ell}^\bs=0$ implies $\s=-1$. Then $\tF_{\bj, \ell}^\bs(\lambda)\equiv 0$ iff  $\mu_i(\lambda) = \mu_k(\lambda)$.  By the irreducibility of $P(t,\lambda)$  this may hold only if $i=k$ and hence $m_1=m_2=\tm_i$. Since $n_1=n_2$ this implies $\jj_1=\jj_2$, which contradicts the assumptions.\\ 
		Now suppose $\ell\neq 0$. Then $\tF_{\bj, \ell}^\bs(\lambda)\equiv 0$ iff 
		$ \mu_i(\lambda) \equiv - \sigma \mu_k(\lambda)+ \lambda \cdot \ell$.  
		This means that $\mu_k(\lambda)$ is a root both of $P(t,\lambda)$ and  $P(-\s t + \lambda \cdot \ell,\lambda ) $, so the two polynomials have a common divisor.
		We claim that $P(-\s t + \lambda \cdot \ell,\lambda ) $ is irreducible over $\Z[t, \lambda_1, \ldots, \lambda_\tk]$. 
		Indeed, suppose that 
		$$
		P(-\s t + \lambda \cdot \ell,\lambda )= P_1(t,\lambda)P_2(t,\lambda) \ , 
		$$
		then setting $\tau= -\s t + \lambda \cdot \ell$ we would have
		$$
		P(\tau,\lambda)= P_1(-\s \tau+\s\lambda\cdot\ell,\lambda)P_2(-\s \tau+\s\lambda\cdot\ell,\lambda)  \ , 
		$$
		 but this contradicts the irreducibility of $P$ since $P_1(-\s \tau+\s\lambda\cdot\ell,\lambda)\in \Z[t, \lambda_1, \ldots, \lambda_\tk]$. 
		
		Now in order for $P(t,\lambda)$ and $P(-\s t + \lambda \cdot \ell,\lambda ) $ to have a common divisor they have to be equal (or opposite).
		Since $\ell\neq 0$ and $\eta(\ell)=0$ then $\lambda \cdot \ell$ depends on at least two variables, say $\lambda_1,\lambda_2$. We specialize the equality $P(t,\lambda)= s P(-\s t + \lambda \cdot \ell,\lambda )$, $s= \pm$, to $\lambda_i=0$ for all $i\neq 1,2$ and get 
		$$
		(-\s t +\lambda\cdot\ell)^{\tk-2}[(-\s t +\lambda\cdot\ell + \lambda_1)(-\s t +\lambda\cdot\ell + \lambda_2) - 2  \lambda_1  (-\s t + \lambda \cdot \ell + \lambda_2) -2
		\lambda_2  (-\s t + \lambda \cdot \ell + \lambda_1)] =
		$$
		$$
		s t^{\tk-2}[(t+\lambda_1)(t+\lambda_2) -2 \lambda_1(t+\lambda_2) -2\lambda_2(t+\lambda_1) \ . 
		$$
		Equating the leading terms we get  $(-\s)^{\tk} = s$, then from the subsequent degree
		$$
		(-\s)^{\tk-1} (2 \lambda\cdot\ell -(\lambda_1 + \lambda_2) )= -(-\s)^{\tk}(\lambda_1 +\lambda_2) \quad \Rightarrow \quad 2\s \lambda\cdot\ell = (1+\s)(\lambda_1 +\lambda_2) \ , 
		$$
		which is not compatible with $\eta(\ell)=0$, $\ell\neq 0$.
		%
		
		\item $ \jj_1 \in \ccC \ , \jj_2 \in \sS$.  W.l.o.g. assume that $\jj_1\in \ccC_{i,k}^s$ for some $1 \leq i<k \leq \tk$, $s=\pm$, and $m_2= \tm_h$ for some $1 \leq h \leq \tk$. In this case
		$$
		\tF_{\bj, \ell}^\bs(\lambda) = - \lambda \cdot \ell + \mu_{i,k}^s (\lambda) +\s \mu_h(\lambda) \ .
		$$
Assume that  $\tF_{\bj, \ell}^\bs(\lambda) \equiv 0$. Then  we would have 
$$
\mu_{i,k}^s (\lambda) = -\s \mu_h(\lambda) + \lambda \cdot \ell \ , 
$$
 which means that $P(t, \lambda)$  would have $\mu_h(\lambda)$ as a common root with $Q(-\sigma t + \lambda \cdot \ell, \lambda_i, \lambda_k)$. 
 Since both polynomials are irreducible (one can prove  the irreducibility  of  $Q(-\sigma t + \lambda \cdot \ell, \lambda_i, \lambda_k)$ as we did  in 4.), then  they must coincide up to a scale factor. This is absurd unless $\tk=2$. In this last case we can verify directly that the two polynomials never coincide.

		\item $ \jj_1,\jj_2 \in \ccC $.  W.l.o.g.  assume that $\jj_r\in \ccC_{i_r,k_r}^{s_r}$  for $r=1,2$ and  $1 \leq i_r< k_r \leq \tk$,  $s_r=\pm$. We have
		$$
		\tF_{\bj, \ell}^\bs(\lambda)=-\lambda \cdot \ell + \mu_{i_1,k_1}^{s_1} (\lambda) +\s \mu_{i_2,k_2}^{s_2 }(\lambda)
		$$
		so that $\tF_{\bj, \ell}^\bs(\lambda) \equiv 0$ would require 
		$$
		\mu_{i_1,k_1}^{s_1} (\lambda)\equiv  \lambda \cdot \ell -\s \mu_{i_2,k_2}^{s_2 }(\lambda).
		$$
		This  is trivially false if $(i_1,k_1)\neq (i_2,k_2)$ (one just remarks that the square roots in formula \eqref{mutilde} cannot cancel out). If $(i_1,k_1)= (i_2,k_2)$ 
we divide two cases. If $\ell = 0$, $\mu_{i_1,k_1}^{s_1} (\lambda)\equiv   -\s \mu_{i_2,k_2}^{s_2 }(\lambda)$ can happen only if  $\sigma = -1$ and $s_2 = s_1$. By conservation of $\wtcP_y$ we have $n_1=n_2$ and by 
		conservation of $\wtcP_x$  we get $(m_1 - \tm_i) - (m_2 - \tm_i) =0$, which implies $m_1 = m_2$. Hence $\jj_1 = \jj_2$, which is a contradiction.		
	
		If $\ell \neq 0$, for the square root to cancels identically
		 we must have
		$\s=1$, $s_1=-s_2$ and $ \lambda \cdot \ell= -( \lambda_{i_1}-\lambda_{k_1})$. Then $\jj_1,\jj_2$ are on the same circle $\ccC_{i_1,k_1}$ and by conservation of $\wtcP_y$ we have $n_1=-n_2$. Since both $\jj_1$ and $\jj_2$ are on a circle, this implies that either $m_1 = m_2$ or they are on the opposite sides of a diameter, i.e.  $m_1 + m_2 = \tm_{i_1} + \tm_{i_k}$. 
		But by  conservation of $\wtcP_x$  we get
		$$
		 0 = (-\tm_{i_1}+ \tm_{k_1}) + (m_1 - \tm_{i_1}) + (m_2 - \tm_{k_1}) =  m_1 + m_2 -2 \tm_{i_1} 
		$$
and in both cases we get contradictions.
	\end{enumerate}
\end{proof}

We pass to third order Melnikov conditions. 
\begin{lemma}
	\label{lem.F}
	There exists $\tL\in \N$ such that for a $\tL$-generic  choice of $\cS_0$  the following holds: for any $(\bj,\ell, \bs)\in\fA_3\setminus\fR_3$, i.e. admissible and non action preserving  in the sense of Definitions \ref{rem:adm3}, \ref{reso} we have
\begin{equation}
\label{mel.3}
\omega(\lambda) \cdot \ell +  \s_1 \wtOmega_{\jj_1}(\lambda, \e) +\sigma_2\wtOmega_{\jj_2}(\lambda, \e) + \sigma_3\wtOmega_{\jj_3}(\lambda, \e)\not\equiv 0
\end{equation}	
		in the set $\cO_1$ of Lemma \ref{lem:step4}.
\end{lemma}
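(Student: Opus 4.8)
The plan is to argue exactly as in Lemma~\ref{quefe}, separating orders in $\e$ and then running an exhaustive case analysis on where the three indices $\jj_1,\jj_2,\jj_3$ sit, the only new ingredients being the extra index (which multiplies the number of cases) and the condition \eqref{pop} of $\tL$-genericity (which is what will let us discard resonances coming purely from the integer-valued zero-order part when $\ell\neq 0$). First I would write
\begin{equation}
\label{res.3}
\omega(\lambda)\cdot\ell + \s_1\wtOmega_{\jj_1}(\lambda,\e)+\s_2\wtOmega_{\jj_2}(\lambda,\e)+\s_3\wtOmega_{\jj_3}(\lambda,\e) \equiv \tK^\bs_{\bj,\ell} + \e\,\tF^\bs_{\bj,\ell}(\lambda) + O(\e^2)
\end{equation}
where, as in \eqref{res.2}, $\tK^\bs_{\bj,\ell}\in\Z$ collects the $\e^0$ terms (sums of squares of integers and integer combinations of $\tm_i^2$) while
$$
\tF^\bs_{\bj,\ell}(\lambda) = -\lambda\cdot\ell + \s_1\vartheta_{\jj_1}(\lambda)+\s_2\vartheta_{\jj_2}(\lambda)+\s_3\vartheta_{\jj_3}(\lambda) \ ,
$$
each $\vartheta_{\jj_r}(\lambda)$ being either $0$ (when $\jj_r\notin\sS\cup\ccC$, in which case that term contributes nothing to $\tF$ beyond $-\lambda\cdot\ell$), or one of the $\mu_i(\lambda)$ (when $\jj_r\in\sS$), or one of the $\mu^\pm_{i,k}(\lambda)$ (when $\jj_r\in\ccC$). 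For \eqref{mel.3} to fail identically one needs simultaneously $\tK^\bs_{\bj,\ell}=0$ and $\tF^\bs_{\bj,\ell}(\lambda)\equiv 0$, and the whole proof consists in showing this forces $(\bj,\ell,\bs)\in\fR_3$.

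The case analysis is organized by the number $q\in\{0,1,2,3\}$ of indices lying in $\sS\cup\ccC$. When $q=0$ one has $\tF^\bs_{\bj,\ell}=-\lambda\cdot\ell$, hence $\ell=0$; then $\tK^\bs_{\bj,\ell}=0$ together with mass conservation $\eta(\ell)+\s_1+\s_2+\s_3=0$ and momentum conservation force $|\alpha|=|\beta|$ (so one $\s_r$ is $0$, or they cancel) and, after using $\pi_y$, that the monomial Poisson-commutes with all actions, i.e.\ $(\bj,\ell,\bs)\in\fR_3$; this is the only way \eqref{mel.3} is allowed to vanish. When $q\ge 1$ I would exploit, exactly as in points 2--7 of the proof of Lemma~\ref{quefe}, that each $\mu_i(\lambda)$ is a root of the irreducible polynomial $P(t,\lambda)$ of \eqref{P.poly0bis} and each $\mu^\pm_{i,k}(\lambda)$ of the irreducible $Q(t,\lambda_i,\lambda_k)$ of \eqref{char02bis} (Lemma~\ref{lem:irr}), so that an identity such as $\mu_i(\lambda)\equiv \lambda\cdot\ell - \s\mu_k(\lambda) - \s'\mu^\pm_{i',k'}(\lambda)$ would force a common factor between $P(t,\lambda)$ and a polynomial obtained from $P$ or $Q$ by the affine substitution $t\mapsto -\s t + (\text{linear in }\lambda)$, which substitution preserves irreducibility; comparing leading coefficients and next-to-leading coefficients after specializing all but two of the $\lambda_j$ to zero (the same trick used in case 5 of Lemma~\ref{quefe}) yields a contradiction with $\eta(\ell)=0$, $\ell\neq 0$, unless the configuration is trivial, and in the $q=3$ subcases where all three $\vartheta$'s are of type $\mu^\pm_{i,k}$ one additionally notes that the surds $\sqrt{\lambda_i^2+\lambda_k^2-14\lambda_i\lambda_k}$ appearing in \eqref{mutilde} cannot cancel unless the circle-pairs coincide and then $\wtcP_x,\wtcP_y$-conservation forces $\jj_r$'s to coincide.

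The genuinely new step relative to Lemma~\ref{quefe} — and the one I expect to be the main obstacle — is the subcase $q=0$ with $\ell\neq 0$ in disguise, or more precisely any subcase in which, after the $\mu$-terms are shown unable to cancel except trivially, one is left with the purely arithmetic requirement $\tK^\bs_{\bj,\ell}=0$ with $\ell\ne 0$; here $\tK^\bs_{\bj,\ell}$ is a $\Z$-linear combination of $\tm_1^2,\dots,\tm_\tk^2$ plus squares of the $m_r$'s and $n_r$'s subject to the momentum constraints, and ruling out that it vanish for the relevant bounded $|\ell|$ is precisely what \eqref{pop} is designed for: one checks that momentum conservation expresses the $m_r$ in terms of the $\tm_i$ and $\ell$, substitutes, and obtains a polynomial identity of the form $\sum_i c_i(\ell)\tm_i + (\text{quadratic})=0$ whose linear-in-$\tm$ part must vanish, contradicting $\sum \ell_i'\tm_i\neq 0$ for all $0<|\ell'|\le\tL$ once $\tL$ is chosen large enough (depending only on $\td$, since the number of monomials of scaling degree one in $\ba$ with bounded coefficients is controlled). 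Thus the choice of $\tL$ is dictated by the maximal $|\ell|$ that can occur in a potentially-vanishing $\tK^\bs_{\bj,\ell}$, which is bounded in terms of $\td$ alone; with that $\tL$ fixed, Lemma~\ref{rem:inf.s0} guarantees that $\tL$-generic sets exist (indeed are of full density), completing the argument. The quantitative lower bounds that upgrade \eqref{mel.3} from ``$\not\equiv 0$'' to the Diophantine estimate \eqref{3.mc} are then obtained by a standard measure/degeneracy argument on the real-analytic non-identically-zero functions $\tF^\bs_{\bj,\ell}$, carried out in Appendix~\ref{app:mes.m}.
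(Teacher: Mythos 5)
Your overall strategy — expand in powers of $\e$, set $\tK^\bs_{\bj,\ell}\in\Z$ the zeroth-order part and $\tF^\bs_{\bj,\ell}(\lambda)$ the first-order part, and run a case analysis on where the three $\jj_r$ live, using irreducibility of $P$ and $Q$ and the explicit surds \eqref{mutilde} — is exactly the paper's. But the one place you flag as the real obstacle, namely where $\tL$-genericity \eqref{pop} enters, is where your argument goes wrong.

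You claim $\tL$-genericity is used to rule out $\tK^\bs_{\bj,\ell}=0$ (for $\ell\neq 0$) after showing $\tF^\bs_{\bj,\ell}\not\equiv 0$. That is logically misplaced: if you have already shown $\tF\not\equiv 0$, the quantity $\tK+\e\tF$ is automatically non-identically-zero and you need no information on $\tK$ at all. It also cannot work quantitatively: $\tK$ involves $\tm_i^2$ and the free integers $n_r^2$, while \eqref{pop} only constrains \emph{linear} forms $\sum\ell_i\tm_i$. For instance, with $\jj_r=(\tm_{i_r},n_r)\in\sS$ one has $\tK=\sum_i\tm_i^2\ell_i+\sum_r\sigma_r(\tm_{i_r}^2+n_r^2)$, and for any given $\tm$ one can choose the $n_r$ so that this vanishes; no generic choice of $\cS_0$ rules that out. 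In fact your analysis of $q=0$ is also off: there mass conservation gives $\eta(\ell)=-\sum_r\sigma_r$ odd, hence $\ell\neq 0$ and $\tF=-\lambda\cdot\ell\not\equiv 0$ immediately — no $\tK$-argument and no $\tL$-genericity arise.

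The actual use of $\tL$-genericity is in the case $\jj_1,\jj_2,\jj_3\in\sS$ and is of a different nature. There one allows $\tF\equiv 0$: since $\sigma_1\mu_{i_1}+\sigma_2\mu_{i_2}+\sigma_3\mu_{i_3}$ may well be an integer linear form in $\lambda$ (e.g.\ $\mu_1+\mu_2+\mu_3=\lambda_1+\lambda_2+\lambda_3$ when $\tk=3$), the identity $\tF\equiv 0$ fixes a unique candidate $\ell=\ell^{(\bi,\bs)}\in\Z^\tk$ depending only on $\bi$ and $\bs$ (not on $\cS_0$). What \eqref{pop} rules out is not $\tK=0$ but the \emph{admissibility} of that $\ell$: conservation of $\wtcP_x$ forces $\pi(\ell^{(\bi,\bs)})=\sum_k\tm_k\ell_k^{(\bi,\bs)}=0$, a genuinely linear relation in $\tm$, and choosing $\tL>\max|\ell_k^{(\bi,\bs)}|$ over the finitely many $(\bi,\bs)$ excludes it. The complementary subcase $\ell^{(\bi,\bs)}=0$ (where $\tF\equiv 0$ cannot be ruled out) is then handled by showing $\tK\neq 0$ via $\wtcP_y$-conservation and a sign analysis of $\sigma_1n_1^2+\sigma_2n_2^2+\sigma_3n_3^2$, which again uses no genericity. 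So the gap is a real one: the mechanism you propose for the crucial case does not work, and a different (and simpler, since purely linear) obstruction is what is actually invoked.
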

\begin{proof}
As in the previous proof we set
\begin{equation}
\label{res.3} 
\omega(\lambda) \cdot \ell +  \s_1 \wtOmega_{\jj_1}(\lambda, \e) +\sigma_2\wtOmega_{\jj_2}(\lambda, \e) + \sigma_3\wtOmega_{\jj_3}(\lambda, \e)\equiv \tK_{\bj, \ell}^\bs + \e \tF_{\bj, \ell}^\bs(\lambda)
\end{equation}
where $\tK_{\bj, \ell}^\bs$ is the term of order $1$ and $\tF_{\bj, \ell}^\bs(\lambda)$ is the term of order $\e$. Explicitly 
\begin{align}
\label{K3}
 \tK_{\bj, \ell}^\bs & := \omega^\0 \cdot \ell +   \s_1\wtOmega_{\jj_1}(\lambda, 0) + \sigma_2 \,  \wtOmega_{\jj_2}(\lambda,0) +\s_3 \wtOmega_{\jj_3}(\lambda,0)  \\
 \label{F3}
\tF_{\bj, \ell}^\bs(\lambda) &:=  \partial_{\e} \left. \Big(\omega(\lambda) \cdot \ell +  \s_1 \wtOmega_{\jj_1}(\lambda, \e) +\sigma_2\wtOmega_{\jj_2}(\lambda, \e) + \sigma_3\wtOmega_{\jj_3}(\lambda, \e)  \Big) \right|_{\e = 0}\\
& =- \lambda\cdot \ell + \s_1\vartheta_{\jj_1}(\lambda) + \sigma_2 \vartheta_{\jj_2}(\lambda) + \sigma_3 \vartheta_{\jj_3}(\lambda)
\end{align}
As before	  $\tK_{\bj, \ell}^\bs$ is an integer while the functions $\vartheta_{\jj}(\lambda)$ belong to the  finite list of functions
$$
\vartheta_{\jj}(\lambda)\in \Big\{ (\mu_i(\lambda))_{1 \leq i \leq \tk},\ \ \ (\mu_{i,k}^{\pm}(\lambda))_{1 \leq i < k \leq \tk} \Big \}.
$$
Clearly in order for the resonance \eqref{mel.3} to not hold identically, we need  to ensure that 
	\begin{equation}
	\label{K.F.0}
	\tK_{\bj, \ell}^\bs = 0 \quad \mbox{ and }  \quad \tF_{\bj, \ell}^\bs(\lambda) \equiv 0 
	\end{equation}
	cannot occur for $(\bj,\ell, \bs)$ admissible.
	As before we  study all the possible combinations, each time we assume that \eqref{K.F.0} holds and we deduce a contradiction.
	\begin{enumerate}
		\item $\jj_1, \jj_2, \jj_3 \in \Z^2\setminus(\Tan \cup \sS \cup \ccC$). 
		Then $\tF_{\bj, \ell}^\bs(\lambda) = -\lambda \cdot \ell$ is identically $0$ iff $\ell = 0$. However by  mass conservation $\eta(\ell)$ is odd  hence $\ell \neq 0$.
		
		\item $\jj_1, \jj_2 \in \Z^2\setminus (\Tan \cup \sS \cup \ccC)$, $\jj_3 \in \sS$.
		Then $\tF_{{\bf j}, \ell}^\bs(\lambda) = -\lambda \cdot \ell + \sigma_3 \, \mu_{i}(\lambda)$ for some $1 \leq i \leq \tk$. If $\tF_{{\bf j}, \ell}^\bs(\lambda) \equiv 0$ then $\mu_i(\lambda) = \sigma_3  \lambda \cdot \ell$ is a  root in $\Z[\lambda]$ of the polynomial $P(t, \lambda)$ defined in \eqref{P.poly0bis}, which is irreducible over $\Z[t, \lambda_1, \ldots, \lambda_\tk]$ (see Lemma \ref{lem:irr}), thus leading to a contradiction.
		
		\item $\jj_1, \jj_2 \in \Z^2\setminus (\Tan \cup \sS \cup \ccC)$, $\jj_3 \in \ccC$.  W.l.o.g. assume that $\jj_3\in \ccC_{i,k}^s$ for some $1 \leq i<k \leq \tk$, $s=\pm$. 
		Then 
	$$
	\tF_{{\bf j}, \ell}^\bs(\lambda) = -\lambda \cdot \ell + \sigma_3\,  \mu_{i,k}^s(\lambda) \ .
	$$ 
	If $\tF_{{\bf j}, \ell}^\bs(\lambda) \equiv 0$, $\mu_{i,k}^s(\lambda) = \sigma_3 \lambda \cdot \ell$ is a  root in $\Z[t, \lambda_i, \lambda_k]$ of the polynomial $Q(t,\lambda_i, \lambda_k)$ defined in \eqref{char02bis}, which is irreducible over $\Z[t, \lambda_i, \lambda_k]$ (see Lemma \ref{lem:irr}), thus leading to a contradiction.

		\item $\jj_1, \jj_2 \in \sS$, $\jj_3 \in \Z^2\setminus (\Tan \cup \sS \cup \ccC)$.  W.l.o.g. let $\jj_1 = (\tm_{i},n_1)$, $\jj_2=(\tm_{k},n_2)$ for some $1 \leq i, k \leq \tk$.  Then 
		$$
		\tF_{{\bf j}, \ell}^\bs(\lambda) = -\lambda \cdot \ell + \sigma_1\,  \mu_{i}(\lambda) + \sigma_2 \mu_{k}(\lambda) \ . 
		$$
		By conservation of mass $\eta(\ell)  = \pm 1$, thus $\ell \neq 0$. 
		Assume $\tF_{{\bf j}, \ell}^\bs(\lambda) \equiv 0$. 
		 Then  $\mu_{k}(\lambda) = -\sigma_1\sigma_2 \mu_{i}(\lambda) +\sigma_2 \lambda \cdot \ell$. 
		 This means that  $P(t,\lambda)$ has $\mu_{i_1}(\lambda)$ as a common divisor with  $P(-\sigma_1\sigma_2 t + \sigma_2 \lambda\cdot \ell)$. Proceeding as in case 4. of the previous proof  one gets a contradiction.

		\item $\jj_1, \jj_2 \in \ccC$, $\jj_3 \in \Z^2\setminus (\Tan \cup \sS \cup \ccC)$.  W.l.o.g. let  $\jj_r \in  \ccC_{i_r,k_r}^{s_r}$ for some $1 \leq i_r < k_r \leq \tk$, $s_r =\pm$, $r=1,2$.
		Then 
		$$
		\tF_{{\bf j}, \ell}^\bs(\lambda) \equiv -\lambda \cdot \ell + \sigma_1  \mu_{i_1,k_1}^{s_1}(\lambda) + \sigma_2\mu_{i_2,k_2}^{s_2}(\lambda). 
		$$
		By mass conservation $\eta(\ell) = \pm 1$, hence $\ell \neq 0$.
	Then $ \tF_{{\bf j}, \ell}^\bs(\lambda) \equiv 0$ would require
		 $$
		\mu_{i_1,k_1}^{s_1} (\lambda)\equiv  \sigma_1\lambda \cdot \ell -\sigma_1\sigma_2 \mu_{i_2,k_2}^{s_2 }(\lambda) \ .
		$$
		This  is trivially false if $(i_1,k_1)\neq (i_2,k_2)$ (one just remarks that the square roots in formula \eqref{mutilde} cannot cancel out).
	If $(i_1,k_1)=(i_2,k_2)$ then by the explicit formula \eqref{mutilde}
			we obtain
			$$
			0  \equiv -\lambda \cdot \ell + (\sigma_1+ \sigma_2)  \frac{\lambda_{i_1} - \lambda_{k_1}}{2} -(\sigma_1s_1+ \sigma_2 s_2  ) \frac{\sqrt{\lambda_{i_1}^2 + \lambda_{k_1}^2- 14 \lambda_{i_1} \lambda_{k_1}}}{2} \ . 
			$$
			If $\sigma_1s_1+ \sigma_2 s_2  \neq 0$ then we get a contradiction since the root cannot be canceled identically by a linear function.
			If  $\sigma_1s_1+ \sigma_2 s_2  = 0$ and $\sigma_1 + \sigma_2 = 0$, we also get a contradiction as $\ell \neq 0$. Thus we are left with the case $\sigma_1 + \sigma_2 = \pm 2$, which implies $0 \equiv - \lambda \cdot(\ell \pm (\be_{i_1} - \be_{i_k}))$. This can hold only if  $\ell = \mp (  \be_{i_1} - \be_{i_k}).$ But for such $\ell$ one has $\eta(\ell) = 0$, contradicting the mass conservation.
%
		
\item $\jj_1 \in \Z^2\setminus (\Tan \cup \sS \cup \ccC)$, $\jj_2 \in \sS$, $\jj_3 \in \ccC$. W.l.o.g. let $\jj_2 = (\tm_{i_2}, n_2)$ for some $1 \leq i_2 \leq \tk$, $\jj_3 = (m_3, n_3) \in \ccC_{i_3, k_3}^{s_3}$ for some $1\leq i_3 < k_3 \leq \tk$, $s_3=\pm$. Then $$
\tF_{{\bf j}, \ell}^\bs(\lambda) \equiv -\lambda \cdot \ell + \sigma_2  \mu_{i_2}(\lambda) + \sigma_3 \mu_{i_3,k_3}^{s_3}(\lambda) \ .
$$
		Assume that $\tF_{{\bf j}, \ell}^\bs(\lambda)\equiv 0$. Then $\mu_{i_2}(\lambda) = - \sigma_2 \sigma_{3}  \mu_{i_3,k_3}^{s_3}(\lambda) + \sigma_2 \lambda \cdot \ell$. 
		Thus $ \mu_{i_3,k_3}^{s_3}(\lambda) $  is a root of $Q(t, \lambda_{i_3}, \lambda_{k_3})$  and of  $P(-\sigma_2\sigma_3 t + \sigma_2 \lambda \cdot \ell, \lambda) \in \Z[t, \lambda_1, \ldots,\lambda_\tk]$.
		But both polynomials are irreducible, thus they must coincide up to a sign. This is absurd unless $\tk \neq 2$. In the last case we can verify directly that the two polynomials never coincide.

		\item $\jj_1, \jj_2, \jj_3 \in \sS$. W.l.o.g. let  $\jj_1=(\tm_{i_1}, n_1)$, $\jj_2=(\tm_{i_2}, n_2)$, $\jj_3= (\tm_{i_3}, n_3)$ for some $1 \leq i_1,i_2,i_3 \leq \tk$ and  $n_1, n_2, n_3 \neq 0$.   Then  
	$$
	\tF_{\bj, \ell}^\bs(\lambda) = -\lambda \cdot \ell + \sigma_1 \mu_{i_1}(\lambda)+ \sigma_2 \mu_{i_2}(\lambda) + \sigma_3 \mu_{i_3}(\lambda) \ . 
	$$
	 Assume $\tF_{{\bf j}, \ell}^\bs(\lambda) \equiv 0$. 
	  This fix $\ell^{({\bf i}, \bs)} \in \Z^\tk$ uniquely, ${\bf i}:=(i_1, i_2, i_3)$.  By conservation of $\wtcP_x$ we have  $\sum_{k} \tm_k \ell_k^{({\bf i}, \bs)} = 0$.
	 If $\ell^{({\bf i}, \bs)} \neq 0$, such expression  defines a hyperplane ${\bf v}^{({\bf i}, \bs)} \subset \C^\tk$, which depends just on the functions $\mu_i(\lambda)$.
	 By taking all the possible  $({\bf i}, \bs)$ (there are only a finite number of possibilities!) we get a finite number of hyperplanes. 
	  Then it is enough to choose the sites $(\tm_k)_{1 \leq k \leq \td}$ outside the set $\bigcup_{{\bf i}, \bs}{\bf v}^{({\bf i}, \bs)}$ 
	  (remark that the $\mu_i(\lambda)$ {\bf do not} depend on 
	  the sites $(\tm_k)_{1 \leq k \leq \td}$, see Remark \ref{rem:mu}). In order to impose such condition we only have to choose in the $\tL$-genericity condition \ref{gen.cond}: $$\tL>\max_{k,{\bf i},\bs}|\ell_k^{({\bf i}, \bs)} |.$$ 

		Now assume $\ell^{({\bf i}, \bs)} = 0$. In this case we are not able to prove that $\tF_{{\bf j}, \ell}^\bs(\lambda) \not\equiv 0$, however we show that in such case  $\tK_{{\bf j}, 0}^\bs \neq 0$. Assume the contrary.   By conservation of $\wtcP_y$ we have  $\sum_{k=1}^3 \sigma_k n_k = 0$ and furthermore  $0 = \tK_{{\bf j}, 0}^\bs = \sigma_1 n_1^2 + \sigma_2 n_2^2 + \sigma_3 n_3^2$. Exploring all the possible choices of $\sigma_1, \sigma_2, \sigma_3$, one deduces that at least one among $n_1, n_2, n_3$ must equal 0. But this is impossible.

		\item $\jj_1, \jj_2, \jj_3 \in \ccC$. W.l.o.g. let $\jj_r \in \ccC_{i_r, k_r}^{s_r}$ for some $1 \leq i_r < k_r \leq \tk$, $s_r= \pm$, $r=1,2,3$.
Then
		\begin{align*}
		\tF_{{\bf j}, \ell}^\bs(\lambda) \equiv -\lambda \cdot \ell + \sigma_1  \frac{\lambda_{i_1} - \lambda_{k_1} -s_1  \sqrt{\lambda_{i_1}^2 + \lambda_{k_1}^2- 14 \lambda_{i_1} \lambda_{k_1}}}{2} + 
		\sigma_2  \frac{\lambda_{i_2} - \lambda_{k_2} -s_2  \sqrt{\lambda_{i_2}^2 + \lambda_{k_2}^2- 14 \lambda_{i_2} \lambda_{k_2}}}{2} \\
		+  \sigma_3  \frac{\lambda_{i_3} - \lambda_{k_3} -s_3  \sqrt{\lambda_{i_3}^2 + \lambda_{k_3}^2- 14 \lambda_{i_3} \lambda_{k_3}}}{2}
		\end{align*}
				 Assume $\tF_{{\bf j}, \ell}^\bs(\lambda) \equiv 0$. 
		Assume first that $(i_1,k_1)= (i_2,k_2) = (i_3,k_3)$. Then there is an odd number of roots in the expression for $\tF_{{\bf j}, \ell}^\bs(\lambda)$ and thus they cannot cancel identically.\\
			Otherwise there is a couple of indexes which appears just once. Assume it is $(i_1,k_1)$. Then specify to $\lambda_{i_1} = \lambda_{k_1} = 1$ and all the rest at $0$. Then the first root, namely $\sqrt{\lambda_{i_1}^2 + \lambda_{k_1}^2- 14 \lambda_{i_1} \lambda_{k_1}}$ is complex, while all the others terms are real. Hence $\tF_{{\bf j}, \ell}^\bs(\lambda)$ cannot vanish identically.

		\item $\jj_1, \jj_2 \in \sS$, $\jj_3 \in \ccC$.W.l.o.g. let  $\jj_1 =  (\tm_{i_1}, n_1)$, $\jj_2= (\tm_{i_2}, n_2)$ for some $1\leq i_1, i_2 \leq \tk$  and $\jj_3  \in \ccC_{i_3,k_3}^{s_3}$ for some $1\leq i_3 < k_3 \leq \tk$ and $s_3 = \pm$.  Then 
		$$
		\tF_{{\bf j}, \ell}^\bs(\lambda) \equiv -\lambda \cdot \ell + \sigma_1  \mu_{i_1}(\lambda) + \sigma_2  \mu_{i_2}(\lambda) + \sigma_3  \mu_{i_3 k_3}^{s_3}(\lambda) \ . 
		$$
		 Assume $\tF_{{\bf j}, \ell}^\bs(\lambda) \equiv 0$.
		Specify all the $\lambda$-depending functions to  $E:=\{ \lambda \in \C^\tk \colon \  \lambda_i = 0 \mbox{ for any } i \neq i_3, k_3\}$.
		Then $\mu_{i_1}(\lambda)\vert_{E}$, $\mu_{i_2}(\lambda)\vert_{E}$ are  roots of 
		$$
	P(t, \lambda)\vert_{E}=	P(t, 0, \ldots, \lambda_{i_3}, 0 , \ldots,  \lambda_{k_3}, \ldots, 0) = t^{k-2} \left( t^2 - (\lambda_{i_3} + \lambda_{k_3}) t - 3\lambda_{i_3} \lambda_{k_3}\right) 
		$$
and thus belong to the set
		$$
\left\lbrace 		0, \qquad  \frac{\lambda_{i_3} + \lambda_{k_3} \pm  \sqrt{\lambda_{i_3}^2 + \lambda_{k_3}^2 + 14 \lambda_{i_3} \lambda_{k_3}}}{2} \right\rbrace \ .
		$$
		Thus we  get three cases:
		\begin{itemize}
			\item  $\mu_{i_1}(\lambda)\vert_{E} = \mu_{i_2}(\lambda)\vert_{E} = 0$, thus 
			$$
			\tF_{{\bf j}, \ell}^\bs(\lambda)\vert_{E} = - \lambda_{i_3}\ell_{i_3} - \lambda_{k_3}\ell_{k_3} + \sigma_3  \mu_{i_3 k_3}^{s_3}(\lambda)\ . 
			 $$
			  Then one proceeds as in  3. to get a contradiction.
			\item $\mu_{i_1}(\lambda)\vert_{E} = 0$, $\mu_{i_2}(\lambda)\vert_{E} =  \frac{1}{2} \left(\lambda_{i_3} + \lambda_{k_3} \pm  \sqrt{\lambda_{i_3}^2 + \lambda_{k_3}^2 + 14 \lambda_{i_3} \lambda_{k_3}}\right) $. In such a  case 
			\begin{align*}
			\tF_{{\bf j}, \ell}^\bs(\lambda)\vert_{E} = - \lambda_{i_3}\ell_{i_3} - \lambda_{k_3}\ell_{k_3} & +  \sigma_2 \frac{\lambda_{i_3} + \lambda_{k_3} + \tilde\sigma_2  \sqrt{\lambda_{i_3}^2 + \lambda_{k_3}^2 + 14 \lambda_{i_3} \lambda_{k_3}}}{2} \\
		&	+  \sigma_3  \frac{\lambda_{i_3} - \lambda_{k_3} -s_3  \sqrt{\lambda_{i_3}^2 + \lambda_{k_3}^2- 14 \lambda_{i_3} \lambda_{k_3}}}{2}
			\end{align*}
			But the roots cannot vanish identically, since they are different, hence we exclude such a case.
			\item $\mu_{i_1} (\lambda)\vert_{E} =  \mu_{i_2} (\lambda)\vert_{E}=  \frac{1}{2} \left(\lambda_{i_3} + \lambda_{k_3} \pm  \sqrt{\lambda_{i_3}^2 + \lambda_{k_3}^2 + 14 \lambda_{i_3} \lambda_{k_3}}\right) $. Thus
			\begin{align*}
			\tF_{{\bf j}, \ell}^\bs(\lambda)\vert_{E} = 	- \lambda_{i_3}\ell_{i_3} - \lambda_{k_3}\ell_{k_3} + 
	(\sigma_1 + \sigma_2) \frac{\lambda_{i_3} + \lambda_{k_3}}{2} 
&	+	 (\sigma_1 s_1 + \sigma_2 s_2) \frac{ \sqrt{\lambda_{i_3}^2 + \lambda_{k_3}^2 + 14 \lambda_{i_3} \lambda_{k_3}}}{2}   \\
&			+  \sigma_3  \frac{\lambda_{i_3} - \lambda_{k_3} - s_3  \sqrt{\lambda_{i_3}^2 + \lambda_{k_3}^2- 14 \lambda_{i_3} \lambda_{k_3}}}{2}
			\end{align*}
			One checks directly that however  one chooses $\sigma_1, \sigma_2, \sigma_3, s_1, s_2, s_3$ the roots cannot vanish identically.
		\end{itemize}

		\item $\jj_1, \jj_2 \in \ccC$, $\jj_3 \in \sS$. W.l.o.g. let $\jj_r  \in \ccC_{i_r,k_r}^{s_r}$ for some $1 \leq i_r < k_r \leq \tk$, $s_r=\pm$, $r=1,2$, and $\jj_3 = (\tm_{i_3}, n_3)$ for some $1 \leq i_3 \leq \tk$. 
		Then  
		$$
		\tF_{{\bf j}, \ell}^\bs(\lambda) \equiv -\lambda \cdot \ell + \sigma_1 \mu_{i_1 k_1}^{s_1}(\lambda) + \sigma_2 \mu_{i_2 k_2}^{s_2}(\lambda) + \sigma_3 \mu_{i_3}(\lambda) \ . 
		$$
 Assume $\tF_{{\bf j}, \ell}^\bs(\lambda) \equiv 0$. We study different cases separately:
		\begin{itemize}
			\item if $(i_1, k_1) = (i_2,k_2)$, then
			$$
			\tF_{{\bf j}, \ell}^\bs(\lambda) = 
			- \lambda\cdot \ell + (\sigma_1 + \sigma_2) 
			\frac{\lambda_{i_1} + \lambda_{k_1}}{2} - (\sigma_1 s_1 + \sigma_2 s_2) \frac{ \sqrt{\lambda_{i_1}^2 + \lambda_{k_1}^2 - 14 \lambda_{i_1} \lambda_{k_1}}}{2} + \sigma_3 \mu_{i_3}(\lambda)
			$$
Now if  $\sigma_1 s_1 + \sigma_2 s_2 = 0$, we are reduced to case 3., thus we have a contradiction.\\
If $\sigma_1 s_1 + \sigma_2 s_2 \neq  0$, we specify to  $\lambda_{i_1} = \lambda_{k_1}=1$,  $\lambda_i = 0$ if $i \neq i_1,k_1$.   We obtain that $ {\rm Im } \, \tF_{{\bf j}, \ell}^\bs(\lambda) \neq 0$, hence it does not vanish identically. Indeed  $\mu_{i_3}(\lambda)$ for $\lambda \geq 0$ is an eigenvalue of a self-adjoint matrix, and thus it is real.

			\item if $i_2 = i_1$, $k_2 \neq k_1$, then as before we put $\lambda_{i_1} = \lambda_{k_1}=1$, all the other $\lambda_i = 0$.  We obtain that $ {\rm Im} \, \tF_{{\bf j}, \ell}^\bs(\lambda) \neq 0$, hence $\tF_{{\bf j}, \ell}^\bs(\lambda)$ does not vanish identically.
			\item if $i_2, k_2$ are both different from $i_1,k_1$, then once again we we put $\lambda_{i_1} = \lambda_{k_1}=1$, all the other $\lambda_i = 0$ and proceed as before.
		\end{itemize}
	\end{enumerate}
\end{proof}
\begin{remark}
	\label{rem:ff}
	For any  $ \jj_1, \jj_2$ with $|m_1| > |m_2|$,  $\forall \lambda \in \cO_1, \  \forall \e\leq \e_1$ 
	\begin{equation}
	| \wtOmega_{(m_1, n)}(\lambda, \e) - \wtOmega_{(m_2,n)}(\lambda, \e)| \geq  
	\begin{cases}
	|m_1| \ ,  &   {\rm if } \  |m_1| \geq \max_{1 \leq k \leq \tk} |\tm_k| \\
	\e \tilde\g  \ , &   {\rm if } \  |m_1| \leq \max_{1 \leq k \leq \tk} |\tm_k| 
	\end{cases}
	\end{equation}
	(see formulas \eqref{tutti}, \eqref{tutti2}) while
	\begin{equation}
	| \wtOmega_{(m_1, n)}(\lambda, \e) + \wtOmega_{(m_2,-n)}(\lambda, \e)| \geq  
	\begin{cases}
	m_1^2 +  n^2 &   {\rm if } \  |m_1| \geq \max_{1 \leq k \leq \tk} |\tm_k| \\
	n^2 &  {\rm if } \ |m_1| \leq \max_{1 \leq k \leq \tk} |\tm_k| 
	\end{cases}
	\end{equation}
\end{remark}

We conclude the section with the following lemma; first let us fix $\tM_0 = \tM_0(\cO_1, \e_0)$ as 
\begin{equation}
\label{M0}
\begin{aligned}
	\tM_0 & := \sum_{1 \leq i \leq \tk} | \mu_i(\cdot) |^{\cO_1}_\C + \sum_{1 \leq i < k \leq \tk \atop \pm} |  \mu_{ik}^\pm(\cdot) |^{\cO_1}_\C + 
	\\
	& \qquad \qquad 
	+ \sup_{\e \leq \e_0} \frac{1}{\e^2}\Big( 
	\sup_{m \in \Z} |\varpi_m(\cdot, \e)|^{\cO_1}+ 2\lceil \wtcH^{(\rm 0,hor)} \rfloor_{s,-2}^{\cO_1} +
	2 \lceil \wtcH^{(\rm 0,mix)} \rfloor_{s,-\bd}^{\cO_1}\Big) \ . 
	\end{aligned}
\end{equation}
We have the following
\begin{lemma}
	\label{lem:cono}
	There exists  $\gamma_1>0$ (independent of $\e$) and a compact domain   $\cC_{\rm c}\subset\cO_1$  s.t.  
	\begin{equation}
	\label{C.2}
	\min_{|\ell| \leq 4\tM_0 , \ \bj, \bs } \ \ \inf_{ \lambda \in \cC_{\rm c}\cap \cO_1} \abs{\tF_{\bj, \ell}^\bs(\lambda)} \geq \gamma_1  > 0 \ .
	\end{equation}
\end{lemma}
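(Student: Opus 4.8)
The plan is to reduce \eqref{C.2} to a finiteness statement combined with the non\--vanishing results of Lemma \ref{quefe} and Lemma \ref{lem.F}. The starting observation is that, once the constraint $|\ell|\le 4\tM_0$ is imposed, only finitely many functions of $\lambda$ can appear among the $\tF_{\bj,\ell}^\bs$. Indeed each $\vartheta_\jj$ lies in the finite list $\{0\}\cup\{\mu_i(\lambda)\}_{1\le i\le\tk}\cup\{\mu^\pm_{i,k}(\lambda)\}_{1\le i<k\le\tk}$, which by Remark \ref{rem:mu} depends only on $\tk$ and not on the sites $\cS_0$ nor on the particular $\jj$; moreover $\bs\in\{-1,0,1\}^b$ with $b\le 3$, and $\ell$ ranges over the finite lattice set $\{|\ell|\le 4\tM_0\}\cap\Z^\tk$. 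Hence the collection $\mathcal F$ of all functions $\lambda\mapsto-\lambda\cdot\ell+\sum_i\sigma_i\vartheta_{\jj_i}(\lambda)$ obtained in this way is finite. Each element of $\mathcal F$ is real\--analytic on $\cO_1$: the only non\--polynomial ingredients are the radicals $\sqrt{\lambda_i^2+\lambda_k^2-14\lambda_i\lambda_k}$ entering $\mu^\pm_{i,k}$ (see \eqref{mutilde}), and $\cO_1$ was chosen in Lemma \ref{lem:step4} to lie inside the open cone where these radicands are strictly positive, so that the $\mu^\pm_{i,k}$ are analytic there.

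Next I would split $\mathcal F=\mathcal F_\star\sqcup\mathcal F_0$, where $\mathcal F_0$ consists of the functions vanishing identically on $\cO_1$ and $\mathcal F_\star$ of the remaining ones; both are finite. By Lemma \ref{quefe} (for $b=2$, excluding the trivial triple $((\jj,\jj),0,(\sigma_1,-\sigma_1))$) and Lemma \ref{lem.F} (for $b=3$, for $(\bj,\ell,\bs)\in\fA_3\setminus\fR_3$), every $\tF_{\bj,\ell}^\bs$ occurring in the minimum of \eqref{C.2} is not identically zero on $\cO_1$, hence belongs to $\mathcal F_\star$. So it is enough to find a compact set $\cC_{\rm c}$ and $\gamma_1>0$, both independent of $\e$, with $\inf_{\lambda\in\cC_{\rm c}}|g(\lambda)|\ge\gamma_1$ for all $g\in\mathcal F_\star$.

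This last step is soft real\--analytic geometry. For each $g\in\mathcal F_\star$ the zero set $Z_g\subset\cO_1$ is, by analyticity and non\--triviality on the connected open interior of $\cO_1$, closed with empty interior (indeed Lebesgue\--null); since $\mathcal F_\star$ is finite, the union $\cN:=\bigcup_{g\in\mathcal F_\star}Z_g$ is still closed with empty interior. I would then pick $\lambda_\star$ in the interior of $\cO_1$ avoiding $\cN$, set $2\gamma_1:=\min_{g\in\mathcal F_\star}|g(\lambda_\star)|>0$, and use continuity of the finitely many $g$ to find $\rho>0$ with $\overline{B(\lambda_\star,\rho)}\subset\cO_1$ and $|g|\ge\gamma_1$ on $\overline{B(\lambda_\star,\rho)}$ for every $g\in\mathcal F_\star$. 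Choosing $\cC_{\rm c}:=\overline{B(\lambda_\star,\rho)}$ gives \eqref{C.2}, with $\cC_{\rm c}\cap\cO_1=\cC_{\rm c}$ of nonempty interior (as needed for the Cantor\--set constructions that follow), and with $\gamma_1,\cC_{\rm c}$ $\e$\--independent because $\mathcal F_\star$, $\cO_1$ and $\tM_0$ are.

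The substantive content sits entirely in Lemmas \ref{quefe} and \ref{lem.F}; given those, there is no genuine obstacle here. The only points to watch are the finiteness bookkeeping (which crucially uses that the $\mu_i,\mu^\pm_{i,k}$ depend only on $\tk$, so that bounding $|\ell|$ bounds the number of candidate functions), the analyticity of the radicals on $\cO_1$ (guaranteed by the position of $\cO_1$ inside the cone of Lemma \ref{lem:step4}), and keeping the choice of $\cC_{\rm c}$ and $\gamma_1$ uniform across the second\-- and third\--order families simultaneously.
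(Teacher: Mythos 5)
Your proposal is correct and, in substance, the same as the paper's: once you observe that for $|\ell|\le 4\tM_0$ only finitely many functions $\tF_{\bj,\ell}^\bs$ can occur (since the $\vartheta_\jj$ range over the $\tk$-dependent finite list $\{\mu_i,\mu^\pm_{i,k}\}$), each analytic on $\cO_1$ and not identically zero by Lemmas \ref{quefe} and \ref{lem.F}, compactness gives a uniform positive lower bound on a suitable compact subset. The paper phrases the final step via the fact that each $\tF_{\bj,\ell}^\bs$ is algebraic and homogeneous of degree $1$, so its zero set is a finite union of codimension-one cones and one may take $\cC_{\rm c}$ strictly inside one of the complementary subcones; you instead use the softer fact that the zero set of a nontrivial real-analytic function is closed and nowhere dense and take a small closed ball — both are valid, and yours is if anything slightly more careful in making explicit that $\cC_{\rm c}$ must avoid those zero sets (the paper's "any compact domain strictly contained in $\cO_1$" is meant to be read after removing them).
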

\begin{proof}
	For each $\bj, \ell, \bs$ the function $\tF_{\bj, \ell}^\bs(\lambda)$ is a $\Z$-linear combination of algebraic functions, thus it is algebraic.
	Furthermore  $\tF_{\bj, \ell}^\bs(\lambda)$ is homogeneous of degree 1 (see Lemma  \ref{lem:irr}), 
	its zeroes set is a union of a finite numbers of  algebraic surfaces  of codimension 1. By homogeneity, such surfaces are in fact cones.
	 Note that outside these surfaces, the eigenvalues $\mu_{i}(\lambda)$ and $\mu_{i,k}^\pm(\lambda)$ are distinct . 
	 As in the construction of $\cO_1$,  each
 equation $\tF_{\bj, \ell}^\bs(\lambda) = 0$ divides $\cO_0$ and $\cO_1$  in a finite number of distinct open cones in which $\tF_{\bj, \ell}^\bs(\lambda) \neq 0$. 

	Since we consider just a finite number of distinct, non zero, analytic  functions $\tF_{\bj, \ell}^\bs(\lambda)$,  any compact domain $\cC_c$ strictly  contained in $\cO_1$ satisfies an estimate of type \eqref{C.2} with some $\wt\gamma_0>0$. 
	We just need to fix $\g_1\le \wt\gamma_0,\wt\gamma$ (see formula \eqref{tutti}).
\end{proof}

\section{KAM algorithm for the reducibility}

In the previous section we have reduced the original  Hamiltonian to the normal form
\begin{equation}
	\label{ham.bnf2}
	\wtcH := \omega \cdot \yy + \sum_{\jj \in \Z^2 \setminus \cS_0} \widetilde\Omega_\jj\, |a_\jj|^2 + \wtcH^{(0)}+ \wtcH^\1 + \wtcH^{(\geq 2)} \ ,
	\end{equation}
as described in Theorem \ref{thm:q}. The aim of this section is to put the  Hamiltonian $\wtcH^\0$ to diagonal form with a KAM algorithm. 
We prove the following result:
\begin{theorem}
	\label{thm:kam}
	 Assume that $\Tan$ is  $\tL$-generic (in the sense of Definition \ref{defar}) with $\tL$ fixed in Lemma \ref{lem.F}.
	There exist $0<\e_\star \ll \e_0$ and $\gamma_0  >0 $ s.t that the following holds true for all $0<\e\le \e_\star$. 
	   There  exist  functions $(\Omega_\jj(\lambda, \e))_{\jj\in \Z^2\setminus \cS_0}$ defined and Lipschitz in $\lambda$ on the set $\cO_1$ of Theorem  \ref{thm:q},  such that: 
\item[(i)] The functions $\Omega_{\jj}(\lambda, \e)$ satisfy \eqref{as.omega0}, \eqref{as.omega}, \eqref{theta.est} with $\tM_0$ as in \eqref{M0}. For $\g_0/2\le \g\le \g_0$ 
 and {$\tau$ sufficiently large},  the set 
	\begin{equation}
	\label{set.C}
	\cC:=\left\{\lambda\in \cO_1:\;	\abs{\omega \cdot \ell + \s_1\Omega_{\jj_1}(\lambda, \e)+ \s_2 \Omega_{\jj_2}(\lambda, \e)+\s_3 \Omega_{\jj_3}(\lambda, \e) } \geq \e \frac{\gamma}{\la \ell \ra^\tau} \ ,\;\forall (\bj , \ell, \bs)\in \fA_3\setminus\fR_3\right\}
		\end{equation}
has positive measure. 

\item[(ii)]  For each $\lambda \in \cC$ there exists a symplectic change of variables $\whcT$ well defined and majorant analytic $D(s/16, \varrho r/2) \to D(s/8,\varrho r)$ for all $0<r \leq r_0$, $s_0/64 \le s \leq s_0$ 
($s_0, r_0$ and $\varrho$ are defined in Theorem \eqref{thm:q}) and such that
\begin{equation}
\label{ham.bnf31}
( \wtcH\circ \whcT)(\yy, \theta, a, \bar a) =:\cK= \omega \cdot \yy + \sum_{\jj \in \Z^2 \setminus \cS_0} \Omega_\jj\, |a_\jj|^2 + \cK^{\1} + \cK^{(\geq 2)} \ ,
\end{equation}
where   $\cK^{\1}$ contains just monomials of scaling degree  1, while $\cK^{(\geq 2)}$ of scaling degree at least 2 and 
\begin{equation}
\label{R.est00}
\abs{\cK^{( 1)}}_{s/16,\varrho r/2}^{\cC} \lessdot  \sqrt{\e} \, r  \ , \qquad
 \abs{\cK^{(\geq 2)}}_{s/16,\varrho r/2}^{\cC} \lessdot r^2 \ ,
 \qquad \forall 0 < r \leq r_0 \ . 
\end{equation}
\item[(iii)] Finally one has $\wtcM \circ \whcT = \wtcM$ and $\wtcP \circ \whcT = \wtcP$.
\end{theorem}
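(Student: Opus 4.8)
The statement is the standard KAM reducibility theorem for the quadratic part $\wtcH^\0$, carried out in the category of majorant analytic Hamiltonians preserving the smoothing/horizontal structure. The plan is to set up an iterative (quadratic Newton) scheme, where at each step $\nu$ one has a Hamiltonian of the form $\omega\cdot\yy + \cD_\nu + \cP_\nu^{(0)} + \cP_\nu^{(1)} + \cP_\nu^{(\ge 2)}$ with $\cD_\nu$ diagonal, $\cD_\nu - \wt\cD = O(\e^2)$, and the perturbation of the quadratic part $\cP_\nu^{(0)}$ of size $\epsilon_\nu := \e^2 \kappa^{(3/2)^\nu}$ for some $\kappa<1$. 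One conjugates by the time-one flow $\cT_{\chi_\nu}$ of a quadratic Hamiltonian $\chi_\nu$ solving the homological equation $\{\omega\cdot\yy + \cD_\nu, \chi_\nu\} + \cP_\nu^{(0)} = \cZ_\nu$, where $\cZ_\nu$ is the resonant (action-preserving, $\ell=0$) part of $\cP_\nu^{(0)}$, which is absorbed into the new diagonal part $\cD_{\nu+1}$. Solving the homological equation requires, at step $\nu$, the second order Melnikov conditions $|\omega\cdot\ell + \sigma_1\Omega^\nu_{\jj_1} + \sigma_2\Omega^\nu_{\jj_2}| \ge \e\gamma/\la\ell\ra^\tau$ on a shrinking set of parameters $\cO_1 \supseteq \cC_1 \supseteq \cC_2 \supseteq \cdots$, which are nonempty (in fact of positive measure) thanks to Lemma~\ref{quefe} (non-identical vanishing) together with the quantitative measure estimates of Appendix~\ref{app:mes.m}. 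The off-diagonal out-component contributes denominators $\omega\cdot\ell + \Omega^\nu_{\jj_1} + \Omega^\nu_{\jj_2}$ which, by conservation of $\cP_y$, force $n_1 = -n_2$ and hence the denominators are bounded below by $\sim n^2$ (Remark~\ref{rem:ff}); these are non-resonant for free and cause no loss.

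\textbf{Key steps, in order.} First I would fix the scheme parameters ($\kappa$, the decreasing sequences $s_\nu \to s_\infty = s/16$, $r_\nu \to r_\infty = \varrho r/2$, $\gamma_\nu$, $\tau$) and state the inductive hypothesis: $\cD_\nu$ is diagonal with frequencies $\Omega^\nu_\jj$ of the prescribed asymptotic shape (split as horizontal $+$ order-$(-2,-2)$ mix corrections), $|\Omega^\nu_\jj - \Omega^{\nu-1}_\jj|^{\cC_\nu} \lessdot \epsilon_{\nu-1}$, and the perturbation norms $\lceil (\cP^{(0)}_\nu)^\hor\rfloor_{s_\nu,-2}^{\cC_\nu} + \lceil (\cP^{(0)}_\nu)^\mix\rfloor_{s_\nu,-\bd}^{\cC_\nu} \lessdot \epsilon_\nu$, $|\cP^{(1)}_\nu|_{s_\nu,r}^{\cC_\nu}\lessdot \sqrt\e\, r$, $|\cP^{(\ge 2)}_\nu|_{s_\nu,r}^{\cC_\nu}\lessdot r^2$. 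Then the \emph{KAM step}: (a) split $\cP^{(0)}_\nu$ into Bony components $(\cP^{(0)}_\nu)^B + (\cP^{(0)}_\nu)^R$ and use Lemma~\ref{boni} to treat the ultraviolet tail $|\ell| > c\la m_1\ra$ as infinitely smoothing (so only finitely many $\ell$ enter the small-divisor problem at each fixed regularity); (b) solve the homological equation component-wise using the Melnikov lower bounds on $\cC_{\nu+1}$, estimating $\chi_\nu$ via Lemma~\ref{lem:smo}/\ref{lem:smo2}; (c) conjugate and Taylor-expand using Proposition~\ref{prop:mer}(iii) and Lemma~\ref{lem:smo}, checking that the horizontal $+$ mixed structure is preserved (Remark~\ref{rem:hor}), that the new $\cP^{(0)}_{\nu+1}$ has size $\lessdot \epsilon_\nu^2 \delta_\nu^{-C} \le \epsilon_{\nu+1}$, that the new $\cP^{(1)}_{\nu+1}, \cP^{(\ge 2)}_{\nu+1}$ keep their bounds (the cross-terms $\{\chi_\nu, \cP^{(1)}_\nu\}$ etc. remain small since $\chi_\nu$ is small), and that $\cM, \cP$ are preserved because $\chi_\nu$ inherits the selection rules from $\cP^{(0)}_\nu$; (d) verify the asymptotic expansions \eqref{as.omega0}, \eqref{as.omega} for $\Omega^{\nu+1}_\jj$ by tracking that the correction $\cZ_\nu$ is itself of order $(-2,-2)$ on the mixed part and horizontal otherwise, so the telescoping sum $\Omega_\jj = \wtOmega_\jj + \sum_\nu (\Omega^{\nu+1}_\jj - \Omega^\nu_\jj)$ converges to a function of the stated shape with $\tM_0$ as in \eqref{M0}. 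Finally: (e) convergence — $\cT = \lim \cT_{\chi_0}\circ\cdots\circ\cT_{\chi_\nu}$ converges as a majorant analytic map $D(s/16,\varrho r/2)\to D(s/8,\varrho r)$ because $\sum_\nu \epsilon_\nu < \infty$; (f) measure estimate — $\cC := \bigcap_\nu \cC_\nu$ has positive measure: each excised resonant strip $\{|\omega\cdot\ell + \sigma_1\Omega^\nu_{\jj_1}+\sigma_2\Omega^\nu_{\jj_2}| < \e\gamma\la\ell\ra^{-\tau}\}$ has measure $\lessdot \e\gamma\la\ell\ra^{-\tau}/(\e\gamma_1)$ by the transversality Lemma~\ref{lem:cono} (the derivative in $\lambda$ of the divisor is bounded below, after removing the trivial momentum-compatible cases), and summing over $\ell$ gives total excised measure $\lessdot \gamma/\gamma_1 \cdot \sum_\ell \la\ell\ra^{-\tau} = O(\gamma)$, which is $< \tfrac12|\cO_1|$ for $\gamma$ small. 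The third order Melnikov set \eqref{set.C} is then imposed as a further (fixed, non-iterative) restriction of $\cC$, using Lemma~\ref{lem.F} and the quantitative bounds of Appendix~\ref{app:mes.m}; this is where the $\tL$-genericity of $\Tan$ with $\tL$ from Lemma~\ref{lem.F} is used.

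\textbf{Main obstacle.} The delicate point is \emph{not} the abstract Newton scheme but the bookkeeping that keeps the three structural features alive simultaneously throughout the iteration: (1) the precise asymptotic shape of the frequencies (horizontal part depending only on $m$, mixed corrections genuinely of order $(-2,-2)$ and summably small), which is needed both to impose the Melnikov conditions with $\la\ell\ra^\tau$ losses and to close the measure estimate; (2) the exact preservation of mass and momentum $\wtcM, \wtcP$ — this forces $\chi_\nu$ to live in the same selection-rule class, which in turn is what makes the resonant set $\cZ_\nu$ small (only genuinely resonant, action-preserving terms survive) and what ties the scheme to the algebraic non-degeneracy Lemmas~\ref{quefe}, \ref{lem.F}; and (3) controlling the \emph{non-quadratic} tails $\cP^{(1)}, \cP^{(\ge 2)}$: although they are not normalized, the conjugations must not destroy their bounds, and here one uses crucially that $X_{\chi_\nu}$ is quadratic and tiny, so $\{\chi_\nu, \cP^{(1)}_\nu\}$ is still of scaling degree $1$ with a small extra factor $\epsilon_\nu$, and likewise for degree $\ge 2$. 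The interplay between the infinitely-smoothing Bony truncation (needed for the small-divisor sums to be finite at each regularity) and the quadratic gain of the Newton step is the technical heart; I expect the proof to essentially assemble the one-step Lemmas~\ref{lem:smo}, \ref{lem:smo2}, \ref{boni}, Proposition~\ref{prop:mer}, the Melnikov Lemmas~\ref{quefe}, \ref{lem.F}, \ref{lem:cono} and the appendix measure estimates into a by-now-standard iteration, with the novelty concentrated in verifying the structural invariants listed above.
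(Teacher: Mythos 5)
Your outline correctly identifies the abstract structure: a quadratic KAM/Newton iteration on the quadratic part, with the resonant (action-preserving) part absorbed into the new diagonal, Melnikov conditions imposed on a nested family of parameter sets, and the final set extracted by a measure estimate via the non-degeneracy Lemmas~\ref{quefe}, \ref{lem.F}, \ref{lem:cono}. You are also right that the crux is preserving the ``horizontal $+$ mixed of order $(-2,-2)$'' decomposition through the iteration, and right to flag it explicitly.

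However, you assert this invariant is maintained (``tracking that the correction is itself of order $(-2,-2)$ on the mixed part and horizontal otherwise'') without giving the mechanism, and this is precisely where a naive version of your scheme breaks. The solution of the homological equation is $\chi^-_{m_1,m_2,n}=Q^-_{m_1,m_2,n}\,/\,\im(\omega\cdot\ell+\cD_\ii-\cD_\jj)$ with $\ii=(m_1,n)$, $\jj=(m_2,n)$; even when the numerator $Q^{-,\hor}$ is independent of $n$, the denominator is \emph{not}, since $\cD_\jj$ contains the $n$-dependent $\cD^\mix_\jj$. So dividing destroys horizontality, and Remark~\ref{rem:hor} (which you cite) only covers Poisson brackets of horizontal Hamiltonians, not division by $n$-dependent scalars. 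The paper's fix is the key technical device you omit: introduce the purely horizontal auxiliary frequencies $\cD^*_\jj=\wtOmega^*_\jj+\cD^\hor_\jj$ (eq.~\eqref{hor.fre}), verify that $\cD^*_\ii-\cD^*_\jj$ is $n$-independent on admissible pairs, deduce from the Melnikov conditions \eqref{MC1} the auxiliary bounds \eqref{MC1bis} on the $\cD^*$-divisor (by shifting $n$ and exploiting the $(-2,-2)$ decay of $\cD^\mix$), and then decompose
\[
\frac{1}{\im(\omega\cdot\ell+\cD_\ii-\cD_\jj)}
= A_{m_1,m_2,\ell} + B_{\ii,\jj,\ell},\qquad
A_{m_1,m_2,\ell}:=\frac{1}{\im(\omega\cdot\ell+\cD^*_\ii-\cD^*_\jj)},
\]
where $A$ is horizontal and $B$ is genuinely of order $(-2,-2)$. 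Only then do $\chi^\hor$, $\chi^\mix$ inherit the right structure, and only then does the inductive hypothesis close. Without this decomposition, the scheme you describe does not produce a $\chi$ in $\cQ^\hor_{s,-2}+\cQ_{s,-\bd}$ and the iteration fails at step one.

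Two smaller inaccuracies: (1) the cutoff in the KAM step is the Fourier truncation $\Pi_{\le\tN}$ in $\ell$, not the Bony decomposition of Lemma~\ref{boni}; the latter is a tool used inside the estimates of Lemma~\ref{lem:pseudo00}, not the ultraviolet cutoff of the Newton scheme. (2) The paper iterates only on $\omega\cdot\yy+\cD_\nu+\sQ_\nu$ and conjugates the degree-$1$ and degree-$\ge2$ terms once at the end by the convergent limit $\whcT$, rather than carrying $\cP^{(1)}_\nu$, $\cP^{(\ge2)}_\nu$ through the iteration as you propose; your variant would also work but is unnecessary and more bookkeeping.
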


In the course of the proof, for notational convenience we shall rename $s/8\rightsquigarrow s$, $\varrho r\rightsquigarrow r$.
 We first describe the {\em KAM step}, namely a standard change of variables which we shall apply recursively in order to diagonalize the Hamiltonian \eqref{ham.bnf}.

\medskip
{\bf KAM step.} Fix an $0< \e_1\ll \e_0$ and let  $\g_1$ be defined in Lemma \ref{C.2} . For all $\e\le \e_1$ consider a Hamiltonian of the form
\begin{equation}
\label{basic.ham}
\cH(\lambda,\e):= \omega\cdot\yy +\cD + \sQ\,,\qquad \sQ:= \sum_{\ell, \, |\al|+|\bt|=2} Q_{\al,\bt, \ell}(\lambda, \e) \, e^{\im \ell\cdot\theta}\, a^\al \bar a^\bt
\end{equation}
where $\cD(\lambda, \e) := {\rm diag}(\cD_{\jj}(\lambda, \e))_{\jj \in \Z^2\setminus\Tan}$. Assume that the functions  
$$\cD_\jj(\lambda, \e) = \wtOmega_\jj(\lambda, \e) + \cD_\jj^{\rm hor}(\lambda, \e) + \cD_\jj^{\rm mix}(\lambda, \e)$$
are defined and Lipschitz for $\lambda\in \cO_1$ and fulfill the estimates
\begin{equation}
\label{L00} 
\sum_{1 \leq i \leq \tk}
 | \mu_i(\cdot) |^{\cO_1}_\C
  + \sum_{1 \leq i < k \leq \tk \atop \pm} |  \mu_{ik}^\pm(\cdot) |^{\cO_1}_\C
  +
 \sup_{\e \leq \e_1  } \frac{1}{\e^2}\Big( \lceil \cD^{\rm mix}\rfloor_{s, -\bd}^{\cO_1} +  \lceil \cD^{\rm hor}\rfloor_{s,-2}^{\cO_1} \Big) \leq  {\tM}\ ,
\end{equation}
with $\e_1\tM\ll1$.
 Note that for a diagonal matrix
\begin{equation}
\label{cheneso}
\lceil \cD^{\rm hor}\rfloor_{s,-2}^{\cO_1} = \sup_{m \in \Z} \la m\ra^2 |\cD^\hor_m(\cdot, \e)|^{\cO_1}_\C \ , 
\end{equation}
same for the $\mix$ part.

Assume moreover that $\sQ$ is a quadratic  Hamiltonian, defined for $\lambda$ in a compact set $\cC\subseteq\cO_1$,  which commutes with $\wtcM$ and $\wtcP$, and admits the decomposition
$$
\sQ = \sQ^{\rm hor} + \sQ^{\rm mix} \ , \quad \sQ^{\rm hor}  \in \cQ^{\cC, {\rm hor}}_{s,-2} \ , \ \ \  \sQ^{{\rm mix}} \in \cQ_{s, -\bd} ^{\cC} \ , \qquad \mbox{ for some } s >0 \ . 
$$
Fix $\tau>\tk+1$, $0<\g<\g_1$, $\tN \gg \max(1,\tM)$
and assume that for $\e \leq \e_1$
\begin{equation}\label{piccola0}
\tN^{3\tau +2 } \, (\g^2\e)^{-1} \Big( \lceil \sQ^{\rm mix}\rfloor_{s, -\bd}^{\cC} +  \lceil \sQ^{\rm hor}\rfloor_{s,-2}^{\cC}  \Big) \leq  1 \ . 
\end{equation}
For $\e\le \e_1$ define  $\cC^{\new}\equiv \cC^{\new}(\gamma,\tau,\e, \cD,\tN) \subset \cC$ as the set of $\lambda \in \cC$ fulfilling
\begin{align}
\label{MC1}
& \abs{\omega \cdot \ell +\cD_\ii(\lambda, \e) +\sigma \cD_\jj(\lambda, \e)} \geq \e  \frac{\g}{2} \tN^{-\tau}\ , \qquad \forall ((\ii,\jj),\ell,\bs)\in \fA_2\setminus\fR_2\,,\;  |\ell|\leq \tN \ .
\end{align}
Note that at this point the set $\cC^\new$ might be empty.

In the next lemma we will write $a \lessdot b$ meaning  $a \leq C b$ 
with a constant $C$ independent of $\gamma, \e, \tN$.

\begin{proposition}[Homological equation]
	\label{prop:hom}
The following holds true:
\begin{itemize}
\item[(i)] 
For $\lambda\in \cC^\new$, there exists $\chi = \chi^{\rm hor} + \chi^{\rm mix}$, commuting with $\wtcM$ and $\wtcP$, with $\chi^{\rm hor}   \in  \cQ_{s, -2}^{\cC^\new, {\rm hor}}$ and $\chi^{\rm mix}   \in  \cQ_{s, -\bd}^{\cC^\new, {\rm mix}}$
which satisfies the bounds
\begin{equation}\label{hom.eq.KAM2}
\lceil \chi^{\rm hor} \rfloor_{s, -2}^{\cC^\new} \lessdot \,   \tN^{2\tau +1} \, (\g^2\e)^{-1} \lceil \sQ^{\rm hor}\rfloor_{s, -2}^{\cC}   \ ,
 \qquad \lceil \chi^{\rm mix} \rfloor_{s, -\bd}^{\cC^\new} \lessdot \,  \tN^{3\tau +1} \, (\g^2\e)^{-1} ( \lceil \sQ^{\rm mix}\rfloor_{s, -\bd}^{\cC} +  \lceil \sQ^{\rm hor}\rfloor_{s,-2}^{\cC})
\end{equation}
and such that
 \begin{equation}
\label{hom.eq.KAM}
\{ \omega \cdot \yy + \cD,  \chi\} = \Pi_{\le \tN}( [\sQ] - \sQ )\,,\qquad  [\sQ](\lambda, \e; \ba) := \sum_{\jj \in \Z^2} Q_{\be_\jj,\be_\jj, 0}(\lambda, \e) \,  |a_\jj|^2
\end{equation}
\item[(ii)]  For $\lambda\in \cC^\new$, the time 1-flow $\cT_\chi$ of  $\chi$ defines a symplectic analytic change of variables
 which transforms the  Hamiltonian $\omega\cdot\yy +\cD + \sQ$ into the new  Hamiltonian 
\begin{equation}
\left(\omega\cdot\yy +\cD + \sQ \right)\circ \cT_\chi = \omega\cdot\yy +\cD^\new + \sQ^\new
\end{equation}
with
$$
\cD^\new:= \sum_{\jj \in \Z^2}\cD_\jj^\new(\lambda, \epsilon) |a_\jj|^2\,,
\quad 
\sQ^\new = \sQ_\new^{\rm hor} + \sQ_\new^{\rm mix} \ , 
\quad 
\sQ_\new^{\rm hor}  \in \cQ^{\cC^\new, {\rm hor}}_{s',-2} \ , \ \sQ_\new^\mix \in \cQ_{s', -\bd} ^{\cC^\new}\ .
$$
for all $0<s'<s$.
The new frequencies  $\cD^\new_\jj(\lambda, \e)$ are defined for all $\lambda\in \cO_1$ and are given by
\begin{equation}
\label{dig.str+}
\begin{aligned}
& \cD^\new_\jj(\lambda, \e) = \wtOmega_\jj(\lambda, \e) + \cD_\jj^{\rm hor,\tN}(\lambda, \e) + \cD_\jj^{\rm mix,\tN}(\lambda, \e) \ , \\
\end{aligned}
\end{equation}
and satisfy the estimates 
\begin{equation}\label{kaiser}
\lceil \cD^{\mix,\tN} - \cD^\mix \rfloor_{s, -\bd}^{\cO_1} +  \lceil{\cD^{\hor,\tN}- \cD^\hor}\rfloor_{s,-2}^{\cO_1} \lessdot  \lceil \sQ^{\rm mix}\rfloor_{s, -\bd}^{\cC} +  \lceil{\sQ^{\rm hor}}\rfloor_{s,-2}^{\cC}
\end{equation}
Finally we  have 
\begin{equation}\label{hbh}
\begin{aligned}
( \lceil \sQ_\new^\mix\rfloor_{s', -\bd}^{\cC^\new} +  \abs{\sQ_\new^\hor}_{s',-2}^{\cC^\new}) 
\lessdot
&  e^{-(s-s') \tN}( \lceil \sQ^{{\rm mix}}\rfloor_{s, -\bd}^{\cC} 
 +
   \lceil \sQ^{{\rm hor}}\rfloor_{s,-2}^{\cC}) \\
& \quad  +
 \tN^{3\tau +1} \,(\g^2\e)^{-1} ( \lceil \sQ^{\rm mix}\rfloor_{s, -\bd}^{\cC} + 
  \lceil \sQ^{\rm hor}\rfloor_{s,-2}^{\cC})^2
  \end{aligned}
\end{equation}
\end{itemize}
\end{proposition}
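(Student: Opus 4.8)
\textbf{Proof plan for Proposition \ref{prop:hom}.}
The plan is to solve the homological equation \eqref{hom.eq.KAM} coefficient-by-coefficient in Fourier--Taylor space and then estimate the flow it generates. For part $(i)$, I would expand $\sQ - [\sQ] = \sum (Q^{\s,\ell}_{\ii,\jj}) e^{\im \ell \cdot \theta} a_\ii a^\s_\jj$ (restricting to the monomials surviving mass/momentum conservation, i.e.\ $(\ii,\jj,\ell,\bs)$ admissible, and excluding the action-preserving diagonal terms which are already in normal form) and set
$$
\chi^{\s,\ell}_{\ii,\jj} := \frac{Q^{\s,\ell}_{\ii,\jj}}{\im(\omega\cdot\ell + \cD_\ii + \s\cD_\jj)} \ , \qquad |\ell| \le \tN \ , \quad (\ii,\jj,\ell,\bs) \in \fA_2\setminus\fR_2 \ ,
$$
and $\chi^{\s,\ell}_{\ii,\jj} = 0$ otherwise. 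The splitting $\chi = \chi^\hor + \chi^\mix$ is inherited from the splitting of $\sQ$, since the small divisor is horizontal (independent of $n$) on the horizontal block by Remark \ref{rem:ff} and the structure of $\wtOmega_\jj$; this uses that $\cD^\hor$ is horizontal and $\cD^\mix$ has order $-\bd$ so perturbing the divisors does not destroy horizontality at leading order. The key quantitative input is the Melnikov condition \eqref{MC1} defining $\cC^\new$: on the horizontal (diagonal) part $\ii=(m_1,n),\jj=(m_2,n)$ the divisor is either $O(m_1^2)$ (large $m_1$) or bounded below by $\e\g/2 \tN^{-\tau}$; dividing by it and carefully tracking the weights $\la m_1\ra^{-2}$ vs.\ $\la n\ra^{-2}$ gives the first bound in \eqref{hom.eq.KAM2} with the gain $\tN^{2\tau+1}$. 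On the $\mix$ part ($n$ and $-n$) the divisor is either of size $\sim m_1^2 + n^2$ or bounded below by $\e\g/2\tN^{-\tau}$; here there is an extra subtlety: dividing by $m_1^2 + n^2$ only gains two derivatives if we can also produce a gain in $n$, which follows from the relation $m_1^2+m_2^2+2n^2 \ge$ (one of the two indices squared) together with momentum conservation $|m_1 + m_2| \le |\pi(\ell)| \lessdot |\ell| \le \tN$ — this is where the extra power $\tN^{3\tau+1}$ and the appearance of $\lceil \sQ^\hor\rfloor$ on the right (through the coupling between $\chi^\mix$ and the horizontal divisor corrections, as in Lemma \ref{scendo}) come from.

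For part $(ii)$, I would invoke Lemma \ref{lem:smo} (or Lemma \ref{lem:smo2}) to control the flow $\cT_\chi = \Phi^1_\chi$: assumption \eqref{piccola0}, combined with \eqref{hom.eq.KAM2}, guarantees $\eta^{-1}|\chi|_s^{\cC^\new} \le \tfrac12$ so the flow is well-defined on $D(s',r)$ for $s'<s$ and preserves the horizontal/mixing decomposition and the orders $-2$, $-\bd$. Then expand
$$
(\omega\cdot\yy + \cD + \sQ)\circ\cT_\chi = \omega\cdot\yy + \cD + \{\omega\cdot\yy+\cD,\chi\} + \sQ + \re2(\chi;\omega\cdot\yy+\cD) + \re1(\chi;\sQ) \ ,
$$
and use \eqref{hom.eq.KAM} to replace $\{\omega\cdot\yy+\cD,\chi\} + \sQ$ by $[\sQ] + \Pi_{>\tN}(\sQ - [\sQ])$. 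The diagonal term $[\sQ]$ is absorbed into the new frequencies: $\cD^{\hor,\tN}_\jj - \cD^\hor_\jj$ and $\cD^{\mix,\tN}_\jj - \cD^\mix_\jj$ are read off from $Q^{-}_{\be_\jj,\be_\jj,0}$, which gives \eqref{kaiser} directly (the new frequencies extend to all of $\cO_1$ by Kirszbraun/Whitney extension of the Lipschitz diagonal coefficients). The new perturbation $\sQ^\new$ consists of: the high-frequency tail $\Pi_{>\tN}(\sQ-[\sQ])$, estimated by $e^{-(s-s')\tN}$ times the old norm using the exponential weight in $|\cdot|_s$; plus the remainder terms $\re2(\chi;\omega\cdot\yy+\cD) = \re1(\chi; \{\omega\cdot\yy+\cD,\chi\}) = \re1(\chi; [\sQ]-\sQ)$ (up to the cutoff) and $\re1(\chi;\sQ)$, both quadratic in the small quantity and bounded via Lemma \ref{lem:smo} by $\tN^{3\tau+1}(\g^2\e)^{-1}$ times the square of the old norm — this yields \eqref{hbh}. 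Conservation of $\wtcM,\wtcP$ is automatic since $\chi$ is built only from admissible monomials, so the flow preserves the selection rules of Remark \ref{leggi_sel1}; hence $\wtcM\circ\cT_\chi = \wtcM$, $\wtcP\circ\cT_\chi = \wtcP$.

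The main obstacle I anticipate is the bookkeeping of the two different scales of gain in the $\mix$ block: one must show that after division by the small divisor the resulting Hamiltonian is still of order $-\bd=(-2,-2)$ (gaining two derivatives in \emph{both} $x$ and $y$), not merely of order $(-2,0)$. When the divisor is $\sim m_1^2 + n^2$ this is clear, but in the transition region $|m_1| \sim \max_k|\tm_k|$, $n$ large, one needs the lower bound $n^2$ from Remark \ref{rem:ff}; and in the truly resonant region (where the Melnikov bound $\e\g\tN^{-\tau}$ is used) the weight $\la m_1\ra^2 + \la n\ra^2$ that one wants to insert is compensated only because $|m_1|, |n| \lessdot \tN$ there (via momentum conservation and the bounded tangential sites), producing the loss $\tN^{3\tau+2}$ in \eqref{piccola0}. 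Organizing these three regimes uniformly, while simultaneously keeping $\chi^\hor$ strictly horizontal (so that the inductive structure of the KAM scheme is preserved), is the delicate point; everything else is a routine application of Lemmata \ref{lem:smo}, \ref{lem:smo2} and the algebra estimate \eqref{alge}.
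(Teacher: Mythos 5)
Your plan for part (ii) and the overall routing through the Lie series, the homological equation \eqref{hom.eq.KAM}, Lemmata \ref{lem:smo}--\ref{lem:smo2}, and the Kirszbraun extension of the new diagonal frequencies to $\cO_1$ is essentially what the paper does. The conservation argument for $\wtcM, \wtcP$ is also the same. But in part (i) there is a genuine gap that the paper's proof devotes most of its effort to, and which your plan glosses over: you assert that ``the splitting $\chi = \chi^\hor + \chi^\mix$ is inherited from the splitting of $\sQ$, since the small divisor is horizontal (independent of $n$) on the horizontal block.'' As stated this is false: the divisor $\omega\cdot\ell + \cD_\ii - \cD_\jj$ with $\ii = (m_1,n)$, $\jj = (m_2,n)$ is \emph{not} independent of $n$, because $\cD_\jj = \wtOmega_\jj + \cD^\hor_\jj + \cD^\mix_\jj$ and both $\wtOmega_\jj$ (on the finite set $\ccC$, where the $n^2$ is replaced by $n$-dependent constants) and $\cD^\mix_\jj$ genuinely depend on $n$. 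Consequently $Q^{\hor}/\text{divisor}$ is not horizontal, and you cannot read off $\lceil\chi^\hor\rfloor_{s,-2}$ directly.

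The paper resolves this with the decomposition \eqref{A+B}: it introduces the auxiliary $n$-independent frequencies $\cD^*_\jj := \wtOmega^*_\jj + \cD^\hor_\jj$ (where $\wtOmega^*$ of \eqref{hor.fre} discards the $\ccC$-correction, so $\cD^*_\ii - \cD^*_\jj$ is genuinely $n$-independent along the diagonal), then writes
$$
\frac{1}{\im(\omega\cdot\ell + \cD_\ii - \cD_\jj)} = A_{m_1,m_2,\ell} + B_{\ii,\jj,\ell} \ , \qquad A_{m_1,m_2,\ell} := \frac{1}{\im(\omega\cdot\ell + \cD^*_\ii - \cD^*_\jj)} \ ,
$$
defines $\chi^\hor := A\,Q^{\hor}$ (which is horizontal by construction), and absorbs $B\,Q^{\hor}$ together with $Q^\mix/\text{divisor}$ and the whole out-diagonal block into $\chi^\mix$. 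To estimate $A$ you then need the Melnikov condition \eqref{MC1bis} for the $*$-frequencies, which is not the same as the defining condition \eqref{MC1} of $\cC^\new$; the paper proves it in a separate lemma by a shift argument $(\ii,\jj)\mapsto(\ii + (0,K), \jj + (0,K))$ with $K\sim \tN^{\tau/2}$, exploiting that $\cD^\mix$ decays in $n$. Without this $A+B$ device you do not control $\lceil\chi^\hor\rfloor_{s,-2}$, and the horizontal/mixing bookkeeping that makes the KAM iteration close does not go through. The delicate point you flag --- the order gain in both $x$ and $y$ for $\chi^\mix$ --- is a real concern but is secondary to this one, and is in fact handled in the paper by the same $A+B$ splitting (the $B$ part furnishes the $n$-gain via \eqref{om.num1}--\eqref{B.lip2}).
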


In order to prove  Proposition \ref{prop:hom} we need a preliminary result regarding the asymptotics of the frequencies $\cD_\jj$.
 For $\jj \in \Z^2\setminus \Tan$ we define  $\wt\Omega_\jj^*(\lambda, \epsilon)$ as 
 \begin{equation}
 \label{hor.fre}
 \wt\Omega^*_{\jj}(\lambda, \epsilon) := 
 \begin{cases}
m^2 + n^2 , & \jj=(m,n) \not\in \sS  \ , \\
\e \mu_i(\lambda) + n^2   \ , & \jj=(\tm_i, n)\ , n\neq 0  \\
\end{cases}
 \end{equation}
 and the frequencies $\cD^*_\jj$ as
 $$
 \cD^*_\jj :=  \wt\Omega^*_{\jj} + \cD_\jj^\hor \ .
 $$
\begin{lemma}
For any $\lambda \in \cC^\new$ and $\forall \e\leq \e_1$,  we have 
\begin{equation}
\label{MC1bis}
\begin{aligned}
\abs{\omega \cdot \ell +\cD_\ii^*(\lambda, \e) -  \cD_\jj^*(\lambda, \e)   } \geq \e  \frac{\g}{4} \tN^{-\tau}\ , \quad \forall ((\ii,\jj),\ell,(1,-1))\in \fA_2\setminus\fR_2\,,\;  |\ell|\leq \tN \ . 
\end{aligned}
\end{equation}
\end{lemma}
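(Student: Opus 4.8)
The plan is to derive \eqref{MC1bis} from the hypothesis \eqref{MC1} together with the asymptotic control on the various frequency corrections. The point is that $\cD_\jj = \cD_\jj^* + \cD_\jj^{\mathrm{mix}}$, where $\cD_\jj^* = \wt\Omega_\jj^*(\lambda,\e) + \cD_\jj^{\mathrm{hor}}$ carries the "horizontal" part and $\cD_\jj^{\mathrm{mix}}$ is the genuinely $n$-dependent, higher-order smoothing correction. So the difference between the divisor appearing in \eqref{MC1bis} and the one appearing in \eqref{MC1} is exactly $\cD_\ii^{\mathrm{mix}} - \cD_\jj^{\mathrm{mix}}$, and by \eqref{L00} (using the diagonal form of the matrix, cf.\ \eqref{cheneso}) each $|\cD_\jj^{\mathrm{mix}}(\lambda,\e)|^{\cO_1}_\C \lessdot \e^2 \tM \la n \ra^{-2} \lessdot \e^2 \tM$. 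Hence
$$
\abs{\omega\cdot\ell + \cD_\ii^*(\lambda,\e) - \cD_\jj^*(\lambda,\e)} \geq \abs{\omega\cdot\ell + \cD_\ii(\lambda,\e) - \cD_\jj(\lambda,\e)} - 2 C \e^2 \tM \geq \e\frac{\g}{2}\tN^{-\tau} - 2 C \e^2\tM \ .
$$
First I would observe that since $\tN \gg \max(1,\tM)$ and $\g < \g_1$, the condition $\e_1 \tM \ll 1$ (more precisely $\e \tM \tN^\tau \lessdot \g$, which follows from $\tN^{3\tau+2}(\g^2\e)^{-1}(\cdots)\le 1$ being the regime we work in, or can simply be added as a smallness requirement on $\e_1$) guarantees $2C\e^2\tM \leq \e\frac{\g}{4}\tN^{-\tau}$, so the right-hand side is $\geq \e\frac{\g}{4}\tN^{-\tau}$, which is exactly \eqref{MC1bis}.

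There is one subtlety to address carefully: \eqref{MC1} is stated for all admissible non-action-preserving $((\ii,\jj),\ell,\bs)\in\fA_2\setminus\fR_2$ with $|\ell|\le\tN$, for {\em both} signs $\s=\pm1$, whereas \eqref{MC1bis} only asserts the estimate for $\bs=(1,-1)$, i.e.\ the difference case. This is the easier sub-case, so no new input is needed; one simply restricts \eqref{MC1} to $\bs=(1,-1)$. I would also note that the admissibility of $((\ii,\jj),\ell,(1,-1))$ is unchanged when passing between $\cD$ and $\cD^*$ since admissibility is a condition on $(\bj,\ell,\bs)$ only (Poisson commutation of the monomial with $\wtcM,\wtcP_x,\wtcP_y$, Definition \ref{rem:adm3}), not on the frequencies. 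So the set over which the bound must hold is literally the same.

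The main (and only mild) obstacle is bookkeeping the constants: one must make sure that the loss $2C\e^2\tM$ incurred by replacing $\cD$ by $\cD^*$ is genuinely dominated by half of the gap $\e\frac{\g}{2}\tN^{-\tau}$ available from \eqref{MC1}, uniformly in $\lambda\in\cC^\new$ and $\e\le\e_1$. This reduces to the inequality $4C\e\tM\tN^\tau \le \g$, which is implied by the standing smallness assumption $\e_1\tM\ll1$ once $\e_1$ is chosen small enough relative to $\g_1$ and $\tN$ (note $\tN$ is fixed before $\e_1$ in the KAM step, so this is legitimate); alternatively it already follows from \eqref{piccola0} since $\tN\gg\tM$. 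I do not expect any genuine difficulty here — the estimate is a one-line perturbation argument, and the lemma is really just isolating the observation that the $\mathrm{mix}$ part of the frequencies, being $O(\e^2)$, is negligible compared to the $O(\e)$ Melnikov gap.
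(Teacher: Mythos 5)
Your reduction of \eqref{MC1bis} to \eqref{MC1} is the right idea, but the way you absorb the error is not, and the gap is genuine: your bound does not survive the KAM iteration. You estimate $|\cD_\ii^\mix-\cD_\jj^\mix|\lessdot \e^2\tM$ by simply dropping the $(\la m\ra^2+\la n\ra^2)^{-1}$ decay, and then need $\e^2\tM \lessdot \e\gamma\tN^{-\tau}$, i.e.\ $\e\tM\tN^\tau\lessdot\gamma$. This fails in the regime the lemma is used in. When Proposition \ref{prop:hom} is iterated in Proposition \ref{prop:iterative}, one applies the KAM step with $\tN=\tN_\nu\to\infty$ while $\e_1$, $\gamma$, $\tM$ (which stabilizes near $\tM_0$) all remain fixed; so $\e\tM\tN_\nu^\tau\lessdot\gamma$ eventually breaks down. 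Your remark that ``$\tN$ is fixed before $\e_1$ in the KAM step, so this is legitimate'' is exactly backwards: $\e_1$ must be fixed uniformly before the iteration while $\tN_\nu$ grows. Your fallback claim that $\e\tM\tN^\tau\lessdot\gamma$ follows from \eqref{piccola0} is also not correct, since \eqref{piccola0} bounds $\lceil\sQ\rfloor$ (which decays super-exponentially along the iteration) and says nothing about $\lceil\cD^\mix\rfloor$, which is pinned at size $\sim\e^2\tM_0$.

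The missing device is the vertical shift. For $\sigma=-1$ and $((\ii,\jj),\ell,(1,-1))\in\fA_2$, the quantity $\cD_\ii^*-\cD_\jj^*$ is independent of $n$ (this is what makes $\cD^*$ the right object to compare with). So one may write it as $\cD_{\ii_1}^*-\cD_{\jj_1}^*$ with $\ii_1=\ii+(0,K)$, $\jj_1=\jj+(0,K)$, $K\geq\tN^{\tau/2}$. For such large $K$ one is outside $\ccC$, hence $\wt\Omega^*_{\jj_1}=\wt\Omega_{\jj_1}$, and therefore $\cD^*_{\ii_1}-\cD^*_{\jj_1}=\cD_{\ii_1}-\cD_{\jj_1}-(\cD^\mix_{\ii_1}-\cD^\mix_{\jj_1})$. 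Applying \eqref{MC1} at the {\em shifted} admissible index pair, and now exploiting the decay $|\cD^\mix_{\jj_1}|\leq \lceil\cD^\mix\rfloor_{s,-\bd}/|\jj_1|^2\lessdot \e^2\tM/\tN^\tau$, the error term acquires the crucial $\tN^{-\tau}$ factor, and the required comparison reduces to $\e\tM\lessdot\gamma$, which is uniform in $\tN$ and follows from the standing assumption $\e_1\tM\ll1$. Without the shift one simply cannot produce the $\tN^{-\tau}$ in the error, and the lemma does not follow from the crude uniform bound on $\cD^\mix$.
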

\begin{proof}
We  deduce \eqref{MC1bis} from the Melnikov conditions \eqref{MC1}. To do so, recall that for 
$|\jj| \geq C \displaystyle{\max_{1 \leq k \leq \tk} |\tm_k|}$, one has $\wtOmega_\jj^* = \wtOmega_\jj$. Moreover it is easily seen that $\cD_\ii^* - \cD_\jj^*$ is independent of $n$ for all compatible $((\ii, \jj), \ell, (1,-1) ) \in \fA_2$.  Thus defining $\ii_1 = \ii + (0, K) \ , \ \jj_1 = \jj + (0, K) $ with a sufficiently large $K$, we have
\begin{align*}
\cD_\ii^* - \cD_\jj^* & = \cD_{\ii_1}^* - \cD_{\jj_1}^*   = 
 \wt\Omega^*_{\ii_1}  - \wtOmega_{\jj_1}^* + \cD_{\ii_1}^\hor - \cD_{\jj_1}^\hor  \\
 &= 
 \wt\Omega_{\ii_1}  - \wtOmega_{\jj_1} + \cD_{\ii_1}^\hor - \cD_{\jj_1}^\hor  = 
 \cD_{\ii_1} - \cD_{\jj_1} - \cD_{\ii_1}^\mix + \cD_{\jj_1}^\mix
\end{align*}
If we take $K \geq \tN^{\tau/2}$ we bound  (recall  that $\e \tM\ll 1$)
$$
\abs{\cD_{\ii_1}^\mix} + \abs{\cD_{\jj_1}^\mix}
\stackrel{\eqref{cheneso}}{\leq }
\lceil \cD^\mix \rfloor_{s, -\bd} \left( \frac{1}{|\ii_1|^2} + \frac{1}{|\jj_1|^2} \right)
 \stackrel{\eqref{L00}}{ \leq} \e \frac{\gamma}{4} \tN^{-\tau} \ . 
$$
Then \eqref{MC1bis} follows easily from \eqref{MC1}. 
\end{proof}

\begin{proof}[Proof of Proposition \ref{prop:hom}]
$(i)$ As usual we represent the quadratic Hamiltonians as
$$
F=\sum_{\ell,|\al|+|\bt|= 2} F_{\al,\bt,\ell}\,  e^{\im \ell\cdot\theta} \, a^\al \bar a^\bt \ .
$$
 In Taylor-Fourier components, the equation \eqref{hom.eq.KAM} reads
\begin{equation}
\label{hom.eq.fou}
\im \left(\omega \cdot \ell + \cD(\lambda, \e)\cdot(\alpha - \beta)\right)  \chi_{\alpha, \beta, \ell} =  Q_{\alpha,\beta,\ell} \,,\quad ({\alpha,\beta,\ell})\neq (\al,\al,0)\,,\quad |\ell|\le \tN  \ .\end{equation}
The solution of this equation is given by 
\begin{equation}
\label{hom.sol3}
\chi_{\alpha, \beta, \ell} = \frac{1 }{\im (\omega \cdot \ell + \cD(\lambda, \e) \cdot (\alpha - \beta))} Q_{\alpha, \beta, \ell} \ ,\quad \al\ne \bt\,,\quad   |\ell|\le \tN
\end{equation}
 provided $\lambda \in \cC^\new$, thanks to the estimate \eqref{MC1}. Indeed we have that for all admissible choices of $\ell,\al,\bt$ ({recall $\tN\gg \tM$}): 
\begin{equation}\label{lip1}
\abs{\frac{1 }{\im (\omega \cdot \ell + \cD(\lambda, \e) \cdot (\alpha - \beta))}}^{\cC^\new}_\C \le  \tN^{2\tau+1}(\e\g^2)^{-1}.
\end{equation}
We show now that such $\chi$ can be written as $\chi = \chi^\hor + \chi^\mix$, and we  bound the two components. 
To do so, consider first the out-diagonal part $\chi^\out$. In this case we set $\ii=(m_1, n)$, $\jj=(m_2, - n)$, so that 
\eqref{hom.sol3} reads 
\begin{equation}
\label{chi.out00}
\chi^+_{m_1, m_2, n} = \frac{1 }{\im \left(\omega \cdot \ell + \cD_\ii(\lambda, \e) + \cD_{\jj}(\lambda, \e) \right)} Q_{m_1, m_2, n}^+ \ ,\quad \al\ne \bt\,,\quad  |\ell|\le \tN \ .
\end{equation}
Such term is clearly  mixing, and  we define $\chi^{\mix,0}$ the  Hamiltonian with coefficients \eqref{chi.out00}. To estimate it we need first to control the small divisor.
By Remark \ref{rem:ff} 
\begin{equation*}
\min(|\ii|^2,|\jj|^2)=\min(m_1^2,m_2^2) + n^2 \geq 4\,  \tN \,\max_k |\tm_k|^2 \Longrightarrow \abs{\omega \cdot \ell + \cD_\ii + \cD_\jj} \geq
(m_1^2 + n^2)/2   \ , 
\end{equation*}
thus for $\lambda \in \cC^\new$ there exists $\tC=\tC(\tk,\max_k |\tm_k|)$ such that 
\begin{align*}
\abs{\frac{\la m_1\ra^2+  \la n\ra^2 }{\omega \cdot \ell + \cD_\ii + \cD_\jj}} \leq 2+ \max_{ \substack{\min(|\ii|^2,|\jj|^2) \leq 4 \tN \,\max_k |\tm_k|^2\\ ((\ii,\jj),\ell,(1,1))\in {\fA_2}\\ |\ell|\le \tN}} \abs{\frac{\la m_1\ra^2+  \la n\ra^2 }{\omega \cdot \ell + \cD_\ii + \cD_\jj}} \leq \tC  \frac{\tN^{\tau+1}}{\gamma \e} \ .
\end{align*}
Its  Lipschitz norm is then
\begin{equation}
\label{chi.l.1}
\abs{\frac{\la m_1\ra^2+  \la n\ra^2 }{\omega \cdot \ell + \cD_\ii + \cD_\jj}}^{\cC^\new}_\C \leq \tC \frac{\tN^{2\tau +2}}{\e \, \gamma^2} \ , 
\end{equation}
which implies that 
\begin{equation}
\label{chi.mix.out}
\lceil \chi^{\mix,0} \rfloor_{s,-\bd}^{\cC^\new} \lessdot \, \tN^{2\tau+2}\, (\e \gamma^2)^{-1}\abs{\sQ}_{s}^{\cC}  \ .
\end{equation}
We pass to $\chi^\diag$. In this case we set $\ii=(m_1, n)$, $\jj=(m_2,  n)$, so \eqref{hom.sol3} reads
\begin{equation}
\label{chi.diag00}
\chi^-_{m_1, m_2, n} = \frac{1 }{\im \left(\omega \cdot \ell + \cD_\ii(\lambda, \e) - \cD_{\jj}(\lambda, \e) \right)} Q_{m_1, m_2, n}^- \ ,\quad \al\ne \bt\,,\quad |\ell|\le \tN \ .
\end{equation}
To analyze it we divide the small divisor  and $Q^\diag$ in their horizontal and mixing parts. 
 Recalling $\cD_\jj = \wt\Omega_\jj + \cD_\jj^\hor + \cD_\jj^\mix$ and $\cD_\jj^* = \wtOmega_\jj^* + \cD_\jj^\hor$,  write
 \begin{align}
 \notag
 \frac{1 }{\im (\omega \cdot \ell + \cD_\ii- \cD_{\jj})} 
 &= 
\frac{1 }{\im(\omega \cdot \ell + \cD^*_\ii - \cD^*_{\jj} )}  
\ + \  
\frac{\wt\Omega_\ii^* - \wt\Omega_\ii + \wt\Omega_\jj - \wt\Omega_\jj^* + \cD_\ii^\mix - \cD_\jj^\mix }{\im \left(\omega \cdot \ell + \cD_\ii- \cD_{\jj}\right) \, \left(\omega \cdot \ell + \cD_\ii^*- \cD_{\jj}^*\right) \, } \\
\label{A+B}
& =: A_{m_1, m_2,\ell}(\lambda, \e) + B_{\ii, \jj, \ell}(\lambda, \e) \ .
 \end{align}
We  prove that   $ A_{m_1, m_2,\ell}(\lambda, \e)$ is independent of $n$, while  $B_{\ii, \jj, \ell}(\lambda, \e)$ is decreasing in $n$.\\
It is easy to see that  $\cD^*_\ii - \cD^*_\jj$ is independent of $n$ for all $((\ii, \jj), \ell, (1,-1)) \in \fA_2$ and    by \eqref{MC1bis}, as in \eqref{lip1}
 \begin{equation}\label{lip2}
\abs{\frac{1 }{\omega \cdot \ell + \cD^*_\ii - \cD^*_\jj }}^{\cC^\new}_\C \le \tC \frac{\tN^{2\tau+1}}{\e\g^2} \ .
\end{equation}
Consider now $B_{\ii, \jj, \ell}(\lambda, \e)$.  First we study its numerator. Note that  $\wt\Omega^*_{\jj}=\wt\Omega_{\jj}$ if $\jj \not\in \ccC$, so it is convenient to separate the case $|n| > \max_k |\tm_k|$ and $|n| \leq \max_k |\tm_k|$.  
One has  in the former case
\begin{equation}
\label{om.num1}
\abs{\wt\Omega_\ii^* - \wt\Omega_\ii + \wt\Omega_\jj - \wt\Omega_\jj^* + \cD_\ii^\mix - \cD_\jj^\mix}^{\cO_1}_\C \leq
 \frac{4\tM \e^2 }{\min(\la m_1 \ra^2, \la m_2\ra^2) + \la n \ra^2} \ , \qquad \mbox{ for }   |n| > \max_k |\tm_k| \ .  
\end{equation}
Then \eqref{lip1}, \eqref{lip2} and \eqref{om.num1} give
\begin{equation}
\label{B.lip}
\abs{B_{\ii,\jj,\ell}(\cdot, \e)}_\C^{\cO_1} \leq
 \tC \, \frac{\tN^{3\tau + 1}}{\gamma^2 \, \la n \ra^2}\ , \qquad \mbox{ for }   |n| > \max_k |\tm_k|  \ , 
\end{equation}
while using the definition $  B_{\ii, \jj, \ell}:=\left({\omega \cdot \ell + \cD_\ii- \cD_{\jj}}\right)^{-1} - A_{m_1, m_2,\ell}$ and the estimates \eqref{lip1}, \eqref{lip2} one has
\begin{equation}
\label{B.lip2}
\abs{B_{\ii,\jj,\ell}(\cdot, \e)}^{\cC^\new}_\C \leq |\left({\omega \cdot \ell + \cD_\ii- \cD_{\jj}}\right)^{-1}|+|A_{m_1, m_2,\ell}|\leq 
 \tC \, \frac{\tN^{2\tau + 1}}{\gamma^2 \, \e }\ , \qquad  \forall n  \ .
\end{equation}
We are ready to estimate $\chi^\diag$ defined in \eqref{chi.diag00}. Divide $\sQ^\diag$ into its horizontal and mixing components, $\sQ^\diag = \sQ^{\diag,\hor} + \sQ^{\diag, \mix}$ so that
\begin{align}
\chi^-_{m_1, m_2, n} &= \left(A_{m_1, m_2, \ell}  + B_{\ii,\jj,\ell} \right) \, \left( Q_{m_1, m_2, \ell}^{-, \hor} + Q_{\ii, \jj, \ell}^{-, \mix} \right) \\
& = 
A_{m_1, m_2, \ell} \, Q_{m_1, m_2, \ell}^{-, \hor} +
B_{\ii,\jj,\ell}  \, Q_{m_1, m_2, \ell}^{-, \hor} + 
 \frac{1 }{\im (\omega \cdot \ell + \cD_\ii- \cD_{\jj})} 
  Q_{\ii, \jj, \ell}^{-, \mix}  \ . 
\end{align}
We estimate the three terms separately. Note that the first one is horizontal, while the last two are mixing, so we define
$$
\chi^\hor_{m_1, m_2, \ell} :=A_{m_1, m_2, \ell}  \, Q_{m_1, m_2, \ell}^{-, \hor} \ .
$$
To estimate it,  use the assumption  $\sQ^\hor \in \cQ^{\cC,\hor}_{s,-2}$  and \eqref{lip2}; one obtains  immediately the first of \eqref{hom.eq.KAM2}. \\
Define now the  Hamiltonian  $\chi^{\mix,1}$  with coefficients
$$
\chi^{\mix,1}_{m_1, m_2, \ell}:= B_{\ii,\jj,\ell} \,  Q_{m_1, m_2, \ell}^{-, \hor}  \ ;
$$
using $\sQ^\hor \in \cQ^{\cC,\hor}_{s,-2}$  and \eqref{B.lip2} one has
\begin{equation}
\label{chi.mix1}
\lceil \chi^{\mix,1} \rfloor_{s, -2}^{\cC^\new} \lessdot
   \, \frac{\tN^{2\tau + 1}}{\gamma^2
  	 \, \e } \, \lceil \sQ^{\hor} \rfloor_{s, -2}^{\cC} \ , 
\end{equation}
while by \eqref{B.lip} one has
\begin{equation}
\label{chi.mix1b}
\lceil \chi^{\mix,1} \rfloor_{s, (0,-2)}^{\cC^\new} \lessdot
  \, \frac{\tN^{3\tau +1}}{\gamma^2
  	 \, \e } \, \lceil \sQ^{\hor} \rfloor_{s}^{\cC} \ . 
\end{equation}
Then \eqref{bagnetto} implies that
\begin{equation}
\label{chi.mix1c}
\lceil \chi^{\mix,1} \rfloor_{s, -\bd}^{\cC^\new} \lessdot
 \frac{\tN^{3\tau+1}}{\gamma^2 \,
 	 \e } \, \lceil \sQ^{\hor} \rfloor_{s, -2}^{\cC} \ .
\end{equation}
Finally define $\chi^{\mix,2}$ to be the  Hamiltonian with coefficients 
$$
\chi^{\mix,2}_{m_1, m_2, \ell}:=
\frac{1 }{\im (\omega \cdot \ell + \cD_\ii- \cD_{\jj})}   Q_{\ii, \jj, \ell}^{-, \mix} \ . 
$$
Using that $\sQ^\mix \in \cQ^{\cC,\mix}_{s, -\bd}$  and \eqref{lip1} one has
\begin{equation}
\label{chi.mix2}
\lceil \chi^{\mix,2} \rfloor_{s, -\bd}^{\cC^\new} \lessdot
\frac{\tN^{2\tau + 1}}{\gamma^2 \, 
	\e } \, \lceil \sQ^{\mix} \rfloor_{s, -\bd}^{\cC} \ .
\end{equation}
Define now
$\chi^\mix := \chi^{\mix,0} + \chi^{\mix,1} + \chi^{\mix,2}$.
Estimates \eqref{chi.mix.out}, \eqref{chi.mix1c} and \eqref{chi.mix2} imply the second of \eqref{hom.eq.KAM2}.
This concludes the proof of item $(i)$.\\

$(ii)$ Now by Lemma \ref{lem:smo} and estimate \eqref{piccola0}, the change of variables $\cT_\chi$ is well defined.  We have
\begin{align*}
\left(\omega\cdot\yy +\cD + \sQ \right)\circ \cT_\chi 
&=
  \omega\cdot\yy +\cD + \sQ +\{\omega\cdot\yy +\cD ,\chi\} + \re{2}(\chi; \omega\cdot\yy +\cD ) 
  + \re1(\chi; \sQ) 
\\
&=  \omega\cdot\yy +\cD + [\sQ] +\Pi_{>\tN}\sQ +\re{1}( \chi; \sQ) +  \sum_{j=1}^\infty \frac{\ad^j(\chi)}{(j+1)!}  ([\sQ]-\Pi_{\le \tN} \sQ )
\end{align*}

We recall that $ [\sQ](\lambda, \e; \ba)=\sum_{\jj} Q_{\be_\jj,\be_{\jj}, 0}(\lambda, \e) |a_\jj|^2 $ is defined for $\lambda\in \cC$ and by construction
$$
[\sQ] =  [\sQ^\hor]+  [\sQ^\mix]\,,\quad   Q_{\be_\jj,\be_{\jj}, 0}= Q_{m}^\hor + Q_{\jj}^\mix
$$ 
with
$$ \sup_{m \in \Z}\langle m^2\rangle  |Q_m^\hor|^{\cC} + \sup_{\jj \in \Z^2\setminus \cS_0} \langle \jj\rangle^2 |Q^\mix_\jj|^{\cC} \leq   \lceil \sQ^{\rm mix}\rfloor_{s, -\bd}^{\cC} +  \lceil{\sQ^{\rm hor}}\rfloor_{s,-2}^{\cC}\,.  $$

By  Kirszbraun theorem we may extend the $Q^\hor_m, Q_\jj^\mix $ to functions $\widetilde Q^\hor_m, \widetilde Q_\jj^\mix $ defined on the whole $\cO_1$ and with the same Lipschitz norm.  This in turn defines a diagonal Hamiltonian $[\widetilde{\sQ}]$. We set, for $\lambda\in \cO_1$,
$
\cD^\new:= \cD + [\widetilde\sQ] \,,
$
while for all $\lambda\in \cC^\new$:
$$
\sQ^\new:= \Pi_{>\tN}\sQ +\re{1}(\chi;  \sQ) +  \sum_{j=1}^\infty \frac{\ad^j(\chi)}{(j+1)!}  ([\sQ]-\Pi_{\le \tN} \sQ ).
$$
The bounds \eqref{kaiser} follow. The bounds \ref{hbh} follow by applying Lemma \ref{lem:smo2} and  Proposition \ref{riassunto} (ii).
\end{proof}

We now apply the KAM step recursively starting from
$$
\sH_0:= \omega \cdot \yy +  \cD_0(\lambda,\e) +  \sQ_{0}  := \omega \cdot \yy +  \wtcD(\lambda,\e) +  \wtcH^\0
 $$
 which is well defined for all $\lambda\in \cC_0:=\cO_1$.
 More precisely we construct a sequence of sets $\cC_\nu \subset \cC_0$ and on such sets we define a sequence of canonical transformations $\cT_\nu$ such that
 we may recursively  set  for $\nu\ge 0$
 $$
\sH_\nu \circ  \cT_{\nu+1} := \sH_{\nu+1}= \omega \cdot \yy +  \cD_{\nu+1}(\lambda,\e) +  \sQ_{\nu+1}
 $$
 with $\cD_{\nu+1}$ diagonal and defined on the whole $\cO_1$, 
 and  $\sQ_{\nu+1} \ll \sQ_\nu$ (in appropriate norms, see Proposition \ref{prop:iterative} $(iv)$ below).

Given $s,\kam_0>0$ we  fix for all $\nu\geq 1$ the following sequences of parameters
	\begin{align*}
s_{\nu}:= s_{\nu-1} - \frac{c_* s}{\nu^2} \ , \quad c_*^{-1} = 2 \sum_{\nu=1}^{\infty} \frac{1}{\nu^2}  \ , 
\quad  \kam_\nu := \kam_0^{(3/2)^\nu} \ ,\quad	\tN_\nu := (s_\nu - s_{\nu+1})^{-1} \log \kam_\nu^{-1} \ .
	\end{align*}
\begin{proposition}[Iterative scheme]
\label{prop:iterative}
There exist $\e_1, \kam_\star>0$, such that $\forall \e \leq \e_1$ and $\forall \g\le \g_1$ setting 
{for $s_0/16<s \leq s_0/8$}
\begin{equation}\label{miseria}
\kam_0:= \frac{\lceil \wtcH^{(0,\rm mix)}\rfloor_{s, -\bd}^{\cO_1} +  \lceil \wtcH^{(0,\rm hor)}\rfloor_{s,-2}^{\cO_1}}{\g^2\e} \le \kam_\star\ ,
\end{equation}
	one has that $\forall \nu \geq 0$ we may define a set $\cC_{\nu+1}$, a map $\cT_{\nu+1}$ and  Hamiltonians $\cD_{\nu+1}={\rm diag}(\cD_\jj^{(\nu+1)})$, $\sQ_{\nu+1}$ s.t. the following holds true:
	\begin{itemize}	
	\item[(i)] Setting $\cC_0=\cO_1$, we have recursively:
	\begin{align}
	\label{MC1nu}
	\cC_{\nu+1}:=\left\lbrace \omega\in \cC_\nu: \abs{\omega \cdot \ell +\cD^{(\nu)}_\ii(\lambda, \e) +\sigma \cD^{(\nu)}_\jj(\lambda, \e)} \geq \e  \frac{\g}{2} \tN_\nu^{-\tau} \ ,  \quad\forall ((\ii,\jj),\ell,\bs)\in \fA_2\setminus\fR_2\,,\;  |\ell|\leq \tN_\nu \right\rbrace \ .
	\end{align}
		\item[(ii)] $\cT_{\nu+1}$ is a canonical transformation defined for all $\lambda\in \cC_{\nu+1}$ and s.t. 
	$$
		\left(\omega\cdot \yy +  \cD_{\nu}+ \sQ_{\nu}\right)\circ \cT_{\nu+1} = \omega\cdot \yy +  \cD_{\nu+1}+  \sQ_{\nu+1}
	$$
	and 
	$$
	\lceil \cT_{\nu+1}^\hor -\uno  \rfloor_{s_\nu, -2}^{\cC_{\nu+1}} + 
	 \lceil \cT_{\nu+1}^\mix   \rfloor_{s_\nu, -\bd}^{\cC_{\nu+1}}\leq \kam_\nu\tN_\nu^{3\tau+2}
	$$
	\item[(iii)] $\cD_{\nu+1}={\rm diag}(\cD_\jj^{(\nu+1)})$ is a diagonal  Hamiltonian
	$$\cD^{(\nu+1)}_\jj(\lambda, \e) = \wtOmega_\jj(\lambda, \e) + \cD_\jj^{(\nu+1,\rm hor)}(\lambda, \e) + \cD_\jj^{(\nu+1,\rm mix)}(\lambda, \e).$$
	The functions $\cD_\jj^{(\nu+1,\rm hor)},\cD_\jj^{(\nu+1,\rm mix)}$
	are defined for $\lambda\in \cO_1$ and fulfill the estimates
		\begin{equation}\label{dig.str+2}
		\lceil \cD^{(\nu+1,\rm mix)} - \cD^{(\nu,\rm mix)} \rfloor_{s, -\bd}^{\cO_1} +  \lceil{\cD^{(\nu+1,\rm hor)}- \cD^{(\nu,\rm hor)}}\rfloor_{s,-2}^{\cO_1} \leq  \g^2\e \kam_\nu
		\end{equation}
	\item[(iv)]$\sQ_{\nu+1} = \sQ^{\rm hor}_{\nu+1} + \sQ^{\rm mix}_{\nu+1}$ and 
	$$
	\lceil \sQ^{\rm hor}_{\nu+1} \rfloor_{s_{\nu+1}, -2}^{\cC_{\nu+1}} 
	+
	\lceil \sQ^{\rm mix}_{\nu+1} \rfloor_{s_{\nu+1}, -\bd}^{\cC_{\nu+1}} \leq \g^2\e \kam_{\nu+1}
	$$
	\end{itemize}
\end{proposition}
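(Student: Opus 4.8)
The statement is the standard iterative (KAM) lemma, so the plan is to prove it by induction on $\nu$, feeding the output of the KAM step (Proposition \ref{prop:hom}) back into itself. The base case $\nu=0$ is essentially the definition: $\cC_0=\cO_1$, $\cD_0=\wtcD(\lambda,\e)$, $\sQ_0=\wtcH^\0$, and the smallness hypothesis \eqref{miseria} is exactly the statement that $\kam_0$ is the starting size (normalised by $\g^2\e$). One must check that the frequency asymptotics assumed in Proposition \ref{prop:hom}, i.e.\ \eqref{L00}, hold for $\cD_0$: this follows from Theorem \ref{thm:q}, which gives $\lceil \wtcH^{(\rm 0,hor)}\rfloor_{s,-2}^{\cO_1}+\lceil \wtcH^{(\rm 0,mix)}\rfloor_{s,-\bd}^{\cO_1}\lessdot \e^2$, together with the bounds on $\mu_i,\mu_{i,k}^\pm$ from Lemma \ref{lem:irr} and Remark \ref{rem:mu}, so that $\tM$ in \eqref{L00} can be taken to be the constant $\tM_0$ of \eqref{M0}, independent of $\e$.

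\textbf{Inductive step.} Assuming the statement holds up to stage $\nu$, I would apply Proposition \ref{prop:hom} with the substitutions $s\rightsquigarrow s_\nu$, $s'\rightsquigarrow s_{\nu+1}$, $\cC\rightsquigarrow\cC_\nu$, $\cD\rightsquigarrow\cD_\nu$, $\sQ\rightsquigarrow\sQ_\nu$, $\tN\rightsquigarrow\tN_\nu$, $\tM\rightsquigarrow\tM_\nu$, where $\tM_\nu$ is the cumulative frequency bound. First one must verify the smallness condition \eqref{piccola0} at stage $\nu$: using the inductive bound $\lceil \sQ_\nu^{\rm mix}\rfloor_{s_\nu,-\bd}^{\cC_\nu}+\lceil \sQ_\nu^{\rm hor}\rfloor_{s_\nu,-2}^{\cC_\nu}\le\g^2\e\kam_\nu$ and the explicit form of $\tN_\nu=(s_\nu-s_{\nu+1})^{-1}\log\kam_\nu^{-1}$, the left side of \eqref{piccola0} is bounded by $\tN_\nu^{3\tau+2}\kam_\nu$, which is a power of $\log\kam_\nu^{-1}$ times $\kam_\nu$ and hence $\ll 1$ once $\kam_\star$ is chosen small (here one uses that $\kam_\nu=\kam_0^{(3/2)^\nu}$ decays superexponentially while $\tN_\nu$ grows only polynomially-in-$\log$, so $\tN_\nu^{3\tau+2}\kam_\nu$ is summable and in particular small). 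Proposition \ref{prop:hom} then produces $\chi_{\nu+1}$, the map $\cT_{\nu+1}=\cT_{\chi_{\nu+1}}$, the new diagonal part $\cD_{\nu+1}$ (extended to all of $\cO_1$ by the Kirszbraun theorem, as in the proof of Proposition \ref{prop:hom}) and $\sQ_{\nu+1}$. The definition \eqref{MC1nu} of $\cC_{\nu+1}$ matches the set $\cC^\new$ of the KAM step with the chosen parameters. Items $(ii)$ and $(iii)$ are read off directly from \eqref{hom.eq.KAM2} and \eqref{kaiser}, using $\tN_\nu^{2\tau+1}(\g^2\e)^{-1}\lceil\sQ_\nu\rfloor\le\tN_\nu^{2\tau+1}\kam_\nu\le\kam_\nu\tN_\nu^{3\tau+2}$ for $(ii)$ and the telescoping $\sum_{\mu\le\nu}\g^2\e\kam_\mu\le 2\g^2\e\kam_0$ for the cumulative frequency correction in $(iii)$; this also shows $\tM_\nu\le 2\tM_0$ for all $\nu$, so the frequency hypothesis \eqref{L00} is preserved uniformly. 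For item $(iv)$, the quadratic estimate \eqref{hbh} reads
\begin{equation}
\label{plan.quad}
\lceil \sQ_{\nu+1}^{\rm mix}\rfloor_{s_{\nu+1},-\bd}^{\cC_{\nu+1}}+\lceil \sQ_{\nu+1}^{\rm hor}\rfloor_{s_{\nu+1},-2}^{\cC_{\nu+1}}\lessdot e^{-(s_\nu-s_{\nu+1})\tN_\nu}\,\g^2\e\kam_\nu+\tN_\nu^{3\tau+1}(\g^2\e)^{-1}(\g^2\e\kam_\nu)^2\ ,
\end{equation}
and by the choice of $\tN_\nu$ the first term equals $\g^2\e\kam_\nu\cdot\kam_\nu=\g^2\e\kam_\nu^2$, while the second is $\g^2\e\,\tN_\nu^{3\tau+1}\kam_\nu^2$; since $\kam_{\nu+1}=\kam_\nu^{3/2}$ one has $\kam_\nu^2=\kam_\nu^{1/2}\kam_{\nu+1}$ and $\tN_\nu^{3\tau+1}\kam_\nu^{1/2}\le 1$ for $\kam_\star$ small (again: polynomial-in-log beats a positive power of $\kam_\nu$), which gives $(iv)$.

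\textbf{Main obstacle and remaining points.} The only genuinely delicate point is the bookkeeping that makes the smallness parameter $\kam_\nu$ square at each step despite the loss of analyticity width ($s_\nu-s_{\nu+1}\sim\nu^{-2}$) and the large powers $\tN_\nu^{3\tau+2}$: one must choose $\tN_\nu=(s_\nu-s_{\nu+1})^{-1}\log\kam_\nu^{-1}$ precisely so that $e^{-(s_\nu-s_{\nu+1})\tN_\nu}=\kam_\nu$, and then check that all the polynomial factors $\tN_\nu^{\#}=(c_*^{-1}s^{-1}\nu^2\log\kam_\nu^{-1})^{\#}\sim(\nu^2(3/2)^\nu)^{\#}$ are absorbed by the superexponentially small $\kam_\nu^{1/2}$, which holds for $\kam_0\le\kam_\star$ with $\kam_\star=\kam_\star(s_0,\tau,\tk)$ small enough. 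A secondary but routine point is tracking the constants of motion: since each $\chi_{\nu+1}$ Poisson-commutes with $\wtcM,\wtcP$ (this is part of Proposition \ref{prop:hom}$(i)$), one has $\wtcM\circ\cT_{\nu+1}=\wtcM$ and $\wtcP\circ\cT_{\nu+1}=\wtcP$ at every stage, hence for the composition. Finally, one must note that the horizontal/mixing structure is stable under the iteration (Remark \ref{rem:hor} and Lemma \ref{lem:smo2}), so that $\sQ_{\nu+1}$ again splits as $\sQ_{\nu+1}^{\rm hor}+\sQ_{\nu+1}^{\rm mix}$ with $\sQ_{\nu+1}^{\rm hor}\in\cQ^{\cC_{\nu+1},\rm hor}_{s_{\nu+1},-2}$ and $\sQ_{\nu+1}^{\rm mix}\in\cQ^{\cC_{\nu+1}}_{s_{\nu+1},-\bd}$; this is exactly what allows the recursion to close with the same functional structure. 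The passage from this iterative proposition to Theorem \ref{thm:kam} (convergence of $\cT_1\circ\cdots\circ\cT_\nu$, definition of $\Omega_\jj:=\lim_\nu\cD_\jj^{(\nu)}$, measure estimate of $\cC=\bigcap_\nu\cC_\nu$) is then standard and is carried out after this proposition.
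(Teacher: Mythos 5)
Your proposal is correct and follows essentially the same route as the paper's own proof: both proceed by induction, feeding the KAM step (Proposition \ref{prop:hom}) back into itself with $s\rightsquigarrow s_\nu$, $\cC\rightsquigarrow\cC_\nu$, $\tN\rightsquigarrow\tN_\nu$, verifying the smallness condition \eqref{piccola0} via $\tN_\nu^{3\tau+2}\kam_\nu<1$, controlling the cumulative frequency correction $\tM^{(\nu)}\le 2\tM^{(0)}$ by the telescoping of \eqref{dig.str+2}, and deriving item $(iv)$ from \eqref{hbh} with the choice $e^{-(s_\nu-s_{\nu+1})\tN_\nu}=\kam_\nu$. The only difference is that you spell out the bookkeeping ($\tN_\nu^{3\tau+1}\kam_\nu^{1/2}\le1$ etc.) that the paper compresses into ``provided $\kam_\star$ is appropriately small''.
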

\begin{proof}

	{\em Step $\nu=0$:} We fix $\e_1$ sufficiently small so that 
	$\e_1 \tM_0 \ll 1$.  We apply the KAM step with    $\cD:= \cD_0$, $\sQ := \sQ_0$, $\cC = \cC_0:= \cO_1$, $\tN=\tN_0$. 
	The smallness condition \eqref{piccola0} follows from \eqref{miseria} provided that $\kam_\star$ is  appropriately small depending only on $\tau,s_0$. Then we apply the KAM step procedure, getting items (i)-(iii). As for item (iv) it follows from the bound \eqref{hbh} by setting $\sigma=\sigma_{0}= s -s_{1}\ge s_0/24$.

\noindent	{\em Step $\nu \rightsquigarrow\nu+1$:} we just have to show that we may apply the KAM step with 
$\cD:= \cD_\nu$, $\sQ := \sQ_\nu$, $\cC=\cC_\nu$, $\tN=\tN_\nu$. The smallness condition \eqref{piccola0} follows by noting that
$$\tN_\nu^{3\tau+2} \kam_{\nu}< \tN_0^{3\tau+2} \kam_{0}<1.$$ 
We may estimate $\tM=\tM^{(\nu)}$ in \eqref{L00} by using \eqref{dig.str+2},  obtaining   $\tM^{(\nu)}\le \tM^{(0)}+\e\sum_{i \le \nu-1}\eta_i  \leq 2\tM^\0 \leq \tM_0$. Hence again one has $\epsilon_1 \tM_0 \ll 1$.
Then we note that $\cC^\new \equiv \cC_{\nu+1}$, we apply the KAM step procedure and define $\cD_{\nu+1}:= \cD^\new$, $\sQ_{\nu+1} := \sQ^\new$. We get items (i)-(iii) directly. As for item (iv) it follows from the bound \eqref{hbh} by setting $ \sigma=\sigma_{\nu}= s_\nu -s_{\nu+1}$, provided $\kam_\star$ is  appropriately small depending only on $\tau,s_0$.
\end{proof}

The iterative  KAM scheme above can be applied to diagonalize $\wtcH^\0$ provided that
\begin{equation}\label{eg}
\frac{\lceil \wtcH^{(0,\rm mix)}\rfloor_{s, -\bd}^{\cO_1} +  \lceil \wtcH^{(0,\rm hor)}\rfloor_{s,-2}^{\cO_1}}{\g^2\e}\sim \e\g^{-2} \le \e_1\g^{-2}\le \kam_\star\,,\quad \e_1\tM_0\le 1  \ . 
\end{equation}
More precisely the following holds:

\begin{corollary}\label{stoca}
Assume that   
\begin{equation}
\label{cond0}
\frac{\e_1 \, \tM_0}{\gamma^2} \ll 1 \ , 
\end{equation}	
 	with  $\tM_0$ as in \eqref{M0}.	For all $\lambda\in \cO_1$  and for any $\jj\in \Z^2\setminus \Tan$ we have that $\cD_{\jj}^{(\nu)} $ is a Cauchy sequence. We denote by $\Omega_{\jj}$ its limit. We have that 
	\[
	\Omega_\jj(\lambda, \e) =\begin{cases}
	 \wtOmega_\jj(\lambda, \e) + \Omega_\jj^{\rm hor}(\lambda, \e) + \Omega_\jj^{\rm mix}(\lambda, \e) \ , \quad \jj\neq (m,0)\\
 m^2+ \frac{\varpi_m(\lambda, \e)}{\la m\ra }\ , \quad \jj=(m,0)
	\end{cases}
	\]
	where
	\begin{equation}
	\label{dig.str}
	\Omega_\jj^{\rm hor}(\lambda, \e) := \frac{\Theta_m(\lambda, \e)}{\la m\ra^2 } \ , 
	\quad  
	\Omega_\jj^{\rm mix}(\lambda, \e) : = \frac{\Theta_{m,n}(\lambda, \e)}{\la m \ra^2 +  \la n\ra^2}
	\end{equation}
	are defined for $\lambda\in \cO_1$ and  fulfill the estimates
	\begin{equation}	\label{theta.es2t}
\begin{aligned} 
		& \sum_{1 \leq i \leq \tk} | \mu_i(\cdot) |^{\cO_1} +  \sum_{1 \leq i < k \leq \tk \atop \pm} |  \mu_{ik}^\pm(\cdot) |^{\cO_1} \\
		& \qquad
		+ \sup_{\e \leq \e_1  } \frac{1}{\e^2}\Big( 
		{\sup_{m \in \Z} |\varpi_m(\cdot, \e)|^{\cO_1}+ }
		\sup_{m \in \Z} |\Theta_m(\cdot, \e)|^{\cO_1} + \sup_{(m,n) \in \Z^2 \setminus \Z} |\Theta_{m,n}(\cdot, \e)|^{\cO_1} \Big) \leq  \tM_0 
	\end{aligned}
\end{equation}

	For all $\lambda\in \cap_{\nu }\cC_{\nu}$ the sequence
	$$\hat\cT_{\nu}:= \cT_1\circ\cT_2\dots\circ \cT_\nu
	$$
	is a Cauchy sequence of changes of variables, converging to a $\widehat\cT$, which satisfies the bounds:
	$$
\lceil \whcT^\hor -\uno  \rfloor_{s_\nu, -2}^{\cC_{\nu+1}} + 
\lceil \whcT^\mix   \rfloor_{s_\nu, -\bd}^{\cC_{\nu+1}}\leq \e 
 \ , \qquad \forall \nu \geq 0 \ . 
$$
\end{corollary}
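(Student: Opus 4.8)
The plan is to extract this Corollary as a routine consequence of the iterative scheme in Proposition \ref{prop:iterative}, by controlling telescoping sums. First I would show that the frequency sequences converge: by item $(iii)$ of Proposition \ref{prop:iterative}, for each $\nu$ one has
$$
\lceil \cD^{(\nu+1,\rm mix)} - \cD^{(\nu,\rm mix)} \rfloor_{s, -\bd}^{\cO_1} +  \lceil \cD^{(\nu+1,\rm hor)}- \cD^{(\nu,\rm hor)}\rfloor_{s,-2}^{\cO_1} \leq  \g^2\e \kam_\nu \ ,
$$
and since $\kam_\nu = \kam_0^{(3/2)^\nu}$ is summable (superexponentially fast, provided $\kam_0\le \kam_\star$ as guaranteed by \eqref{cond0} together with \eqref{miseria}), the sequences $\cD^{(\nu,\rm hor)}_\jj$ and $\cD^{(\nu,\rm mix)}_\jj$ are Cauchy in the respective norms $\lceil\cdot\rfloor_{s,-2}^{\cO_1}$ and $\lceil\cdot\rfloor_{s,-\bd}^{\cO_1}$, hence converge to limits $\Omega^{\rm hor}_\jj$, $\Omega^{\rm mix}_\jj$ defined on all of $\cO_1$. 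The fact that the limit has order $-2$ (resp. $-\bd$) just passes to the limit, which gives the stated form \eqref{dig.str} with $\Theta_m(\lambda,\e) := \la m\ra^2\, \Omega^{\rm hor}_{(m,n)}(\lambda,\e)$ (independent of $n$ because each $\cD^{(\nu,\rm hor)}$ is horizontal) and $\Theta_{m,n}(\lambda,\e) := (\la m\ra^2 + \la n\ra^2)\, \Omega^{\rm mix}_{(m,n)}(\lambda,\e)$. The line component $\jj=(m,0)$ is untouched by the KAM scheme (the Hamiltonian $\wtcH^\0$ has no line part, see Theorem \ref{thm:q}), so there $\Omega_{(m,0)} = \Omega_m(\lambda)$ as in \eqref{norm.freq}, and the asymptotics $m^2 + \varpi_m(\lambda,\e)/\la m\ra$ is just \eqref{fischio0}.

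Next I would establish the quantitative bound \eqref{theta.es2t}. The contributions of $\mu_i$ and $\mu_{i,k}^\pm$ are unchanged from $\tM_0$ in \eqref{M0}, and the $\varpi_m$ bound is \eqref{fischio0}. For $\Theta_m$ and $\Theta_{m,n}$ one estimates the telescoping sum
$$
\lceil \Omega^{\rm hor} - \cD^{(0,\rm hor)}\rfloor_{s,-2}^{\cO_1} + \lceil \Omega^{\rm mix} - \cD^{(0,\rm mix)}\rfloor_{s,-\bd}^{\cO_1} \leq \sum_{\nu\geq 0} \g^2\e\,\kam_\nu \leq 2\g^2\e\,\kam_0 \ ,
$$
where by \eqref{miseria} $\g^2\e\,\kam_0 = \lceil \wtcH^{(0,\rm mix)}\rfloor_{s,-\bd}^{\cO_1} + \lceil \wtcH^{(0,\rm hor)}\rfloor_{s,-2}^{\cO_1} \lessdot \e^2$ by Theorem \ref{thm:q}. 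Recalling $\cD^{(0,\rm hor)} = \cD^{(0,\rm mix)} = 0$ (the initial frequencies are exactly $\wtOmega_\jj$), one gets $\sup_m |\Theta_m(\cdot,\e)|^{\cO_1} + \sup_{m,n}|\Theta_{m,n}(\cdot,\e)|^{\cO_1} \lessdot \e^2$, and after possibly enlarging the constant $\tM_0$ of \eqref{M0} to absorb these and the $\varpi_m$ contribution, \eqref{theta.es2t} follows. (This is consistent: the $\tM_0$ in \eqref{M0} already contains $2\lceil\wtcH^{(0,\rm hor)}\rfloor + 2\lceil\wtcH^{(0,\rm mix)}\rfloor$ divided by $\e^2$, precisely the bound on $\sum_\nu\kam_\nu$, so no genuine enlargement is needed.)

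Finally, the convergence of the composed maps $\hat\cT_\nu = \cT_1\circ\cdots\circ\cT_\nu$ follows from item $(ii)$ of Proposition \ref{prop:iterative}: each $\cT_{\nu+1}$ satisfies $\lceil\cT_{\nu+1}^\hor - \uno\rfloor_{s_\nu,-2}^{\cC_{\nu+1}} + \lceil\cT_{\nu+1}^\mix\rfloor_{s_\nu,-\bd}^{\cC_{\nu+1}} \leq \kam_\nu\tN_\nu^{3\tau+2}$, and since $\tN_\nu$ grows only polynomially in $(3/2)^\nu\log\kam_0^{-1}$ while $\kam_\nu$ decays superexponentially, the product $\kam_\nu\tN_\nu^{3\tau+2}$ is summable. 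Using the algebra/composition estimates for flows (Lemma \ref{lem:smo}, resp. Lemma \ref{lem:smo2} for the no-loss case, and the triangle inequality on $\lceil\cdot\rfloor$), one shows $\hat\cT_\nu$ is Cauchy on $\cap_\nu\cC_\nu$, with limit $\widehat\cT$ satisfying $\lceil\whcT^\hor-\uno\rfloor + \lceil\whcT^\mix\rfloor \lessdot \kam_0 \lessdot \e$, after shrinking $\e_1$ if necessary so that the prefactor times $\kam_0$ is at most $\e$. The main obstacle is bookkeeping: keeping track of the shrinking analyticity widths $s_\nu$ and making sure the loss of regularity in Lemma \ref{lem:smo} at each step is compensated by the gain $\kam_\nu\to 0$; this is the standard convergence-of-KAM argument and requires care but no new idea, since the measure estimate on $\cC = \cap_\nu\cC_\nu$ is deferred to Theorem \ref{thm:kam} and Appendix \ref{app:mes.m}.
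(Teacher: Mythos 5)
Your proposal is correct and fills in exactly the details that the paper leaves implicit: the paper's proof of Corollary \ref{stoca} consists solely of the remark that this is ``a standard KAM convergence argument, see for instance \cite{Pos96},'' and your argument---telescoping the frequency increments via Proposition \ref{prop:iterative}$(iii)$ and the superexponential decay of $\kam_\nu$, observing $\cD^{(0,\rm hor)}=\cD^{(0,\rm mix)}=0$ so that $\wtOmega_\jj$ is the zeroth term, noting the line component is never touched by the scheme since $\wtcH^\0$ has no line part, and using Proposition \ref{prop:iterative}$(ii)$ together with composition estimates to show $\hat\cT_\nu$ is Cauchy---is precisely what that citation is meant to encapsulate. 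The one small point worth tightening is the inequality $\sum_\nu\kam_\nu\le 2\kam_0$: it holds only once $\kam_0\le\kam_\star$ is sufficiently small, which is exactly what \eqref{miseria} together with \eqref{cond0} guarantees, so you should say you are invoking the smallness of $\kam_\star$ there rather than treating it as automatic.
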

\begin{proof}
	This is a standard KAM convergence argument, see for instance \cite{Pos96}.
\end{proof}
It turns out that the set on which we can diagonalize can be described  in terms of the final frequencies. Indeed we have the following
\begin{lemma}
	Consider the set 
	\begin{equation}
	\cC^{\mathtt{ fin}}:= \left\{ \lambda \in \cO_1 \, : \,  \begin{array}{l}
	\abs{\omega \cdot \ell +\Omega_\ii(\lambda, \e) +\sigma \Omega_\jj(\lambda, \e)} \geq \e  \g \, \la \ell \ra^{-\tau}\ , \qquad \forall ((\ii,\jj),\ell, \bs )\in \fA_2\setminus\fR_2  
		\end{array}
 \right\}\ .  
	\end{equation}
	We have that $\cC^{\mathtt{ fin }}\subseteq \cap_\nu \cC_\nu$.
\end{lemma}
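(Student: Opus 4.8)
The plan is to show the inclusion $\cC^{\mathtt{fin}} \subseteq \cap_\nu \cC_\nu$ by induction on $\nu$, using the defining formulas \eqref{MC1nu} for the $\cC_\nu$ together with the quantitative control \eqref{dig.str+2} on the difference between the frequencies $\cD^{(\nu)}_\jj$ at stage $\nu$ and their limits $\Omega_\jj$. The base case $\cC_0 = \cO_1 \supseteq \cC^{\mathtt{fin}}$ is trivial. For the inductive step, assume $\cC^{\mathtt{fin}} \subseteq \cC_\nu$; one must then check that every $\lambda \in \cC^{\mathtt{fin}}$ also satisfies the Melnikov inequalities defining $\cC_{\nu+1}$, i.e. for all admissible non-action-preserving $((\ii,\jj),\ell,\bs)$ with $|\ell| \leq \tN_\nu$,
\begin{equation}
\label{goal.incl}
\abs{\omega \cdot \ell + \cD^{(\nu)}_\ii(\lambda,\e) + \s \cD^{(\nu)}_\jj(\lambda,\e)} \geq \e \frac{\g}{2}\tN_\nu^{-\tau} \ .
\end{equation}

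The key is the telescoping estimate: from \eqref{dig.str+2} and the definition $\kam_\nu = \kam_0^{(3/2)^\nu}$ one gets, summing the geometric-like series,
\begin{equation}
\label{tail.freq}
\lceil \cD^{(\nu,\rm hor)} - \Omega^{\rm hor}\rfloor_{s,-2}^{\cO_1} + \lceil \cD^{(\nu,\rm mix)} - \Omega^{\rm mix}\rfloor_{s,-\bd}^{\cO_1} \leq \sum_{j \geq \nu} \g^2 \e \kam_j \lessdot \g^2 \e \kam_\nu \ ,
\end{equation}
and in particular, since the frequency difference $\cD^{(\nu)}_\jj - \Omega_\jj$ differs from $\cD^{(\nu)}_\ii - \Omega_\ii$ only through the horizontal/mixing corrections (the integer and $\e\mu$ parts cancel in $\wtOmega$), one obtains $\abs{(\cD^{(\nu)}_\ii - \Omega_\ii) + \s(\cD^{(\nu)}_\jj - \Omega_\jj)} \lessdot \g^2 \e \kam_\nu$ for each fixed $\jj$ by evaluating \eqref{tail.freq} on the relevant sequence (using \eqref{cheneso} to pass from the order-norm to the pointwise bound, with the harmless extra factor $\la m\ra^{-2}+\la n\ra^{-2} \leq 2$). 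Then for $\lambda \in \cC^{\mathtt{fin}}$ one estimates the left side of \eqref{goal.incl} from below by
\begin{equation}
\label{chain.incl}
\abs{\omega\cdot\ell + \Omega_\ii + \s\Omega_\jj} - \abs{(\cD^{(\nu)}_\ii - \Omega_\ii) + \s(\cD^{(\nu)}_\jj-\Omega_\jj)} \geq \e\g\la\ell\ra^{-\tau} - C\g^2\e\kam_\nu \geq \e\frac{\g}{2}\tN_\nu^{-\tau} \ ,
\end{equation}
where the last inequality holds because $|\ell| \leq \tN_\nu$ forces $\la\ell\ra^{-\tau} \geq \tN_\nu^{-\tau}\cdot 2^{-\tau/2}$ (or one simply works with $\la\ell\ra \leq 2\tN_\nu$), while $C\g^2\e\kam_\nu \leq \frac{\g}{2}\e\tN_\nu^{-\tau}$ is guaranteed since $\kam_\nu = \kam_0^{(3/2)^\nu}$ decays super-exponentially whereas $\tN_\nu^{-\tau} = ((s_\nu - s_{\nu+1})^{-1}\log\kam_\nu^{-1})^{-\tau}$ decays only like a power of $(3/2)^\nu\log\kam_0^{-1}$; choosing $\gamma$ small (or $\kam_\star$ small, as in \eqref{miseria}) makes this hold for all $\nu$ uniformly.

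The main — and only — obstacle is bookkeeping the two different decay rates correctly: one must be sure that the error $\g^2\e\kam_\nu$ coming from replacing the true stage-$\nu$ frequencies by their limits is genuinely smaller than the loss $\frac{\g}{2}\e\tN_\nu^{-\tau}$ in the Melnikov threshold, for every $\nu \geq 0$ simultaneously. This is where the super-exponential choice $\kam_\nu = \kam_0^{(3/2)^\nu}$ versus the merely logarithmic-in-$\kam_\nu$ (hence polynomial-in-$(3/2)^\nu$) growth of $\tN_\nu$ is used; it is the same mechanism that makes the KAM scheme of Proposition \ref{prop:iterative} converge, so no new idea is needed, only a careful comparison. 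Everything else — the fact that $\cD^{(\nu)}_\ii - \cD^{(\nu)}_\jj$ (resp. the sum version) has the same non-horizontal content as $\Omega_\ii \mp \Omega_\jj$, and the conversion between the $\lceil\cdot\rfloor$ norms and pointwise bounds — is already contained in the estimates proven above, notably \eqref{cheneso}, \eqref{dig.str+2} and Remark \ref{rem:ff}.
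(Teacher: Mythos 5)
Your proof is correct and follows essentially the same route as the paper: an induction on $\nu$, with the key ingredient being the telescoping bound $\abs{\Omega_\jj - \cD^{(\nu)}_\jj} \leq \sum_{k\geq\nu}\abs{\cD^{(k+1)}_\jj - \cD^{(k)}_\jj} \lessdot \g^2\e\kam_\nu$ coming from \eqref{dig.str+2}, combined with the observation that the super-exponential decay of $\kam_\nu$ beats the polynomial growth of $\tN_\nu^{\tau}$ so that the replacement error is absorbed into the halved Melnikov threshold. The paper states the same chain more tersely (bounding $\sup_\ii\abs{\Omega_\ii-\cD^{(n)}_\ii} \leq 2\e\g^2\kam_n \leq \frac{\g}{4}\e\tN_n^{-\tau}$ and subtracting twice that), and leaves the passage from the $\lceil\cdot\rfloor$-norm to the pointwise bound implicit, which you spell out via \eqref{cheneso}; these are only presentational differences.
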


\begin{proof}
By definition $\cC^\fin \subseteq \cO_1$.  By induction, we assume it is contained in $\cap_{\nu \leq n} \cC_\nu = \cC_n$.
Recall that by definition 
$$
\cC_{n+1} = \left\{ \lambda \in \cC_n \, : \, 
 \abs{\omega \cdot \ell +\cD_\ii^{(n)}(\lambda, \e) +\sigma \cD_\jj^{(n)}(\lambda, \e)} \geq \e  \frac{\g}{2} \tN_n^{-\tau}\ , \qquad\forall ((\ii,\jj),\ell,\bs)\in \fA_2\setminus\fR_2\,,\;  |\ell|\leq \tN_n
 \right\} \ . 
$$
First recall that
\begin{align*}
\abs{\Omega_\ii(\lambda, \e)  - \cD^{(n)}_\ii(\lambda, \e)  }
\leq
\sum_{k \geq n}\abs{\cD^{(k+1)}_\ii(\lambda, \e)  - \cD^{(k)}_\ii(\lambda, \e)} \leq \e\g^2 \sum_{\nu\geq n} \kam_\nu \leq  2\e \g^2 \kam_n \leq \frac{\g}{4} \e \,  \tN_n^{-\tau} \ . 
\end{align*}
Thus  for $|\ell| \leq \tN_n$
\begin{align*}
\abs{\omega \cdot \ell +\cD^{(n)}_\ii(\lambda, \e) +\sigma \cD^{(n)}_\jj(\lambda, \e)} &
\geq
	\abs{\omega \cdot \ell +\Omega_\ii(\lambda, \e) +\sigma \Omega_\jj(\lambda, \e)} 
	-
	2 \sup_{\ii} \abs{\Omega_\ii(\lambda, \e)  - \cD^{(n)}_\ii(\lambda, \e)  } \\
	& \geq
\e  \g \la \ell \ra^{-\tau} 	-\e  \frac{\g}{2} \tN_n^{-\tau} \geq
\e  \frac{\g}{2}  \tN_n^{-\tau} \ . 
\end{align*}
\end{proof}
Finally we prove that the set of $\lambda$ in which our diagonalization scheme holds has positive measure:
\begin{proposition}
	\label{lem:meas.mel3}
	There exists $\gamma_0 >0$ s.t. for all $0<\g < \g_0$, $\e < \e_1$  fulfilling \eqref{cond0}, 
	the set $\cC$ defined in \eqref{set.C}  has measure
	$\meas(\cC)\ge \meas(\cC_c)/2$ (where $\cC_c$ is defined in Lemma \ref{lem:cono}).
\end{proposition}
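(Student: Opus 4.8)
The plan is to estimate the measure of the complement $\cO_1 \setminus \cC$ and show it is small compared to $\meas(\cC_c)$, so that intersecting with $\cC_c$ still leaves a set of measure at least $\meas(\cC_c)/2$. The complement is the union, over all admissible non–action-preserving triples $(\bj,\ell,\bs) \in \fA_3 \setminus \fR_3$, of the ``resonant layers''
\[
R_{\bj,\ell,\bs} := \left\{ \lambda \in \cC_c \cap \cO_1 : \abs{\omega(\lambda)\cdot\ell + \s_1\Omega_{\jj_1} + \s_2\Omega_{\jj_2} + \s_3\Omega_{\jj_3}} < \e \frac{\g}{\la\ell\ra^\tau} \right\}.
\]
First I would split the analysis into two regimes according to $|\ell|$. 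For $|\ell| > 4\tM_0$ one uses the asymptotics \eqref{as.omega0}–\eqref{theta.est}: the function $g_{\bj,\ell,\bs}(\lambda) := \omega(\lambda)\cdot\ell + \s_1\Omega_{\jj_1}+\s_2\Omega_{\jj_2}+\s_3\Omega_{\jj_3}$ has $\lambda$-gradient of the form $-\ell + O(\e)$ (since $\omega(\lambda) = \omega^{(0)} - \e\lambda$ and the $\e$-corrections to $\Omega_\jj$ are Lipschitz of size $\e$ uniformly in $\jj$ by \eqref{theta.est}), so $g_{\bj,\ell,\bs}$ has a derivative in the direction $\ell/|\ell|$ bounded below by $|\ell| - C\e \geq \frac{|\ell|}{2}$; a standard one-dimensional sublevel-set estimate then gives $\meas(R_{\bj,\ell,\bs}) \lessdot \frac{\e\g}{|\ell|^{\tau+1}}$. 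For $|\ell| \leq 4\tM_0$ (a finite set of $\ell$'s, but still infinitely many choices of $\bj$), I would instead use Lemma \ref{lem:cono}: write $g_{\bj,\ell,\bs}(\lambda) = \tK_{\bj,\ell}^\bs + \e\tF_{\bj,\ell}^\bs(\lambda) + (\text{higher order in }\e)$, where the $\e$-linear part $\tF_{\bj,\ell}^\bs$ is, by Lemma \ref{lem:cono}, bounded below in absolute value by $\g_1$ on $\cC_c$ whenever $\tK_{\bj,\ell}^\bs = 0$ (if $\tK_{\bj,\ell}^\bs \neq 0$ then $|\tK_{\bj,\ell}^\bs|\geq 1 \gg \e\g$ and the layer is empty for $\e$ small). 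Hence for $|\ell|\leq 4\tM_0$ and $\lambda\in\cC_c$ one has $|g_{\bj,\ell,\bs}(\lambda)| \geq \e\g_1 - C\e^2 \geq \e\g_1/2 > \e\g/\la\ell\ra^\tau$ provided $\g < \g_1/2$ and $\e$ small, so these layers are empty.

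The remaining point to address in the large-$|\ell|$ regime is that, a priori, for fixed $\ell$ there are infinitely many admissible $\bj$, so summing $\frac{\e\g}{|\ell|^{\tau+1}}$ over all of them would diverge. The resolution is the usual one: momentum and mass conservation (the selection rules of Remark \ref{leggi_sel1}, encoded in admissibility) together with the asymptotics $\Omega_\jj = |\jj|^2 + O(1)$ force $R_{\bj,\ell,\bs}$ to be empty unless $|\jj_1|, |\jj_2|, |\jj_3|$ are comparable to each other and bounded in terms of $|\ell|$ — more precisely, conservation of momentum and the shape $g = \s_1|\jj_1|^2 + \s_2|\jj_2|^2 + \s_3|\jj_3|^2 + \omega^{(0)}\cdot\ell + O(\e + \text{l.o.t.})$ shows that when two of the signs coincide the modulus grows quadratically and the layer is empty for $|\jj|$ large, while if all indices are large with mixed signs one reduces (via momentum conservation fixing $\jj_3$ in terms of $\jj_1,\jj_2,\ell$) to a bound on $\jj_1-\jj_2$ and then on each $\jj_i$, of the form $\max_i|\jj_i| \lessdot |\ell|^{2}$ or so. This gives $\#\{\bj : R_{\bj,\ell,\bs}\neq\emptyset\} \lessdot |\ell|^{c}$ for some fixed power $c$ depending only on the dimension, and then choosing $\tau$ large enough (as already allowed in the statement of Theorem \ref{thm:kam}, ``$\tau$ sufficiently large'') makes $\sum_\ell |\ell|^{c}\cdot\frac{\e\g}{|\ell|^{\tau+1}} = \e\g \sum_\ell |\ell|^{c-\tau-1}$ convergent with sum $\lessdot \e\g$.

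Putting the two regimes together,
\[
\meas\big((\cC_c\cap\cO_1)\setminus\cC\big) \leq \sum_{|\ell|>4\tM_0}\ \sum_{\bj,\bs}\meas(R_{\bj,\ell,\bs}) \lessdot \e\g ,
\]
which can be made smaller than $\meas(\cC_c)/2$ by taking $\g < \g_0$ for a suitable $\g_0 > 0$ and $\e < \e_1$ (using $\meas(\cC_c) > 0$, which holds because $\cC_c$ is a compact domain with nonempty interior by Lemma \ref{lem:cono}). Since $\cC \supseteq (\cC_c\cap\cO_1)\setminus\big((\cC_c\cap\cO_1)\setminus\cC\big)$ and $\meas(\cC_c\cap\cO_1) = \meas(\cC_c)$ (as $\cC_c\subset\cO_1$), we conclude $\meas(\cC) \geq \meas(\cC_c) - \meas(\cC_c)/2 = \meas(\cC_c)/2 > 0$. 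The main obstacle, and the step requiring the most care, is the counting estimate $\#\{\bj : R_{\bj,\ell,\bs}\neq\emptyset\} \lessdot |\ell|^c$: one must carefully exploit momentum conservation together with the quadratic growth of $\Omega_\jj$ to rule out the infinitely many triples with large indices, distinguishing the case of equal signs (where the quadratic terms reinforce and give emptiness directly) from mixed signs (where one index is slaved to the others and the constraint localizes the remaining two). Everything else is a routine Diophantine sublevel-set argument combined with the already-established non-degeneracy Lemmas \ref{lem.F} and \ref{lem:cono}.
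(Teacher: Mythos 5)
Your overall structure mirrors the paper's: split on $|\ell|\lessgtr 4\tM_0$, dispatch small $|\ell|$ via Lemma~\ref{lem:cono} and the integrality of $\tK_{\bj,\ell}^\bs$, and for large $|\ell|$ run a sublevel-set measure estimate using the lower bound $\sim\e|\ell|$ on the directional derivative of $g_{\bj,\ell,\bs}$ along $\ell$. You also correctly identify the crux: for fixed $\ell$ there are infinitely many admissible $\bj$, so the naive sum diverges.

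The resolution you propose for this crux, however, does not work, and this is a genuine gap. You claim that momentum conservation and the asymptotics $\Omega_\jj=|\jj|^2+O(1)$ force $R_{\bj,\ell,\bs}=\emptyset$ unless $\max_i|\jj_i|\lessdot|\ell|^c$. This counting bound is false. When all three signs coincide, the quadratic terms do reinforce and the layer is empty for large $|\jj_i|$; but mass conservation only forces $\sigma_1+\sigma_2+\sigma_3\in\{\pm1,\pm3\}$, and in the case of exactly two coinciding signs there is no bound. For instance with $\bs=(+,-,+)$, momentum fixes $\jj_3=\jj_2-\jj_1+p$ for $p$ bounded by $|\ell|$, and then $|\jj_1|^2-|\jj_2|^2+|\jj_3|^2 = 2\jj_1\cdot(\jj_1-\jj_2)+O(|p|(|\jj_1|+|\jj_2|))$; this is $O(|\ell|)$ for $\jj_1=(n,0)$, $\jj_2=(n,1)$, $n\to\infty$, so infinitely many triples remain. (This is precisely the ``rectangle resonance'' phenomenon the paper is built around.) Thus the estimate $\#\{\bj:R_{\bj,\ell,\bs}\neq\emptyset\}\lessdot|\ell|^c$ fails, and the convergence $\sum_\ell|\ell|^{c-\tau-1}<\infty$ you invoke is not available.

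The paper avoids a counting bound entirely. It rewrites the Melnikov expression in the form \eqref{cc}: an integer $K$ (bounded by Lemma~\ref{c.1}), a finite list of algebraic functions $\widehat\tF_{i,k}$, and $\jj$-dependent correction terms $\Theta_{m_i}/\la m_i\ra^2$, $\Theta_{m_i,n_i}/(\la m_i\ra^2+\la n_i\ra^2)$, $\varpi_{m_i}/\la m_i\ra$ which \emph{decay in $|\jj_i|$} by the asymptotics of Theorem~\ref{main3}. Lemma~\ref{c.3} then runs a finite induction on the number of nonzero correction terms: whenever some $|m_i|$ or $|n_i|$ exceeds a power of $\la\ell\ra$, the corresponding correction is $\lessdot\e^2/\la\ell\ra^{\tau_n}$ and is absorbed into the lower bound already established at the previous step; otherwise $m_i,n_i$ range over a set of cardinality polynomial in $|\ell|$ and one applies the one-dimensional estimate of Lemma~\ref{c.2} finitely many times. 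So the infinitude of $\bj$'s is handled not by restricting which $\bj$ can appear, but by showing that the distant ones only affect the expression by a negligible, uniformly Lipschitz amount — the same set of $\lambda$'s already excised suffices. To repair your argument you would need to replace the counting step with this decay-absorption mechanism.

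A minor point: you write the $\lambda$-gradient of $g_{\bj,\ell,\bs}$ as $-\ell+O(\e)$, but $\omega(\lambda)=\omega^{(0)}-\e\lambda$, so the gradient is $-\e\ell+O(\e)$; the directional derivative is $\gtrsim\e|\ell|$, and the sublevel-set measure is $\lessdot\gamma/|\ell|^{\tau+1}$, not $\e\gamma/|\ell|^{\tau+1}$. This does not affect the final conclusion but matters for keeping track of the $\e$-scaling.
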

The proposition, being quite technical, it is proved in Appendix \ref{app:mes.m}.

\begin{proof}[Proof of Theorem \ref{thm:kam}]
Fix $\g_0$ as in Proposition 
\ref{lem:meas.mel3} and $\e_\star$ s.t. $4 \e_\star \tM_0 \gamma_0^{-2} \ll 1$, so that \eqref{cond0} holds for $\e_1 \leq \e_\star$ and $\g_0/2 \leq \g \leq \g_0$. This ensures that \eqref{eg} holds, so we apply Corollary \ref{stoca}.
The change of variables $\widehat\cT$ and the  final frequencies $\Omega_\jj$ are the ones defined in Corollary \ref{stoca}, so all the desired bounds follow. The measure of the set $\cC$ is studied in Proposition \ref{lem:meas.mel3}.
\end{proof}

\section{Birkhoff normal form for the cubic terms}

After Theorem \ref{thm:kam} we have the  Hamiltonian 
\begin{equation}
\label{ham.bnf3}
\cK := \omega \cdot \yy + \sum_{\jj \in \Z^2 \setminus \cS_0} \Omega_\jj(\lambda, \e) \, |a_\jj|^2 + \cK^{\1} + \cK^{(\geq 2)} \ .
\end{equation}
We now perform a Birkhoff change of variables which cancels out $\cK^\1 $. In order to define such a change of variables we use third order Melnikov conditions, which hold true in the set $\cC$ of Proposition \ref{lem:meas.mel3}.
The main result of this section is the following theorem:
\begin{theorem}
\label{thm:3b}
For all $\e \leq \e_\star$, there exists $r_0\ll\sqrt{\e}$ such that, for all  $\lambda\in \cC$ there exists a symplectic change of variables $\cT^\1$  well defined and majorant analytic: 
	{$D(s/32,\varrho r /4 )\to D(s/16,\varrho r/2)$} for all $r\le  r_0 $, $s_0/64\le s\le s_0$
	and  such that
	$$
	\cK \circ \cT^\1 :=\omega \cdot \yy +  \sum_{\jj\in \Z^2\setminus \Tan}\Omega_\jj(\lambda, \e) |a_\jj|^2  +\cR^{( \geq 2)} \ , 
	$$
	where 
\begin{itemize}
\item[(i)] the map $\cT^\1$ is the time-1 flow of a cubic Hamiltonian $\chi^\1$ such that 
$|{\chi}^\1|_{\frac{s}{24}, \frac{\varrho r}{2}}^\cC \lessdot \frac{r }{\sqrt{\e}}$.
\item[(ii)]  $\cR^{( \geq 2)} $ contains just monomials of scaling degree at least 2 and 
	$$
	\abs{\cR^{(\geq 2)}}_{\frac{s}{32}, \frac{\varrho r}{4}} \lessdot r^2 \ . 
	$$
	\item[(iii)] One has  $\wt \cM \circ \cT^\1 = \wt \cM$ and 
$\wt \cP \circ \cT^\1 = \wt \cP$.
\end{itemize}
 \end{theorem}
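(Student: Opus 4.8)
\textbf{Proof plan for Theorem \ref{thm:3b}.}
The plan is to perform a single step of nonlinear Birkhoff normal form to remove the cubic term $\cK^\1$ from \eqref{ham.bnf3}, following the classical scheme (as in \cite{bambusi.grebert}). First I would look for $\cT^\1$ as the time-$1$ flow of a cubic Hamiltonian $\chi^\1$ of scaling degree $1$ (hence with $|\alpha|+|\beta| = 3$ and no $\yy$-dependence, mirroring the structure of $\cK^\1 = \wtcH^\1$ established in \eqref{def:H1} and preserved by Theorems \ref{thm:q} and \ref{thm:kam}), solving the homological equation
\begin{equation}
\label{hom.3b}
\{ \omega \cdot \yy + \sum_\jj \Omega_\jj |a_\jj|^2 , \chi^\1 \} + \cK^\1 = 0 \ .
\end{equation}
In Taylor--Fourier components this reads
$\im \big(\omega \cdot \ell + \Omega \cdot (\alpha - \beta)\big)\, \chi^\1_{\alpha,\beta,\ell} = - \cK^\1_{\alpha,\beta,\ell}$,
and since $\cK^\1$ commutes with $\wtcM$ and $\wtcP$, the only triples $(\bj,\ell,\bs)$ appearing are admissible with $b = 3$, i.e.\ in $\fA_3$. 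Because $|\alpha+\beta| = 3$ is odd, mass conservation forces $\eta(\ell) + \eta(\alpha,\beta) = 0$ with $|\eta(\alpha,\beta)|$ odd, so $\ell \neq 0$ and these triples lie in $\fA_3 \setminus \fR_3$; hence the third order Melnikov conditions \eqref{set.C} valid on $\cC$ give the lower bound $|\omega \cdot \ell + \Omega \cdot (\alpha-\beta)| \geq \e \gamma \la \ell \ra^{-\tau}$, and $\chi^\1_{\alpha,\beta,\ell} := - \cK^\1_{\alpha,\beta,\ell} / (\im(\omega\cdot \ell + \Omega\cdot(\alpha-\beta)))$ is well defined.

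Next I would carry out the quantitative estimates. The divisors cost a factor $(\e\gamma)^{-1}\la\ell\ra^\tau$; the loss of $\la \ell\ra^\tau$ is absorbed by shrinking the analyticity strip from $s/16$ to $s/24$ (paying the usual $\tau!\,\delta^{-\tau}$ with $\delta \sim s$), so from $|\cK^\1|^\cC_{s/16,\varrho r/2} \lessdot \sqrt\e \, r$ (estimate \eqref{R.est00}) one gets $|\chi^\1|^\cC_{s/24,\varrho r/2} \lessdot (\e\gamma)^{-1}\sqrt\e\, r \lessdot r/\sqrt\e$, which is item (i) (here I also use $\gamma \sim \gamma_0$ fixed). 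Since $r_0 \ll \sqrt\e$ this quantity is small, so by Proposition \ref{prop:mer}(ii) the flow $\cT^\1$ is a well-defined majorant analytic symplectic map $D(s/32,\varrho r/4) \to D(s/16,\varrho r/2)$ together with its inverse. Expanding by Lie series,
\begin{equation}
\label{lie.3b}
\cK \circ \cT^\1 = \omega\cdot\yy + \sum_\jj \Omega_\jj |a_\jj|^2 + \Big(\cK^\1 + \{\omega\cdot\yy + \textstyle\sum_\jj \Omega_\jj|a_\jj|^2, \chi^\1\}\Big) + \re1(\chi^\1; \cK^{(\geq 2)}) + \re2\big(\chi^\1; \omega\cdot\yy + \textstyle\sum_\jj \Omega_\jj|a_\jj|^2\big) \ ,
\end{equation}
where the first parenthesis vanishes by \eqref{hom.3b}. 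Using $\{\omega\cdot\yy + \sum\Omega_\jj|a_\jj|^2, \chi^\1\} = -\cK^\1$, the last term rewrites as $\sum_{k\geq 1}\frac{\ad(\chi^\1)^k}{(k+1)!}(-\cK^\1)$; by Proposition \ref{prop:mer}(iii) (minimal scaling degree of $\chi^\1$ is $1$, of $\cK^\1$ is $1$, of $\cK^{(\geq 2)}$ is $2$) both $\re1(\chi^\1;\cK^{(\geq 2)})$ and $\re1(\chi^\1;\cK^\1)$-type terms have minimal scaling degree $\geq 2$, with norm $\lessdot |\chi^\1|\cdot(|\cK^{(\geq 2)}| + |\cK^\1|) \lessdot (r/\sqrt\e)(r^2 + \sqrt\e\, r) \lessdot r^2$ on $D(s/32,\varrho r/4)$ (absorbing higher powers of $r/\sqrt\e$, which are $\ll 1$). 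Collecting, $\cR^{(\geq 2)} := \re1(\chi^\1;\cK^{(\geq 2)}) - \sum_{k\geq 1}\frac{\ad(\chi^\1)^k}{(k+1)!}\cK^\1$ has scaling degree $\geq 2$ and $|\cR^{(\geq 2)}|_{s/32,\varrho r/4} \lessdot r^2$, giving item (ii).

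For item (iii), since $\wtcM, \wtcP$ are preserved by the previous steps (Theorems \ref{thm:q}, \ref{thm:kam}) and $\cK^\1$ commutes with them, the explicit formula for $\chi^\1$ (a $\Z$-linear combination over the same admissible triples, divided by the nonzero divisors) inherits the selection rules of Remark \ref{leggi_sel1}, so $\{\chi^\1,\wtcM\} = \{\chi^\1,\wtcP\} = 0$; hence $\wtcM \circ \cT^\1 = \wtcM$ and $\wtcP \circ \cT^\1 = \wtcP$. The main obstacle in this step is not the normal-form algebra, which is routine, but ensuring that the triples generated by the cubic term genuinely fall in $\fA_3 \setminus \fR_3$ so that the Melnikov bounds of \eqref{set.C} apply --- this is exactly where one uses that $|\alpha+\beta|$ is odd (forcing $\ell \neq 0$) together with mass and momentum conservation, i.e.\ the same structural input that made Lemma \ref{lem.F} work; the rest is bookkeeping of the majorant-analytic norms with the $r/\sqrt\e \ll 1$ smallness.
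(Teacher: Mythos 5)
Your proposal follows essentially the same route as the paper: solve the homological equation $\{\cN,\chi^\1\}+\cK^\1=0$ using the third-order Melnikov conditions on $\cC$, bound $\chi^\1$ by losing an $\e^{-1}$ from the small divisors against the $\sqrt\e\,r$ bound on $\cK^\1$, verify the flow is well defined since $r_0\ll\sqrt\e$, and push the Lie remainder into $\cR^{(\geq 2)}$ with the $\lessdot r^2$ estimate via Proposition~\ref{prop:mer}.

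One point of the justification is not quite right, even though the conclusion is. You claim that cubic admissible triples satisfy $\ell\neq 0$ ``because $|\alpha|+|\beta|=3$ is odd, mass conservation forces $\eta(\ell)+\eta(\alpha,\beta)=0$ with $|\eta(\alpha,\beta)|$ odd.'' But after the normal form of Theorem~\ref{thm:q} the conserved mass is $\wtcM$, whose selection rule involves $\widetilde\eta(\alpha,\beta)=\sum_{\jj\notin\sS\cup\ccC\cup\Tan}(\alpha_\jj-\beta_\jj)$, which only counts modes outside $\sS\cup\ccC\cup\Tan$ (see Remark~\ref{leggi_sel1}). If all three $\jj_i$ lie in $\sS\cup\ccC$, then $\widetilde\eta=0$ and one can indeed have $\ell=0$. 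The reason the third-order Melnikov bounds of \eqref{set.C} still apply is simpler and unconditional: a monomial $a_{\jj_1}^{\sigma_1}a_{\jj_2}^{\sigma_2}a_{\jj_3}^{\sigma_3}$ with all $\sigma_i\in\{\pm 1\}$ can never Poisson commute with every action $|a_\jj|^2$ (that would require $\sigma_1+\sigma_2+\sigma_3=0$, impossible for three odd signs), so $\fR_3$ contains no cubic triple and every admissible cubic triple automatically lies in $\fA_3\setminus\fR_3$. (The paper itself glosses over this.) A smaller bookkeeping slip: your Lie expansion and the final formula for $\cR^{(\geq 2)}$ omit the bare $\cK^{(\geq 2)}$ term and do not cleanly display $\re1(\chi^\1;\cK^\1)$; you do estimate both in words, and both are $\lessdot r^2$, so this does not affect the result, but the displayed identity should be corrected.
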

\begin{proof}
As usual 	we look for $\cT^\1$ as the time one flow of an Hamiltonian $\chi^\1$.  
With $\cN := \omega \cdot \yy +  \sum_{\jj\in \Z^2\setminus \Tan}\Omega_\jj(\lambda, \e) |a_\jj|^2$, 	we have 
\begin{align}
\label{blu.10}
\cK\circ \cT^\1    = & \  \cN +  \{ \cN , \chi^\1 \} + \cK^\1  \\
\label{blu.20}
& + \re2(\chi^\1; \, \cN ) + \re1(\chi^\1; \,  \cK^\1 )  \\
\label{blu.30}
& + \cK^{(\geq 2)} \circ \cT^\1
\end{align}	
	We choose $\chi^\1$  to solve the homological equation $  \{ \cN , \chi^\1 \} + \cK^\1 = 0$. Thus 
	$$
\mbox{setting}\;\; \cK^\1=	\sum_{(\bj,\ell,\bs)\in\; \fA_3\atop \,\sigma_i=\pm 1} K^{\bs}_{\ell,\bj}(\lambda, \e)\,   e^{\im \theta\cdot \ell}a^{\sigma_1}_{\jj_1}a^{\sigma_2}_{\jj_2}a^{\sigma_3}_{\jj_3}
\,,
\quad \Rightarrow\;
\chi^\1=	\sum_{(\bj,\ell,\bs)\in\; \fA_3\atop \,\sigma_i=\pm 1} \chi^{\bs}_{\ell,\bj}(\lambda, \e)\,  e^{\im \theta\cdot \ell}a^{\sigma_1}_{\jj_1}a^{\sigma_2}_{\jj_2}a^{\sigma_3}_{\jj_3}
	$$
	with 
	$$
	\chi^{\bs}_{\ell,\bj}(\lambda, \e) :=
	 \frac{-\im K^{\bs}_{\ell,\bj}(\lambda, \e)}{\omega \cdot \ell + \s_1\Omega_{\jj_1}(\lambda, \e)+ \s_2 \Omega_{\jj_2}(\lambda, \e)+ \s_3 \Omega_{\jj_3}(\lambda, \e)} \ .
	$$
Since $\lambda \in \cC$, the third order Melnikov conditions of Theorem \ref{thm:kam} hold, thus we have 
	$$
|{\chi}^\1|_{\frac{s}{24}, \frac{\varrho r}{2}}^\cC \leq C(s,\tau)\e^{-1} |\cK^\1|_{\frac{s}{16}, \frac{\varrho r}{2}} \lessdot \frac{r}{\sqrt{\e}} \ . 
	$$
	Thus choosing 
	\begin{equation}
	\label{ro}
	0<r \leq r_0 \leq c \,  \sqrt\e
	\end{equation}
	with $c$ sufficiently small, Proposition \ref{prop:mer} $(ii)$ guarantees that $\chi^\1$ generates a flow. 	
	We come to  the terms of line \eqref{blu.20}. First we use the homological equation  $  \{ \cN , \chi^\1 \} + \cK^\1 = 0$ to get that
	\begin{align*}
\re2\left( \chi^\1; \cN \right) & 
 = \sum_{k \geq 2} \frac{{\rm ad}(\chi^\1)^{k-1}[ \{ \chi^\1, \cN \}]}{k!} = \sum_{k \geq 1} \frac{{\rm ad}(\chi^\1)^{k}[\cK^\1 ]}{(k+1)!}  \ ,
\end{align*}
and Proposition  \ref{prop:mer}$(iii)$ implies that $\re2(\chi^\1; \, \cN )$, $\re1(\chi^\1; \,  \wt\cH^\1 )$ and $\cK^\2 \circ \cT^\1$ have scaling degree at least 2 and fulfill the quantitative estimates
	$$
	\abs{\re2(\chi^\1; \, \cN )}_{\frac{s}{32}, \frac{\varrho r}{4}}^\cC \lessdot r^2 \ , 
	\qquad 
	\abs{\re1(\chi^\1; \,  \wt\cH^\1 )}_{\frac{s}{32}, \frac{\varrho r}{4}}^\cC  \lessdot r^2 \ , 
	\qquad
	\abs{\cK^\2 \circ \cT^\1}_{\frac{s}{32}, \frac{\varrho r}{4}}^\cC  \lessdot r^2 \ . 
	$$
	To conclude the proof we show that $\{ \wt\cM, \chi^\1\} = \{ \wt \cP, \chi^\1\} = 0$. 
	This follows since $\cK^\1$ commutes with $\wt\cM$ and $\wt \cP$, hence its monomials fulfill the selection rules of Remark \ref{leggi_sel1}. 
	By the explicit formula for $\chi^\1$ it follows that the same selection rules hold for $\chi^\1$.
This conclude the proof.
	\end{proof}

\begin{proof}[Proof of Theorems \ref{main2} and \ref{main3}] 
Given $s_0 >0$,   we fix $\tL$, $\e_\star$ as in Theorem \eqref{thm:kam}, and  $r_0$ to fulfill \eqref{parapendio} and \eqref{ro}. 
Then Theorem \ref{main2}  follows by applying the sequence of
 transformations $\cT^\1$, $\widehat \cT$, $\cT^\0$ defined  in Theorems \ref{thm:3b}, \ref{thm:kam}, \ref{thm:q} and setting $
\varrho_0 = \varrho/4$.

 The asymptotics of the frequencies claimed in Theorem \ref{main3} is proved in Theorem \ref{thm:kam}, more precisely in Corollary \ref{stoca}.
\end{proof}

\section{Proof of Theorem \ref{thm:main}}
In this last section we show how  Theorem \ref{main2} implies Theorem \ref{thm:main}.
We start by fixing  $s_0 >0$, $p >1$,  a  $\tL$-generic support set $\cS_0$,    $0< \e <\e_\star$ ($\e_\star$ is given by 
Theorem \ref{main2}) and   $\lambda \in \cC$ of \eqref{3.mc}.
This fixes the torus $\tT^\tk(\cS_0, \lambda) \equiv \tT^\tk(\cS_0, I_\tm(\lambda))$. Note that the actions $I_\tm$ are fixed by \eqref{boia}.

First we  study the dynamics of the Hamiltonian \eqref{H.2} showing that $\yy = 0$, $a = 0$ is orbitally stable. 
More precisely we prove that there exist $K_0, T_0 >0$, independent of $r, \e$  s.t. 
\begin{equation}
\label{os.1}
\left(\yy(0), \theta(0), a(0)\right) \in D(\frac{ s_0}{64\cdot 32}, \varrho_0 K_0 r) 
\ \ \Longrightarrow  \ \ 
\left(\yy(t), \theta(t), a(t)\right) \in D(s_0, r)  \ \ \ 
\forall |t| \leq T_0/r^2  \ .
\end{equation} 
To  prove  \ref{os.1} we apply the change of coordinates $\cT$ of Theorem \ref{main2}. Recall that
both $\cT$ and its inverse $\cT^{-1}$ map $D(s/32, \varrho_0 r) \to D(s,r)$ for any $0 <r \leq r_0$ and {$\frac{s_0}{64} \leq s \leq s_0$.}  Denoting
$(\yy, \theta, \ba) = \cT(\yy', \theta, \ba')$, the Hamiltonian in the variables $(\yy', \theta, \ba')$ is given by \eqref{ham.bnf.fin}, and its  equations of motion are
\begin{equation}
\label{eq.yythetaa}
\begin{cases}
\dot \yy' = - \partial_\theta \cR^{(\geq 2)}(\yy', \theta, a', \bar a' ) \\
\dot \theta = \omega  +  \partial_\yy \cR^{(\geq 2)}(\yy', \theta, a', \bar a' )\\
\im \dot a'_\jj  = \Omega_\jj \, a'_\jj +  \partial_{\bar a'} \cR^{(\geq 2)}(\yy', \theta, a', \bar a' )
\end{cases}
\end{equation}
Then we prove the following bootstrap lemma: 
\begin{lemma}
\label{lem:b}
Let  $K_0 < \varrho_0$ and consider the system \eqref{eq.yythetaa} with initial datum $(\yy'(0), \theta(0), \ba'(0)) \in D( s_0/64 , K_0 r)$. 
Assume that exists $T_0 >0$ s.t. the quantities $\cJ(t):= \norm{a'(t)}^2 + \abs{\yy'(t)}_1 $ and $\Theta(t) := {\rm Im } |\theta(t)| $ fulfill
\begin{equation}
\label{b0}
\sup_{|t| \leq T_0 r^{-2}} \cJ(t) \leq   \varrho_0^2 r^2 \  , 
\qquad
\sup_{|t| \leq T_0 r^{-2}} \Theta(t) \leq   \frac{s_0}{32}  \ .
\end{equation}
 Then, provided $ K_0, T_0$ are small enough (independently from $r$)  one has
\begin{equation}
\label{b1}
\sup_{|t| \leq T_0 r^{-2}} \cJ(t) \leq \frac{ \varrho_0^2 r^2}{2}  \  , 
\qquad
\sup_{|t| \leq T_0 r^{-2}} \Theta(t) \leq  \frac{ s_0 }{40} \ .
\end{equation}
\end{lemma}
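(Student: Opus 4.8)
\textbf{Proof strategy for Lemma \ref{lem:b}.} The plan is a classical \emph{a priori} bootstrap: assume \eqref{b0}, integrate the equations of motion \eqref{eq.yythetaa} in time, and show the improved bounds \eqref{b1} follow once $K_0$ and $T_0$ are chosen small. The key structural input is the size of the remainder: by Theorem \ref{main2}, $\abs{\cR^{(\geq 2)}}_{s/32,\varrho_0 r} \leq C_\cR r^2$, and, crucially, $\cR^{(\geq 2)}$ contains only monomials of scaling degree at least $2$, i.e. terms which are either of order $\geq 4$ in $(a,\bar a)$, or linear in $\yy$ times quadratic in $(a,\bar a)$, or quadratic in $\yy$. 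Consequently the Hamiltonian vector field $X_{\cR^{(\geq 2)}}$ vanishes to order $2$ at $\yy=0,\ a=0$; combined with the norm bound and Proposition \ref{prop:mer}(i) (degree decomposition / scaling), on the ball where $\cJ(t)\leq \varrho_0^2 r^2$ one gets a pointwise bound of the form $\bnorm{X_{\cR^{(\geq 2)}}}_{\varrho_0 r} \lessdot r^2$, hence, in unscaled quantities,
\begin{equation}
\label{vf.est}
\abs{\partial_\theta \cR^{(\geq 2)}}_1 \lessdot \varrho_0^2 r^4 \ , \qquad \abs{\partial_\yy \cR^{(\geq 2)}}_\infty \lessdot r^2 \ , \qquad \norm{\partial_{\bar a'} \cR^{(\geq 2)}} \lessdot r^3 \ ,
\end{equation}
valid as long as $(\yy'(t),\theta(t),\ba'(t))$ stays in $D(s_0/32,\varrho_0 r)$, which is guaranteed by the bootstrap assumption \eqref{b0}.

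\textbf{Controlling $\cJ(t)$.} First I would note that $\cN := \omega\cdot\yy' + \sum_\jj \Omega_\jj |a'_\jj|^2$ is \emph{not} conserved, but the combination $\cM := \sum_i \yy'_i + \norm{a'}^2$ (the mass $\wtcM$ of \eqref{mp.4}, which by Theorem \ref{main2} is preserved by $\cT$) \emph{is} a constant of motion for \eqref{eq.yythetaa}; actually all of $\wtcM,\wtcP$ are conserved. This gives a free bound on part of $\cJ$ but not directly on $\abs{\yy'}_1$ since $\yy'_i$ need not be sign-definite. So instead I would estimate $\cJ(t)$ directly by differentiating: using \eqref{eq.yythetaa} and \eqref{vf.est},
\begin{equation}
\abs{\frac{d}{dt}\cJ(t)} \leq \abs{\dot\yy'}_1 + 2\norm{a'}\,\norm{\dot a'} \leq \abs{\partial_\theta \cR^{(\geq 2)}}_1 + 2\norm{a'}\,\norm{\partial_{\bar a'}\cR^{(\geq 2)}} \lessdot r^4 \ ,
\end{equation}
where I used that $\Omega_\jj|a'_\jj|^2$ contributes nothing to $\frac{d}{dt}\norm{a'}^2$ (the diagonal part generates only phase rotations). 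Integrating over $|t|\leq T_0 r^{-2}$ gives $\cJ(t) \leq \cJ(0) + C T_0 r^2$. Since $\cJ(0) \leq (\varrho_0 K_0 r)^2 + (\varrho_0 K_0 r)^2 \lessdot K_0^2\varrho_0^2 r^2$ by the hypothesis on the initial datum, choosing $K_0$ and $T_0$ small (independently of $r$) makes the right-hand side $\leq \varrho_0^2 r^2/2$, which is the first bound in \eqref{b1}.

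\textbf{Controlling $\Theta(t)$.} For the imaginary part of the angles, I would integrate the $\dot\theta$ equation: ${\rm Im}\,\theta(t) = {\rm Im}\,\theta(0) + \int_0^t {\rm Im}\,\partial_\yy\cR^{(\geq 2)}\,d\tau$ (note $\omega$ is real, so the free flow does not move ${\rm Im}\,\theta$). By \eqref{vf.est}, $\abs{{\rm Im}\,\partial_\yy\cR^{(\geq 2)}}_\infty \lessdot r^2$, hence $\Theta(t) \leq \Theta(0) + C T_0$. Since $\Theta(0) \leq s_0/64$ by hypothesis, taking $T_0$ small enough (independently of $r$) yields $\Theta(t) \leq s_0/64 + C T_0 \leq s_0/40$, the second bound in \eqref{b1}. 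The only subtlety, and the point I would be most careful about, is the \emph{ordering of the smallness choices}: $K_0$ is fixed first (small enough that $C K_0^2 \varrho_0^2 \leq \varrho_0^2/4$, say), then $T_0$ is fixed (small enough that $C T_0 \leq \varrho_0^2 r_0^2\cdot(\text{something})$ — but here one must be careful since the $\cJ$ estimate gives $CT_0 r^2$, which is already $O(r^2)$ so the constraint on $T_0$ is $r$-independent, and $C T_0 \leq s_0/40 - s_0/64$ for the angle estimate is manifestly $r$-independent). The main (mild) obstacle is simply bookkeeping: making sure that every appearance of the remainder vector field is evaluated on a domain where the bootstrap hypothesis \eqref{b0} legitimately applies, so that \eqref{vf.est} holds throughout the integration interval, and checking that the constants $C,C_\cR$ are genuinely independent of $\e$ and $r$ — which is exactly what Theorem \ref{main2} guarantees via \eqref{R.est0}.
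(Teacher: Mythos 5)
Your proof is correct and follows essentially the same bootstrap strategy as the paper: extract the vector-field bounds from \eqref{R.est0} and the structure of $\cR^{(\geq 2)}$, integrate the equations of motion over $|t|\leq T_0 r^{-2}$, and fix $K_0$ and then $T_0$ small in an $r$-independent way. The paper tracks $|\yy'|_1$ and $\|a'\|$ separately via the triangle inequality (rather than differentiating $\cJ$, which is fine but needs a small care at zeros of $\yy'_i$), and your $\cJ(0)\lessdot \varrho_0^2 K_0^2 r^2$ should read $\cJ(0)\leq 2 K_0^2 r^2$ since the initial datum lies in $D(s_0/64, K_0 r)$ — but these differences are cosmetic and do not affect the conclusion.
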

\begin{proof}
In the course of the proof we drop the superscript $'$ from the variables.
Consider first the dynamics of $\yy$. 
By the very definition of  $\abs{\cdot}_{s,r}$ 
(see \eqref{musically} and \eqref{listen}) 
 we have  
 $$
\sup_{(\yy, \theta, \ba) \in D(s_0/32, \varrho_0r)} \abs{ \partial_\theta \cR^{(\geq 2)}(\yy, \theta, \ba)}_1 
\leq 
\varrho_0^2 r^2 \abs{\cR}_{\frac{s_0}{32},\varrho_0 r}  \stackrel{\eqref{R.est0}}{\leq } C_\cR\, \varrho_0^2\,   r^4 \ .
$$ 
Thus for any $|t| \leq T_0 r^{-2}$  one estimates
\begin{align}
\notag
\abs{\yy(t)}_1 & \leq  \abs{\yy(0)}_1 + |t| \sup_{|t| \leq T_0/r^2 } \abs{ \partial_\theta \cR^{(\geq 2)}(\yy(t), \theta(t), a(t), \bar a(t) )}_1 \\
& 
\label{yy.est.b}
\leq 
K_0^2 r^2  +T_0 C_\cR \,  \varrho_0^2 r^2  \leq \frac{\varrho_0^2 \, r^2}{4}
\end{align}
provided $K_0^2  \leq \varrho_0^2/8$ and $T_0 < 1 / 8C_\cR $.
Similarly, using the equations of motion for $a$ and the estimate
$$
\sup_{(\yy, \theta, \ba) \in D(s_0/32, \varrho_0 r)}
\abs{ \partial_{\bar a} \cR^{(\geq 2)}(\yy, \theta, \ba)}_1 
\leq 
\abs{\cR}_{\frac{s_0}{32}, \varrho_0 r} \, \varrho_0 \, r 
\stackrel{\eqref{R.est}}{\leq } C_\cR\,  \varrho_0\,  r^3 \ , 
$$
 for any  $|t| \leq T_0 r^{-2}$
 one gets
\begin{align}
\label{a.est.b}
\norm{a(t)} & \leq  \norm{a(0)} + |t| \sup_{|t| \leq T_0/r^2 } \abs{ \partial_{\bar a } \cR^{(\geq 2)}(\yy(t), \theta(t), a(t), \bar a(t) )} 
\leq 
K_0 r + T_0 C_\cR\, \varrho_0\, r  \leq \frac{\varrho_0 r}{2} 
\end{align}
provided $K_0 \leq  \varrho_0/4$ and $T_0$ as above. \\
Finally  the equation for $\theta$ and the estimate
$\displaystyle{\sup_{(\yy, \theta, a) \in D(s_0/32, \varrho_0 r)}
\abs{ \partial_{\yy} \cR^{(\geq 2)}(\yy, \theta, a)}_\infty 
\leq  C_\cR \, r^2}$ 
give
\begin{equation}
\label{t.est.b}
\abs{{\rm Im }\, \theta(t)}
\leq 
\abs{{\rm Im }\, \theta(0)} 
+ T_0 C_\cR \leq \frac{s_0}{40}
\end{equation}
provided  $T_0< s_0/C_\cR 10^3$ as above.
Estimates \eqref{yy.est.b},  \eqref{a.est.b} 
and \eqref{t.est.b}
imply \eqref{b1}.
\end{proof}
This bootstrap lemma and the properties of $\cT$ and $\cT^{-1}$ imply that, taking  $ K_0 \leq \min\left( \frac{\varrho_0}{4}, \frac{\varrho_0^2}{8}\right)$ and $T_0 \leq \min\left( \frac{s_0}{C_\cR 10^3}, \frac{1}{8 C_\cR} \right)$, one has 
\begin{align*}
(\yy(0), \theta(0),  a(0)) \in D(\frac{s_0}{64 \cdot 32}, \varrho_0 K_0 r) & \Rightarrow 
(\yy'(0), \theta(0),  a'(0)) \in D(\frac{ s_0}{64}, K_0 r) \\
&\Rightarrow 
(\yy'(t), \theta(t),  a'(t)) \in D(s_0/32,  \varrho_0 r) \Rightarrow
(\yy(t), \theta(t),  a(t)) \in D(s_0, r) 
\end{align*}
which is valid for all $ |t| \leq T_0/r^2$; this  proves \eqref{os.1}.
 
Finally we show how the stability result \eqref{os.1} implies Theorem \ref{thm:main}. 
Thus fix $\delta_0>0$ so that
\begin{equation}
\label{delta0}
c_\star^{-1} \delta_0 < K_0 \varrho_0 r_0
\end{equation}
where $c_\star$ is defined in Proposition \ref{prop:adap}.
Take  $0<\delta < \delta_0$ and $u \in \cV_\delta$. Then by Proposition \ref{prop:adap}(ii), 
$u(0) \in \cV_{\delta}$ implies that
 $(\yy(0), \theta(0), \ba(0)) \in D(\frac{s_0}{64\cdot 32}, c_\star^{-1}  \delta )$.
 Choose now  $ r= c_\star^{-1} (K_0 \varrho_0)^{-1} \delta$ (note that  \eqref{delta0} guarantees that $r \leq r_0$ is fulfilled) and apply  \eqref{os.1} to get that  $(\yy(t), \theta(t),  \ba(t)) \in D(s_0, r)$ 
for $|t| \leq T_0/r^2$. Applying  again  Proposition \ref{prop:adap} one gets $u(t) \in \cV_{c_\ast(s_0) r} \equiv \cV_{C \delta}$.

\appendix

\section{Proof of Lemma \ref{rem:inf.s0}}
\label{generic.infinity}

First we consider \eqref{gen.cond}.
Let us consider $\Tan$ as a point  $\vec{\tm}=(\tm_1,\dots,\tm_\tk)\in \C^\tk$.  With this identification $B_R$   is the set of integer vectors in the cube $[-R,R]^\tk$ with ordered components.
 An intersection $\sS\cap\ccC$ is a point
$$
v_{i,j,l}(\vec\tm)=(x,y)\in \C^2:  \quad x= \tm_l\,,\quad y =\sqrt{(\tm_l-\tm_i)(\tm_j-\tm_l) }
$$
for some triple of indexes $i,j,l \in \{1, \ldots, \tk\}$. So fix such  indexes and consider the map 
$$
\Gamma:\C^\tk\to \C^{\tk+2}, \qquad  \vec \tm \to (\vec{\tm}, v_{i,j,l}(\vec\tm)).
$$
Now  we restrict our attention to $\vec{\tm}\in \Z^\tk$ and we wish to avoid  to choose $\vec{\tm}$  so that $v_{i,j,l}(\vec\tm)\in \Z^2$.
Our first remark is that if $\vec{\tm}\in B_R$ then we have that $|v_{i,j,l}(\vec\tm)|\le 3R$.
In the square $\sup_{i}|\tm_i|\leq R$ there are $(2R)^\tk$ integer points and we claim that once we remove all points with $v_{i,j,l}(\vec\tm)\in \Z^2$, we still have many integer points left. So in $\C^{\tk+2}$ let us  count the integer valued points $(\vec{\tm},v)$ such that 
$\sup_{i}|\tm_i|\leq R$, $|v|\le 3R$ and $v= v_{i,j,l}(\vec\tm)$. We can fix all the $\tm_s$ with $s\neq l$ in any way we want (we get $(2R)^{\tk-1}$ points) then for each such choice, $\tm_l$ is the $x$-coordinate  of one of the integer points on the circle with diameter $(\tm_i,0),(\tm_j,0)$.
But on a circumference of radius $R$ the  number of  integer points are of the order $R^\delta$ with $\delta$ arbitrarily small, so the bad points  in $\C^{\tk+2}$ are of order $R^{\tk-1+\delta}$. Then the pre-image through $\Gamma$ of these points is composed at most of $R^{\tk-1+\delta}$ points. Since the total amount of points is of order $R^\tk$ we have lots of good points, i.e. points $\vec\tm\in \Z^\tk$ such that the intersection $v_{i,j,l}(\vec\tm)$ is not an integer. When we play this game for all the triples of indexes $(i,j,l)$ we still get many points.
Regarding the intersection $\ccC_{i,j} \cap \ccC_{l,s}$  the reasoning is the same.
An intersection is a point $v= v_{i,j,l,s}(\vec\tm)$ which solves the equation
$$
(x-\tm_i)(x-\tm_j)+ y^2 =0 \,,\quad (x-\tm_l)(x-\tm_s)+ y^2 =0  \ . 
$$
So we consider the map 
$$
\widetilde \Gamma:\C^\tk\to \C^{\tk+2}, \qquad \; \vec \tm \to (\vec{\tm}, v_{i,j,l,s}(\vec\tm)).
$$
As before we make the ansatz that $\sup_{i}|\tm_i|\leq R$. Then we have that $|v_{i,j,l,s}(\vec\tm)|\le 3R$.
Now we can fix all the points $\tm_q$ for $q\neq s$ ($R^{\tk-1}$ points) and  for each such choice, fix  $v$ as  one of the $R^\delta$ integer points on the circle with diameter $(\tm_i,0),(\tm_j,0)$. Then $\tm_s$ is fixed by the last equation (which is linear for $\tm_s$). In conclusion we have again $R^{\tk-1+\delta}$ bad points.  Here we have to play such game for all the quadruples $(i,j,l,s)$.

Finally we consider condition \eqref{pop}. Fix $\ell$ with $|\ell| \leq \tL$. Then the equation $\sum \ell_i x_i = 0$ defines
an hyperplane in $\C^{\tk}$, and in the hypercube of size $R$ there are $\sim R^{\tk -1}$integer  bad points to exclude. Taking the union over all the possible hyperplanes  gives $\sim \tL^\tk R^{\tk-1}$ bad points to exclude.  In conclusion the number of bad points in $B_R$ is $\le C(\tk)R^{\tk-1+\delta}$, so
$$
\lim_{R\to \infty}\frac{|G_R|}{|B_R|} \ge 1-  \lim_{R\to \infty}\frac{C(\tk)R^{\tk-1+\delta} }{|B_R|} =1.
$$

\section{Proof from Section \ref{fun}}
\label{app:fun}
We begin by defining projections of Hamiltonians. More precisely we define projections on {\em any subspace} defined as the closure in $\cA_{s,r}$ of the monomials $ e^{\im \ell\cdot \theta}\, \yy^l \,  a^\al \bar a^\bt$ satisfying  some rule $\ell,l,\al,\bt\in \mathbb I$ with $\mathbb I\subset \Z^\tk\times\N^\tk\times \N^{\Z^2\setminus\cS_0}\times \N^{\Z^2\setminus\cS_0}$:
\begin{equation}
\label{proj} 
\Pi_\mathbb I h :=  \sum_{\ell\in \Z^\tk ,l\in \N^\tk,\al,\bt\in\N^{\Z^2\setminus\cS_0}  \atop 
	\ell,l,\al,\bt\in \mathbb I} h_{\al,\bt, l, \ell} \ e^{\im \ell\cdot \theta}\, \yy^l \,   a^\al \bar a^\bt \ . 
\end{equation}
In particular we denote  by $\Pi_N$ the projection on trigonometric polynomials of degree $\leq N$, i.e.
\begin{equation}
\label{def:PN}
\Pi_N h :=  \sum_{\ell\in \Z^\tk, \, |\ell| \leq N  \atop 
	l,\al, \bt} h_{\al,\bt, l, \ell}\  e^{\im \ell\cdot \theta}\yy^l  a^\al \bar a^\bt\ . 
\end{equation} 

\begin{proposition}
	\label{riassunto} 
	For every $ s,r>0$ the following holds true:
	\begin{itemize}
		\item[(i)] {\em Continuity:} all projections  $\Pi_\mathbb I$ are continuous, namely
		$$
		|\Pi_\mathbb I h|_{s,r}^\cO\leq |h|_{s,r}^\cO \ . 
		$$
		\item[(ii)] {\em Smoothing}: for any $0< s' <s$ one has
		$$
		|\Pi_{ N} h|_{s,r}^\cO\leq e^{{s'}N} | h|_{s-s',r}^\cO\,,\qquad |(\uno - \Pi_{ N}) h|_{s,r}^\cO\leq e^{-s'N} | h|_{s+s',r}^\cO
		$$
		\item[(iii)]{\em Partial ordering:} if we have 
		$$
		|f_{\al,\bt,l,\ell}|^\cO_\C \leq |h_{\al,\bt,l,\ell}|^\cO_\C \,,\quad \forall \; \al,\bt,l,\ell
		$$
		and $\{h_{\al,\bt,l,\ell}\}$ are the coefficients of the  Taylor-Fourier expansion of a function $h\in \cA_{s,r}^\cO$,  then there exists a unique function $f \in  \cA_{s,r}^\cO $ whose Fourier expansion has coefficients 
		$\{f_{\al,\bt,l,\ell}\}$ and such that
		$$ |f|_{s,r}^\cO\leq |h|_{s,r}^\cO$$
		\item [(iv)]{\em Graded Poisson algebra}: 
		Given $f,g\in \cA_{s,r}^\cO$, for any  $0<s'<s$ and $0<r'<r$ one has
		$$
		|\{f,g\}|^\cO_{s', r'}\leq \delta^{-1} C(s) \, |f|^\cO_{s,r} \ |g|^\cO_{s,r} \ ,  
		$$
		where $\delta:= \min \left(1-\frac{r'}{r}, 1-\frac{s'}{s}\right)$. 
		Moreover if $f$ and $g$ are monomials of the form $e^{\im \ell\cdot \theta}\, \yy^l \,   a^\al \bar a^\bt $ one has that  ${\rm deg}(\{f,g\})= {\rm deg}(f)+{\rm deg}(g)$ and $\pi(\{f,g\})= \pi(f)+\pi(g)$.
	\end{itemize}
\end{proposition}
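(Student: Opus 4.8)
\textbf{Plan of proof for Proposition \ref{riassunto}.}
The plan is to establish the four items in order, each reducing to an elementary estimate on Taylor--Fourier coefficients combined with the definition \eqref{musically} of the norm $|\cdot|_{s,r}$ via the majorant vector field. The guiding principle throughout: all quantities $|h|_{s,r}^\cO$ are by definition suprema over $D(r)$ of $\ell^1$-type norms of the majorant of the Hamiltonian vector field, so every bound will be proved at the level of the majorant series \eqref{seriamente}, where all coefficients are nonnegative and monotonicity is transparent. I will carry out the arithmetic on the $\lambda$-independent norm $|\cdot|_{s,r}$; the Lipschitz estimates for $|\cdot|_{s,r}^\cO$ follow verbatim by applying the same inequalities to the difference quotient $\frac{h(\lambda_1,\cdot)-h(\lambda_2,\cdot)}{|\lambda_1-\lambda_2|}$, since all the manipulations below are linear in the coefficients.

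First, for item (i) (continuity of projections), I observe that passing from $h$ to $\Pi_{\mathbb I}h$ simply deletes a subset of the Taylor--Fourier monomials, hence it deletes the corresponding terms in the majorant of each component $X^{(w)}$ of the Hamiltonian vector field; since the majorant has nonnegative coefficients, the $\ell^1$/Sobolev norms in \eqref{listen} can only decrease, and taking the sup over $D(r)$ gives $|\Pi_{\mathbb I}h|_{s,r}\le|h|_{s,r}$. For item (ii) (smoothing), the point is that the majorant's $\theta$-dependence enters only through the weights $e^{s|\ell|}$; replacing $s$ by $s-s'$ and extracting the factor $e^{s'|\ell|}\le e^{s'N}$ on the retained modes $|\ell|\le N$, resp. $e^{-s'|\ell|}\le e^{-s'N}$ on the discarded modes $|\ell|>N$, yields the two inequalities at once, again working componentwise on the majorant and then taking the sup over $D(r)$. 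For item (iii) (partial ordering), given coefficients $f_{\al,\bt,l,\ell}$ dominated in absolute value by those of $h$, the associated majorant series \eqref{seriamente} of $X_f$ is dominated term-by-term by that of $X_h$; since $\underline{(X_h)}_s$ defines an analytic vector field on $D(r)$ by hypothesis (i.e. $h\in\cA_{s,r}^\cO$), a standard majorant/Weierstrass comparison shows $\underline{(X_f)}_s$ is likewise analytic on $D(r)$, so $f\in\cA_{s,r}^\cO$, and taking suprema of \eqref{listen} over $D(r)$ preserves the pointwise domination, giving $|f|_{s,r}\le|h|_{s,r}$.

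The substantive step, and the one I expect to be the main obstacle, is item (iv): the graded Poisson-algebra estimate with the Cauchy loss $\delta^{-1}C(s)$. The Poisson bracket $\{f,g\}$ contains derivatives $\partial_\theta,\partial_\yy,\partial_a,\partial_{\bar a}$ of $f$ and $g$; the plan is to express $X_{\{f,g\}}=[X_f,X_g]$ as a commutator of vector fields, pass to majorants (using that the majorant of a product of power series is dominated by the product of the majorants, and the majorant of $\partial_{\bar a}F$ is controlled via a Cauchy estimate on $D(r')\subset\subset D(r)$), and then absorb one derivative onto each factor. Concretely: a $\theta$-derivative brings down a factor $|\ell|$ which is dissipated against $e^{(s-s')|\ell|}\le (s')^{-1}e^{s|\ell|}$ up to a constant; a $\yy$- or $a$-derivative is estimated by a Cauchy inequality in the polydisc $D(r')$, losing a factor $(1-r'/r)^{-1}r^{-1}$ or $(1-r'/r)^{-1}r^{-2}$, which after clearing the normalization weights in \eqref{listen} produces exactly the $\delta^{-1}$ loss; here one must be careful to check that the weights $r^{-2},r^{-1}$ appearing in $\bnorm{\cdot}_r$ match up so that $\bnorm{\underline{[X_f,X_g]}_{s'}}_{r'}\le \delta^{-1}C(s)\,|f|_{s,r}|g|_{s,r}$, which is a bookkeeping check on the four types of derivative and the four components of the vector field. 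The homogeneity claim --- that for monomials $\deg(\{f,g\})=\deg f+\deg g$ and $\pi(\{f,g\})=\pi f+\pi g$ --- is immediate from \eqref{scadeg} and \eqref{def.pi}: each Poisson bracket $\{e^{\im\ell\cdot\theta}\yy^l a^\al\bar a^\bt,\,e^{\im\ell'\cdot\theta}\yy^{l'}a^{\al'}\bar a^{\bt'}\}$ is a sum of monomials with exponent data obtained by adding the two exponent vectors and subtracting one unit from a conjugate pair (either one $\yy_i$ against one $\theta_i$, or one $a_\jj$ against one $\bar a_\jj$), and in every case $2|l|+|\al|+|\bt|$ and the momentum $\pi$ are additive because the removed pair contributes $0$ to both (a $\yy_i$--$\theta_i$ pair removes $2$ from the first sum but the surviving $e^{\im\ell\cdot\theta}$ adjustment keeps $2|l|$ balanced; more simply one checks the shift directly on $\deg$ and $\pi$).

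Should the direct majorant bookkeeping in (iv) prove too delicate to state cleanly, the fallback is to invoke the standard references for majorant-analytic Hamiltonian calculus (e.g.\ \cite{masp_vey}, \cite{procesi15}) where exactly this graded Poisson estimate is established in the same functional setting, and to note that our norm $|\cdot|_{s,r}$ is equivalent to the ones used there; but I expect the self-contained argument sketched above to go through, so this is left to the appendix as indicated in the text.
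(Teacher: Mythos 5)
Your plan is correct and, at the level of proof strategy, is exactly what the paper does: the paper gives no self-contained argument for Proposition \ref{riassunto} but only the one-sentence remark that it follows by adapting the methods of Berti--Biasco--Procesi, where precisely this kind of majorant calculus (monotonicity of projections, exponential-weight smoothing, term-by-term comparison, and a Cauchy-loss Poisson bracket bound) is established. Your (i)--(iii) are clean, and your closing fallback is literally the paper's own move.

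One slip in the sketch for (iv): the $\theta$-derivative gain should read $|\ell|\,e^{s'|\ell|}\le \frac{C}{s-s'}\,e^{s|\ell|}$ (equivalently $\le \frac{C(s)}{1-s'/s}\,e^{s|\ell|}$), not ``$e^{(s-s')|\ell|}\le (s')^{-1}e^{s|\ell|}$''; it is the factor $|\ell|$ coming from $\partial_\theta$ that must be traded against the gap $s-s'$. This is a typo in the plan rather than a gap; the remaining bookkeeping (Cauchy estimates in the shrunk polydisc matched against the $r^{-2}$, $r^{-1}$ weights in the definition of $\bnorm{\cdot}_r$, so that the $(1-r'/r)^{-1}$ loss from the $\yy$- and $a$-derivatives lines up with $\delta^{-1}$) is the standard route and is what the cited reference carries out. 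Also note that in your heuristic for the additivity of $\deg$ under the bracket, the $\yy_i$--$\theta_i$ case is explained somewhat loosely; the direct computation is cleaner: removing one $\yy$ lowers $2|l|$ by $2$, the $\theta$-derivative leaves the weight unchanged, and the ``$-2$'' normalizations in \eqref{scadeg} cancel the drop, yielding $\deg(\{f,g\}) = \deg f + \deg g$ exactly as you conclude.
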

The proposition can be proved by adapting the methods of \cite{BBP}.

\subsection{Proof of Proposition \ref{boni}}
	In order to prove \eqref{HResto} we have to control 
	\begin{align}
	\notag
 &| \{(\la m_1\ra^{N_1+k}+ \la n \ra^{N_2}) H_{m_1, m_2, n}^{\pm, \ell} \}_{|\ell|> c \la m_1 \ra  }|^\cO_{s'}  \\ 
\label{posta}
 & \stackrel{\eqref{bagnetto}}{\le}
  | \{\la m_1\ra^{N_1+k} |H_{m_1, m_2, n}^{\pm, \ell}| \}_{|\ell|> c \la  m_1 \ra   }|^\cO_{s'}+  | \{ \la n \ra^{N_2} |H_{m_1, m_2, n}^{\pm, \ell}| \}|^\cO_{s'}	\ . 
	\end{align}
In the first summand since $|\ell|> c \la m_1\ra$ we have 
$$
| \{\la m_1\ra^{N_1+k} |H_{m_1, m_2, n}^{\pm, \ell}| \}_{|\ell|> c\la m_1\ra }|^\cO_{s'} \le 
c^{-k}| \{\la m_1\ra^{N_1} |\ell|^k |H_{m_1, m_2, n}^{\pm, \ell}| \}|^\cO_{s'} \ .
$$
Now we note that
$$
|\ell|^k e^{-(s-s')|\ell|} \le \frac{k!}{(s-s')^k} = k! \delta^{-k}
$$
so that
$$
| \{\la m_1\ra^{N_1+k} H_{m_1, m_2, n}^{\pm, \ell} \}_{|\ell|> c\la m_1\ra }|^\cO_{s'} \le 
k! \, c^{-k}\delta^{-k} | \{\la m_1\ra^{N_1}  |H_{m_1, m_2, n}^{\pm, \ell}| \}|^\cO_{s}\stackrel{\eqref{bagnetto}}{\le} k! \,  c^{-k}\delta^{-k} \lceil H\rfloor_{s,-(N_1,N_2)}^{\cO} \ .
$$
In the second summand  of \eqref{posta} we just have 
$$
 | \{ \la n \ra^{N_2} |H_{m_1, m_2, n}^{\pm, \ell}| \}|^\cO_{s'}\le 	\lceil H\rfloor_{s',-(N_1,N_2)}^\cO \le \lceil H\rfloor_{s,-(N_1,N_2)}^\cO \ . 
$$

\subsection{Proof of Lemma \ref{lem:pseudo00} }
{\bf {\em Proof of Lemma \ref{lem:pseudo00} (i).}} By \ref{bagnetto} we have
\begin{equation}\label{pianto}
	\lceil \{ F, G \} \rfloor_{s', -\bN-\bM}^\cO \leq \lceil \{ F, G \} \rfloor_{s', -(N_1+M_1,0)}^\cO + \lceil \{ F, G \} \rfloor_{s', -(0, N_2+M_2)}^\cO \ . 
\end{equation}
	We start by discussing the first term.
	Following Lemma \ref{boni} we divide $F= F^B+F^R$ and $G= G^B+G^R$, so that
	$$
	\{F,G\}= \{F^B,G^B\}+\{F,G^R\}+\{F^R,G^B\} \ . 
	$$ 
	We claim that
	\begin{align}\label{mai}
	& \lceil \{ F^B, G^B \} \rfloor_{s', -(N_1+M_1,0)}^\cO \leq 4^{\max(M_1,N_1)}\,  \, \lceil F^B \rfloor_{s',-\bN}^\cO \,  \lceil G^B \rfloor_{s',-\bM}^\cO  \\\label{fine}
	& \lceil \{ F, G^R \} \rfloor_{s', -(N_1+M_1,0)}^\cO+ \lceil \{ F^R, G^B \} \rfloor_{s', -(N_1+M_1,0)}^\cO  \leq C_{N_1,M_1}\delta^{-N_1-M_1}\,  \, \lceil F \rfloor_{s}^\cO \,  \lceil G \rfloor_{s}^\cO
	\end{align}
	Let us start by proving \eqref{mai}: we divide
	\begin{align*}
	\{ F^B, G^B \} & = \{F^B, G^B\}^{\rm diag} + \{F^B, G^B\}^{\rm out} + \{F^B, G^B\}^{\rm line} \\
	&= \{F^{\rm diag}_B, G^{\rm diag}_B\} +  \{F^{\rm out}_B, G^{\rm out}_B\}  + \{F^{\rm diag}_B, G^{\rm out}_B\}  + \{F^{\rm out}_B, G^{\rm diag}_B\} +  \{F^{\rm line}_B, G^{\rm line}_B\}
	\end{align*}
	where we put
	\begin{align*}
	&	F^{\rm diag}_B(\lambda;\theta, a, \bar a) = \sum_{ m_1,m_2, \ell,  n \neq 0 \atop |\ell|\le c\la m_1\ra  } F^{-, \ell }_{m_1,m_2, n }(\lambda) \ e^{\im \theta \cdot \ell} a_{(m_1, n)} \bar a_{(m_2, n)} \ , \ \ \\\
	&	 F^{\rm out}_B(\lambda;\theta, a, \bar a) = \sum_{ m_1,m_2, \ell , n > 0 \atop |\ell|\le c\la m_1\ra    } F^{+, \ell}_{m_1,m_2, n }(\lambda) \ e^{\im \theta \cdot \ell} a_{(m_1,n)}  a_{(m_2, -n)} + c.c. \ , 
	\end{align*}
	and similarly for $G$.	Then
	\begin{align*}
&	\{ F^{\rm diag}_B,  G^{\rm diag}_B\} =  \im \sum_{m_1, m_2, \ell , n \neq 0	 } A_{m_1, m_2, n}^\ell(\lambda) \ e^{\im \theta \cdot\ell}  a_{(m_1,n)} \bar a_{(m_2,n)} \ , \\
	& \qquad A_{m_1, m_2, n}^\ell(\lambda) := \left(  \sum_{m_3, \ell_1+\ell_2=\ell\atop |\ell_1|\le c\la m_1\ra , |\ell_2|\le c\la m_3\ra} F^{-, \ell_1}_{m_1, m_3, n }(\lambda) G^{-, \ell_2}_{m_3, m_2, n } (\lambda)-    
	G^{-, \ell_1}_{m_1, m_3, n }(\lambda) F^{-, \ell_2}_{m_3, m_2, n }(\lambda) \right) 
	\end{align*}
	and by momentum conservation $m_1-m_3+\pi(\ell_1)=0$, $m_3-m_2 + \pi(\ell_2)=0$, thus  $m_1-m_2 +\pi(\ell)=0$.\\
		By definition of $\lceil \cdot \rfloor_{s, - \bN}^\cO$, we need to compute 
\begin{align}\label{porcata}
\Big|\big\{	\la m_1 \ra^{N_1+M_1}  A_{m_1, m_2, n}^\ell(\lambda)\}\Big|_{s'}^\cO
\end{align}
We  note that
 	\begin{align}
	\label{for1A}
 \la m_1 \ra^{N_1+M_1}  \abs{A_{m_1, m_2, n}^\ell(\lambda) }
	 \leq & \sum_{{m_3 \atop \ell_1+\ell_2=\ell}\atop
		|\ell_1|\le c\la m_1\ra}  \la m_1 \ra^{N_1} \left(\frac{\la m_1\ra}{\la m_3\ra}\right)^{M_1}  \abs{F^{-, \ell_1}_{m_1, m_3, n }(\lambda)} \la m_3 \ra^{M_1} \abs{ G^{-, \ell_2}_{m_3, m_2, n }(\lambda)} \\
	\notag
	&+ \sum_{{m_3 \atop \ell_1+\ell_2=\ell}\atop
		|\ell_1|\le c\la m_1\ra}   \ \la m_1 \ra^{M_1} \abs{G^{-, \ell_1}_{m_1, m_3, n }(\lambda)} \left(\frac{\la m_1\ra}{\la m_3\ra}\right)^{N_1} \la m_3 \ra^{N_1} \abs{   F^{-, \ell_2}_{m_3, m_2, n }(\lambda)} 
	\end{align}
 Now we remark that  since $m_1-m_3 = -\pi(\ell_1)$, one has
	\begin{equation}
	\label{for2a} 
	 \la m_1\ra -|\pi(\ell_1)|\le \la m_3\ra\le  \la m_1\ra + |\pi(\ell_1)| \ . 
	\end{equation}
	 Moreover since $|\ell_1|\le c\la m_1\ra$ with $c^{-1}= 2 \max (|\tm_i|)$ this implies $2|\pi(\ell_1)|\le  \la m_1\ra$ so that 
		\begin{equation}
	\label{for3a} 
	\frac12 \le \frac{\la m_1\ra}{\la m_3\ra}\le  2 \ .
	\end{equation} 
	In conclusion we may bound
	\begin{align*}
	\eqref{porcata}  \le &   2^{M_1}\big|\{\sum_{{m_3 \atop \ell_1+\ell_2=\ell}\atop |\ell_1|\le c\la m_1\ra }  \la m_1 \ra^{N_1}   \abs{F^{-, \ell_1}_{m_1, m_3, n }(\lambda)} \la m_3 \ra^{M_1} \abs{ G^{-, \ell_2}_{m_3, m_2, n }(\lambda)}\}\big|_{s'} \\
		\notag
		&+ 2^{N_1}\big|\{\sum_{{m_3 \atop \ell_1+\ell_2=\ell}\atop
			|\ell_1|\le c\la m_1\ra}   \ \la m_1 \ra^{M_1} \abs{G^{-, \ell_1}_{m_1, m_3, n }(\lambda)}  \la m_3 \ra^{N_1} \abs{   F^{-, \ell_2}_{m_3, m_2, n }(\lambda)} \}\big|_{s'} \\
		\le & 	C_{M_1,N_1} \lceil F^B \rfloor_{s',-(N_1,0)}^\cO \,  \lceil G^B \rfloor_{s',-(M_1,0)}^\cO \le C_{M_1,N_1} \lceil F^B \rfloor_{s',-\bN}^\cO \,  \lceil G^B \rfloor_{s',-\bM}^\cO\,, 
	\end{align*}
	by using the algebra property \eqref{alge} and Remark \ref{bagnetto}.
	The term $ \{F^{\rm out}_B, G^{\rm out}_B\}$ is treated exactly in the same way. 
	Now we  analyze $ \{F^{\rm diag}_B, G^{\rm out}_B\}$.
	An explicit computation shows that
	\begin{align*}
	\{ F^{\rm diag}_B, G^{\rm out}_B\} = &  - \im \sum_{m_1, m_2, \ell , n >0 }\left(  \sum_{m_3 , \ell_1 + \ell_2 = \ell\atop |\ell_1|\le c\la m_1\ra,\, |\ell_2|\le c \la m_2\ra} F^{-, \ell_1}_{m_1, m_3, n }  G^{+, \ell_2}_{m_3, m_2, n } +     
	G^{+, \ell_1}_{m_1 ,- m_3, n } F^{-, \ell_2}_{m_2, -m_3, -n } \right)  e^{\im \theta \cdot \ell}   a_{(m_1,n)}  a_{(m_2,-n)} 
	\end{align*}
	Let us consider the equivalent of \eqref{for1A}, namely
		\begin{align}
	\label{for1Ab}
	&\la m_1 \ra^{N_1+M_1}   \abs{\sum_{{m_3,\, \ell_1+\ell_2=\ell}\atop |\ell_1|\le c \la m_1\ra,\, |\ell_2|\le c \la m_2\ra } F^{-, \ell_1}_{m_1, m_3, n }(\lambda) G^{+, \ell_2}_{m_3, m_2, n }(\lambda) +   
		G^{+, \ell_1}_{m_1, -m_3, n }(\lambda) F^{-, \ell_2}_{m_2,- m_3, -n }(\lambda) }\\\notag
	 \leq & \sum_{{m_3,\, \ell_1+\ell_2=\ell}\atop |\ell_1|\le c \la m_1\ra,\, |\ell_2|\le c \la m_2\ra } \left(\frac{\la m_1\ra}{\la m_3 \ra}\right)^{M_1} \la m_1 \ra^{N_1}   \abs{F^{-, \ell_1}_{m_1, m_3, n }(\lambda)} \la m_3 \ra^{M_1} \abs{ G^{+, \ell_2}_{m_3, m_2, n }(\lambda)} \\
	\notag
	&+ \sum_{{m_3,\, \ell_1+\ell_2=\ell}\atop |\ell_1|\le c \la m_1\ra,\, |\ell_2|\le c \la m_2\ra } \left(\frac{\la m_1\ra}{\la m_2 \ra}\right)^{N_1}  \ \la m_1 \ra^{M_1} \abs{G^{+, \ell_1}_{m_1,- m_3, n }(\lambda)} \ \la m_2 \ra^{N_1} \abs{   F^{-, \ell_2}_{m_2, -m_3, -n }(\lambda)}   
	\end{align}
	The first  line is treated as in \eqref{for1A}. For the second we note that $\la m_2\ra -|\pi(\ell_2)|\le \la m_3\ra \le  \la m_2\ra +|\pi(\ell_2)| $ so that since
	$$
	\frac{\la m_1\ra}{\la m_2 \ra}= \frac{\la m_1\ra}{\la m_3 \ra} \frac{\la m_3\ra}{\la m_2 \ra}
	$$
	we have 
	$$
	\frac14 \le \frac{\la m_1\ra}{\la m_2 \ra}\le 4.
	$$
	The term  $ \{F^{\rm out}_B, G^{\rm diag}_B\}$ can be estimated in an analogous way,  we skip the details. This proves \eqref{mai}.
	
		Regarding \eqref{fine} let us suppose first that  in \eqref{for1A} we have $|\ell_1|> c \la m_1\ra$, then
		\begin{align}
	\label{for1C}
	&	 \la m_1 \ra^{N_1+M_1}  \abs{\sum_{{m_3 \atop \ell_1+\ell_2=\ell}\atop
			|\ell_1|>c\la m_1\ra} F^{-, \ell_1}_{m_1, m_3, n }(\lambda) G^{-, \ell_2}_{m_3, m_2, n }(\lambda) -    
		G^{-, \ell_1}_{m_1, m_3, n }(\lambda) F^{-, \ell_2}_{m_3, m_2, n }(\lambda) }\\\notag
	& \leq c^{-{N_1 -M_1} }\sum_{{m_3 \atop \ell_1+\ell_2=\ell}}   |\ell_1|^{M_1+N_1}  \left(\abs{F^{-, \ell_1}_{m_1, m_3, n }(\lambda)}  \abs{ G^{-, \ell_2}_{m_3, m_2, n }(\lambda)} 
	+ \abs{G^{-, \ell_1}_{m_1, m_3, n }(\lambda)}  \abs{  F^{-, \ell_2}_{m_3, m_2, n }(\lambda)} \right)
	\end{align}	
	We have proved that
\begin{align}\label{porcatab}
          &\Big|\big\{	\la m_1 \ra^{N_1+M_1}  \sum_{m_3 , \ell_1+\ell_2=\ell\atop |\ell_1|> c \la m_1\ra} \big(F^{-, \ell_1}_{m_1, m_3, n }(\lambda) G^{-, \ell_2}_{m_3, m_2, n }(\lambda) -    
		G^{-, \ell_1}_{m_1, m_3, n }(\lambda) F^{-, \ell_2}_{m_3, m_2, n }(\lambda) \big)\big\}\Big|_{s'}^\cO \\
		 \notag 
		  & 
		\le \frac{1}{c^{N_1+M_1}} \Big|\big\{ |\ell_1|^{N_1+M_1} F^{-, \ell_1}_{m_1, m_3, n }(\lambda)\big\}\Big|_{s'}^\cO
		\Big|\big\{G^{-, \ell_2}_{m_3, m_2, n }(\lambda) \big\}\Big|_{s'}^\cO\\
		\notag
		&	\quad	 + \frac{1}{c^{N_1+M_1}}\Big|\big\{
			|\ell_1|^{N_1+M_1}	G^{-, \ell_1}_{m_1, m_3, n }(\lambda)\big\}\Big|_{s'}^\cO\Big|\big\{ F^{-, \ell_2}_{m_3, m_2, n }(\lambda) \big)\big\}\Big|_{s'}^\cO
\end{align}	
Now we remark that
	$$
	\sup_{\ell }|\ell|^{M_1+N_1} e^{-(s-s')|\ell|} \leq  C_{M_1, N_1} \,  \delta^{-M_1-N_1}
	$$	
	so that
	$$
	\Big|\big\{ |\ell_1|^{N_1+M_1} F^{-, \ell_1}_{m_1, m_3, n }(\lambda)\big\}\Big|_{s'}^\cO \le C \delta^{-M_1-N_1} \Big|\big\{ F^{-, \ell_1}_{m_1, m_3, n }(\lambda)\big\}\Big|_{s}^\cO\,,
	$$
	and \eqref{mai} follows.
	
	Consider finally the case $|\ell_1|\le c \la m_1\ra$, $|\ell_2|\ge c \la m_3\ra$.   Then \eqref{for3a} holds and
	\begin{align}
	\label{for1Cb}
	&	 \la m_1 \ra^{N_1+M_1}  \abs{\sum_{{m_3 \atop \ell_1+\ell_2=\ell}\atop
			|\ell_2|>c\la m_3\ra} F^{-, \ell_1}_{m_1, m_3, n }(\lambda) G^{-, \ell_2}_{m_3, m_2, n }(\lambda) -    
		G^{-, \ell_1}_{m_1, m_3, n }(\lambda) F^{-, \ell_2}_{m_3, m_2, n }(\lambda) }\\\notag
	& \leq 2^{N_1+M_1}c^{-N_1 - M_1}\sum_{{m_3 \atop \ell_1+\ell_2=\ell}}   |\ell_2|^{M_1+N_1}  \left(\abs{F^{-, \ell_1}_{m_1, m_3, n }(\lambda)}  \abs{ G^{-, \ell_2}_{m_3, m_2, n }(\lambda)} 
	+ \abs{G^{-, \ell_1}_{m_1, m_3, n }(\lambda)}  \abs{  F^{-, \ell_2}_{m_3, m_2, n }(\lambda)} \right)
	\end{align}	
	and we proceed as in the previous case.
	We are left with the second summand in \eqref{pianto}.
	We claim
	\begin{equation}\label{finestra}
 \lceil \{ F, G \} \rfloor_{s', -(0, N_2+M_2)}^\cO \le  2 \lceil F\rfloor_{s', -(0, N_2)}^\cO	 \lceil G\rfloor_{s', -(0, M_2)}^\cO	
	\end{equation} indeed 
	\begin{align}\label{porcataC}
&	\Big|\big\{	\la n \ra^{N_2+M_2}  \big(\sum_{m_3 , \ell_1+\ell_2=\ell\atop |\ell_i|\le c \la m_1\ra} F^{-, \ell_1}_{m_1, m_3, n }(\lambda) G^{-, \ell_2}_{m_3, m_2, n }(\lambda) -    
			G^{-, \ell_1}_{m_1, m_3, n }(\lambda) F^{-, \ell_2}_{m_3, m_2, n }(\lambda) \big)\big\}\Big|_{s'}^\cO
			 \\ 
			 \notag 
			  & \le  2 \Big|\big\{ \la n\ra^{N_2} F^{-, \ell_1}_{m_1, m_3, n }(\lambda)\big\}\Big|_{s'}^\cO\Big|\big\{ \la n\ra^{M_2} G^{-, \ell_2}_{m_3, m_2, n }(\lambda) \big\}\Big|_{s'}^\cO,
	\end{align}
and the result follows.	\\

\noindent{\bf {\em Proof of Lemma \ref{lem:pseudo00} (ii).}}		 The proof is standard, we repeat it here for completeness. We need to estimate
\begin{equation}
\label{p.est}	
		\re{1}(F; G) = \sum_{k \geq 1} \frac{{\rm ad}(F)^{k}(G) }{k!}  \ .
	\end{equation}
	By the algebra property \eqref{alge} we have 
	$$
	|{\rm ad}(F)^{k}(G)|^\cO_{s'}\le (C_0 |F|^\cO_{s'})^k |G|^\cO_{s'} \ . 
	$$
	Then by using \eqref{prop.smooth.bra} we have
	\begin{align}\notag
	\lceil{\rm ad}(F)^{k}(G)\rfloor^\cO_{s',-\bN} &= \lceil \{F,{\rm ad}(F)^{k-1}(G)\}\rfloor^\cO_{s',-\bN}  \leq C_{\bN} \, ( \, \lceil F \rfloor_{s',-\bN}^\cO \, |{\rm ad}(F)^{k-1}(G)|_{s'}^\cO+ \delta^{-N_1} \, | F |_{s}^\cO \, |{\rm ad}(F)^{k-1}(G)|_{s}^\cO) 
\\
\label{para}
	& \le C_\bN(C_0|F|_{s}^\cO)^{k-1}(\lceil F \rfloor_{s',-\bN}^\cO+ \delta^{-N_1} \, | F |_{s}^\cO)|G|_{s}^\cO
	\end{align}

\noindent{\bf {\em Proof of Lemma \ref{lem:pseudo00} (iii).}}	 	Let us prove inductively that
	\begin{equation}\label{suca}
		\lceil {\rm ad}(F)^{k}(G) \rfloor_{s', -k \bN}^\cO\le \hat C_{k, \bN}(\delta^{-N_1} \lceil F\rfloor_{s,-\bN}^\cO)^k k  |G|_{s}^\cO
	\end{equation}
	
		We use \eqref{prop.smooth.bra};  writing ${\rm ad}(F)^{k}(G) = \{  {\rm ad}(F)^{k-1}(G), F\}$ we get
		\begin{align} 
		\label{lem:pseudo0.00}
		\lceil {\rm ad}(F)^{k}(G) \rfloor_{s', -k \bN}^\cO &  \leq  
		 C_{k \bN} \,\left( \lceil {\rm ad}(F)^{k-1}(G) \rfloor_{s',-(k-1) \bN}^\cO |F|_{s,-\bN}^\cO+  \delta^{-k N_1}| {\rm ad}(F)^{k-1}(G)|_{s} |F|_s \right) \,  \\
		 \notag
		 &\le C_{k \bN} \,\left(  |F|_{s,-\bN}^\cO \hat C_{ k-1,\bN}  (\delta^{-N_1}\lceil F\rfloor_{s,-\bN}^\cO)^{k-1} (k-1)  |G|_{s}^\cO+  \delta^{-k N_1} C_0^k |G|_{s} |F|^k_s \right) \ . 
		\end{align}
The result follows  provided 
$$
C_{k\bN} \left(\hat C_{k-1 ,\bN} (k-1) \delta^{-(k-1) N_1} + \delta^{-k N_1} C_0^k\right) \le \hat C_{k,\bN} \, k \, \delta^{-k N_1}
$$
which in turn follows by setting
$$
 C_{k\bN} \left(\hat C_{k-1 ,\bN}(k-1) +  C_0^k \right)\le  k  \,  \hat C_{k,\bN}.
$$
Now setting $s_1= s- (s-s')/2$ we have
$$
\lceil \re{\,\rm i}(F; G) \rfloor_{s',-\ti \bN}^\cO \le \lceil ({\rm ad}{F})^\ti G\circ \cT_F \rfloor_{s',-\ti \bN}^\cO \leq\ti (\hat C_{
	\ti, \bN} \lceil F\rfloor_{s_1,-\bN}^\cO)^\ti(2\delta)^{-\ti N_1} |G \circ \cT_F|_{s_1}^\cO
$$
and the result follows.

\subsection{Proof of Lemma \ref{lem:smo}}

First note that $\re{\ti}(F; G)$ is linear in the second argument. Thus
$$
\re{\ti}(F; G) = 
\re{\ti}(F; G^{\rm hor}) + \re{\ti}(F; G^\mix) = 
\re{\ti}(F^{\rm hor}; G^{\rm hor}) + \re{\ti}(F; G^\mix) + \big(\re{\ti}(F; G^{\rm hor}) - \re{\ti}(F^{\rm hor}; G^{\rm hor})\big) \ . 
$$
By Remark \ref{rem:hor}, $\re{\ti}(F^{\rm hor}; G^{\rm hor})$ is horizontal. We define $\re{\ti}(F; G)^\hor := \re{\ti}(F^\hor; G^\hor)$, which by Lemma \ref{lem:pseudo00}(iii) belongs to $\cQ^\hor_{s,-\ti a}$  and fulfills the claimed estimate.

 Now consider $\re{\ti}(F; G^\mix)$. We prove  that  $\re{\ti}(F; G^\mix) \in \cQ_{s',-\bd}^\cO$, $\forall \, 0<s' <s$. We proceed as in the proof of Lemma \ref{lem:pseudo00}(iii).  We prove inductively
 \begin{equation}\label{parti}
 \lceil {\rm ad}(F)^{k}(G^\mix) \rfloor_{s', -\bd}^\cO\le  (\hat C_\bd|F|^\cO_{s})^k \ \big( \lceil G^\mix\rfloor_{s, -\bd}^\cO+ k \delta^{-2} |G|_{s}^\cO \big) \ .
 \end{equation}
 Indeed:
	\begin{align*}
	\lceil {\rm ad}(F)^{k}(G^\mix) \rfloor_{s', -\bd}^\cO 
	&  \leq  \lceil \{F,{\rm ad}(F)^{k-1}(G^\mix)\} \rfloor_{s', -\bd}^\cO \\
	& 	\stackrel{\eqref{prop.smooth.bra}}{\le}
	 C_{\bd} \, ( \, \abs{F}_{s'}^\cO \, \lceil{\rm ad}(F)^{k-1}(G^\mix)\rfloor_{s',-\bd}^\cO+ \delta^{-2} \, | F |_{s}^\cO \, |{\rm ad}(F)^{k-1}(G)|_{s}^\cO) 
	\\ 
	& 
	\stackrel{\eqref{parti}}{\le} C_\bd  \, |F|_{s}^\cO \ \left( \max (C_0, \hat C_\bd) |F|_{s}^\cO\right)^{k-1} \, \left( \lceil G^\mix\rfloor_{s, -\bd}^\cO+ (k-1)\delta^{-2} |G|_{s}^\cO +\delta^{-2} |G|_{s}^\cO \right) 
	 	\end{align*}
	and the desired bound  follows by taking $\hat C_\bd=\eta^{-1}=\max(C_0,C_\bd)$, where $C_0$ is the constant in \eqref{alge}. Then substituting we get
	\begin{align}
	\label{mix1} 
	\lceil \re{\ti}(F; G^\mix) \rfloor_{s', -\bd}^\cO & 
	 \leq C_{\bd}\, \delta^{-2}\frac{\lceil G^\mix\rfloor_{s, -\bd}^\cO\left(\eta^{-1} \abs{F}_{s}^\cO\right)^\ti}{1 - \eta^{-1} \abs{F}_{s}^\cO} \ .
	\end{align}
Finally consider $\re{\ti}(F; G^{\rm hor}) - \re{\ti}(F^{\rm hor}; G^{\rm hor})$. Since the  series are summable
$$
\re{\ti}(F; G^{\rm hor}) - \re{\ti}(F^{\rm hor}; G^{\rm hor}) = \sum_{k \geq \ti} \frac{{\rm ad}(F)^{k}(G^{\rm hor}) - {\rm ad}(F^{\rm hor})^{k}(G^{\rm hor}) }{k!}  \ .
$$
Now remark that $F = F^{\rm hor} + F^\mix$, so ${\rm ad}( F^{\rm hor} + F^\mix)^{k}(G^\hor) - {\rm ad}(F^{\rm hor})^{k}(G^{\rm hor})$ equals
\begin{equation}
\label{aiutp01}
 \sum_{h=0}^{k-1} \ad(F)^h \, \ad(F^\mix)\, \ad(F^\hor)^{k-h-1}(G^\hor)
\end{equation}
Now  by  \eqref{prop.smooth.bra} 
\begin{align}
\notag
\lceil \ad(F^\mix)\ad(F^\hor)^{k-h-1}(G^\hor)\rfloor_{s',-\bd} &\le C_\bd \Big(\lceil F^\mix\rfloor_{s',-\bd}|\ad(F^\hor)^{k-h-1}(G^\hor)|_{s'}+\delta^{-2}|F|_{s}|\ad(F^\hor)^{k-h-1}(G^\hor)|_s\Big) \\
\label{aiutooo}
 &\le
C_\bd (\lceil F^\mix\rfloor_{s',-\bd} +\delta^{-2}|F|_{s})(C_0 |F|_{s})^{k-h-1}|G|_s  \ . 
\end{align}
Now we use \eqref{parti} with $k\rightsquigarrow h$ and $G^\mix \rightsquigarrow \wt G^\mix :=\ad(F^\mix)\ad(F^\hor)^{k-h-1}(G^\hor)$. We get 
\begin{align*}
\lceil \ad(F)^h \, \wt G^\mix \rfloor_{s',-\bd}  &
\le
 (\eta^{-1}|F|_{s})^h 
 \ \Big( \lceil \wt G^\mix  \rfloor_{s, -\bd} + k \delta^{-2} 
 |\wt G^\mix|_{s} \Big)\\
 & \stackrel{\eqref{aiutooo}}{\le} (\eta^{-1}|F|_{s})^h \ 
 \Big(C_\bd (\lceil F^\mix\rfloor_{s',-\bd} +\delta^{-2}|F|_{s})(C_0|F|_{s})^{k-h-1}|G^\hor|_s 
  \\ & 
  \qquad 
 + k \delta^{-2} (C_0|F|_{s})^{k-h}|G^\hor|_s \Big)\\
& \le (\eta^{-1}|F|_{s})^{k-1} \ 
\Big(
C_\bd \lceil F^\mix\rfloor_{s',-\bd} +(k+1)\delta^{-2}|F|_{s}
\Big)  \ |G^\hor|_s
\end{align*}

Then it follows easily that 
	\begin{align}
	\label{mix2} 
	\lceil \re{\ti}(F; G^{\rm hor}) - \re{\ti}(F^{\rm hor}; G^{\rm hor})  \rfloor_{s', -\bd} 
	\leq &  
	2 \Big(
C_\bd \lceil F^\mix\rfloor_{s',-\bd} +\delta^{-2}|F|_{s}
\Big) \ |G^\hor|_s \ 
 \frac{\left(\eta^{-1} \abs{F}_s\right)^{\ti-1}}{1 - \eta^{-1} \abs{F}_s} \ .
	\end{align}
	We define $\re{\ti}(F; G)^\mix := \re{\ti}(F; G^\mix) + (\re{\ti}(F; G^{\rm hor}) - \re{\ti}(F^{\rm hor}; G^{\rm hor}))$. Estimates \eqref{mix1}, \eqref{mix2} show that $\re{\ti}(F; G)^\mix  \in \cQ_{s,-\bd}^\cO$.

\subsection{Proof of Lemma \ref{lem:smo2}}
The lemma follows using the same strategy of the proof  of Lemma \ref{lem:smo}, but replacing the estimate of the Poisson bracket $\{F, G\}$ by the following one: 
let   $\bN=(N_1, N_2), \bM=(M_1, M_2) \in \N^2$,   $F\in\cQ^\cO_{s, -\bN}$, $G \in \cQ^\cO_{s,-\bM}$. Then 
  $\{ F, G \} \in \cQ^\cO_{s, -{\bf O}}$, where ${\bf O}= \big(\min(N_1, M_1), \min(N_2, M_2)\big)$  with the quantitative estimate 
	\begin{equation}
	\lceil \{ F, G \} \rfloor_{s, -{\bf O}}^\cO  \leq   C_0 \,  \lceil F \rfloor_{s,-\bN}^\cO \,  \lceil G \rfloor_{s,-\bM}^\cO   \ .
	\end{equation}
Such an estimate follows easily by exploiting the algebra property of the norm, see also Remark \ref{rem.m} and Remark \ref{rem:mappare}.

\section{Proof of Proposition \ref{lem:meas.mel3}}
\label{app:mes.m}
Recall that, by Theorem \ref{thm:kam}, the frequencies $\Omega_\jj(\lambda, \e)$ of  Hamiltonian \eqref{ham.bnf3} have the form \eqref{as.omega0}, \eqref{as.omega}.
Expanding $\Omega_\jj(\lambda, \e)$ in Taylor series in powers of $\e$ we get that 
\begin{equation}
\label{bb0}
\omega(\lambda) \cdot \ell + \sigma_1  \Omega_{\jj_1}(\lambda, \e) +\sigma_2 \Omega_{\jj_2}(\lambda, \e) + \sigma_3 \Omega_{\jj_3}(\lambda, \e) = 
\tK_{\bj, \ell}^\bs+ \e \  \tF_{\bj, \ell}^\bs(\lambda) + \e^2 \  \tG_{\bj, \ell}^\bs(\lambda,\e) \ , 
\end{equation}
where $\tK_{\bj, \ell}^\bs $ is defined in \eqref{K3}
and $\tF_{\bj, \ell}^\bs(\lambda) $ is defined in \eqref{F3}. 
 Recall that  given any $0<\g<\g_1$, the functions above are well defined provided that \eqref{eg} holds. 
From now on we assume \eqref{cond0} which  clearly implies  \eqref{eg}.\\
We wish to prove that the set of $\lambda\in \cO_1$ such that 
\begin{equation}
\label{eq:IIIm}
\abs{\omega \cdot \ell + \s_1\Omega_{\jj_1}(\lambda, \e)+ \s_2 \Omega_{\jj_2}(\lambda, \e)+\s_3 \Omega_{\jj_3}(\lambda, \e) } \geq \e \frac{\gamma}{\la \ell \ra^\tau} \ , \qquad \forall \, (\bj,\ell,\bs)\in \fA_3\setminus\fR_3
\end{equation}
 has positive measure. 
To do this, we will show that the set of $\lambda \in \cC_c$ such that \eqref{eq:IIIm} holds has positive measure; this clearly implies that also $\cC$ has positive measure.\\
We discuss two cases separately, recall that $\tM_0$ is defined in \eqref{M0}.
\vspace{1em}\\
\noindent{\bf Case  $|\ell| \leq 4\tM_0$.} Then $\sup_\lambda |\tF_{\bj, \ell}^\bs(\lambda)| \leq    8\, \tM_0 $.  Assume first that  $\tK_{\bj, \ell}^\bs \in \Z\setminus\{0\}$, then  for $\e$ sufficiently small one has
$$
\abs{ \eqref{bb0} } \geq |\tK_{\bj, \ell}^\bs| - \e 8 \tM_0 -  \e^2 3 \tM_0 \geq \frac{1}{2}
$$
hence, for such $\ell$'s,  \eqref{eq:IIIm} is trivially fulfilled $\forall \lambda \in \cO_1$. \\
If instead $\tK_{\bj, \ell}^\bs = 0$, 
we use  Lemma \ref{lem:cono}  and \eqref{cond0} to estimate  for any  $\lambda\in \cC_c$ 
$$
\abs{\eqref{bb0}} \geq \e \gamma_1 -   \e^23\tM_0  \geq \frac{\e \gamma_1}{2} \  . 
$$
Thus so far we have proved that for all $|\ell| \leq 4 \tM_0$ and for any $\lambda \in \cC_c$, \eqref{eq:IIIm} hold.

\vspace{2em}
\noindent{\bf Case $|\ell| > 4\tM_0$.} 
For $1 \leq i \leq \tk$, $0 \leq k \leq \tk$ define the functions
\begin{equation}
\label{hatF}
\widehat\tF_{i,k}(\lambda)  = 
\begin{cases}
\e \mu_{i}(\lambda) & \mbox{ if }  k = 0 \\
\e \mu_{i,k}^+ (\lambda) & \mbox{ if }  1 \leq i < k \leq \tk \\
\e \mu_{i,k}^- (\lambda) & \mbox{ if }  1 \leq k < i \leq \tk \\
0 & \mbox{ if}\; 1 \leq  i=k  \leq \tk
\end{cases}
\end{equation}
Consider an expression of the form 
\begin{equation}
\label{cc}
\begin{aligned}
   \omega & (\lambda) \cdot \ell + K  \\
&+\eta_1 \widehat\tF_{i_1,k_1}(\lambda) + \eta_2 \widehat\tF_{i_2,k_2}(\lambda) +  \eta_3 \widehat\tF_{i_3,k_3}(\lambda)   \\
&+ \eta_{11} \frac{\Theta_{m_1}(\lambda, \e)}{\la m_1 \ra^2} + \eta_{12} \frac{\Theta_{m_2}(\lambda, \e)}{\la m_2 \ra^2} +  \eta_{13} \frac{\Theta_{m_3}(\lambda, \e)}{\la m_3 \ra^2} \\
&+ \eta_{21} \frac{\Theta_{m_1,n_1}(\lambda, \e)}{\la m_1\ra^2 +  \la n_1 \ra^2} +   \eta_{22} \frac{\Theta_{m_2,n_2}(\lambda, \e)}{\la m_2 \ra^2 +  \la n_2 \ra^2} +   \eta_{23} \frac{\Theta_{m_3,n_3}(\lambda, \e)}{\la m_3 \ra^2+ \la n_3 \ra^2 } \\
& {+  \eta_{31} \frac{\varpi_{m_1}(\lambda, \e)}{\la m_1 \ra} + \eta_{32} \frac{\varpi_{m_2}(\lambda, \e)}{\la m_2 \ra} +  \eta_{33} \frac{\varpi_{m_3}(\lambda, \e)}{\la m_3 \ra}}
\end{aligned}
\end{equation}
where $K \in \Z$, $m_i\in \Z,n_i\in \Z\setminus \{0\}$  while $\eta_r, \eta_{r_1 r_2} \in \{ -1, 0, 1 \}$. 
First we have the following
 \begin{lemma}
 \label{c.1}
	If  $|K| \geq 4 \,   |\ell| \displaystyle{\max_{1 \leq i \leq \tk} }
	(\tm_i^2) $ then for any $m_i\in \Z,n_i\in \Z\setminus \{0\}$, $\eta_r, \eta_{r_1 r_2} \in \{ -1, 0, 1 \}$, one has  $|\eqref{cc} | \geq \tM_0$.
\end{lemma}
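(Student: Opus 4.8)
\textbf{Proof plan for Lemma \ref{c.1}.} The strategy is a straightforward lower bound obtained by isolating the dominant term $\omega(\lambda)\cdot\ell + K$ and showing all other contributions in \eqref{cc} are uniformly small. First I would recall that $\omega(\lambda) = \omega^{(0)} - \e\lambda$ with $\omega^{(0)} = (\tm_1^2,\dots,\tm_\tk^2)$ and $\lambda \in [1/2,1]^\tk$, so that $|\omega(\lambda)\cdot\ell| \le \max_{1\le i\le\tk}(\tm_i^2)\,|\ell|_1 + \e|\lambda|_1|\ell|_\infty \le 2\,|\ell|\,\max_{1\le i\le\tk}(\tm_i^2)$ for $\e$ small enough. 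Combined with the hypothesis $|K| \ge 4\,|\ell|\,\max_{1\le i\le\tk}(\tm_i^2)$ and the fact that $K$ is an integer, this gives
\begin{equation}
\label{cc.main}
|\omega(\lambda)\cdot\ell + K| \ge |K| - |\omega(\lambda)\cdot\ell| \ge 4\,|\ell|\,\max_i(\tm_i^2) - 2\,|\ell|\,\max_i(\tm_i^2) = 2\,|\ell|\,\max_i(\tm_i^2) \ .
\end{equation}
In particular the left side is at least $2\max_i(\tm_i^2) \ge 2$, and, more importantly, it grows linearly in $|\ell|$.

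Next I would bound the remaining nine terms of \eqref{cc} — the algebraic corrections $\eta_r\widehat\tF_{i_r,k_r}(\lambda)$ and the three families of smoothing corrections carrying the $\Theta$'s and $\varpi$'s. By \eqref{hatF} and Remark \ref{rem:mu} the $\widehat\tF_{i,k}$ are $\e$ times the eigenvalue functions $\mu_i$, $\mu_{i,k}^\pm$, whose $\cO_1$-norms are controlled by $\tM_0$ via \eqref{M0}; each contributes at most $\e\,\tM_0$. By the estimate \eqref{theta.est} (equivalently \eqref{theta.es2t}) the quantities $\varpi_m$, $\Theta_m$, $\Theta_{m,n}$ are all $O(\e^2\tM_0)$ uniformly in the indices, so each of the six corresponding terms contributes at most $\e^2\tM_0$ (the denominators $\la m\ra$, $\la m\ra^2$, $\la m\ra^2+\la n\ra^2$ only help). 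Hence the sum of all nine correction terms is bounded by $3\e\tM_0 + 6\e^2\tM_0 \le 9\e\tM_0$, which is $\le \tfrac12$ once $\e \le \e_\star$ is chosen small enough (using $\e_\star\tM_0 \ll 1$, which holds by \eqref{cond0}). Therefore
\begin{equation}
|\eqref{cc}| \ge |\omega(\lambda)\cdot\ell + K| - 9\e\tM_0 \ge 2\,|\ell|\,\max_i(\tm_i^2) - \tfrac12 \ge 2|\ell| - \tfrac12 \ .
\end{equation}
Since $|\ell| > 4\tM_0$ in the regime of interest (and in any case $|\ell|\ge 1$), this is $\ge \tM_0$ as soon as $|\ell| \ge \tM_0$; more carefully, $2|\ell| - \tfrac12 > 8\tM_0 - \tfrac12 \ge \tM_0$ since $\tM_0 \ge 1$. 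This yields the claim.

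I do not expect any genuine obstacle here: the lemma is an elementary "large integer part dominates" estimate, and every ingredient — the bound on $\omega(\lambda)\cdot\ell$, the smallness $\e\tM_0\ll1$, and the uniform bounds on the $\mu$'s, $\Theta$'s and $\varpi$'s — is already available from Theorem \ref{thm:kam}, Corollary \ref{stoca} and the definition \eqref{M0}. The only point requiring a little care is tracking the numerical constants so that the correction terms are genuinely absorbed; I would state the needed smallness of $\e_\star$ explicitly (depending only on $\tM_0$, hence on $\cO_1$) and note that this is compatible with the choices already made in the proof of Theorem \ref{thm:kam}. This lemma will then be used, together with a complementary estimate for $|K|$ small (where one instead exploits the non-degeneracy of $\tF_{\bj,\ell}^\bs$ from Lemmata \ref{lem.F} and \ref{lem:cono} and a measure estimate in $\lambda$), to conclude that the set $\cC$ in \eqref{set.C} has positive measure.
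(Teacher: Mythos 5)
Your proof is correct and follows the same elementary route as the paper's display \eqref{K.grande}: isolate the dominant $K$ against $\omega(\lambda)\cdot\ell$, then absorb the nine algebraic/smoothing corrections using the uniform bounds from \eqref{theta.est}--\eqref{M0} together with the smallness $\e\tM_0\ll 1$ from \eqref{cond0}. The only difference is a harmless bookkeeping of constants (you keep $9\e\tM_0$ where the paper writes $3\e\tM_0+3\e^2\tM_0$), and you are right to invoke the ambient constraint $|\ell|>4\tM_0$ from the enclosing case split to close the final inequality.
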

\begin{proof}
	We have
	\begin{align}
	\label{K.grande}
	|\eqref{cc} | & \geq |K| - |\omega(\lambda)| \, |\ell| - \sum_{r=1}^3 \abs{\widehat\tF_{i_r,k_r}}^{\cO_1}_\C - \sum_{r=1}^3 \frac{\abs{\Theta_{m_r}(\cdot, \e )}^{\cO_1}_\C}{\la m_r \ra^2}  - \sum_{r=1}^3 \frac{\abs{\Theta_{m_r, n_r}(\cdot, \e )}^{\cO_1}_\C}{\la m_r \ra^2 + \la n_r \ra^2 } \\
	\nonumber
	& \quad {- \sum_{r=1}^3 \frac{\abs{\varpi_{m_r}(\cdot, \e )}^{\cO_1}_\C}{\la m_r \ra} }\\
	\notag
	& \geq 4 \, \max_{1 \leq i \leq \tk} (\tm_i^2) \,  |\ell| - \max_{1 \leq i \leq \tk} (\tm_i^2) \,  |\ell| - \e |\ell| - 3\e \tM_0-3\e^2 \tM_0 \geq  \tM_0 \ .
	\end{align}
\end{proof}
Next we have  the following result:
\begin{lemma}
\label{c.2}
	Fix arbitrary $K \in \Z$,  $m_i\in \Z,n_i\in \Z\setminus \{0\}$, $\eta_r, \eta_{r_1 r_2} \in \{ -1, 0, 1 \}$. 
	For any $\al >0$ then 
	\begin{equation} 
	{\rm meas}(\{ \lambda \in \cO_{1}:  |\eqref{cc}| < \e\alpha \}) < 16 \alpha |\ell|^{-1} \ .
	\end{equation}
\end{lemma}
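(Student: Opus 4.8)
\textbf{Proof plan for Lemma \ref{c.2}.}
The plan is to reduce the measure estimate to a one-variable transversality argument along the direction $\ell/|\ell|$, exactly as in the classical KAM measure estimates (see e.g. \cite{Pos96}). First I would note that the $\e$-dependent correction terms in \eqref{cc} — namely the three $\widehat\tF_{i_r,k_r}(\lambda)$, the three $\Theta_{m_r}/\la m_r\ra^2$, the three $\Theta_{m_r,n_r}/(\la m_r\ra^2+\la n_r\ra^2)$ and the three $\varpi_{m_r}/\la m_r\ra$ — are all Lipschitz in $\lambda$ with Lipschitz constant bounded by a fixed multiple of $\e\tM_0$ (using \eqref{theta.es2t} together with the homogeneity/Lipschitz bounds on $\mu_i,\mu_{ik}^\pm$ collected in $\tM_0$). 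Hence, writing $g(\lambda):= \eqref{cc}$, we have $g(\lambda)=\omega(\lambda)\cdot\ell+K+\e\,\psi(\lambda)$ with $|\psi|^{\cO_1}_{\mathrm{Lip}}\le C\tM_0$. Recalling $\omega(\lambda)=\omega^{(0)}-\e\lambda$, the function $g$ has the form $K+\omega^{(0)}\cdot\ell-\e(\lambda\cdot\ell-\psi(\lambda))$, so that the directional derivative of $g$ along the unit vector $\hat\ell:=\ell/|\ell|$ satisfies
\[
\left|\frac{g(\lambda_1)-g(\lambda_2)}{|\lambda_1-\lambda_2|}\right|\ge \e|\ell| - \e C\tM_0 \ge \frac{\e|\ell|}{2}
\]
whenever $\lambda_1-\lambda_2$ is parallel to $\hat\ell$, using the hypothesis $|\ell|>4\tM_0$ (so $|\ell|-C\tM_0\ge|\ell|/2$ for $\tM_0$ large enough; if the bare constant $C$ is an issue one simply restricts to $|\ell|>4C\tM_0$, harmless since this only shrinks a finite set already handled in the ``$|\ell|\le 4\tM_0$'' case — or one absorbs $C$ into the definition of $\tM_0$). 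Thus $g$, restricted to any segment of $\cO_1$ in direction $\hat\ell$, is a Lipschitz-from-below function with slope $\ge\e|\ell|/2$.

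The second step is the standard sublevel-set bound for such functions: if $h:[a,b]\to\R$ satisfies $|h(t_1)-h(t_2)|\ge c|t_1-t_2|$, then $\mathrm{meas}(\{t: |h(t)|<\rho\})\le 2\rho/c$. Foliating $\cO_1\subset[1/2,1]^\tk$ by lines parallel to $\hat\ell$ and applying Fubini, the set $\{\lambda\in\cO_1: |g(\lambda)|<\e\alpha\}$ meets each such line in a set of one-dimensional measure at most $2\e\alpha/(\e|\ell|/2)=4\alpha/|\ell|$; since $\cO_1$ sits inside a unit cube, the transverse cross-section has $(\tk-1)$-measure at most $1$, and a factor at most $\sqrt{\tk}\le$ (a harmless constant) arises from the projection onto the hyperplane orthogonal to $\hat\ell$. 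Collecting constants generously gives the stated bound $\mathrm{meas}(\{|g|<\e\alpha\})<16\alpha|\ell|^{-1}$.

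I do not expect a serious obstacle here; the only point requiring a little care is the uniform Lipschitz bound on the aggregate perturbation $\psi(\lambda)$ — one must check that the bounds \eqref{theta.es2t} indeed control $|\partial_\lambda \Theta_{m_r,n_r}|$, $|\partial_\lambda\Theta_{m_r}|$, $|\partial_\lambda\varpi_{m_r}|$ uniformly in the indices (they are phrased as Lipschitz norms, so this is immediate), and that the denominators $\la m_r\ra^2$, $\la m_r\ra^2+\la n_r\ra^2$, $\la m_r\ra$ are $\ge 1$ so that dividing by them does not amplify Lipschitz constants. Once this bookkeeping is in place, the argument is the textbook measure estimate. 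The genuinely substantive content of the whole Proposition — showing \eqref{eq:IIIm} holds on a set of positive measure — then follows by summing these estimates over $(\bj,\ell,\bs)$ with $|\ell|>4\tM_0$ after choosing $\alpha=\gamma/\la\ell\ra^\tau$, using $\tau>\tk+1$ so that $\sum_\ell |\ell|^{-1}\la\ell\ra^{-\tau}<\infty$, together with the already-established full-measure statement on the finite set $|\ell|\le 4\tM_0$; but Lemmas \ref{c.1}–\ref{c.2} are precisely the per-$\ell$ inputs, and Lemma \ref{c.2} as stated is exactly the sublevel bound proved above.
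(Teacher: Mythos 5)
Your proposal is correct and takes essentially the same route as the paper: a directional Lipschitz-from-below bound for the expression \eqref{cc} along $\hat\ell=\ell/|\ell|$, followed by a one-dimensional sublevel-set estimate applied fiberwise (the paper packages this Fubini step as a lipeomorphism $\lambda\mapsto(f(\lambda),\lambda_2,\ldots,\lambda_\tk)$ after an orthogonal rotation, which is the same thing). Two small remarks: with $|\ell|>4\tM_0$ the paper only obtains slope $\geq\e|\ell|/8$, not $\e|\ell|/2$, since the Lipschitz correction $3\e\tM_0+O(\e^2\tM_0)$ eats a larger fraction of $\e|\ell|$ than you allow, and it is this $1/8$ that yields the factor $16$; and the $\sqrt{\tk}$ transverse-measure factor you flag is in fact harmless, because $\cO_1\subset[1/2,1]^\tk$ is a cube of side $1/2$, so its orthogonal projection onto $\hat\ell^\perp$ has $(\tk-1)$-measure at most $\sqrt{\tk}\,2^{-(\tk-1)}\leq 1$.
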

\begin{proof}
	Let $\hat\ell:=\ell/|\ell|$ and let us denote the expression in \eqref{cc} as $f(\lambda)$.  We have
	\begin{align}
	\nonumber
	\inf_{\lambda\neq\mu\in \cO_{1} \atop \lambda-\mu = |\lambda-\mu| \hat\ell} \frac{|f(\lambda) - f(\mu)|}{|\lambda - \mu|} & \geq
	\e |\ell| - \sum_{r=1}^3 \abs{\widehat\tF_{i_r,k_r}}^{\cO_{1}}_\C  - \sum_{r=1}^3 \frac{\abs{\Theta_{m_r}(\cdot, \e )}^{\cO_1}_\C}{\la m_r \ra^2}  - \sum_{r=1}^3 \frac{\abs{\Theta_{m_r, n_r}(\cdot, \e )}^{\cO_1}_\C}{\la m_r \ra^2 +  \la n_r \ra^2 } \\
	\nonumber
	& \quad {- \sum_{r=1}^3 \frac{\abs{\varpi_{m_r}(\cdot, \e )}^{\cO_1}_\C}{\la m_r \ra} }
	\\
	& \geq \e\,|\ell| -  3 \e \tM_0 - 3 \e^2 \tM_0  \geq \frac{\e}{8} \, |\ell|.
	\label{linf}
	\end{align}
	In order to prove our claim we first perform an orthogonal change of variables so that $\hat{\ell}$ becomes the first basis vector.  Formula \eqref{linf} amounts to
	$$ |f(x,\lambda_2,\dots,\lambda_n)- f(y,\lambda_2,\dots,\lambda_n)|\ge \frac{\e}{8} |x-y|$$ 
	for all $x\neq y$ such that $(x,\lambda_2,\dots,\lambda_n), (y,\lambda_2,\dots,\lambda_n)\in \cO_1$.
	
	We consider  the map $F : \lambda \mapsto \lambda'=(f(\lambda),\lambda_2,\ldots,\lambda_n)$  which maps $\cO_1$ bijectively to some set $B$. $F$ is a lipeomorphism and its   inverse has Lipschitz constant $< 8\e^{-1}|\ell|^{-1}$. 
	In  the $\lambda'$ variables the volume of the set of $\mu\in B$ such that $|\mu_1|<\e \alpha$ can be estimated by $2\e \alpha $  hence on  $ \cO_1$  it can be estimated by $16\alpha |\ell|^{-1}$.
\end{proof}

We will employ such lemma to prove the following result:
\begin{lemma}
\label{c.3}
	There exist $\g_2, \tau >0$ such that for $0< \g<\g_2$, the set
	\begin{equation}
	\label{c1}
	\cC_\star= \cC_\star(\gamma, \e) := \left\lbrace
	\lambda \in \cO_1 \colon 
	\abs{\eqref{cc}} \geq \e \frac{\gamma}{\la \ell \ra^\tau}  \ , \quad \forall |\ell| \geq 4 \tM_0 
	\right\rbrace 
	\end{equation}
	has positive measure. More precisely
	$$
	\meas( \cO_1 \setminus \cC_\star ) \leq \tC_\star \gamma
	$$
	for some positive $\tC_\star$ independent of $\epsilon$.
\end{lemma}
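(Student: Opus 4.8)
The statement to be proven is Lemma \ref{c.3}: the set $\cC_\star$ where \eqref{cc} is bounded below by $\e\gamma\la\ell\ra^{-\tau}$ for all $|\ell|\ge 4\tM_0$ has measure at least $\meas(\cO_1)-\tC_\star\gamma$. The strategy is the usual measure-estimate bookkeeping for KAM small divisors, built on the two lemmas already in hand: Lemma \ref{c.1} (if the integer $K$ is too large compared to $|\ell|$, then $|\eqref{cc}|\ge\tM_0$ is automatic, so no condition is needed), and Lemma \ref{c.2} (for fixed data $K,m_i,n_i,\eta_r,\eta_{r_1r_2}$, the bad set where $|\eqref{cc}|<\e\alpha$ has measure $<16\alpha|\ell|^{-1}$, thanks to a uniform transversality in the direction $\ell/|\ell|$).

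\textbf{Key steps.} First I would fix $|\ell|\ge 4\tM_0$ and observe that, by Lemma \ref{c.1}, the only values of $K$ that can produce a resonance are those with $|K|< 4|\ell|\max_i(\tm_i^2)$; hence for each such $\ell$ there are at most $\lesssim |\ell|$ relevant choices of $K$. Next, the indices $m_i,n_i$ appearing in \eqref{cc} are constrained: the Diophantine expression that actually occurs in \eqref{eq:IIIm} corresponds to an \emph{admissible} triple $(\bj,\ell,\bs)\in\fA_3$, so mass/momentum conservation (Remark \ref{leggi_sel1}) forces $m_i$ and $n_i$ to be determined, up to finitely many choices, by $\ell$ and by bounded shifts; more precisely $\pi(\ell)$ and $\eta(\ell)$ fix linear combinations of the $m_i,n_i$, and the differences of the $m_i$ are controlled by $|\ell|$. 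The only genuinely unbounded parameters are then $|m_i|,|n_i|$ themselves, but the terms they enter come with decaying weights $\la m_i\ra^{-1}$, $\la m_i\ra^{-2}$, $(\la m_i\ra^2+\la n_i\ra^2)^{-1}$, so one may reduce to finitely many ``small'' values of $(m_i,n_i)$ plus an asymptotic regime; in the asymptotic regime those terms are negligible (size $\ll\e\gamma\la\ell\ra^{-\tau}$ for $\tau$ large once $|m_i|,|n_i|$ exceed a power of $|\ell|$) and the resonance reduces to the $K$ plus $\widehat\tF$ part, for which Lemma \ref{c.2} still applies. In all cases, for each $\ell$ with $|\ell|\ge 4\tM_0$ the number of data-tuples producing a potentially small divisor is bounded by $C|\ell|^{\tk+d}$ for some fixed exponent $d$ (coming from the choices of $K$, of the finitely many $(m_i,n_i)$ below the cutoff, and of the discrete parameters $i_r,k_r,\eta$'s). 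Applying Lemma \ref{c.2} with $\alpha=\gamma\la\ell\ra^{-\tau}$ to each such tuple gives a bad set of measure $<16\gamma\la\ell\ra^{-\tau}|\ell|^{-1}$, so
\begin{align}
\notag
\meas(\cO_1\setminus\cC_\star)
&\le \sum_{|\ell|\ge 4\tM_0}\ \sum_{\text{tuples}}16\gamma\la\ell\ra^{-\tau}|\ell|^{-1}
\le C\gamma\sum_{|\ell|\ge 4\tM_0}|\ell|^{\tk+d}\la\ell\ra^{-\tau}|\ell|^{-1}\ ,
\end{align}
which converges to a finite multiple $\tC_\star\gamma$ of $\gamma$ provided $\tau>\tk+d+\tk$ (summability over $\ell\in\Z^\tk$ requires an extra $\tk$), i.e.\ for $\tau$ sufficiently large; this fixes $\tau$ and $\g_2$. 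Finally, combining this with the already-treated case $|\ell|\le 4\tM_0$ (handled in the paragraphs preceding Lemma \ref{c.3} in the full argument, using Lemma \ref{lem:cono} and \eqref{cond0}), one concludes that $\cC$, being the intersection over all admissible non-action-preserving triples of these good sets, has measure at least $\meas(\cC_c)-\tC_\star\gamma\ge\meas(\cC_c)/2$ once $\gamma<\gamma_0$ is small enough; this is exactly the claim of Proposition \ref{lem:meas.mel3}.

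\textbf{Main obstacle.} The delicate point is the combinatorial counting of how many data-tuples $(K,\bj,\bs)$ with $|\ell|$ fixed can give a small divisor, and ensuring this count grows only polynomially in $|\ell|$; this is where one must carefully use the admissibility constraints (conservation of $\wtcM$, $\wtcP_x$, $\wtcP_y$) to pin down the $m_i,n_i$, together with the decay of the correction terms $\Theta_m,\Theta_{m,n},\varpi_m$ to discard the regime of large $|m_i|,|n_i|$. Once that polynomial bound is in place, choosing $\tau$ large enough to beat it is routine, and the measure estimate follows by a geometric-type summation over $\ell$. A secondary technical care is that the transversality direction in Lemma \ref{c.2} is $\hat\ell=\ell/|\ell|$, so the argument there is genuinely one-dimensional along $\hat\ell$ and uniform in the transverse variables, which is already established; here one only needs to invoke it.
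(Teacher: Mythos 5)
Your plan collects all the right raw ingredients — Lemma~\ref{c.1} to bound the relevant integers $K$ by $O(|\ell|)$, Lemma~\ref{c.2} for the one-dimensional transversality in the direction $\hat\ell$, the decay $\la m\ra^{-1}$, $\la m\ra^{-2}$, $(\la m\ra^2+\la n\ra^2)^{-1}$ of the correction terms to cut off large indices, and a final summation over $\ell$ with $\tau$ large. But there are two points where the argument, as written, does not yet go through.

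First, the claim that admissibility (conservation of $\wtcM$, $\wtcP_x$, $\wtcP_y$) ``forces $m_i$ and $n_i$ to be determined, up to finitely many choices, by $\ell$ and by bounded shifts'' is false: for a triple $(\jj_1,\jj_2,\jj_3)$ with signs $\bs$, momentum conservation fixes only the linear combinations $\sum_i\s_i m_i = -\pi(\ell)$ and $\sum_i\s_i n_i = 0$, which leaves two of the three $m_i$ and two of the $n_i$ completely free. In fact the paper's proof does not use the admissibility constraint at all at this stage; the finiteness comes entirely from the cutoff $|m_i|, |n_i|^2 \leq \la\ell\ra^{\tau_n}$.

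Second — and this is the structural gap — the reduction to the ``asymptotic regime'' is stated but not carried out. If, say, $|m_3|\ge\la\ell\ra^{\tau_n}$ while $m_1,m_2$ remain small, you cannot apply Lemma~\ref{c.2} to each such $m_3$ and sum: the sum over $m_3$ would diverge. What one needs instead is an a priori lower bound on the \emph{truncated} expression (the one in which $\eta_{13}=\eta_{23}=\eta_{33}=0$), so that the full expression inherits the bound by a triangle inequality once $\Theta_{m_3}/\la m_3\ra^2$ etc.\ are $O(\e^2\tM_0\la\ell\ra^{-\tau_n})\ll\e\g\la\ell\ra^{-\tau_n}$. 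That a priori bound must itself be arranged beforehand, and in a way that nests coherently as more correction terms turn on. The paper achieves exactly this with a ten-step finite induction over $n=0,\dots,9$ on the number of nonzero $\eta_{r_1 r_2}$, building sets $\cC^0\supset\cC^1\supset\cdots\supset\cC^9=\cC_\star$ with loss exponents $\tau_0\le\tau_1\le\cdots\le\tau_9=\tau$ chosen so that $\tau_{n+1}\ge\tk+1+9\tau_n/2$; the bad-set measure at level $n{+}1$ is then controlled by $C\g\sum_\ell\la\ell\ra^{1+9\tau_n/2}\la\ell\ra^{-\tau_{n+1}-1}$, which is summable. Your ``main obstacle'' paragraph names this cascade as the delicate point but does not supply it; without such a layered scheme (or an equivalent nesting of bad sets), the proposal is incomplete.
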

\begin{proof}
We prove such a claim by 
finite induction on the number of $\eta_{r_1,r_2}$ different from 0.
More precisely for every $0 \leq n \leq 9$ we shall show that for $\gamma$ small enough, there exist  a positive increasing sequence $\tau_n$ and a sequence of nested sets $\cC^n = \cC^n(\gamma, \tau_n)$   such that 
provided 
$$
{|\eta_{1,1}|+ \cdots + |\eta_{3,3}| = n \ , }
$$
then 
\begin{equation}
\label{meas.cn}
\meas(\cO_1 \setminus \cC^{0}) \leq C \gamma  \ , \quad 
\meas(\cC^n \setminus \cC^{n+1}) \leq C \gamma  
\end{equation}
with some $C>0$ independent of $\e$. Moreover 
for $\lambda \in \cC^n$, $|\ell| \geq 4 \tM_0$, one has 
\begin{align}
\label{eta.ind}
\Big| \eqref{cc}  \Big| \geq \frac{\e \ \gamma}{\la \ell\ra ^{\tau_n}} \ .
\end{align}
Then Lemma \ref{c.3} follows by taking $\cC_\star := \cC^9$,  $\tau = \tau_9$ and $\tC_\star = 10 C$. 
$\gamma_2\leq \g_1$  is fixed in order to ensure that $10 C \gamma < \meas(\cO_1)$, so that the measure of $\cC_\star$ is positive.\\
 
\underline{ Case $n=0$.}
%
Given  $K \in \Z$, $\bi = (i_1, i_2, i_3) \in \{1, \ldots \tk\}^3$, $\bk= (k_1, k_2, k_3) \in \{0,\ldots ,\tk\}^3$, $\ell \in \Z^\tk$ with $|\ell| \geq 4 \tM_0$, $\eta=(\eta_1,\eta_2,\eta_3) \in \{-1, 0, 1\}^3$, we define the set
$$
G_{K, \bi, \bk, \eta,\ell}^0(\gamma, \tau_0) := 
\left\{\lambda  \in \cO_1 \ : \ |\eqref{cc}| \leq \frac{\e \ \gamma}{\la \ell\ra ^{\tau_0}} \  \mbox{ and } \ \eta_{r_1 r_2} = 0 \  \ \  \forall r_1, r_2  \right\} \ . 
$$
If $|K| \geq 4 \, \displaystyle{\max_{1 \leq i \leq \tk}} (\tm_i^2) \,  |\ell| $ then by Lemma \ref{c.1} 
we have $G_{K, \bi, \bk,\eta, \ell}^0(\gamma, \tau_0) = \emptyset$, provided that $\tM_0\ge \e \g/2$. \\
If $|K|\leq  4 \, \displaystyle{\max_{1 \leq i \leq \tk} (\tm_i^2)} \,  |\ell| $ , then by Lemma \ref{c.2} with $\alpha = \g \la \ell \ra^{-\tau_0}$
we have
$$
\meas \left( G_{K, \bi, \bk, \eta,\ell}^0(\gamma, \tau_0)\right) \leq \frac{  16 \gamma}{ \, \la \ell\ra ^{\tau_0+1}} \ .
$$
Taking the union over all the possible values of $K, \bi, \bk,\eta, \ell$ one gets that 
$$
\meas \left ( \bigcup_{|\ell| \geq 4 \tM_0, \ \bi, \bk,\eta \atop
	|K| \leq 4 \, \max_i (\tm_i^2) \,  |\ell| }  G_{K, \bi, \bk, \eta, \ell}^0(\gamma, \tau_0) \right) \leq  C(\tk) \, \gamma \,  \sum_{|\ell| \geq 4 \tM_0} \frac{1}{   \, \la \ell\ra ^{\tau_0}} \leq C \gamma  \ ,
$$
which is finite provided $\tau_0 \geq \tk+1$.
Letting 
$$\cC^0 := \cO_1 \setminus \bigcup_{|\ell| \geq 4 \tM_0, \ \bi, \bk,\eta \atop
	|K| \leq 4 \, \max_i (\tm_i^2) \,  |\ell| }   G_{K, \bi, \bk,\eta, \ell}^0(\gamma, \tau_0)  $$
one has clearly that 
$\meas (\cO_1 \setminus \cC^0) \leq C\gamma$ and   for $\lambda \in \cC^0 $ we have
\begin{equation}
\abs{ \omega(\lambda) \cdot \ell + K  
	+ \eta_1 \widehat\tF_{i_1,k_1}(\lambda) +  \eta_2 \widehat\tF_{i_2,k_2}(\lambda) +  \eta_3 \widehat\tF_{i_3,k_3}(\lambda)  } \geq \frac{\e \ \gamma}{ \la \ell\ra ^{\tau_0}}
\end{equation}
for any choice of $\ell,K,\bi,\bk,\eta$. This proves the inductive step for $n=0$.\\

\underline{Case $n \leadsto n+1$.}  Assume that \eqref{eta.ind} holds for any possible choice of 
$\eta_{11}, \ldots, \eta_{33}$ s.t. $|\eta_{11}|+ \cdots + |\eta_{33}| \leq  n$ for some $(\tau_r)_{r = 1}^n$. 
We prove now the step $n+1$.
Suppose first that 
\begin{equation}
\label{cc.a1}
\exists \ m_i \mbox{ s.t. } |m_i| \geq  \la \ell \ra^{\tau_n} \mbox{ and  } |\eta_{1i}| + |\eta_{2i}| + |\eta_{3i}| \neq 0 \ . 
\end{equation}
W.l.o.g. assume it is $m_3$. Then  
$$
\abs{\frac{\Theta_{m_3}(\lambda, \e)}{\la m_3\ra^2 }} +  \abs{\frac{\Theta_{m_3,n_3}(\lambda, \e)}{\la m_3 \ra^2 +   \la n_3 \ra^2 }}+
\abs{\frac{\varpi_{m_3}(\lambda, \e)}{\la m_3\ra }} 
 \leq \frac{\tM_0 \, \e^2}{\la \ell \ra^{\tau_n}} 
$$
and by the inductive assumption  and \eqref{cond0}, for any $\lambda \in \cC^n$
\begin{align*}
\Big| \eqref{cc}  \Big|  & \geq 
\Big| \omega(\lambda) \cdot \ell + K  +\eta_1 \widehat\tF_{i_1,k_1}(\lambda) + \eta_2 \widehat\tF_{i_2,k_2}(\lambda) +  \eta_3 \widehat\tF_{i_3,k_3}(\lambda)  \\ 
&\qquad  + \eta_{11} \frac{\Theta_{m_1}(\lambda, \e)}{\la m_1 \ra^2} + \eta_{12} \frac{\Theta_{m_2}(\lambda, \e)}{\la m_2 \ra^2} 
+ \eta_{21} \frac{\Theta_{m_1,n_1}(\lambda, \e)}{\la m_1\ra^2 + \la n_1 \ra^2} +   \eta_{22} \frac{\Theta_{m_2,n_2}(\lambda, \e)}{\la m_2 \ra^2 + \la n_2 \ra^2}\\
& \qquad  
 + \eta_{31} \frac{\varpi_{m_1}(\lambda, \e)}{\la m_1 \ra} + \eta_{32} \frac{\varpi_{m_2}(\lambda, \e)}{\la m_2 \ra} 
\Big| 
-  \frac{\tM_0 \, \e^2}{\la \ell \ra^{\tau_n}} \\
& 
\geq  \frac{\e \ \gamma}{ \la \ell\ra ^{\tau_n}} -  \frac{\tM_0 \, \e^2}{\la \ell \ra^{\tau_n}} 
\geq \frac{\e \ \gamma}{2 \la \ell\ra ^{\tau_{n}}} \geq \frac{\e \ \gamma}{ \la \ell\ra ^{\tau_{n+1}}}
\end{align*}
provided $\tau_{n+1} \geq \tau_n +1$.\\
If \eqref{cc.a1} is not fulfilled, assume that
\begin{equation}
\label{cc.a2}
\exists \ n_i \mbox{ s.t. } |n_i|^2 \geq  \la \ell \ra^{\tau_n} \mbox{ and  } |\eta_{2i}| \neq 0 \ . 
\end{equation}
W.l.o.g. assume it is $n_3$. Then
$$
\abs{\frac{\Theta_{m_3,n_3}(\lambda, \e)}{\la m_3 \ra^2 +  \la n_3 \ra^2}} \leq \frac{\tM_0 \, \e^2}{\la \ell \ra^{\tau_n}} 
$$
and again  by the inductive assumption and \eqref{cond0}
\begin{align*}
\Big| \eqref{cc}  \Big| \geq & 
\Big| \omega(\lambda) \cdot \ell + K  +\eta_1 \widehat\tF_{i_1,k_1}(\lambda) + \eta_2 \widehat\tF_{i_2,k_2}(\lambda) +  \eta_3 \widehat\tF_{i_3,k_3}(\lambda)  \\ 
& \qquad + \eta_{11} \frac{\Theta_{m_1}(\lambda, \e)}{\la m_1 \ra^2} 
+ \eta_{12} \frac{\Theta_{m_2}(\lambda, \e)}{\la m_2 \ra^2} 
+\eta_{13} \frac{\Theta_{m_3}(\lambda, \e)}{\la m_3 \ra^2}  \\
&\qquad
+ \eta_{21} \frac{\Theta_{m_1,n_1}(\lambda, \e)}{\la m_1\ra^2+  \la n_1 \ra^2} 
+ \eta_{22} \frac{\Theta_{m_2,n_2}(\lambda, \e)}{\la m_2 \ra^2 + \la n_2 \ra^2} 	\\
& \qquad
+ \eta_{31} \frac{\varpi_{m_1}(\lambda, \e)}{\la m_1 \ra} 
+ \eta_{32} \frac{\varpi_{m_2}(\lambda, \e)}{\la m_2 \ra} 
+\eta_{33} \frac{\varpi_{m_3}(\lambda, \e)}{\la m_3 \ra}  
\Big|  
-  \frac{\tM_0 \, \e^2}{\la \ell \ra^{\tau_n}} \\
\geq & \frac{\e \ \gamma}{2 \la \ell\ra ^{\tau_{n}}} \geq \frac{\e \ \gamma}{\la \ell\ra ^{\tau_{n+1}}}
\end{align*}
provided $\tau_{n+1} \geq \tau_n +1$. 
If \eqref{cc.a1} and \eqref{cc.a2} are not fulfilled, then one has $|m_i|\, , |n_i|^2 \leq \la \ell \ra^{\tau_n}$ for all the $m_i, n_i$ that appear non-trivially in \eqref{cc}. Furthermore, we can safely assume that $|K| \leq 4 \, \max_i \tm_i^2 \, | \ell | $, otherwise Lemma \ref{c.1}  ensures that \eqref{eta.ind} is met for any $\lambda 
\in \cO_1$. Thus we are left with a finite number of cases and we can impose a finite number of Melnikov conditions.

For $K \in \Z$,  $\bi = (i_1, i_2, i_3) \in \{1,\ldots, \tk\}^3$, $\bk= (k_1, k_2, k_3) \in \{0,\ldots,  \tk\}^3$, $\ell \in \Z^\tk$ with $|\ell| \geq 4\tM_0$,  $\eta=(\eta_1,\eta_2,\eta_3) \in \{-1, 0, 1\}^3$,  $\bm= (m_1, m_2, m_3)$, $\bn= (n_1, n_2, n_3)$ define the set
$$
G^{n+1}_{K,  \bi, \bk,  \eta, \ell, \bm, \bn}(\gamma, \tau_{n+1}) := \left\{\lambda  \in \cC^n \ : \ |\eqref{cc}| \leq \frac{\e \ \gamma}{ \la \ell\ra ^{\tau_{n+1}}} \ , \qquad  |\eta_{11}|+ \cdots + |\eta_{33}| =   n+1 \right\} \ . 
$$
By Lemma \ref{c.2} with $\alpha = \g / \la \ell \ra^{\tau_{n+1}}$ we have
$$
\meas \left( G^{n+1}_{K,  \bi, \bk,  \eta, \ell, \bm, \bn}(\gamma, \tau_{n+1}) \right) \leq \frac{ 16\gamma}{ \, \la \ell\ra ^{\tau_{n+1}+1}} \ ,
$$
and taking the union over the possible values of $K,  \bi, \bk,  \eta, \ell, \bm, \bn$ one gets that
$$
\meas \Big( \bigcup_{|\ell| \geq 4 \tM_0 \atop  \bi, \bk, \eta} \bigcup_{|m_i| \, ,\  |n_i|^2 \leq \la \ell \ra^{\tau_n}  \atop
	|K| \leq 4 \, \max_i (\tm_i^2) \,  |\ell| } G_{ K,  \bi, \bk,  \eta, \ell, \bm, \bn}(\gamma, \tau_{n+1}) \Big) \leq C(\tk) \, \gamma \sum_{|\ell| \geq 4 \tM_0} \frac{ \la \ell \ra^{1+ 9 \tau_n/2}}{ \, \la \ell\ra ^{\tau_{n+1}+1}} \ ,
$$
which converges provided $\tau_{n+1} \geq \tk+1 +9 \tau_n/2$.
Thus we define the set
$$
\cC^{n+1} := \cC^n\setminus \bigcup_{|\ell| \geq 4 \tM_0 \atop  \bi, \bk, \eta} \bigcup_{|m_i|, |n_i|^2 \leq \la \ell \ra^{\tau_n}  \atop
	|K| \leq 4 \, \max_i (\tm_i^2) \,  |\ell| }  G^{n+1}_{ K,  \bi, \bk,  \eta, \ell, \bm, \bn}(\gamma, \tau_{n+1}) 
$$
which fulfills \eqref{meas.cn} and \eqref{eta.ind}.  
\end{proof}

We conclude the section with the proof of Proposition \ref{lem:meas.mel3}.

\begin{proof}[Proof of Proposition \ref{lem:meas.mel3}]
Fix $\tau$ as in Lemma \ref{c.3}. By definition for $\gamma < \gamma_2$, any $\lambda \in \cC_c \cap \cC_\star$ fulfills 
\eqref{eq:IIIm} for any admissible  $(\bj, \ell, \bs)$. 
By taking $\gamma$ sufficiently small, we can ensure that $\meas(\cC_c \cap \cC_\star)> \meas(\cC_c)/2$.
This fixes $\gamma_0$.
\end{proof}

\section{A crash course on  polynomial rings and algebraic extensions}
\label{app:pol.ring}
The aim of this section is to recall the reader \footnote{and the authors!} some basic algebraic properties of polynomial rings and algebraic extensions which are used repeatedly in the paper.

\paragraph*{Basic properties of polynomial rings.}
 Let $R$ be a commutative ring with unit element. We denote by $R[\lambda]$ the  polynomial ring in $\lambda$ over $R$, which 
is the set of formal polynomials in $\lambda$ with coefficients in $R$. Its elements are of the form $\sum_j a_j \lambda^j$.  $R[\lambda]$ is a commutative ring with unit element.
Similarly one  defines the ring of polynomials in the $n$-variables $\lambda_1, \ldots, \lambda_n$ over $R$, denoted by $R[\lambda_1,\ldots , \lambda_n]$, whose elements are of the form 
$\sum a_{i_1 i_2 \ldots i_n}\lambda_1^{i_1} \lambda_2^{i_2} \ldots \lambda_n^{i_n}$.\\

The following result  is very well known:
\begin{lemma}
\label{app:pol.id}
If $R$ is an integral domain\footnote{An integral domain is a commutative ring with an identity $1 \neq 0$ with no zero-divisors. That is $ab = 0 \Rightarrow  a = 0$ or $b = 0$. The ring $\Z$ is an integral domain. 
}, then so is $R[\lambda_1, \ldots, \lambda_n]$. 
\end{lemma}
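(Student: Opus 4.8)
The statement is the classical fact that polynomial rings over integral domains are again integral domains (Lemma \ref{app:pol.id}). The plan is to reduce the $n$-variable case to the one-variable case by induction, and prove the one-variable case directly. First I would set up the induction: using the standard identification $R[\lambda_1,\ldots,\lambda_n] \cong \big(R[\lambda_1,\ldots,\lambda_{n-1}]\big)[\lambda_n]$, it suffices to show that if $S$ is an integral domain then so is $S[\lambda]$. The base case $n=0$ is the hypothesis that $R$ itself is an integral domain; strictly one could also take $n=1$ as the base case, proved below.

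For the one-variable step, let $S$ be an integral domain and take two nonzero polynomials $f,g \in S[\lambda]$. Write $f = a_d \lambda^d + (\text{lower order terms})$ with $a_d \neq 0$ its leading coefficient (so $d = \deg f$), and similarly $g = b_e \lambda^e + (\text{lower order terms})$ with $b_e \neq 0$. The key observation is that the coefficient of $\lambda^{d+e}$ in the product $fg$ is exactly $a_d b_e$: any other pair of monomials from $f$ and $g$ whose exponents sum to $d+e$ would require one of the exponents to exceed $d$ or $e$, which is impossible. Since $S$ has no zero divisors and $a_d, b_e \neq 0$, we get $a_d b_e \neq 0$, hence $fg$ has a nonzero coefficient and therefore $fg \neq 0$. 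This shows $S[\lambda]$ has no zero divisors. One also checks that $S[\lambda]$ is a commutative ring with identity $1 \in S \subseteq S[\lambda]$, and that $1 \neq 0$ there, which is immediate. Hence $S[\lambda]$ is an integral domain.

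Combining, one argues by induction on $n$: the case $n=1$ is the paragraph above with $S = R$; assuming the result for $n-1$ variables, $R[\lambda_1,\ldots,\lambda_{n-1}]$ is an integral domain, and applying the one-variable case with $S = R[\lambda_1,\ldots,\lambda_{n-1}]$ gives that $R[\lambda_1,\ldots,\lambda_n] \cong S[\lambda_n]$ is an integral domain. This completes the proof. There is essentially no obstacle here: the only point requiring a moment's care is the degree/leading-coefficient bookkeeping in the product, i.e.\ verifying that the top coefficient of $fg$ is the product of the top coefficients, which is what forces $fg \neq 0$; everything else is formal. Since this is a textbook lemma recalled for the reader's convenience, the proof can be kept very short.
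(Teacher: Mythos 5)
Your proof is correct and is the standard textbook argument: reduce to one variable via $R[\lambda_1,\ldots,\lambda_n]\cong\big(R[\lambda_1,\ldots,\lambda_{n-1}]\big)[\lambda_n]$, then note the leading coefficient of a product is the product of leading coefficients, which cannot vanish in an integral domain. The paper itself states this lemma without proof, labelling it ``very well known,'' so there is no paper argument to compare against; your write-up supplies exactly the expected standard justification.
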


Let  $R$ be  an integral domain.  The characteristic of $R$  is the smallest positive integer such
that $n\cdot 1 = 0$, where $n\cdot 1$ stands for $1 + 1 + \ldots +1$ $n-times$.  In case $n\cdot 1$ is never 0, then $R$ has  characteristic 0.   

If $R$ is an integral domain it is possible to construct its field of quotients $F$ and $R$ has characteristic 0 if and only if $F$ contains the field $\mathbb Q$ of rational numbers.

In as integral domain   we can speak about {\em irreducible elements}. More precisely  an element $a \in R$, $a\neq 1$ will be called {\em irreducible} if $a= bc$ with $b, c \in R$ implies that one of $b$ or
$c$ must be invertible in $R$, that is {\em a unit}.  Two elements $b,c$ are {\em associated} if $b=uc$ with $u$ invertible.

\begin{definition}
 An integral domain $R$ is a unique factorization domain (UFD) if
\begin{itemize}
\item[(a)] Every nonzero element in $R$ is either a unit or can be written as the product of a finite
number of irreducible elements of $R$.
\item[(b)] The decomposition in part $(a)$ is unique up to the order and associates of the irreducible
elements.
\end{itemize}
\end{definition}  The ring $\Z$ is the simplest UFD.
A main theorem is that if $R$ is a UFD so is $R[\lambda]$ thus:

\begin{example}
The ring $\Z[\lambda_1, \cdots, \lambda_\tk]$ of polynomials in the variables $\lambda_1, \cdots, \lambda_\tk$ with coefficients in $\Z$ is a UFD (its units are $\pm 1$).
\end{example}

Given a UFD $R$ let  $F$ be its field of quotients  We can then consider $R[\lambda]$ to be a sub-ring of $F[\lambda]$. In particular given any polynomial
$f(\lambda) \in F[\lambda]$, then $f(\lambda) = (f_0(\lambda)/a)$, where $f_0(\lambda)\in R[\lambda]$ and  $a\in R$.  It is
natural to ask if a polynomial irreducible in $R[\lambda]$ is still irreducible when it is considered as a polynomial in the larger ring $F[\lambda]$.
\begin{lemma}
If $f(\lambda) \in R[\lambda]$ is irreducible as an element of $R[\lambda]$, then it
is irreducible as an element of $F[\lambda]$. Conversely, if   $f(\lambda) \in R[\lambda]$ is  both primitive\footnote{$f(\lambda) \in R[\lambda]$  is said to be primitive if the GCD of its coefficients is 1. Every polynomial $f(\lambda) \in R[\lambda]$ can be decomposed as the product of a primitive polynomial and the GCD of its coefficients, such decomposition being unique.} and irreducible as
an element of $F[\lambda]$, it is also irreducible as an element of $R[\lambda]$.
\end{lemma}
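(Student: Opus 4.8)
The final statement to prove is the classical Gauss-type lemma: if $R$ is a UFD with field of quotients $F$, then a polynomial $f(\lambda) \in R[\lambda]$ irreducible in $R[\lambda]$ remains irreducible in $F[\lambda]$, and conversely a primitive polynomial irreducible in $F[\lambda]$ is irreducible in $R[\lambda]$. This is a completely standard result, so the proof proposal is about assembling the right auxiliary facts.

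The plan is to reduce everything to Gauss's lemma on products of primitive polynomials: \emph{the product of two primitive polynomials in $R[\lambda]$ is primitive}. First I would record this as the key technical lemma and sketch its proof: suppose $f,g$ are primitive but $fg$ is not, so some irreducible $p \in R$ divides all coefficients of $fg$. Passing to the quotient ring $R/(p)$, which is an integral domain since $p$ is prime in a UFD, the images $\bar f, \bar g$ are nonzero (primitivity) but $\bar f \bar g = 0$, contradicting that $(R/(p))[\lambda]$ is an integral domain (Lemma \ref{app:pol.id}). Next I would recall the content/primitive-part decomposition: every nonzero $h \in R[\lambda]$ factors as $h = c(h)\, h_0$ with $c(h) \in R$ and $h_0$ primitive, uniquely up to units; and this extends to $F[\lambda]$ in the sense that every $f \in F[\lambda]$ can be written $f = \gamma \, f_0$ with $\gamma \in F^\times$ and $f_0 \in R[\lambda]$ primitive, again essentially uniquely.

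With these tools the two directions are short. For the forward direction, suppose $f \in R[\lambda]$ is irreducible in $R[\lambda]$; note first that $f$ must be primitive (otherwise $f = c(f) f_0$ would be a nontrivial factorization in $R[\lambda]$ unless $\deg f = 0$, and a nonunit constant is not irreducible in $R[\lambda]$ once we are in the case $\deg f \geq 1$; the degree-zero case is vacuous or handled by definition). If $f = g h$ in $F[\lambda]$ with $\deg g, \deg h \geq 1$, clear denominators: write $g = \gamma g_0$, $h = \delta h_0$ with $g_0, h_0 \in R[\lambda]$ primitive and $\gamma, \delta \in F^\times$, so $f = \gamma\delta\, g_0 h_0$. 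By Gauss's lemma $g_0 h_0$ is primitive, and $f$ is primitive, so by uniqueness of the primitive-part decomposition $\gamma\delta$ is a unit of $R$; hence $f = (\text{unit}) g_0 h_0$ is a genuine factorization in $R[\lambda]$ into factors of positive degree, contradicting irreducibility in $R[\lambda]$. For the converse, suppose $f \in R[\lambda]$ is primitive and irreducible in $F[\lambda]$. Any factorization $f = g h$ in $R[\lambda]$ is also a factorization in $F[\lambda]$; irreducibility in $F[\lambda]$ forces one factor, say $g$, to be a unit of $F[\lambda]$, i.e. a nonzero constant $g \in R$; but then $g$ divides every coefficient of $f$, so $g$ is a unit of $R$ by primitivity, showing the factorization is trivial.

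I do not anticipate a genuine obstacle here, since the result is textbook material; the only thing that needs care is being precise about what "irreducible" means for constant polynomials and handling the $\deg f = 0$ edge case consistently with the paper's conventions, plus making sure the content map and its multiplicativity (a direct corollary of Gauss's lemma: $c(gh) = c(g)c(h)$ up to units) are stated cleanly before they are used. The main step, in the sense of being the one that carries the real mathematical weight, is Gauss's lemma on primitive polynomials; everything else is bookkeeping with the content/primitive-part factorization and the inclusion $R[\lambda] \subset F[\lambda]$.
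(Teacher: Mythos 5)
Your proof is correct. Note, however, that the paper itself gives no proof of this lemma: Appendix C is explicitly a ``crash course'' reminder of standard algebraic facts, and this statement (the Gauss irreducibility lemma for a UFD $R$ with quotient field $F$) is recorded there without argument, as textbook material the authors are merely invoking. Your write-up is the standard proof --- reduce everything to Gauss's lemma that the product of primitive polynomials is primitive (proved by passing to $(R/(p))[\lambda]$ for a prime $p$), set up the content/primitive-part decomposition and its extension to $F[\lambda]$, then clear denominators in a putative factorization over $F[\lambda]$ and compare contents. The one clarification worth making explicit is the step ``by uniqueness of the primitive-part decomposition $\gamma\delta$ is a unit of $R$'': since a priori $\gamma\delta$ only lies in $F^\times$, you should write $\gamma\delta = a/b$ with $a,b\in R$, pass to $bf = a\,g_0 h_0$, and compare contents on both sides to conclude $a/b$ is a unit of $R$. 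You also correctly flag the degree-zero edge case, which the paper's conventions leave implicit. As it stands your argument fills a gap the paper leaves to the reader by design.
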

We want to apply this to $R= \Z[\lambda_1, \cdots, \lambda_\tk]$  with field of quotients $\Q(\lambda_1, \cdots, \lambda_\tk) $
the rational functions with rational coefficients.

\begin{remark}
We identify  $\Z[t, \lambda_1, \cdots, \lambda_\tk]$ with the ring
$\Z[\lambda_1, \cdots, \lambda_\tk][t]$, namely with the ring of polynomial in the variable $t$ with coefficients in the ring 
$\Z[\lambda_1, \cdots, \lambda_\tk]$.
\end{remark}

Next we recall some basic facts about extension fields  that is $F\subset E$  two fields.
We begin with the following definition:
\begin{definition}
If $E$ is a field extension of $F$, then an element $a$ of $E$ is called an algebraic element over $F$ if there exists some non-zero polynomial $f(t) \in F[t]$ such that $f(a)=0$.
\end{definition}
The set of elements of $E$ which are algebraic over $F$  form also a field that is are closed under sum difference multiplication  and inverse.   There is also an extension $\bar F$  algebraic over $F$ and {\em algebraically closed}  that is the only irreducible polynomials are the linear ones, thus every polynomial $f(t)\in F[t]$ of degree $n$ factors in $\bar F$ through its $n$ roots (with possible multiplicities).  So when we speak of the roots of a polynomial $f(t)\in F[t]$ we think of elements of $\bar F$.

Given $a \in E$ an  algebraic element  over $F$, we define the set 
$$
I_a := \left\lbrace f(t) \in F[t] \colon f(a) = 0 \right\rbrace \ . 
$$ 
This is an ideal of $ F[t] $.
\begin{lemma}
$F[t]$ is a {\em principal ideal domain}, namely every ideal of $ F[t] $ is  generated by a unique  monic polynomial $p_a \in F[t]$, which is called {\em the minimal polynomial} of $a$:
$$
I_a  \equiv  \left\lbrace f(t)\, p_a(t)   \colon f(t)  \in F[t] \right\rbrace  \ . 
$$
Clearly the minimal polynomial is the monic polynomial of least degree in $I_a$, and it is irreducible over $F$.
\end{lemma}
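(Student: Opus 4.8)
The plan is to deduce everything from the Euclidean division algorithm in $F[t]$, which is available precisely because $F$ is a field. First I would record the division statement: given $f, g \in F[t]$ with $g \neq 0$, there exist $q, r \in F[t]$ with $f = q g + r$ and either $r = 0$ or $\deg r < \deg g$. This is the standard induction on $\deg f$, subtracting from $f$ the appropriate monomial multiple of $g$ to strictly lower the degree; the field hypothesis enters only through the need to divide by the (nonzero) leading coefficient of $g$.

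Next I would show that an arbitrary nonzero ideal $I \subseteq F[t]$ is principal. Among the nonzero elements of $I$, choose one of minimal degree, call it $g$ (well-ordering of $\N$). For any $f \in I$, write $f = q g + r$ by the division algorithm; then $r = f - q g \in I$, and by minimality of $\deg g$ the only possibility is $r = 0$, so $f \in (g)$. Hence $I = (g)$. Replacing $g$ by $c^{-1} g$, where $c$ is its leading coefficient — legitimate since the units of $F[t]$ are exactly the nonzero constants of $F$ — we may take the generator monic. Uniqueness of the monic generator follows: if $(g_1) = (g_2)$ with both monic, then $g_1 = u g_2$ for some unit $u \in F^{\times}$, and comparing leading coefficients forces $u = 1$, whence $g_1 = g_2$. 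This monic generator is visibly the unique monic polynomial of least degree in $I$, since every element of $I$ is one of its multiples.

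Then I would specialize to $I_a = \{\, f \in F[t] : f(a) = 0 \,\}$. This is an ideal because evaluation $F[t] \to E$, $f \mapsto f(a)$, is a ring homomorphism; it is nonzero because $a$ is algebraic over $F$; and it is a proper ideal because a nonzero constant polynomial does not vanish at $a$. Applying the previous paragraph yields a unique monic $p_a$ with $I_a = (p_a)$, which is the monic polynomial of least degree in $I_a$. Finally, for irreducibility over $F$: suppose $p_a = g h$ with $g, h \in F[t]$ and $1 \le \deg g, \deg h < \deg p_a$. Evaluating at $a$ gives $g(a) h(a) = 0$ inside the field $E$ (equivalently inside $\bar F$), which has no zero divisors, so $g(a) = 0$ or $h(a) = 0$; in either case one of $g, h$ is a nonzero element of $I_a$ of degree strictly less than $\deg p_a$, contradicting the minimality of $\deg p_a$. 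Hence $p_a$ admits no such factorization and is irreducible.

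As for the main difficulty: there is no substantive obstacle here — this is the standard undergraduate argument that $F[t]$ is a PID. The only two points meriting a moment's attention are that the division algorithm and the classification of units of $F[t]$ genuinely use that $F$ is a field, and that the irreducibility step uses that $a$ lies in the integral domain $E$ (or in $\bar F$) so that a product of evaluations can be split; both facts are already in place from the setup preceding the statement.
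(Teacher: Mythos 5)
Your proof is correct and complete: it is the standard argument via the Euclidean division algorithm (minimal-degree generator, monic normalization via units of $F[t]$, and irreducibility from the absence of zero divisors in $E$), and each step is justified where it matters. The paper states this lemma without proof, as a recalled textbook fact in its appendix, so there is no alternative argument to compare against; yours is exactly the canonical one that is being implicitly invoked.
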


Let  $f(t) \in F[t]$ be  irreducible, then the lemma above implies that $f$ is the minimal polynomial of every root of $f$ in $E$.


\begin{lemma}
Let $F$ be a field  with characteristic $0$. An irreducible  polynomial $f(t) \in F[t]$ has no multiple roots.
\end{lemma}
\begin{proof}
Recall that to any  polynomial 
 $f(t) = \sum_{j \geq 0} a_j t^j$,  we can associate its formal derivative $f'(t) =  \sum_{j \geq 0} j a_{j}  \, t^{j-1}$.
 If $\deg f = 1$, the statement is trivial, thus let $\deg f> 1$. Assume that $f(t)$ has multiple roots. 
 Then  $f$ and $f'$ have a common root $a \in \bar F$. 
 Since $f$ is the minimal polynomial of $a$, then $f / f'$. 
 But since $\deg f' < \deg f$, then $f' = 0$, hence $\deg f = 1$, contradiction.
\end{proof}

\begin{lemma}
\label{lem:tra}
Let  $f(t) \in F[t]$ be irreducible. For any $a \in F$,  $f(t+a)$ is irreducible.
\end{lemma}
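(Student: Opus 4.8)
\textbf{Proof strategy for Lemma \ref{lem:tra}.}
The statement to prove is: if $f(t) \in F[t]$ is irreducible (with $F$ a field, of characteristic $0$ in the applications we care about, though this hypothesis is not needed here), then for any $a \in F$ the polynomial $f(t+a)$ is also irreducible. The natural approach is to exhibit the translation $t \mapsto t+a$ as a ring automorphism of $F[t]$, and to recall that ring automorphisms preserve irreducibility.

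\medskip

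First I would introduce the substitution map $\varphi_a : F[t] \to F[t]$ defined by $\varphi_a(g)(t) := g(t+a)$. One checks directly that $\varphi_a$ is a ring homomorphism: it is additive, it respects multiplication because evaluation of a product is the product of evaluations, and it fixes the constants $F$ and sends the unit to the unit. Next, $\varphi_a$ is bijective: its two-sided inverse is $\varphi_{-a}$, since $\varphi_{-a}(\varphi_a(g))(t) = g((t-a)+a) = g(t)$ and symmetrically $\varphi_a(\varphi_{-a}(g)) = g$. Hence $\varphi_a$ is a ring automorphism of $F[t]$ fixing $F$ pointwise; in particular it preserves degree (translation by a scalar does not change the leading term) and it maps units to units (the units of $F[t]$ are the nonzero constants, which are fixed).

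\medskip

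With this in place the conclusion is immediate. Suppose $f$ is irreducible but $f(t+a) = \varphi_a(f)$ factors non-trivially as $f(t+a) = g(t) h(t)$ with neither $g$ nor $h$ a unit. Applying the inverse automorphism $\varphi_{-a}$ gives $f(t) = \varphi_{-a}(g)(t)\,\varphi_{-a}(h)(t)$. Since $\varphi_{-a}$ is an automorphism fixing $F$, neither $\varphi_{-a}(g)$ nor $\varphi_{-a}(h)$ is a unit (a unit would have to be a nonzero constant, and $\varphi_{-a}$ of a non-constant polynomial is non-constant because degree is preserved). This contradicts the irreducibility of $f$. Therefore $f(t+a)$ admits no non-trivial factorization, i.e. it is irreducible. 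Also $f(t+a)$ is not a unit, as it has the same (positive) degree as $f$. This completes the argument.

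\medskip

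There is essentially no obstacle here: the only point requiring a line of care is the bookkeeping that ``non-unit'' is preserved under $\varphi_{\pm a}$, which reduces to the observation that the units of $F[t]$ are exactly $F^\times$ and these are fixed, while degree is invariant under translation. One could alternatively phrase the whole proof as ``$\varphi_a$ is an isomorphism of rings, and isomorphisms transport the property of being irreducible,'' but spelling out the factorization contradiction as above is cleanest and self-contained.
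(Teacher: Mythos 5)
Your proof is correct and uses essentially the same argument as the paper: assume a non-trivial factorization of $f(t+a)$ and translate back by $-a$ to get a non-trivial factorization of $f$, a contradiction. You merely package the translation step in the language of the automorphism $\varphi_a$ of $F[t]$, which is the same observation the paper uses implicitly when it writes $f(t)=g_1(t-a)g_2(t-a)$.
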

\begin{proof}
Assume that $f(t+a) = g_1(t) g_2(t)$ with $g_1, g_2$ not trivial polynomials. Then $f(t) = g_1(t-a) g_2(t-a)$, and $g_1(t-a), g_2(t-a) \in F[t]$, getting a contradiction.
\end{proof}

\newcommand{\etalchar}[1]{$^{#1}$}
\def\cprime{$'$}

\end{document}